\newcommand*{\rom}[1]{\expandafter\@slowromancap\romannumeral #1@}
\newcommand{\doublewidetilde}[1]{{%
  \mathpalette\double@widetilde{#1}%
}}
\newcommand{\double@widetilde}[2]{%
  \sbox\z@{$\m@th#1\widetilde{#2}$}%
  \ht\z@=.9\ht\z@
  \widetilde{\box\z@}%
}
\newcommand\tsup[2][2]{%
 \def\useanchorwidth{T}%
  \ifnum#1>1%
    \stackon[-.5pt]{\tsup[\numexpr#1-1\relax]{#2}}{\scriptscriptstyle\sim}%
  \else%
    \stackon[.5pt]{#2}{\scriptscriptstyle\sim}%
  \fi%
}
\newtheorem{theorem}{Theorem}[section]
\newtheorem{corollary}[theorem]{Corollary}
\newtheorem{lemma}[theorem]{Lemma}
\newtheorem{proposition}[theorem]{Proposition}
\theoremstyle{definition}
\newtheorem{definition}[theorem]{Definition}
\newtheorem{remark}[theorem]{Remark}
\newtheorem{example}[theorem]{Example}
\DeclareMathOperator{\ad}{\mathsf{ad}}
\DeclareMathOperator{\rk}{\mathrm{rk}}
\newcommand{\idm}{\mathfrak{m}}
\newcommand{\diag}{\mathsf{diag}}
\newcommand{\id}{\mathrm{id}}
\newcommand{\charf}{\mathsf{char}}
\DeclareMathOperator{\res}{\mathsf{res}}
\DeclareMathOperator{\ev}{\mathsf{ev}}
\DeclareMathOperator{\Coh}{\mathsf{Coh}}
\DeclareMathOperator{\TF}{\mathsf{TF}}
\DeclareMathOperator{\VB}{\mathsf{VB}}
\DeclareMathOperator{\alt}{\mathsf{alt}}
\DeclareMathOperator{\Tri}{\mathsf{Tri}}
\DeclareMathOperator{\Pic}{\mathsf{Pic}}
\DeclareMathOperator{\Hom}{\mathsf{Hom}}
\DeclareMathOperator{\Aut}{\mathsf{Aut}}
\DeclareMathOperator{\Inn}{\mathsf{Inn}}
\DeclareMathOperator{\Out}{\mathsf{Out}}
\DeclareMathOperator{\End}{\mathsf{End}}
\DeclareMathOperator{\Mat}{\mathsf{Mat}}
\DeclareMathOperator{\Spec}{\mathsf{Spec}}
\DeclareMathOperator{\lieg}{\mathfrak{g}}
\DeclareMathOperator{\lieh}{\mathfrak{h}}
\DeclareMathOperator{\lien}{\mathfrak{n}}
\DeclareMathOperator{\liew}{\mathfrak{w}}
\DeclareMathOperator{\lieL}{\mathfrak{L}}
\DeclareMathOperator{\liea}{\mathfrak{a}}
\DeclareMathOperator{\liem}{\mathfrak{d}}
\DeclareMathOperator{\lied}{\mathfrak{c}}
\DeclareMathOperator{\lieA}{\mathfrak{A}}
\DeclareMathOperator{\lieB}{\mathfrak{B}}
\DeclareMathOperator{\lieC}{\mathfrak{C}}
\DeclareMathOperator{\lieE}{\mathfrak{E}}
\DeclareMathOperator{\lieG}{\mathfrak{G}}
\DeclareMathOperator{\lieH}{\mathfrak{H}}
\DeclareMathOperator{\lieD}{\mathfrak{C}}
\DeclareMathOperator{\lieP}{\mathfrak{P}}
\DeclareMathOperator{\lieR}{\mathfrak{R}}
\DeclareMathOperator{\lieW}{\mathfrak{W}}
\DeclareMathOperator{\lieV}{\mathfrak{V}}
\DeclareMathOperator{\lieO}{\mathfrak{O}}
\DeclareMathOperator{\lieI}{\mathfrak{I}}
\DeclareMathOperator{\lieM}{\mathfrak{D}}
\DeclareMathOperator{\lieZ}{\mathfrak{Z}}
\DeclareMathOperator{\lieK}{\mathfrak{K}}
\DeclareMathOperator{\lieu}{\mathfrak{u}}
\DeclareMathOperator{\lies}{\mathfrak{s}}
\newcommand{\kk}{\mathbbm{k}}
\newcommand{\KK}{\mathbbm{K}}
\newcommand{\CC}{\mathbb{C}}
\newcommand{\NN}{\mathbb{N}}
\newcommand{\ZZ}{\mathbb{Z}}
\newcommand{\FF}{\mathbb{F}}
\newcommand{\GG}{\mathbb{G}}
\newcommand{\EE}{\mathbb{E}}
\newcommand{\PP}{\mathbb{P}}
\newcommand{\Ad}{\mbox{\emph{Ad}}}
\newcommand{\kA}{\mathcal{A}}
\newcommand{\kB}{\mathcal{B}}
\newcommand{\kF}{\mathcal{F}}
\newcommand{\kG}{\mathcal{G}}
\newcommand{\kO}{\mathcal{O}}
\newcommand{\kL}{\mathcal{L}}
\newcommand{\kP}{\mathcal{P}}
\newcommand{\kQ}{\mathcal{Q}}
\newcommand{\kT}{\mathcal{T}}
\newcommand{\lar}{\longrightarrow}
\newcommand{\rightarrowdbl}{\longrightarrow\mathrel{\mkern-14mu}\rightarrow}
\newcommand{\twoheadarrow}{\rightarrow\mathrel{\mkern-14mu}\rightarrow}
\newcommand{\ttau}{\mathtt{t}}
\newcommand{\ttaus}{\mathtt{s}}
\newcommand{\llbrace}{(\!(}
\newcommand{\rrbrace}{)\!)}
\newcommand{\llangle}{\langle\!\langle}
\newcommand{\rrangle}{\rangle\!\rangle}
\newcommand{\lconvol}{[\!\langle}
\newcommand{\rconvol}{\rangle\!]}
\begin{document}

\title[Algebraic geometry of Lie bialgebras]{Algebraic geometry of Lie bialgebras  defined by  solutions of the classical Yang--Baxter equation}

\author{Raschid Abedin}
\address{
Universit\"at Paderborn\\
Institut f\"ur Mathematik \\
Warburger Stra\ss{}e 100 \\
33098 Paderborn \\
Germany
}
\email{rabedin@math.uni-paderborn.de}

\author{Igor Burban}
\address{
Universit\"at Paderborn\\
Institut f\"ur Mathematik \\
Warburger Stra\ss{}e 100 \\
33098 Paderborn \\
Germany
}
\email{burban@math.uni-paderborn.de}

\begin{abstract}
This paper is devoted to algebro-geometric study  of infinite dimensional Lie bialgebras, which arise from  solutions of the classical Yang--Baxter equation.    We  regard trigonometric solutions of this  equation as twists of the standard Lie bialgebra cobracket  on an appropriate affine Lie algebra and  work out  the corresponding theory of Manin triples,  putting it   into an algebro-geometric context. 
As a consequence of this approach, we prove that any trigonometric solution of the classical Yang--Baxter equation arises from an appropriate algebro-geometric datum. The developed theory   is illustrated by some concrete examples. 
\end{abstract}

\maketitle

\section{Introduction}

\smallskip
\noindent
The notion of a Lie bialgebra originates from the concept of a Poisson--Lie group. Let $G$ be any finite dimensional real Lie group and $\lieg_\diamond$ be its Lie algebra.  It was shown  by Drinfeld in \cite{DrinfeldPoissonLie} that 
Poisson algebra structures  on the algebra $C^\infty(G)$ of smooth functions on $G$ making the group product $G \times G \to G$ to a Poisson map  correspond,  on the Lie algebra level, to linear maps $\lieg_\diamond \stackrel{\delta}\lar 
\wedge^2(\lieg_\circ)$ satisfying the cocycle and  the co-Jacobi identities.  Such a pair $(\lieg_\diamond, \delta)$ is a Lie bialgebra. Conversely, if $G$ is simply connected then any Lie bialgebra cobracket 
$\lieg_\diamond \stackrel{\delta}\lar \wedge^2(\lieg_\diamond)$ defines a Poisson bracket on $C^\infty(G)$ such that   $G \times G \to G$ is a Poisson map; see \cite{DrinfeldPoissonLie}.

\smallskip
\noindent
Assuming that $\lieg_\diamond$ is a  simple Lie algebra, it follows from Whitehead's Lemma that  any Lie bialgebra cobracket $\lieg_\diamond  \stackrel{\delta}\lar \wedge^2(\lieg_\diamond)$ has the form $\delta = \partial_\ttau$ for some tensor $\ttau \in \lieg_\diamond \otimes \lieg_\diamond$, where 
$$
\lieg_\diamond \stackrel{\partial_\ttau}\lar \lieg_\diamond \otimes \lieg_\diamond, \; a \mapsto [a \otimes 1 + 1 \otimes a, \ttau]
$$
 and $\ttau$ satisfies the  classical Yang--Baxter equation for constants (cCYBE):
\begin{equation}\label{E:CYBEconstant}
[\ttau^{12}, \ttau^{13}] + [\ttau^{12}, \ttau^{23}] + [\ttau^{13}, \ttau^{23}] = 0 \quad \mbox{\rm and} \quad 
\ttau^{12} + \ttau^{21} = \lambda \gamma
\end{equation}
Here, 
  $\gamma \in \lieg_\diamond \otimes \lieg_\diamond$ is the Casimir element with respect to the Killing form $\lieg_\diamond \times \lieg_\diamond \stackrel{\kappa_\diamond}\lar \mathbb{R}$ and $\lambda \in \mathbb{R}$.  For  any $a, b, c, d \in \lieg_\diamond$ we put:
$
\bigl[(a\otimes b)^{12}, (c \otimes d)^{13}\bigr] =  [a, c] \otimes b \otimes d \, \in \, \lieg_\diamond^{\otimes 3},$  which determines  the expression
 $[\ttau^{12}, \ttau^{13}]$; the two other summands  $[\ttau^{12}, \ttau^{23}]$ and $[\ttau^{13}, \ttau^{23}]$  of  (\ref{E:CYBEconstant})  are defined in a similar way.

\smallskip
\noindent
 Suppose now that $\lieg$ is a finite dimensional complex simple Lie algebra and $\lieg \times \lieg \stackrel{\kappa}\lar \mathbb{C}$ is its Killing form. Solutions of cCYBE for $\lambda \ne 0$ were classified by Belavin and Drinfeld; see 
\cite[Chapter  6]{BelavinDrinfeldBook}. In a work of Stolin \cite{StolinBialg}  it was shown that such solutions  stand in bijection with direct sum decompositions 
\begin{equation}\label{E:ManinTriplesFiniteDimensional}
\lieg \times \lieg = \lied \dotplus \liew,
\end{equation}
where $\lied := \bigl\{(a, a) \, |\, a \in \lieg \bigr\}$ is the diagonal and $\liew = \liew_\ttau$ is a Lie subalgebra of $\lieg \times \lieg$ which is Lagrangian with respect to the bilinear form 
\begin{equation}\label{E:FormDoubleLieAlg}
(\lieg \times \lieg) \times (\lieg \times \lieg) 
\stackrel{F}\lar \CC, \quad \bigl((a_1, b_1), (a_2, b_2)\bigr) \mapsto \kappa(a_1, a_2) - \kappa(b_1, b_2).
\end{equation}
Such datum $\Bigl(\bigl(\lieg \times \lieg, F\bigr), \lied, \liew\Bigr)$ is an example of a Manin triple. 

\smallskip
\noindent
Let $\doublewidetilde{\lieG} = \doublewidetilde{\lieG}_A$ be the Kac--Moody Lie algebra, associated with a symmetrizable generalized Cartan matrix $A$. It turns out  that
 $\doublewidetilde{\lieG}$ possesses a non-degenerate invariant symmetric bilinear form $\doublewidetilde{\lieG} \times \doublewidetilde{\lieG} \stackrel{B}\lar \CC$ and decomposes into a direct sum of root spaces \cite{Kac}.
From these facts one can deduce that   $\doublewidetilde{\lieG}$ carries a distinguished Lie bialgebra cobracket $\doublewidetilde{\lieG} \stackrel{\delta_\circ}\lar 
\wedge^2\bigl(\doublewidetilde{\lieG}\bigr)$ called \emph{standard}; see \cite{Drinfeld}.

\smallskip
\noindent
Especially interesting and important phenomena in this context arise  in the case of affine Lie algebras. 
Assume  that $A$ is a generalized Cartan matrix of affine type. Then the corresponding affine Lie algebra 
$\widetilde\lieG = \bigl[\doublewidetilde{\lieG}, \doublewidetilde{\lieG}\bigr]$ has a one-dimensional center $\langle c\rangle$ and both $B$ and $\delta_\circ$ induce the corresponding structures on the Lie algebra $\lieG = \widetilde{\lieG}/\langle c\rangle$. Namely, we have a non-degenerate invariant symmetric bilinear form $\lieG \times \lieG \stackrel{B}\lar \CC$ and a Lie bialgebra cobracket
$\lieG \stackrel{\delta_\circ}\lar  \wedge^2(\lieG)$.  According to a theorem of  Gabber and Kac (see \cite[Theorem 8.5]{Kac}), there exists a finite dimensional simple Lie algebra $\lieg$ and 
an automorphism  $\sigma \in \Aut_{\CC}(\lieg)$ of finite order $m$ such that 
$\lieG$ is isomorphic to the twisted loop algebra $\lieL = \lieL(\lieg, \sigma)  := \bigoplus\limits_{k \in \ZZ} \lieg_{k}  z^k \subset \lieg\bigl[z, z^{-1}\bigr]$ (where $\lieg_k$ are eigenspaces of $\sigma$).  The Lie algebra $\lieL$ is a free module of rank $q = \dim_{\CC}(\lieg)$ over the ring $R = \CC\bigl[t, t^{-1}\bigr]$, where $t = z^m$.  It turns out that (up to an appropriate rescaling) the bilinear form 
$\lieL \times \lieL \stackrel{B}\lar \CC$ factorizes as $\lieL \times \lieL \stackrel{K}\lar R \stackrel{\res_0^\omega}\lar \CC$, where $K$ is the Killing form of $\lieL$ (viewed as a Lie algebra over $R$) and $\res_0^\omega$ is the residue map at the zero point with respect to the differential one-form   $\omega = \dfrac{dt}{t}$. Moreover, one can show  that the standard Lie bialgebra cobracket 
$\delta_\circ$ on $\lieL \cong \lieG$ is given by the following formula:
\begin{equation}\label{E:StandBracket}
\lieL \stackrel{\delta_\circ}\lar \wedge^2(\lieL), \quad f(z) \mapsto 
\bigl[f(x) \otimes 1 + 1 \otimes f(y), r_\circ(x, y)\bigr],
\end{equation}
where $r_\circ(x, y)$ is the  so-called  \emph{standard} trigonometric solution of the classical Yang--Baxter equation with spectral parameters (CYBE)
\begin{equation*}
\left\{
\begin{array}{l}
\bigl[r^{12}(x_1, x_2), r^{13}(x_1, x_3)\bigr] + \bigl[r^{13}(x_1, x_3), r^{23}(x_2, x_3)\bigr] +
\bigl[r^{12}(x_1, x_2),
r^{23}(x_2, x_3)\bigr] = 0
\\
r^{12}(x_1, x_2) = 
-r^{21}(x_2, x_1),
\end{array} 
\right.
\end{equation*}
attached to the pair $(\lieg,\sigma)$, see for instance Corollary \ref{C:StandradStructure}. 

\smallskip
\noindent
Following the approach of Karolinsky and Stolin \cite{KarolinskyStolin}, we study twisted Lie bialgebra cobrackets $\delta_\ttau = \delta_\circ + \partial_\ttau$ on $\lieL$, where $$\ttau \in \lieL \wedge \lieL \subseteq (\lieg \otimes \lieg)\bigl[x, x^{-1}, y, y^{-1}\bigr] \;  
\mbox{\rm and}\; \partial_\ttau\bigl(f(z)\bigr) = \bigl[f(x) \otimes 1 + 1 \otimes f(y), 
\ttau(x,y)\bigr].$$ One can show   that $(\lieL, \delta_\ttau)$ is a Lie bialgebra if and only if $r_{\ttau}(x, y) = r_\circ(x, y) + \ttau(x, y)$ is a solution of CYBE (see Theorem \ref{T:TwistsCYBEGeometry}). It is not hard to see  that (after an appropriate change of variables) all trigonometric solutions of CYBE (classified by Belavin and Drinfeld in \cite[Theorem 6.1]{BelavinDrinfeld}) are of the form $r_\ttau(x, y)$ for an appropriate  $\ttau \in \wedge^2(\lieL)$. Conversely, one can show that any solution of CYBE of the form  $r_\ttau(x, y)$ is equivalent to a trigonometric solution of CYBE; see Proposition \ref{P:TwistIsTrigonometric}. We prove that such Lie bialgebra twists $\ttau \in 
\wedge^2(\lieL)$ are parametrized by  Manin triples of the form 
\begin{equation}\label{E:ManinTriplesForTrigSol}
\lieL \times \lieL^\ddagger := \lieD \,\dotplus\, \lieW,
\end{equation}
where $\lieL^\ddagger = \lieL\bigl(\lieg, \sigma^{-1}\bigr)$, $\lieD = \bigl\{(f, f^\ddagger) \,\big| \, f \in \lieL \bigr\}$ (here, $(az^k)^\ddagger = a z^{-k}$ for $a \in \lieg$ and $k \in \ZZ$) and the symmetric non-degenerate bilinear invariant form  
$\bigl(\lieL \times \lieL^\ddagger) \times \bigl(\lieL \times \lieL^\ddagger) \stackrel{F}\lar \CC$ is given similarly to  (\ref{E:FormDoubleLieAlg}), but replacing the Killing form $\kappa$ by the standard form $B$; see Theorem \ref{T:ManinTriplesLoopTwists}. This results establishes another   analogy  between solutions of cCYBE for $\lambda \ne 0$ and trigonometric solutions of CYBE (parallels between both theories were  already highlighted by Belavin and Drinfeld in \cite{BelavinDrinfeldBook}). We expect (in the light of the works \cite{STS, LuWeinstein}) that the constructed Manin triples (\ref{E:ManinTriplesForTrigSol}) will be useful  in the  study of symplectic leaves of  Poisson--Lie structures on the affine Kac--Moody groups and loop groups, associated to trigonometric solutions of CYBE.

\smallskip
\noindent
Using results obtained in this paper,  Maximov together with the first-named author proved in \cite{AbedinMaximov} that up to $R$--linear automorphisms of $\lieL$, the Lie bialgebra  twists of the standard Lie bialgebra cobracket (\ref{E:StandBracket})
are classified by Belavin--Drinfeld quadruples $\bigl((\Gamma_1, \Gamma_2, \tau), \ttaus\bigr)$, which parametrize 
trigonometric solutions of CYBE (see Subsection  \ref{SS:ReviewBDTheory} for details).

\smallskip
\noindent
Based on the work \cite{BurbanGalinat}, we put the theory of Manin triples of the form 
(\ref{E:ManinTriplesForTrigSol}) into an algebro-geometric context. We show that
for any twist $\ttau \in \wedge^2(\lieL)$ of the standard Lie bialgebra structure 
on $\lieL$ there exists an acyclic isotropic coherent  sheaf of Lie algebras $\kA = \kA_\ttau$ on a plane nodal cubic $E = \overline{V(y^2 - x^3 - x^2)} \subset \PP^2$ such that $\Gamma(U, \kA) \cong \lieL$ and such that 
the completed Manin triple $\widehat{\lieL} \times \widehat{\lieL}^\ddagger := \lieD \,\dotplus\, \widehat{\lieW}_\ttau$ is isomorphic to  the Manin triple
$
\widetilde{\lieA}_s = \Gamma(U, \kA) \dotplus \widehat{\lieA}_s,
$
where $s$ is the singular point of $E$, $U = E \setminus \{s\}$, $\widehat{\lieA}_s$
is the completion of the germ of $\kA$ at $s$  and $\widetilde{\lieA}_s$ is its rational hull. Moreover,  $\lieL \stackrel{\delta_\ttau}\lar \wedge^2(\lieL) \subset \lieL \otimes \lieL$ can be  identified with the Lie bialgebra cobracket
$$
\Gamma(U, \kA) \lar \Gamma(U, \kA) \otimes \Gamma(U, \kA) \cong 
\Gamma(U \times U, \kA \boxtimes \kA), \; f(z) \mapsto \bigl[f(x) \otimes 1 + 1 \otimes f(y), \rho(x, y)\bigr],
$$
where $\rho \in \Gamma\bigl(U \times U \setminus \Sigma, \kA \boxtimes \kA)$ is the \emph{geometric $r$-matrix} attached to the pair $(E, \kA)$ (here, $\Sigma \subset U \times U$ is the diagonal); see Theorem \ref{T:TwistsCYBEGeometry}. From this we deduce that any trigonometric solution of CYBE arises from an appropriate pair $(E, \kA)$, completing the program of geometrization  of solutions of CYBE started in \cite{Cherednik, BurbanGalinat}. Another proof of this result was recently obtained by Polishchuk along quite different lines \cite{Polishchuk}. 

\smallskip
\noindent
The theory  of twists of the standard Lie bialgebra cobracket on $\lieL \cong \lieG$   can be regarded as an alternative approach  to the classification of  trigonometric solutions of CYBE. In particular, it is adaptable for  the  study of trigonometric solutions of CYBE for arbitrary \emph{real} simple Lie algebras, which is of the most interest from the point of view of applications in the theory of integrable systems (see \cite{BabelonBernardTalon, ReymanST2})  as well as for simple Lie algebras over  arbitrary fields of characteristic zero.

\smallskip
\noindent
For a completeness of exposition, we also discuss in this paper an algebro-geometric viewpoint  on the theory of  Manin triples of the form $\lieg\llbrace z\rrbrace = \lieg\llbracket z\rrbracket \dotplus \lieW$, which can be associated to an \emph{arbitrary} formal solution of CYBE (see Subsection
\ref{SS:SurveyCYBE}) as well as of Manin triples of the form $\lieg\llbrace z^{-1}\rrbrace = \lieg[z] \dotplus \lieW$, which (according to a work of Stolin \cite{Stolin}) parametrize  the rational solutions of CYBE; see Remark \ref{R:CuspidalCase} and Remark \ref{R:CuspidalCaseQuasiConst}.

\medskip
\noindent
The plan of this paper is the following. 

\smallskip
\noindent
In Section \ref{S:LieBilgLagrDec} we elaborate (following the work of Karolinsky and 
Stolin \cite{KarolinskyStolin}) the theory of twists of  a given Lie bialgebra cobracket.  
The main result of this section is Theorem \ref{T:ManinTriplesTwists}, which describes such 
twists  in the terms of appropriate Manin triples. 

\smallskip
\noindent
Necessary notions and results of the structure theory of affine Lie algebras 
and twisted loop algebras are  reviewed in  Section \ref{S:StandardLiebialgStructure}. In particular, we recall the description   of  the standard Lie bialgebra cobracket  $\lieG \stackrel{\delta_\circ}\lar \wedge^2(\lieG)$ for an affine  Lie algebra $\lieG \cong \lieL$. The main new result of this section   is Theorem \ref{P:CoisotropicSubalgebras} asserting that any bounded Lie subalgebra $\lieO \subset \lieL$, which is coisotropic with respect to the standard bilinear form $\lieL \times \lieL \stackrel{B}\lar \CC$, is stable under the  multiplication with elements of the polynomial algebra $\CC[t]$.

\smallskip
\noindent
In Section \ref{S:TwistsStandardStructure}, we apply the theory of twists of Lie bialgebra cobrackets,  developed in Section \ref{S:LieBilgLagrDec},  to the  particular case of $(\lieL, \delta_\circ)$. The main results of this section are Theorem
\ref{T:ManinTriplesLoopTwists} and Proposition \ref{P:CompletedTwistsManinTriples}, giving a classification  of the twisted Lie bialgebra cobrackets  $\lieL \stackrel{\delta_\ttau}\lar \wedge^2(\lieL)$ via  appropriate Manin triples. 

\smallskip
\noindent
Section \ref{S:ReviewGeomCYBE} is dedicated to the algebro-geometric theory of CYBE. In Subsection 
\ref{SS:SurveyCYBE}, we recall a well-known connection between solutions of CYBE and Manin triples
of the form  $\lieg\llbrace z\rrbrace = \lieg\llbracket z\rrbracket \dotplus \lieW$. In Subsection
\ref{SS:GeomCYBEDatum} we give a survey of the algebro-geometric theory of CYBE developed in \cite{BurbanGalinat}. 
In Subsection \ref{SS:NodalManinTriples}, we study properties of  geometric
CYBE  data $(E, \kA)$, where $E$ is a singular Weierstra\ss{} curve.  The main result of this section is Theorem \ref{T:main2} (see also Remark \ref{R:CuspidalCase}),  which gives a recipe to compute the geometric $r$-matrix attached to a datum $(E, \kA)$. 

\smallskip
\noindent
In Section \ref{S:GeometrizationTrigonomSolutions}, we continue the algebro-geometric study of solutions of CYBE, started in Section \ref{S:ReviewGeomCYBE}. In Subsection \ref{SS:BasisBBDG}, we review the theory of torsion free sheaves on degenerations of  elliptic  curves, following the work \cite{Survey}. Subsections \ref{SS:GeometrizationTwists} and \ref{SS:GeomRMatTwists} are dedicated to the problem of geometrization of twists of the standard Lie bialgebra structure on $\lieL$. In Proposition \ref{P:GeomStandardStructure}, we derive a formula for the standard trigonometric $r$-matrix, associated to an \emph{arbitrary} 
finite order automorphism $\sigma \in \Aut_{\CC}(\lieg)$.  We give a geometric proof of the known fact that the standard Lie bialgebra cobracket $\lieL \stackrel{\delta_\circ}\lar \wedge^2(\lieL)$ is given by the standard solution $r_\circ(x, y)$ of CYBE; see Corollary \ref{C:StandradStructure}. After these preparations been established,  we prove  in Theorem \ref{T:TwistsCYBEGeometry} that an arbitrary twist $\lieL \stackrel{\delta_\ttau}\lar \wedge^2(\lieL)$  arises from an appropriate geometric CYBE datum $(E, \kA)$, where $E$ is 
a  nodal Weierstra\ss{} curve.  After reviewing in Subsection \ref{SS:ReviewBDTheory} the theory  of trigonometric solutions of CYBE due to Belavin and Drinfeld \cite{BelavinDrinfeld, BelavinDrinfeldBook}, we prove in Proposition \ref{P:TwistIsTrigonometric} that any twist $r_\ttau(x, y)$ of the standard solution $r_\circ(x, y)$ of CYBE is equivalent to a trigonometric solution. 

\smallskip
\noindent
Some explicit computations are performed in 
Section \ref{S:ExplicitComputations}. 
In particular, we explicitly describe  Manin triples of the form (\ref{E:ManinTriplesForTrigSol}) and the corresponding geometric data for the  quasi-constant trigonometric solutions of CYBE (see Theorem \ref{T:QuasiConstantGeometrization}) as well as for  a distinguished class of (quasi-)trigonometric solutions $r^{\mathrm{trg}}_{(c, d)}$ for the Lie algebra $\lieg = \mathfrak{sl}_n(\CC)$, 
which are attached to a pair of mutually prime natural numbers $(c, d)$ such that $c + d = n$ (see Theorem \ref{T:CremmerGervaisGeometry}).

\smallskip
\noindent
In the final Section \ref{S:Appendix}, we review various constructions of  Lie bialgebras arising from solutions of the classical Yang--Baxter equation.

\smallskip
\noindent
\textit{List of notation}. For convenience of the reader we introduce now  the most important  notation used in this paper.

\smallskip
\noindent
$-$ We use  Gothic letters as a notation for Lie algebras. In particular, $\lieg$ is  a finite dimensional complex simple Lie algebra of dimension $q$ and $\lieL = \lieL(\lieg, \sigma)$ is the twisted loop algebra associated with an automorphism $\sigma \in \Aut_{\CC}(\lieg)$ of order $m$, whereas $\overline{\lieL} = \lieg\bigl[z, z^{-1}\bigr]$ denotes the full loop algebra. We put $t = z^m$ and $R = \CC\bigl[t, t^{-1}\bigr]$ and denote
by $\lieg \times \lieg \stackrel{\kappa}\lar \CC$ (respectively, $\lieL \times \lieL \stackrel{K}\lar 
R$)  the   Killing form of $\lieg$ (respectively, of $\lieL$)  and by $\gamma \in \lieg \otimes \lieg$ (respectively, $\chi \in \lieL \otimes_{R}
\lieL$)  the corresponding Casimir element. 

\smallskip
\noindent
$-$ Unless otherwise stated,  by $\otimes$ we mean the tensor product over the field of definition. We use $\dotplus$ to denote the (inner) direct sum of vector spaces. Given a vector space $V$ over a field $\kk$ and $v_1, \dots, v_n \in V$, we denote by $\langle v_1, \dots, v_n\rangle_{\kk}$ the corresponding linear hull. If $V$ is a Lie algebra then $\llangle v_1, \dots, v_n\rrangle$ is the Lie subalgebra of $V$ generated by $v_1, \dots, v_n$. 

\smallskip
\noindent
$-$ We denote by $\widetilde{\lieG}$ an affine Lie algebra and by $\lieG$ its quotient modulo the center. Next,  $\widetilde{\lieG} \times \widetilde{\lieG} \stackrel{B}\lar \CC$ (respectively, $\lieL \times \lieL \stackrel{B}\lar \CC$)  is the  standard bilinear form and 
$\widetilde{\lieG} \stackrel{\delta_\circ}\lar  \wedge^2(\widetilde{\lieG})$  (respectively, $\lieL \stackrel{\delta_\circ}\lar  \wedge^2(\lieL)$) is the  standard Lie bialgebra cobracket.

\smallskip
\noindent
$-$ A Weierstra\ss{} curve $E$ is an irreducible projective curve over $\CC$ of arithmetic genus one. If $E$ is singular then $s$ denotes its singular point and $U = E \setminus\{s\}$ its regular part.  For a coherent sheaf $\kF$ on a scheme  $X$ and a point $p  \in X$,  we denote by
$\kF\big|_p$ the fiber of $\kF$ over $p$ and by $\kF_p$ the stalk of $\kF$ at $p$. 

\smallskip
\noindent
$-$ Next, $\kA$ denotes a coherent sheaf of Lie algebras on a (singular) Weierstra\ss{} curve $E$ such that $H^0(E, \kA) = 0 = H^1(E, \kA)$ and $\kA\big|_x \cong \lieg$ for any $x \in U$ (together with a certain extra condition at the singular point $s$). Such a pair $(E, \kA)$ is called geometric CYBE datum and 
$\rho$ is the corresponding geometric $r$-matrix.

\smallskip
\noindent
$-$ Given a geometric CYBE datum $(E, \kA)$ and a fixed point $p\in E$, we write $\kO$ for the structure sheaf of $E$ and put $E_p = E\setminus\{p\}$ and $U_p = U\setminus \{p\}$ as well as $R = \Gamma(U,\kO)$, $R_p = \Gamma(E_p,\kO)$ and $R_p^\circ = \Gamma(U_p,\kO)$. For the corresponding sections of $\kA$ we write $\lieA = \Gamma(U,\kA)$, $\lieA_{(p)} = \Gamma(E_p,\kA)$ and $\lieA^\circ_{(p)} = \Gamma(U_p,\kA)$. The completion of the stalk of $\kO$ at $p$ is denoted by $\widehat O_p$, while its field of fraction is denoted by $\widehat Q_p$. Finally,  the completion of the stalk of $\kA$ at $p$ is denoted by $\widehat\lieA_p$, whereas $\widetilde\lieA_p = \widehat Q_p \otimes_{\widehat O_p}\widehat \lieA_p$ is the corresponding rational hull. If $p$ is the singular point of $E$, we omit the indices $p$. 

\medskip
\noindent
\emph{Acknowledgement}. The work of both  authors  was supported  by the  DFG project Bu--1866/5--1. We are grateful to Stepan Maximov and Alexander Stolin for fruitful discussions as well as to both anonymous referees for their helpful comments and remarks. 

\clearpage
\section{Lie bialgebras and Lagrangian decompositions}\label{S:LieBilgLagrDec}

\smallskip
\noindent
In this section $\kk$ is a field of $\charf(\kk) \ne 2$

\subsection{Generalities on Lie bialgebras} Let $\lieR = \bigl(\lieR, [-\,,-\,]\bigr)$ be a Lie algebra over  $\kk$. Recall the following standard notions.

\begin{itemize}
\item For any $n \in \NN$ we denote:
$\lieR^{\otimes n} = \underbrace{\lieR \otimes \lieR \otimes \dots \otimes \lieR}_{\text{$n$ times }}$. For any $\ttau \in \lieR^{\otimes n}$ and $a \in \lieR$, we put: $a \circ \ttau = \ad_a(\ttau) := \bigl[a \otimes 1 \otimes \dots \otimes 1 + \dots + 1 \otimes \dots \otimes 1 \otimes a, \ttau \bigr].
$ A tensor $\ttau \in \lieR^{\otimes n}$ is called \emph{ad-invariant} if $a \circ \ttau = 0$ for all $a\in \lieR$. 
\item A linear map $\lieR \stackrel{\delta}\lar \lieR \otimes \lieR$ is a \emph{skew-symmetric cocycle} if $\mathsf{Im}(\delta) \subseteq \wedge^2(\lieR)$ and $$
\delta\bigl([a, b]\bigr) = a \circ \delta(b) - b \circ \delta(a)$$
 for all $a, b \in \lieR$.
\item For any $\ttau \in \lieR^{\otimes 2}$ we have a linear map 
$
\lieR \stackrel{\partial_\ttau}\lar \lieR^{\otimes 2}, \; a\mapsto a \circ \ttau.
$
If $\ttau \in \wedge^2 \lieR$ then $\partial_\ttau$ is automatically a skew-symmetric cocycle.
\end{itemize}

\begin{definition} A \emph{Lie bialgebra} is a pair $(\lieR, \delta)$, where 
$\lieR$ is a Lie algebra and  $\delta$ is a skew-symmetric cocycle satisfying the co-Jacobi identity
$\alt\bigl((\delta \otimes \mathbbm{1}) \circ \delta\bigr) = 0$, where $\lieR^{\otimes 3} \stackrel{\alt}\lar \lieR^{\otimes 3}$ is given  by the formula $\alt(a\otimes b \otimes c) := a\otimes b \otimes c + c \otimes a \otimes b + b \otimes c \otimes a$ for $a, b, c \in \lieR$.
\end{definition}

\begin{remark} Let $(\lieR, \delta)$ be a Lie bialgebra. 
\begin{itemize}
\item The Lie cobracket  $\delta$ defines an element in the Lie algebra cohomology 
 $H^1\bigl(\lieR, \wedge^2(\lieR)\bigr)$. For any $\ttau \in \wedge^2(\lieR)$ we have:  $[\partial_\ttau] = 0$ in 
 $H^1\bigl(\lieR, \wedge^2(\lieR)\bigr)$. 
\item The linear map 
$
\lieR^\ast \otimes \lieR^\ast \longhookrightarrow  \bigl(\lieR \otimes \lieR\bigr)^\ast \stackrel{\delta^\ast}\lar \lieR^\ast
$
 defines a Lie algebra bracket on the dual vector space  $\lieR^\ast$ of $\lieR$. \hfill $\lozenge$
 \end{itemize} 
\end{remark}

\smallskip
\noindent
Following the work \cite{KarolinskyStolin}, we have the following result.
\begin{proposition}\label{P:TwistLieBialgStructures}
Let $(\lieR, \delta)$ be a Lie bialgebra, $\ttau \in \wedge^2(\lieR)$ and 
$\delta_{\ttau} := \delta + \partial_{\ttau}$. Then $(\lieR, \delta_\ttau)$ is a Lie bialgebra if and only if the tensor $\bigl(\alt\bigl((\delta \otimes \mathbbm{1})(\ttau)\bigr) - [[\ttau, \ttau]]\bigr)
\in \lieR^{\otimes 3}$ is ad-invariant, where 
$$[[\ttau, \ttau]] := [\ttau^{12}, \ttau^{13}] + [\ttau^{12}, \ttau^{23}] + [\ttau^{13}, \ttau^{23}].$$ 
In this case, $\delta_\ttau$ is  called  a \emph{twist} of $\delta$. 
\end{proposition}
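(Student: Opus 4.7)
The plan is to reduce the problem to verifying the co-Jacobi identity for $\delta_\ttau$, since skew-symmetry and the cocycle property of $\delta_\ttau$ are immediate. Indeed, $\delta$ is a skew-symmetric cocycle by assumption, and $\partial_\ttau$ is automatically a skew-symmetric cocycle whenever $\ttau \in \wedge^2(\lieR)$ (this was noted in the preamble to the proposition). The sum $\delta_\ttau = \delta + \partial_\ttau$ therefore lands in $\wedge^2(\lieR)$ and satisfies the cocycle identity with respect to the Lie bracket of $\lieR$, so only co-Jacobi remains to be analyzed.

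Next, I would expand
$$
(\delta_\ttau \otimes \mathbbm{1}) \circ \delta_\ttau = (\delta \otimes \mathbbm{1})\circ\delta + (\delta \otimes \mathbbm{1})\circ\partial_\ttau + (\partial_\ttau \otimes \mathbbm{1})\circ\delta + (\partial_\ttau \otimes \mathbbm{1})\circ\partial_\ttau
$$
and evaluate at $a \in \lieR$. The first term vanishes under $\alt$ because $(\lieR, \delta)$ is a Lie bialgebra. For the two cross terms, I would use the cocycle identity for $\delta$ (and the fact that the Lie algebra action $\circ$ commutes with $\alt$) to move $a$ outside, yielding
$$
\alt\Bigl(\bigl((\delta \otimes \mathbbm{1})\partial_\ttau + (\partial_\ttau \otimes \mathbbm{1})\delta\bigr)(a)\Bigr) = a \circ \alt\bigl((\delta \otimes \mathbbm{1})(\ttau)\bigr).
$$
For the pure $\partial_\ttau\partial_\ttau$ term, a direct computation using the Jacobi identity in $\lieR^{\otimes 3}$ (the same manipulation that underlies the classical Yang--Baxter equation) gives
$$
\alt\bigl((\partial_\ttau \otimes \mathbbm{1})\partial_\ttau(a)\bigr) = a \circ [[\ttau, \ttau]].
$$

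Combining these three computations yields
$$
\alt\bigl((\delta_\ttau \otimes \mathbbm{1}) \circ \delta_\ttau\bigr)(a) \;=\; a \circ \Bigl(\alt\bigl((\delta \otimes \mathbbm{1})(\ttau)\bigr) - [[\ttau, \ttau]]\Bigr).
$$
Since this must vanish for \emph{every} $a \in \lieR$, we conclude that the co-Jacobi identity for $\delta_\ttau$ is equivalent to the ad-invariance of the tensor $\alt\bigl((\delta \otimes \mathbbm{1})(\ttau)\bigr) - [[\ttau, \ttau]] \in \lieR^{\otimes 3}$, which is exactly the claimed criterion.

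The main obstacle I expect is the careful bookkeeping in the two cross-term computations: one must verify that the two contributions $(\delta \otimes \mathbbm{1})\partial_\ttau(a)$ and $(\partial_\ttau \otimes \mathbbm{1})\delta(a)$ combine, after applying $\alt$, into a single expression of the form $a \circ X$ with $X$ independent of $a$. This is where the cocycle identity for $\delta$ is essential, and where one has to track the symmetrization $\alt$ on three tensor factors carefully. The $\partial_\ttau\partial_\ttau$ computation is then a standard exercise recovering the tensor $[[\ttau,\ttau]]$, which already appears in the statement of the constant CYBE (\ref{E:CYBEconstant}).
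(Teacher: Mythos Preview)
Your outline is essentially the paper's own proof: reduce to co-Jacobi, split $(\delta_\ttau\otimes\mathbbm{1})\circ\delta_\ttau$ into four pieces, kill the $\delta\delta$ piece by the co-Jacobi identity for $\delta$, handle the two cross terms via the cocycle identity together with the skew-symmetry of both $\ttau$ and $\delta(a)$, and quote the standard identity for the pure $\partial_\ttau\partial_\ttau$ piece. One sign to correct: the pure term satisfies
\[
\alt\bigl((\partial_\ttau\otimes\mathbbm{1})\circ\partial_\ttau\bigr)(a) \;=\; -\,a\circ[[\ttau,\ttau]]
\]
(this is exactly \cite[Lemma 2.1.3]{ChariPressley}, which the paper cites), not $+\,a\circ[[\ttau,\ttau]]$ as you wrote; with that sign your displayed combined formula then follows consistently.
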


\begin{proof}
Clearly, $\delta_\ttau$ is a skew-symmetric cocycle. Hence, $(\lieR, \delta)$ is a Lie bialgebra if and only if $
\alt\bigl((\delta_\ttau \otimes \mathbbm{1}) 
\circ \delta_\ttau\bigr)(x) = 0$
for all $x \in \lieR$. Since $(\lieR, \delta)$ is a Lie bialgebra, we have:
$\alt\bigl((\delta \otimes \mathbbm{1}) \circ \delta\bigr) = 0$. Next,  for any $x \in \lieR$ the following formula is true: 
$$
\alt\bigl((\partial_\ttau \otimes \mathbbm{1}) \circ \partial_\ttau\bigr)(x) = - x \circ [[\ttau, \ttau]],
$$
see
\cite[Lemma 2.1.3]{ChariPressley}.
If  $\ttau = \sum\limits_{i = 1}^n a_i \otimes b_i$ then we have:
$$
x \circ \bigl((\delta \otimes \mathbbm{1})(\ttau)\bigr) = 
\sum\limits_{i=1}^n \bigl((x \circ \delta(a_i)) \otimes b_i + \delta(a_i) \otimes [x, b_i]\bigr).
$$
Since $(\delta \otimes \mathbbm{1}) (\partial_\ttau (x))  = (\delta \otimes \mathbbm{1}) [x \otimes 1 + 1 \otimes x, \ttau] = $
$$
= \sum\limits_{i=1}^n\bigl(\delta([x, a_i]) \otimes b_i + \delta(a_i) \otimes [x, b_i]\bigr) = \sum\limits_{i=1}^n\bigl(
\bigl(x \circ \delta(a_i) - a_i \circ \delta(x)\bigr) \otimes b_i + \delta(a_i) \otimes [x, b_i]
\bigr),
$$
we obtain: 
$
(\delta \otimes \mathbbm{1}) (\partial_\ttau (x)) = x \circ \bigl((\delta \otimes \mathbbm{1})(\ttau)\bigr)  -  \sum\limits_{i=1}^n  \bigl(a_i \circ \delta(x)\bigr) \otimes b_i.
$

\smallskip
\noindent
Let $\delta(x) = \sum\limits_{j = 1}^m x_j \otimes y_j$. Then we have: 
$$
(\partial_\ttau \otimes \mathbbm{1}) (\delta(x)) = \sum\limits_{j = 1}^m 
\sum\limits_{i = 1}^n \bigl([x_j, a_i] \otimes b_i \otimes y_j  + a_i \otimes [x_j, b_i] \otimes y_j\bigr)
$$
and
$$
\sum\limits_{i=1}^n  \bigl(a_i \circ \delta(x)\bigr) \otimes b_i = \sum\limits_{j = 1}^m 
\sum\limits_{i = 1}^n \bigl([a_i, x_j]  \otimes y_j \otimes b_i  + x_j  \otimes [a_i, y_j] \otimes b_i\bigr).
$$
Since $\ttau \in \wedge^2(\lieR)$, we have: $\ttau = - \sum\limits_{i = 1}^n b_i \otimes a_i$. It follows that
$$
\sum\limits_{j = 1}^m 
\sum\limits_{i = 1}^n [a_i, x_j]  \otimes y_j \otimes b_i  = 
\sum\limits_{j = 1}^m 
\sum\limits_{i = 1}^n [x_j, b_i] \otimes y_j \otimes a_i.$$ 
As a consequence, we obtain:
$
\alt\bigl(\sum\limits_{j = 1}^m 
\sum\limits_{i = 1}^n  a_i \otimes [x_j, b_i] \otimes y_j - [x_j, b_i] \otimes y_j \otimes a_i\bigr) = 0.
$
Similarly, since $\delta(x) \in \wedge^2(\lieR)$, we have: $\sum\limits_{j = 1}^m x_j \otimes y_j  = - \sum\limits_{j = 1}^m y_j \otimes x_j$. Hence, 
$$
\sum\limits_{j = 1}^m 
\sum\limits_{i = 1}^n x_j  \otimes [a_i, y_j] \otimes b_i = 
\sum\limits_{j = 1}^m 
\sum\limits_{i = 1}^n y_j \otimes[x_j, a_i] \otimes b_i
$$
and as a consequence, 
$
\alt\bigl(\sum\limits_{j = 1}^m 
\sum\limits_{i = 1}^n  [x_j, a_i] \otimes b_i \otimes y_j  - y_j \otimes[x_j, a_i] \otimes b_i\bigr) = 0.
$
Putting everything together, we finally obtain:
$
\alt\bigl((\delta_\ttau \otimes \mathbbm{1}) \circ \delta_\ttau\bigr)(x) = 
x \circ \bigl(\alt\bigl((\delta \otimes \mathbbm{1})(\ttau)\bigr)-[[\ttau, \ttau]]\bigr),
$
implying the statement. 
\end{proof}

\begin{corollary} Let $(\lieR, \delta)$ be a Lie bialgebra and $\ttau \in \wedge^2(\lieR)$. 
A sufficient condition
for $\delta_\ttau$ to be a twist of $\delta$ is provided by the \emph{twist equation} 
\begin{equation}\label{E:TwistEquation}
\alt\bigl((\delta \otimes \mathbbm{1})(\ttau)\bigr) -[[\ttau, \ttau]] = 0,
\end{equation}
introduced in \cite{KarolinskyStolin}.
\end{corollary}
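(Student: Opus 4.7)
The proof is an immediate consequence of Proposition \ref{P:TwistLieBialgStructures}. That proposition asserts that $(\lieR, \delta_\ttau)$ is a Lie bialgebra if and only if the tensor
\[
T := \alt\bigl((\delta \otimes \mathbbm{1})(\ttau)\bigr) - [[\ttau, \ttau]] \in \lieR^{\otimes 3}
\]
is ad-invariant, i.e.\ $x \circ T = 0$ for every $x \in \lieR$. The hypothesis of the corollary, namely the twist equation (\ref{E:TwistEquation}), is precisely $T = 0$, which is strictly stronger than ad-invariance.

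Since the zero tensor is trivially ad-invariant, the hypothesis of Proposition \ref{P:TwistLieBialgStructures} is fulfilled, and hence $(\lieR, \delta_\ttau)$ is a Lie bialgebra. By definition this means that $\delta_\ttau$ is a twist of $\delta$. There is essentially no obstacle here; the content of the corollary is merely that one may replace the ad-invariance criterion of Proposition \ref{P:TwistLieBialgStructures} by the more practical identity $T = 0$, which will be the form of the condition verified in later sections when concrete twists of the standard Lie bialgebra cobracket on $\lieL$ are constructed.
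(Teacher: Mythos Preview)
Your proof is correct and matches the paper's approach exactly: the corollary is stated without proof immediately after Proposition~\ref{P:TwistLieBialgStructures}, since the twist equation $T=0$ is the trivial instance of the ad-invariance criterion established there.
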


\begin{definition}
Let $\lieR$ be a Lie algebra over $\kk$ and $\lieR \times \lieR \stackrel{F}\lar \kk$ be a symmetric invariant non-degenerate bilinear form, i.e.~$
F\bigl([a, b], c) = F\bigl(a, [b, c]\bigr)
$
for all $a, b, c \in \lieR$. Next, let $\lieR_\pm \subset \lieR$ be a pair of  Lie subalgebras such that
$$
\lieR = \lieR_+ \dotplus \lieR_- \quad \mbox{\rm and} \quad \lieR_\pm \subseteq \lieR_\pm^{\perp},
$$
where $\dotplus$ is the direct sum of vector subspaces. 
Then 
$\bigl((\lieR, F),  \lieR_+, \lieR_-\bigr) = \bigl(\lieR, \lieR_+, \lieR_-\bigr)
$ is called a \emph{Manin triple}. We say that a given splitting $\lieR = \lieR_+ \dotplus \lieR_- $ is a Manin triple, if $\bigl(\lieR, \lieR_+, \lieR_-\bigr)$ is. Two Manin triples $\bigl((\lieR, F), \lieR_+, \lieR_-)$ and
$\bigl((\widetilde{\lieR},\widetilde{F}), \widetilde{\lieR}_+, \widetilde{\lieR}_-)$ are isomorphic if there exists an isomorphism of Lie algebras $\lieR \stackrel{f}\lar \widetilde{\lieR}$, which is a homothety  with respect to the bilinear forms $F$ and $\widetilde{F}$ (i.e.~there exists $\lambda \in \kk^\ast$ such that $F(a, b) = \lambda \widetilde{F}(a, b)$ for all $a, b \in \lieR$) and such that $f(\lieR_\pm) = \widetilde{\lieR}_\pm$. 
\end{definition}

\begin{remark}
If  $\bigl(\lieR, \lieR_+, \lieR_-)$ is a Manin triple, then we automatically  have:  $\lieR_\pm =  \lieR_\pm^{\perp}$; see Lemma \ref{L:LagrangianBasic} below. \hfill $\lozenge$
\end{remark}

\begin{definition}\label{D:CobracketManinTriple}
Let $(\lieR_+, \delta)$ be a Lie bialgebra. We say that the Lie bialgebra cobracket $\lieR_+ \stackrel{\delta}\lar \wedge^2(\lieR_+)$ \emph{is determined} by a Manin triple 
$\bigl((\lieR,F), \lieR_+, \lieR_-)$ if 
\begin{equation}
F\bigl(\delta(a), b_1 \otimes b_2\bigr) = F\bigl(a, [b_1, b_2]\bigr)
\end{equation}
for all $a \in \lieR_+$ and $b_1, b_2 \in \lieR_-$. 
\end{definition}

\noindent
It is clear that if $\lieR_+ \stackrel{\tilde\delta}\lar \wedge^2(\lieR_+)$ is another Lie bialgebra cobracket which is  determined by the same Manin triple 
$\bigl(\lieR, \lieR_+, \lieR_-)$, then $\delta = \tilde\delta$. 

\subsection{Some basic results on Lagrangian decompositions}
Let $V$ be a (possibly infinite dimensional) vector space over $\kk$. Recall that two vector subspaces $W', W'' \subset V$ are called \emph{commensurable}  (which will be denoted $W' \asymp 	 W''$) if $\dim_{\kk}\bigl((W' + W'')/(W' \cap W'')\bigr) < \infty$.

\begin{lemma}\label{L:LagrangianBasic}
Let $V = U \dotplus W$, where $U, W \subset V$ are isotropic subspaces  with respect to a non-degenerate symmetric bilinear form $V \times V \stackrel{F}\lar \kk$.
Then we have:
\begin{enumerate}
\item[(a)] The linear map $U \stackrel{\widetilde{F}}\lar W^\ast, u \mapsto F(u, \,-)$ is injective and both subspaces $U$ and $W$ are automatically Lagrangian, i.e.~  $V = U \dotplus W$ is  a \emph{Lagrangian decomposition}.
\item[(b)] The linear map
$$
U \otimes U \stackrel{\jmath}\lar \Hom_{\kk}(W, U), \; \ttau = \sum\limits_{i = 1}^n a_i \otimes b_i \mapsto \bigl(W \stackrel{f_\ttau}\lar U, \; w \mapsto \sum\limits_{i = 1}^n F(w, a_i) b_i\bigr)
$$
is injective.
\item[(c)] For any $\ttau \in U^{\otimes 2}$ let 
$
W_\ttau := \bigl\{w + f_\ttau(w) \, |\, w \in W\bigr\}.
$
Then we have:
\begin{enumerate}
\item[(1)] $V = U \dotplus W_\ttau$ and $W \asymp 	 W_\ttau$.
\item[(2)] The map $W \lar W_\ttau, w \mapsto w + f_\ttau(w)$ is an isomorphism of vector spaces and $W_\ttau = W_{\ttau'}$ if and only if $\ttau = \ttau'$. 
\end{enumerate}
\end{enumerate}
\end{lemma}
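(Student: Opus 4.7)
The plan is to treat the three parts in order, using only non-degeneracy of $F$, the isotropy of $U$ and $W$, and the direct sum decomposition $V = U \dotplus W$. Throughout, I will repeatedly exploit the fact that for any $v \in V$, the isotropy of $U$ (resp.~$W$) kills the cross-pairing with itself, so the only information about $v$ that $F(v,-)$ sees on $U$ comes from the $W$-component of $v$, and vice versa.

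For part (a), to show that $\widetilde F\colon U \to W^{\ast}$ is injective, I would take $u \in U$ with $F(u,w) = 0$ for all $w \in W$, combine this with $F(u,u') = 0$ for all $u' \in U$ (isotropy), and conclude $F(u,v) = 0$ for all $v \in V$, so $u = 0$ by non-degeneracy. The symmetric argument gives injectivity of $W \to U^{\ast}$. To promote ``isotropic'' to ``Lagrangian'', pick $v \in U^{\perp}$, decompose $v = u + w$, and observe that for every $u' \in U$ one has $F(w,u') = F(v,u') - F(u,u') = 0$, so $F(w,-)$ vanishes on $U$; by the injectivity just established (applied to $W \to U^{\ast}$), this forces $w = 0$, hence $v \in U$. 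The same argument yields $W = W^{\perp}$.

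For part (b), suppose $f_{\ttau} = 0$ with $\ttau = \sum_{i=1}^{n} a_i \otimes b_i$; after reducing, I may assume the $b_i$ are linearly independent in $U$. Then $\sum_i F(w,a_i)\, b_i = 0$ for every $w \in W$ forces $F(w,a_i) = 0$ for all $i$ and all $w \in W$, so each $\widetilde F(a_i) = 0$ in $W^{\ast}$, and part (a) gives $a_i = 0$. For part (c)(1), the key observation is that $\im(f_{\ttau}) \subseteq \langle b_1, \dots, b_n\rangle_{\kk}$ is finite-dimensional. The decomposition $v = u + w \mapsto (u - f_{\ttau}(w)) + (w + f_{\ttau}(w))$ exhibits $V = U + W_{\ttau}$, and the transversality $U \cap W_{\ttau} = 0$ follows by writing $u = w + f_{\ttau}(w)$, reading off the $W$-component to get $w = 0$. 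Commensurability then follows from $W + W_{\ttau} \subseteq W + \im(f_{\ttau})$ and $W \cap W_{\ttau} = \ker\bigl(f_{\ttau}|_W\bigr)$, both of which are finite-dimensional modulo $W$ by finite-dimensionality of $\im(f_{\ttau})$.

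For part (c)(2), the stated map is manifestly surjective; it is injective because $w + f_{\ttau}(w) = 0$ combined with $f_{\ttau}(w) \in U$ and $w \in W$ forces $w = 0$. Finally, if $W_{\ttau} = W_{\ttau'}$, then for each $w \in W$ there is a unique $w' \in W$ with $w + f_{\ttau}(w) = w' + f_{\ttau'}(w')$; taking $W$-parts gives $w = w'$ and then the $U$-parts give $f_{\ttau}(w) = f_{\ttau'}(w)$, so $f_{\ttau} = f_{\ttau'}$ and part (b) implies $\ttau = \ttau'$. The only step that requires more than a one-line bookkeeping argument is the promotion from isotropic to Lagrangian in (a), since it is the only place where the non-degeneracy of $F$ interacts non-trivially with the direct-sum decomposition, so I expect that to be the main (and only mild) obstacle.
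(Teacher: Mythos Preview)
Your proof is correct and follows essentially the same route as the paper's: the injectivity and Lagrangian arguments in (a), the reduction of (b) to injectivity of $U \to W^\ast$, and the finite-rank image argument for commensurability and the $W$-component comparison in (c) all match the paper's reasoning almost step for step. One small phrasing slip: in (c)(1) you write that $W \cap W_\ttau = \ker(f_\ttau)$ is ``finite-dimensional modulo $W$'', but what you need (and clearly intend) is that $W/\ker(f_\ttau) \cong \im(f_\ttau)$ is finite-dimensional; with that corrected the commensurability argument is complete.
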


\begin{proof} (a) Since $U \subseteq U^{\perp}$ and $F$ is non-degenerate, the linear map $\widetilde{F}$ is injective.  Let $v \in U^\perp$. Then there exist uniquely determined $u \in U$ and $w \in W$ such that $v = u+w$. For any $u' \in U$ and $w' \in W$ we have:
$$
F(w, u') = F(v, u') = 0 \quad \mbox{and} \quad F(w, w') = 0.
$$
It follows that $w = 0$ and $v = u \in U$, hence  $U = U^\perp$ is Lagrangian. 

\smallskip
\noindent
(b) Since $U$ is isotropic and $F$ is non-degenerate, the  linear map $U \stackrel{\widetilde{F}}\lar W^\ast, \, u \mapsto F(-, u)$ is injective. The linear map $\jmath$ coincides with the composition
$$
U \otimes U \stackrel{\widetilde{F} \otimes \mathbbm{1}}\longhookrightarrow W^\ast \otimes U \longhookrightarrow \Hom_{\kk}(W, U),
$$
and is therefore injective. 

\smallskip
\noindent
(c1) Let $\ttau = \sum\limits_{i = 1}^n a_i \otimes b_i$. Then $\mathsf{Im}(f_\ttau) \subseteq  \bigl\langle b_1, \dots, b_n\bigr\rangle_{\kk}$ and $\dim_{\kk}\bigl(\mathsf{Im}(f_\ttau)\bigr) \le n$. Since $W/\mathsf{Ker}(f_\ttau) \cong \mathsf{Im}(f_\ttau)$, there exists a finite dimensional vector subspace $W' \subset W$ such that $W = W' \dotplus \mathsf{Ker}(f_\ttau)$. It follows that
$$
\mathsf{Ker}(f_\ttau) \subseteq W \cap W_t \subseteq W + W_\ttau \subseteq \mathsf{Ker}(f_\ttau) + \bigl(W' +  \mathsf{Im}(f_\ttau)\bigr).
$$
Hence, $W \asymp 	 W_\ttau$. It is easy to see that $U \cap W_\ttau = 0$ and $W \subset U + W_\ttau$. It follows that $V = U + W \subseteq U +  W_\ttau$, hence $V = U \dotplus W_\ttau$ as asserted. 

\smallskip
\noindent
(c2) The linear map $W \rightarrow W_\ttau$ is by construction surjective. It is also easy to see that it is injective. 

\smallskip
\noindent
Assume that $\ttau, \ttau' \in U^{\otimes 2}$ are such that $W_\ttau = W_{\ttau'}$. Then for any $w \in W$ there exists a uniquely determined $w' \in W$ such that 
$
w + f_\ttau(w) = w' + f_{\ttau'}(w').
$
It follows from $U \cap W = 0$ that $w = w'$. Hence, $f_\ttau(w) = f_{\ttau'}(w)$ for all $w \in W$. Since  $\jmath$ is injective, we have: $\ttau = \ttau'$.
\end{proof}

\begin{proposition}\label{P:LagrangianAdvanced}
Let $V = U \dotplus W$ be a Lagrangian decomposition and
$$
\mathsf{LG}\bigl(V, U; W) := \left\{
\widetilde{W} \subseteq V \left|
\begin{array}{l}
V = U \dotplus \widetilde{W} \\
\widetilde{W}^\perp = W \quad \mbox{\rm and} \quad \widetilde{W} \asymp 	 W
\end{array}
\right.
\right\}.
$$
Then the map 
$
\wedge^2 U \lar \mathsf{LG}\bigl(V, U; W), \; \ttau \mapsto W_\ttau
$
is a bijection.
\end{proposition}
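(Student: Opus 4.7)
The approach is to establish the bijection in two parts: well-definedness and injectivity on the one hand (both essentially packaged in Lemma \ref{L:LagrangianBasic}(c)), and surjectivity on the other (the main work).

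For $\ttau \in \wedge^2 U$, Lemma \ref{L:LagrangianBasic}(c1) already supplies the direct-sum decomposition $V = U \dotplus W_\ttau$ and the commensurability $W_\ttau \asymp W$, while Lemma \ref{L:LagrangianBasic}(c2) gives injectivity of $\ttau \mapsto W_\ttau$. The remaining point for well-definedness is that $W_\ttau$ is Lagrangian; equivalently, by Lemma \ref{L:LagrangianBasic}(a), isotropic. A direct expansion of $F\bigl(w_1 + f_\ttau(w_1),\, w_2 + f_\ttau(w_2)\bigr)$ for $w_1, w_2 \in W$ kills the terms $F(w_1, w_2)$ and $F\bigl(f_\ttau(w_1), f_\ttau(w_2)\bigr)$ by isotropy of $W$ and of $U$ respectively, and the two remaining cross terms cancel precisely because $\ttau + \ttau^{21} = 0$.

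For surjectivity, let $\widetilde W \in \mathsf{LG}(V, U; W)$. The two Lagrangian decompositions $V = U \dotplus W = U \dotplus \widetilde W$ yield a linear map $g\colon W \to U$ characterised by $\widetilde W = \{w + g(w) \mid w \in W\}$; since $\widetilde W \cap W = \ker g$, the commensurability $\widetilde W \asymp W$ forces $\mathsf{Im}(g)$ to be finite-dimensional. Pick a basis $b_1, \ldots, b_n$ of $\mathsf{Im}(g) \subseteq U$ and write $g(w) = \sum_{i=1}^n \phi_i(w)\, b_i$ with uniquely determined (hence linearly independent) $\phi_i \in W^\ast$; set $\psi_i := F(-, b_i)|_W \in W^\ast$, which are linearly independent by injectivity of $\widetilde F\colon U \hookrightarrow W^\ast$ from Lemma \ref{L:LagrangianBasic}(a). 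The Lagrangian condition on $\widetilde W$ then reads
\[
\sum_{i=1}^n \bigl(\phi_i(w_1)\psi_i(w_2) + \psi_i(w_1)\phi_i(w_2)\bigr) \;=\; 0 \qquad \text{for all } w_1, w_2 \in W.
\]
Choosing $w_1, \ldots, w_n \in W$ with $\psi_j(w_i) = \delta_{ij}$ (possible by linear independence of the $\psi_j$) and specialising $w_1 = w_i$, I read off $\phi_i = \sum_j c_{ij}\psi_j$ with $c_{ij} := -\phi_j(w_i)$; resubstituting into the displayed identity forces $c_{ij} = -c_{ji}$. Setting $\ttau := \sum_{i,j} c_{ij}\, b_j \otimes b_i \in U \otimes U$, antisymmetry of $C = (c_{ij})$ gives $\ttau \in \wedge^2 U$, and a direct computation yields $f_\ttau = g$, whence $\widetilde W = W_\ttau$.

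The main obstacle is the construction of $\ttau$ in the surjectivity step: the functionals $\phi_i$ lie a priori in the (possibly very large) dual $W^\ast$, and one has to combine the embedding $U \hookrightarrow W^\ast$ coming from isotropy of $U$ with the Lagrangian condition on $\widetilde W$ in order to force the $\phi_i$ into the image of this embedding while simultaneously producing a skew-symmetric coefficient matrix.
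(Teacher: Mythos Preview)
Your proof is correct and follows essentially the same route as the paper's. Both arguments handle well-definedness and injectivity via Lemma \ref{L:LagrangianBasic}, and for surjectivity both define the linear map $W \to U$ coming from the two complementary decompositions, use commensurability to force a finite-dimensional image, and then build $\ttau$ from a basis of that image together with a dual system in $W$ produced by the pairing $F$. The only difference is organizational: the paper first writes down $\ttau = -\sum_i f(v_i)\otimes f(w_i) \in U^{\otimes 2}$, checks $f_\ttau = f$, and then invokes the ``$W_\ttau$ Lagrangian $\Leftrightarrow$ $\ttau \in \wedge^2 U$'' equivalence established earlier to conclude skew-symmetry, whereas you extract the skew-symmetric coefficient matrix $(c_{ij})$ directly from the Lagrangian identity before assembling $\ttau$. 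The underlying tensor is the same in both constructions.
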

\begin{proof}
Let $\ttau \in U^{\otimes 2}$. Then $W_\ttau \subset V$ is Lagrangian if and only if
$$
F\bigl(f_\ttau(w), w'\bigr) + F\bigl(w, f_\ttau(w')\bigr) = 0 \quad \mbox{for all} \quad 
w, w' \in W.
$$
It follows that $\widehat{F}\bigl(\ttau + \ttau^{21}, w \otimes w'\bigr) = 0$ for all $w, w' \in W$, where $V^{\otimes 2} \times V^{\otimes 2} \stackrel{\widehat{F}}\lar \kk$ is the bilinear form induced by $F$. Since $V = U \dotplus W$ is a Lagrangian decomposition, it follows that $\widehat{F}\bigl(\ttau + \ttau^{21}, v \otimes v'\bigr) = 0$ for all $v, v' \in V$. Thus,  $\ttau + \ttau^{21} = 0$, i.e. $\ttau \in \wedge^2(U)$. Lemma \ref{L:LagrangianBasic}  implies that
$
\wedge^2 U \lar \mathsf{LG}\bigl(V, U; W), \; \ttau \mapsto W_\ttau
$
is a well-defined injective map and it remains to prove its surjectivity.

\smallskip
\noindent
Let $\widetilde{W} \in \mathsf{LG}\bigl(V, U; W)$. Then for any $w \in W$ there exist uniquely determined $u \in U$ and $\tilde{w} \in \widetilde{W}$ such that 
$w = \tilde{w} - u$. We define a linear map $W \stackrel{f}\lar  U$ by setting 
$u:= f(w)$. Since $W \asymp 	 \widetilde{W}$, 
$
\mathsf{Ker}(f) = W \cap \widetilde{W} \subseteq W
$
is a subspace of finite codimension and $\dim_{k}\bigl(\mathsf{Im}(f)\bigr) < \infty$. 

\smallskip
\noindent
We also get an isomorphism  $W \rightarrow \widetilde{W}, w \mapsto \tilde{w} = w + f(w)$. Since $\widetilde{W}$ is a Lagrangian subspace of $V$, we have:
$
F\bigl(f(w), w'\bigr) + F\bigl(w, f(w')\bigr) = 0$ for all
$w, w' \in W$.
It follows  that $\mathsf{Ker}(f)  = \bigl(\mathsf{Im}(f)\bigr)^\perp \cap W$. Moreover, we obtain a bilinear pairing
$$
W/\mathsf{Ker}(f) \times \mathsf{Im}(f) \stackrel{\bar{F}}\lar \kk, \quad (\bar{w}, u) \mapsto F(w, u).
$$
It is not hard to show that  $\bar{F}$ is non-degenerate.
Let $w_1, v_1, \dots, w_n, v_n \in W$ be such that
\begin{itemize}
\item $\bigl(f(w_1), \dots, f(w_n)\bigr)$ is a basis of $\mathsf{Im}(f)$.
\item $\bigl(\bar{v_1}, \dots, \bar{v}_n\bigr)$ is a basis of 
$W/\mathsf{Ker}(f)$.
\item For all $1 \le i, j \le n$ we have: $F\bigl(v_i, f(w_j)\bigr) = \delta_{ij}$. 
\end{itemize} 
Then we have:
$
\sum\limits_{i = 1}^n F\bigl(w_j, - f(v_i)\bigr) f(w_i) = 
\sum\limits_{i = 1}^n F\bigl(f(w_j), v_i\bigr) f(w_i)   = f(w_j).
$

\smallskip
\noindent
Let $\ttau:= -\sum\limits_{i = 1}^n f(v_i) \otimes f(w_i) \in U^{\otimes 2}$. Then for any $1 \le j \le n$ we have: $f_\ttau(w_j) = f(w_j)$, hence $\mathsf{Im}(f) = \mathsf{Im}(f_\ttau)$. Since $\mathsf{Ker}(f)  = \bigl(\mathsf{Im}(f)\bigr)^\perp \cap W \subseteq \mathsf{Ker}(f_\ttau)$, it follows that $\mathsf{Ker}(f) = \mathsf{Ker}(f_\ttau)$ implying that $f = f_\ttau$. Thus, we have found $\ttau \in U^{\otimes 2}$ such that 
$\widetilde{W} = W_\ttau$. Finally, the assumption $\widetilde{W}^\perp = \widetilde{W}$ implies that $\ttau \in \wedge^2(U)$, as asserted. 
\end{proof}

\begin{theorem}\label{T:ManinTriplesTwists} Let $(\lieR,\lieR_+,\lieR_-) = ((\lieR,F),\lieR_+,\lieR_-)$ be a Manin triple determining a Lie bialgebra cobracket $\lieR_+ \stackrel{\delta}\lar \wedge^2(\lieR_+)$ and
$$
\mathsf{MT}\bigl(\lieR, \lieR_+; \lieR_-):=  \left\{\lieW \subset \lieR \left|
\begin{array}{l}
(\lieR,\lieR_+,\lieW) \,  \mbox{\rm is a Manin triple} \\
\lieW \asymp 	 \lieR_-
\end{array}
\right.\right\}.
$$
Let $\ttau \in  \wedge^2(\lieR_+)$. Then  the corresponding subspace 
$\lieR_{-}^\ttau := \bigl({\lieR_{-}}\bigr)_{\ttau} \subset \lieR$ is a Lie subalgebra if and only if $\ttau$ satisfies the twist equation (\ref{E:TwistEquation}) and
the map
$$
\Bigl\{\ttau \in  \wedge^2(\lieR_+) \Big| 
\alt\bigl((\delta \otimes \mathbbm{1})(\ttau)\bigr) -[[\ttau, \ttau]] = 0
 \Bigr\} \lar
\mathsf{MT}\bigl(\lieR, \lieR_+; \lieR_-)
$$
assigning to a tensor $\ttau \in  \wedge^2(\lieR_+)$ the subspace  $\lieR_{-}^\ttau \subset \lieR$ 
is a bijection. Moreover, the Lie bialgebra cobracket 
$\lieR_+ \stackrel{\delta_\ttau}\lar  \wedge^2(\lieR_+)$ is determined by the Manin triple $\lieR = \lieR_+ \dotplus \lieR_{-}^\ttau$. 
\end{theorem}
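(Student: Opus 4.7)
The plan is to build on Proposition~\ref{P:LagrangianAdvanced}, which already provides a bijection between $\wedge^2(\lieR_+)$ and the set of Lagrangian complements of $\lieR_+$ in $\lieR$ commensurable with $\lieR_-$, assigning $\ttau$ to $\lieR_-^\ttau = \{w + f_\ttau(w) \, : \, w \in \lieR_-\}$. Under this bijection, $\mathsf{MT}(\lieR, \lieR_+; \lieR_-)$ corresponds to those $\ttau$ for which $\lieR_-^\ttau$ is additionally a Lie subalgebra. Thus the task reduces to showing that this subalgebra property is equivalent to the twist equation~(\ref{E:TwistEquation}), and then verifying that the resulting Manin triple determines the cobracket $\delta_\ttau = \delta + \partial_\ttau$.

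Since $\lieR_-^\ttau$ is Lagrangian, hence self-orthogonal, we have $[\lieR_-^\ttau, \lieR_-^\ttau] \subseteq \lieR_-^\ttau$ if and only if $F(x_1, [x_2, x_3]) = 0$ for all $x_1, x_2, x_3 \in \lieR_-^\ttau$. The symmetry and $\ad$-invariance of $F$ imply that this trilinear form is fully alternating in its three arguments. Substituting $x_i = w_i + f_\ttau(w_i)$ with $w_i \in \lieR_-$, the two pure summands $F(w_1, [w_2, w_3])$ and $F(f_\ttau(w_1), [f_\ttau(w_2), f_\ttau(w_3)])$ vanish by the isotropy of $\lieR_-$ and $\lieR_+$ respectively, while the remaining six summands can be rewritten, using $F$-invariance and the fact that $\lieR_\pm$ are subalgebras, as a trilinear form on $\lieR_-$ whose coefficients involve $\ttau$, the Lie brackets on $\lieR_\pm$, and the pairing $F$.

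To match this with the twist equation, I pair $\alt\bigl((\delta \otimes \mathbbm{1})(\ttau)\bigr) - [[\ttau, \ttau]] \in \lieR_+^{\otimes 3}$ with arbitrary $w_1 \otimes w_2 \otimes w_3 \in \lieR_-^{\otimes 3}$ via $F^{\otimes 3}$. The three cyclic terms of $\alt\bigl((\delta \otimes \mathbbm{1})(\ttau)\bigr)$ transform, via the defining relation $F(\delta(a), b \otimes c) = F(a, [b, c])$, into expressions built from $\lieR_-$-brackets, while the three summands of $[[\ttau, \ttau]]$ produce expressions involving $\lieR_+$-brackets. After a careful rearrangement using the antisymmetry of $\ttau$, these match the trilinear form obtained in the previous paragraph. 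Since by Lemma~\ref{L:LagrangianBasic} $F$ induces an injective map $\lieR_+ \hookrightarrow \lieR_-^\ast$, and hence $\lieR_+^{\otimes 3} \hookrightarrow (\lieR_-^{\otimes 3})^\ast$, vanishing of this pairing for all $w_i \in \lieR_-$ is equivalent to the twist equation tensor being zero. Combined with Proposition~\ref{P:LagrangianAdvanced}, this yields the asserted bijection.

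For the last assertion, I verify Definition~\ref{D:CobracketManinTriple} directly. For $a \in \lieR_+$ and $b_i = w_i + f_\ttau(w_i) \in \lieR_-^\ttau$, expanding $F(a, [b_1, b_2])$ yields four terms: $F(a, [w_1, w_2])$ equals $F(\delta(a), w_1 \otimes w_2) = F(\delta(a), b_1 \otimes b_2)$ by the definition of $\delta$ and the isotropy of $\lieR_+$; the term $F(a, [f_\ttau(w_1), f_\ttau(w_2)])$ vanishes by the same isotropy; and the two mixed terms recombine, via $F$-invariance and the antisymmetry of $\ttau$, into $F(\partial_\ttau(a), w_1 \otimes w_2) = F(\partial_\ttau(a), b_1 \otimes b_2)$. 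The main obstacle throughout is the combinatorial bookkeeping in the central identification of the subalgebra obstruction with the twist equation tensor: the cyclic structure of $\alt$, the antisymmetry of $\ttau \in \wedge^2(\lieR_+)$, repeated use of $F$-invariance, and the sign conventions of the Lie bracket must all be coordinated simultaneously, though no single step is individually difficult.
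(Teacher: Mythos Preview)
Your proposal is correct and follows essentially the same route as the paper: reduce via Proposition~\ref{P:LagrangianAdvanced} to the question of when $\lieR_-^\ttau$ is a Lie subalgebra, express that condition as the vanishing of the trilinear form $F\bigl([\tilde w_1,\tilde w_2],\tilde w_3\bigr)$ on $\lieR_-$, identify this with the $F^{\otimes 3}$-pairing of the twist-equation tensor against $\lieR_-^{\otimes 3}$, and conclude by the injectivity of $\lieR_+^{\otimes 3}\hookrightarrow(\lieR_-^{\otimes 3})^\ast$. The paper delegates the central identity $F\bigl(w_1\otimes w_2\otimes w_3,\,[[\ttau,\ttau]]-\alt((\delta\otimes\mathbbm{1})(\ttau))\bigr)=F\bigl([\tilde w_1,\tilde w_2],\tilde w_3\bigr)$ to \cite[Theorem~7]{KarolinskyStolin}, whereas you sketch the same computation directly; the treatment of the final ``determined by'' assertion is likewise the same in substance.
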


\begin{proof}
Let $\ttau \in \wedge^2(\lieR_+)$. Then the corresponding vector subspace $
\lieR_{-}^\ttau \subset \lieR$ is Lagrangian, $\lieR = \lieR_+ \dotplus \lieR_{-}^\ttau$ and 
$\lieR_{-}^\ttau  \asymp  \lieR_{-}$. Conversely, any such Lagrangian subspace $\lieW$
 has the form $\lieW = \lieR_{-}^\ttau$ for some uniquely determined
$\ttau \in \wedge^2\bigl(\lieR_+\bigr)$; see Proposition \ref{P:LagrangianAdvanced}.

\smallskip
\noindent 
Since $\lieR = \lieR_+ \dotplus \lieR_-^\ttau$
is a Lagrangian decomposition, 
the subspace $\lieR_-^\ttau \subset \lieR$  is closed under the Lie bracket if and only if  
$F\bigl([\tilde{w}_1, \tilde{w}_2], 
\tilde{w}_3 \bigr) = 0$ for any $\tilde{w}_1, \tilde{w}_2, \tilde{w}_3 \in 
\lieR_-^\ttau$.
 
\smallskip
\noindent
For any $w \in \lieR_-$ let $\tilde{w} = w + f_\ttau(w)$ be the corresponding element of $\lieR_-^\ttau$. 
The same computation as in  \cite[Theorem 7]{KarolinskyStolin} shows that for all $w_1, w_2, w_3 \in \lieR_-$ we have:
$$
F\bigl(w_1 \otimes w_2 \otimes w_3, [[\ttau, \ttau]] - \alt\bigl((\delta \otimes \mathbbm{1})(\ttau)\bigr)\bigr) = F\bigl([\tilde{w}_1, \tilde{w}_2], 
\tilde{w}_3 \bigr).
$$
This implies that $\lieR_-^\ttau$ is a Lie subalgebra of $\lieR$ if and only if 
$\alt\bigl((\delta_\circ \otimes \mathbbm{1})(\ttau)\bigr) -[[\ttau, \ttau]] = 0$.

\smallskip
\noindent
Since $\ttau \in  \wedge^2\bigl(\lieR_+\bigr)$, it follows that 
$
F\bigl(\partial_\ttau(a), w_1 \otimes w_2\bigr) = F\bigl(a, 
\bigl[w_1, f_\ttau(w_2)\bigr] + \bigl[f_\ttau(w_1), w_2\bigr]\bigr)
$
for any $a \in \lieR_+$ and $w_1, w_2 \in \lieR_-$. A straightforward computation shows that
$$
F\bigl(\delta_t(a), \tilde{w}_1 \otimes \tilde{w}_2\bigr) = 
F\bigl(a, 
\bigl[\tilde{w}_1, \tilde{w}_2\bigr]\bigr) \quad \mbox{for any} \, a \in \lieR_+
\; \mbox{\rm and} \;  
\tilde{w}_1, \tilde{w}_2 \in \lieR_-^\ttau,
$$
implying that $\lieR_+ \stackrel{\delta_\ttau}\lar  \wedge^2\bigl(\lieR_+\bigr)$ is determined  by the Manin triple $\lieR = \lieR_+ \dotplus \lieR_-^\ttau$.
\end{proof}

\section{Review of  affine Lie algebras and twisted loop algebras}\label{S:StandardLiebialgStructure}

\subsection{Basic facts on affine Lie algebras}
Let $\widehat{\Gamma}$ be an affine Dynkin diagram, $|\widehat{\Gamma}| = r + 1$ and $A  \in \Mat_{(r+1) \times (r+1)}(\ZZ)$ be the corresponding  generalized Cartan matrix.
We choose a labelling of vertices of $\widehat{\Gamma}$ as in \cite[Section 17.1]{Carter}. 
The corresponding affine Lie algebra 
$\widetilde\lieG = \widetilde{\lieG}_{\widehat{\Gamma}} = \widetilde\lieG_{A}$  is by definition the Lie algebra over $\CC$ generated by the elements $e_0^\pm, \dots, e_r^\pm, \tilde{h}_0, \dots, \tilde{h}_r$ subject to the following relations: 
$$
\left\{
\begin{array}{cc}
\begin{array}{l}
[\tilde{h}_i, \tilde{h}_j]    = 0\\
\smallskip
[e_{i}^{+}, e_{j}^{-}] =   \delta_{ij}\,  \tilde{h}_i \\
\smallskip
[\tilde{h}_i, e_j^{\pm}]  =  \pm a_{ij} \, e_j^{\pm} 
\end{array}
& \; \mbox{for  all} \; 0 \le i, j \le r\\
\end{array}
\right.
$$
and
$$
\left\{
\begin{array}{lcl}
\ad_{e_i^{\pm}}^{1 - a_{ij}}(e_j^\pm) = 0 & \mbox{for all} & 0 \le i \ne j \le r  
\end{array}
\right.
$$
see \cite{Carter, Kac}. Recall the following standard facts.

\smallskip
\noindent
1. There exist unique vectors $\vec{k} = (k_0, \dots, k_r)$ and $\vec{l} = (l_0, \dots, l_r)$  in $\NN^{r+1}$ such that 
$$
\gcd(k_0, \dots, k_r) = 1 = \gcd(l_0, \dots, l_r)
$$
and 
$
\vec{l} A = \vec{0} = A \vec{k}^t$; see \cite[Section 17.1]{Carter}. 
\begin{itemize}
\item For any $0 \le i \le r$ let $d_i:= \dfrac{k_i}{l_i}$. Then for any $0 \le i, j \le r$ we have: $a_{ij} d_j = a_{ji} d_i$. In other words, the matrix $D^{-1} A$ is symmetric, where $D:= \diag\bigl(d_0, \dots, d_r\bigr)$. 
\item  
The center
of the Lie algebra $\widetilde{\lieG}$ is one-dimensional and generated by   the element $c: = l_0 \tilde{h}_0+ \dots + l_r \tilde{h}_r$; see \cite[Proposition 17.8]{Carter}.
\end{itemize}

\smallskip
\noindent
2. There exists a symmetric invariant bilinear form 
$\widetilde\lieG \times \widetilde\lieG \stackrel{\widetilde{B}}\lar \CC$ (called \emph{standard form}) given on the generators  by the following formulae:
\begin{equation}
\left\{
\begin{array}{cc}
\begin{array}{l}
\widetilde{B}(\tilde{h}_i, x_j^\pm) = 0  \\
\smallskip
\widetilde{B}(\tilde{h}_i, \tilde{h}_j)  \; = \; d_j a_{ij}\\
\smallskip
\widetilde{B}(e_{i}^{\pm}, e_{j}^{\mp}) =   d_i \delta_{ij} \\
\smallskip
\widetilde{B}(e_{i}^{\pm}, e_{j}^{\pm}) =   0  \\
\end{array}
& \; \mbox{for  all} \; 0 \le i, j \le r\\
\end{array}
\right.
\end{equation}
see \cite[Theorem 16.2]{Carter}. This form is degenerate and its radical is the vector space $\CC c$.

\smallskip
\noindent
3. The Lie algebra 
 $\widetilde\lieG $ carries a so-called \emph{standard} Lie bialgebra cobracket $\widetilde\lieG \stackrel{\tilde\delta_\circ}\lar \wedge^2 \widetilde\lieG$ (discovered by Drinfeld \cite{Drinfeld})  given by the formulae
$$
\tilde\delta_\circ(e_i^\pm) = \dfrac{1}{d_i} \tilde{h}_i \wedge e_i^\pm \; \mbox{and} \; 
\tilde\delta_\circ(\tilde{h}_i) = 0 \; \mbox{for all} \; 0 \le i \le r.
$$
4. Consider the Lie algebra  $\lieG =  \widetilde{\lieG}/\langle c\rangle$. Then we have   the induced  
 \emph{non-degenerate} symmetric invariant bilinear form  
$\lieG \times \lieG \stackrel{B}\lar  \CC$, which will be also called standard, as well as  a Lie bialgebra cobracket $\lieG \stackrel{\delta_\circ}\lar \wedge^2 \lieG$,  given by the formulae
\begin{equation}\label{E:standardbialgstructure}
\delta_\circ(e_i^\pm) = \dfrac{1}{d_i} h_i \wedge e_i^\pm \; \mbox{and} \; 
\delta_\circ(h_i) = 0 \; \mbox{for all} \; 0 \le i \le r,
\end{equation}
where $h_i$ denotes the image of $\tilde{h}_i$ in $\lieG$. 

\medskip
\noindent
5. Denote by $\lieG_\pm = \llangle e_0^\pm, \dots, e_r^\pm\rrangle$ the Lie subalgebra of $\lieG$ generated by the elements 
$e_0^\pm, \dots, e_r^\pm$ and put $\lieH := \bigl\langle h_1, \dots, h_r\bigr\rangle_\CC$. Then we have the triangular decomposition $\lieG = \lieG_+ \oplus \lieH \oplus \lieG_-$ as well as the following symmetric non-degenerate invariant bilinear form:
\begin{equation}
\bigl(\lieG \times \lieG\bigr) \times \bigl(\lieG \times \lieG\bigr) 
\stackrel{F}\lar \CC, \; \bigl((a', b'), (a'', b'')\bigr) \mapsto B(a', b') - B(a'', b'').
\end{equation}
We identify $\lieG$ with the diagonal  $\bigl\{(a, a) \, \big| \, a \in \lieG \bigr\} \subset \lieG \times \lieG$ and put
\begin{equation}\label{E:StandardW}
\lieH' = \bigl\{(a, -a) \, \big| \, a \in \lieH \bigr\} \quad
\mbox{\rm and} \quad 
\overline{\lieW}^\circ := \bigl(\lieG_+ \times \lieG_-\bigr) + \lieH'.
\end{equation}

\smallskip
\noindent
The following result is  essentially  due to Drinfeld \cite[Example 3.2]{Drinfeld};  see also 
\cite[Example 1.3.8]{ChariPressley} for a detailed proof.
\begin{theorem}\label{T:Drinfeld}
We have a Manin triple 
\begin{equation}\label{E:StandardManinTriple}
\lieG \times \lieG  = \lieG \dotplus \overline{\lieW}^\circ,
\end{equation}
which  moreover determines the standard Lie bialgebra cobracket $\lieG 
\stackrel{\delta_\circ}\lar \wedge^2\lieG$.
\end{theorem}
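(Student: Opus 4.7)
The plan is to verify the four defining ingredients of a Manin triple and then match the induced cobracket with $\delta_\circ$. First, the vector-space decomposition $\lieG \times \lieG = \lieG \dotplus \overline{\lieW}^\circ$: using the triangular decomposition $\lieG = \lieG_+ \oplus \lieH \oplus \lieG_-$ and writing an arbitrary pair $(a, b)$ in components, one solves uniquely for a summand $(x, x) \in \lieG$ and a summand $(u + h, v - h) \in \overline{\lieW}^\circ$. The Cartan part $h$ is determined by halving the $\lieH$-component of $a - b$, so this is where the hypothesis $\charf(\kk) \ne 2$ enters. The intersection $\lieG \cap \overline{\lieW}^\circ = 0$ is immediate from the linear independence of the three pieces $\lieG_+, \lieH, \lieG_-$.

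Next, $\overline{\lieW}^\circ$ is a Lie subalgebra: $\lieG_+ \times \lieG_-$ is obviously closed, $\lieH'$ is abelian because $[\lieH, \lieH] = 0$, and the mixed bracket $[(u, v), (h, -h)] = ([u, h], -[v, h])$ stays in $\lieG_+ \times \lieG_- \subseteq \overline{\lieW}^\circ$ using $[\lieH, \lieG_\pm] \subseteq \lieG_\pm$. Isotropy of both summands under $F$ is then routine: on the diagonal, $F\bigl((a, a), (b, b)\bigr) = B(a, b) - B(a, b) = 0$; on $\overline{\lieW}^\circ$, using that $B$ vanishes on $\lieG_\pm \times \lieG_\pm$ and on $\lieH \times \lieG_\pm$ in any Kac--Moody algebra, the form $F$ evaluated on two elements $(u_j + h_j, v_j - h_j)$ collapses to $B(h_1, h_2) - B(h_1, h_2) = 0$. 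Lemma \ref{L:LagrangianBasic}(a) then guarantees that both summands are actually Lagrangian, completing the Manin triple structure.

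To match the cobracket, by Definition \ref{D:CobracketManinTriple} it suffices to verify the identity $F\bigl(\delta_\circ(a), b_1 \otimes b_2\bigr) = F\bigl(a, [b_1, b_2]\bigr)$ for $a$ ranging over the Chevalley generators $h_i, e_i^\pm$ and $b_1, b_2$ over a convenient basis of $\overline{\lieW}^\circ$ made of root-vector elements $(x, 0)$ with $x \in \lieG_+$, $(0, y)$ with $y \in \lieG_-$, and Cartan elements $(h_j, -h_j)$. The case $a = h_i$ is immediate: one checks that $[b_1, b_2]$ always lies in $\lieG_+ \times \lieG_-$ because the $\lieH'$-part of any bracket in $\overline{\lieW}^\circ$ vanishes thanks to $[\lieH, \lieH] = 0$, while $(h_i, h_i)$ pairs trivially with every element of $\lieG_+ \times \lieG_-$; both sides therefore vanish, matching $\delta_\circ(h_i) = 0$. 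For $a = e_i^\pm$, one substitutes $\delta_\circ(e_i^\pm) = d_i^{-1}\, h_i \wedge e_i^\pm$ and isolates the surviving contributions: only brackets of the form $[\text{root vector}, (h_j, -h_j)]$ produce a non-vanishing pairing with $(e_i^\pm, e_i^\pm)$, and the normalization $B(e_i^\pm, e_i^\mp) = d_i$ of the standard form cancels the factor $d_i^{-1}$ in $\delta_\circ$ to yield the required identity.

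The main obstacle is the bookkeeping in this last step: although each individual computation is a simple application of invariance of $B$ and the root-space pairing rules, one has to identify exactly which of the many possible brackets in $\overline{\lieW}^\circ$ contribute non-trivially to the pairing with $(e_i^\pm, e_i^\pm)$ and verify that the numerical factors $d_i$ built into the standard form balance precisely against the factor $d_i^{-1}$ appearing in the standard cobracket.
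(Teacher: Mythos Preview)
The paper does not actually give a proof of this theorem; it states that the result is ``essentially due to Drinfeld'' and refers to \cite[Example 3.2]{Drinfeld} and \cite[Example 1.3.8]{ChariPressley} for a detailed argument. So there is nothing to compare your approach against inside the paper itself.

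Your verification of the Manin triple structure is correct and cleanly organized: the direct-sum decomposition via the triangular splitting, the closure of $\overline{\lieW}^\circ$ under the bracket, and the isotropy computations are all fine, and the appeal to Lemma~\ref{L:LagrangianBasic}(a) to upgrade isotropic to Lagrangian is exactly right.

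The gap is in the cobracket step. You claim that it suffices to check the identity
\[
F\bigl(\delta_\circ(a), b_1 \otimes b_2\bigr) = F\bigl(a, [b_1, b_2]\bigr)
\]
for $a$ ranging over the Chevalley generators $h_i, e_i^\pm$. But the identity is linear in $a$, and the Chevalley generators do not span $\lieG$ as a vector space --- they only generate it as a Lie algebra. So you are implicitly relying on a propagation step: if the identity holds for $a$ and for $a'$ (for all $b_1, b_2 \in \overline{\lieW}^\circ$), then it holds for $[a, a']$. This is \emph{not} an automatic consequence of the cocycle identity for $\delta_\circ$. When you unfold $F(\delta_\circ([a,a']), b_1\otimes b_2)$ via the cocycle rule and invariance of $F$, you are forced to pair $\delta_\circ(a')$ against tensors like $[a,b_1]\otimes b_2$; but $[a,b_1]$ lies in $\lieG\times\lieG$, not in $\overline{\lieW}^\circ$, so you must decompose it along $\lieG \dotplus \overline{\lieW}^\circ$ and track the diagonal pieces through the rest of the computation. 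This can be made to work, but it is a genuine argument that you have not supplied, and your phrase ``it suffices'' hides it.

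Two ways to close the gap: either carry out the propagation argument explicitly (decomposing $[a,b_i]$ and using isotropy of both summands repeatedly), or --- closer to what the cited references do --- use the description of $\delta_\circ$ via the formal canonical element $r = \sum_{\alpha>0} e_{-\alpha}\otimes e_\alpha + \tfrac12\sum h_l\otimes h_l$ and verify the identity directly on root vectors $e_\alpha$ for \emph{all} roots $\alpha$, exploiting the orthogonality of distinct root spaces under $B$. The second route avoids the inductive subtlety entirely.
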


\subsection{Basic facts on  twisted loop algebras}\label{SS:BasicsTwistedLoops}

\noindent
Let $\lieg$ be  a finite dimensional complex simple Lie algebra of dimension $q$, $\lieg \times \lieg 
\stackrel{\kappa}\lar \CC$  its  Killing form, $\sigma \in \Aut_{\CC}(\lieg)$ an automorphism of order $m$ and $\varepsilon = \exp\Bigl(\dfrac{2\pi i}{m}\Bigr)$.  For  any  $k  \in \ZZ$,  let
$
\lieg_{k} := \left\{x \in \lieg \, \big| \, \sigma(x) = \varepsilon^k x \right\}.
$
Then we have a direct sum decomposition
$
\lieg = \oplus_{k = 0}^{m-1} \lieg_{k}$. 
First note the following easy and well-known fact. 

\begin{lemma}\label{L:Orthogonality}  For any $k, l \in \ZZ$,  the  pairing $\lieg_{k} \times \lieg_{l} \stackrel{\kappa}\lar \CC$ is non-zero if and only if $m | (k + l)$. Moreover, the pairing
$\lieg_{k} \times \lieg_{-k} \stackrel{\kappa}\lar \CC$ is non-degenerate for any $k \in  \ZZ$.
\end{lemma}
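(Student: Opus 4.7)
The plan is to exploit the fact that $\sigma \in \Aut_{\CC}(\lieg)$ is a Lie algebra automorphism, which forces $\sigma$-invariance of the Killing form, and then translate this into a character-orthogonality statement for the eigenspace decomposition.

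First I would record the invariance identity $\kappa\bigl(\sigma(x), \sigma(y)\bigr) = \kappa(x, y)$ for all $x, y \in \lieg$. This is a routine consequence of the definition of $\kappa$ as a trace of the composition of adjoint operators, together with the fact that $\ad_{\sigma(x)} = \sigma \circ \ad_x \circ \sigma^{-1}$. Applying this to $x \in \lieg_k$ and $y \in \lieg_l$ gives
\begin{equation*}
\kappa(x, y) = \kappa\bigl(\sigma(x), \sigma(y)\bigr) = \varepsilon^{k+l}\, \kappa(x, y),
\end{equation*}
so whenever $\varepsilon^{k+l} \ne 1$, i.e.~$m \nmid (k+l)$, the pairing $\lieg_k \times \lieg_l \stackrel{\kappa}\lar \CC$ is identically zero. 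This establishes one direction of the first assertion.

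Next I would prove non-degeneracy of $\lieg_k \times \lieg_{-k} \stackrel{\kappa}\lar \CC$, which also takes care of the remaining ``if'' direction of the first statement. Suppose $x \in \lieg_k$ satisfies $\kappa(x, y) = 0$ for every $y \in \lieg_{-k}$. Using the decomposition $\lieg = \bigoplus_{j = 0}^{m-1} \lieg_j$ together with the orthogonality just proved, we conclude that $\kappa(x, y) = 0$ for \emph{every} $y \in \lieg$, since any $y \in \lieg_j$ with $j \not\equiv -k \pmod m$ is automatically $\kappa$-orthogonal to $x$. Because $\lieg$ is simple, its Killing form $\kappa$ is non-degenerate on $\lieg$, whence $x = 0$. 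The symmetric argument shows that the left radical is also trivial, so the pairing $\lieg_k \times \lieg_{-k} \stackrel{\kappa}\lar \CC$ is non-degenerate; in particular it is not the zero pairing, which finishes the equivalence in the first claim.

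There is no serious obstacle here: the entire argument rests on a single one-line computation together with the simplicity of $\lieg$. The only mildly subtle point is the implicit reduction modulo $m$ in indexing the eigenspaces $\lieg_k$ for $k \in \ZZ$, which is consistent because $\lieg_{k} = \lieg_{k + m}$ by construction, so the condition $m \mid (k + l)$ is well-posed.
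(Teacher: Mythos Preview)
Your argument is correct and follows essentially the same route as the paper's proof: the $\sigma$-invariance of $\kappa$ gives the identity $\kappa(x,y)=\varepsilon^{k+l}\kappa(x,y)$, forcing the pairing to vanish when $m\nmid(k+l)$, and non-degeneracy of $\kappa$ on the simple Lie algebra $\lieg$ then yields both the non-degeneracy of $\lieg_k\times\lieg_{-k}\to\CC$ and the remaining ``if'' direction. The paper's write-up is more terse but structurally identical.
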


\begin{proof}
Let $a \in \lieg_{k}$ and $b \in \lieg_{l}$. Then we have:
$
\kappa(a, b) = \kappa\bigl(\sigma(a), \sigma(b)\bigr) = \varepsilon^{k+l} \kappa(a, b),
$
implying the first statement. The second statement follows from the first one and non-degeneracy of the form $\kappa$. 
\end{proof}

\begin{corollary}
The Casimir element  $\gamma \in \lieg \otimes \lieg$ (with respect to the Killing form $\kappa$) admits the decomposition 
$
\gamma = \sum\limits_{k = 0}^{m-1} \gamma_{k}
$
with components  $\gamma_{k} \in \lieg_{k} \otimes \lieg_{-k}$. 
\end{corollary}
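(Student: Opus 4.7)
The plan is to construct dual bases of $\lieg$ with respect to $\kappa$ that respect the $\sigma$-grading, and then read off the decomposition of $\gamma$ directly. The key ingredient is already Lemma \ref{L:Orthogonality}.

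First, for each $k \in \{0, 1, \dots, m-1\}$, I would fix an arbitrary basis $\bigl\{x_i^{(k)}\bigr\}_{i=1}^{q_k}$ of $\lieg_{k}$, where $q_k = \dim_\CC(\lieg_k)$. By the second part of Lemma \ref{L:Orthogonality}, the pairing $\lieg_{k} \times \lieg_{-k} \stackrel{\kappa}\lar \CC$ is non-degenerate, so there exist unique elements $y_i^{(k)} \in \lieg_{-k}$ (with indices $-k$ taken modulo $m$) satisfying $\kappa\bigl(x_i^{(k)}, y_j^{(k)}\bigr) = \delta_{ij}$ for $1 \le i, j \le q_k$.

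Next, I would verify that the union $\bigl\{x_i^{(k)}\bigr\}_{k,i}$ and $\bigl\{y_i^{(k)}\bigr\}_{k,i}$ are dual bases of $\lieg$ with respect to $\kappa$. Both are bases because $\lieg = \bigoplus_{k=0}^{m-1} \lieg_{k}$. To check duality, note that for $k \neq l$ in $\{0, \dots, m-1\}$ we have $k + (-l) \not\equiv 0 \pmod{m}$, so by the first part of Lemma \ref{L:Orthogonality} the pairing $\lieg_k \times \lieg_{-l} \stackrel{\kappa}\lar \CC$ vanishes identically; thus $\kappa\bigl(x_i^{(k)}, y_j^{(l)}\bigr) = \delta_{kl}\delta_{ij}$.

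Finally, since the Casimir element is defined by $\gamma = \sum_\alpha u_\alpha \otimes v_\alpha$ for any pair of dual bases $\{u_\alpha\}, \{v_\alpha\}$ of $\lieg$ with respect to $\kappa$, I obtain
\[
\gamma = \sum_{k=0}^{m-1} \sum_{i=1}^{q_k} x_i^{(k)} \otimes y_i^{(k)},
\]
and setting $\gamma_{k} := \sum_{i=1}^{q_k} x_i^{(k)} \otimes y_i^{(k)} \in \lieg_k \otimes \lieg_{-k}$ yields the claimed decomposition. There is no serious obstacle here: the only mildly delicate point is confirming that the two bases are genuinely $\kappa$-dual across different graded components, and this is exactly what Lemma \ref{L:Orthogonality} provides. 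The components $\gamma_k$ are intrinsically defined as the images of $\gamma$ under the projections $\lieg \otimes \lieg \twoheadrightarrow \lieg_k \otimes \lieg_{-k}$, so they are independent of the basis choice.
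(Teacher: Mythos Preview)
Your proof is correct and follows exactly the approach the paper intends: the corollary is stated without proof as an immediate consequence of Lemma~\ref{L:Orthogonality}, and your argument simply spells out the natural construction of $\kappa$-dual bases compatible with the $\sigma$-grading. Nothing more is needed.
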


\smallskip
\noindent
Let $\overline{\lieL} = \lieg[z, z^{-1}]$ be the loop algebra of $\lieg$, where
$\bigl[az^k, b z^l\bigr] := [a, b] z^{k+l}$ for any $a, b \in \lieg$ and $k, l \in \ZZ$. 
The \emph{twisted loop algebra}  is the following   Lie subalgebra of $\overline{\lieL}$:
\begin{equation}
\lieL = \lieL(\lieg, \sigma)  := \bigoplus\limits_{k \in \ZZ} \lieg_{k}  z^k .
\end{equation}
Let $\Inn(\lieg)$ be the group of inner automorphisms of $\lieg$. It is a normal subgroup of  the group $\Aut(\lieg)$ of Lie algebra automorphisms of $\lieg$. The quotient $\Out(\lieg):= \Aut(\lieg)/\Inn(\lieg)$ can be identified with the group $\Aut(\Gamma)$ of automorphisms of the Dynkin diagram $\Gamma$ of $\lieg$; see e.g. \cite[Chapter 4]{OnishchikVinberg}. Moreover, 
given  two automorphisms $\sigma, \sigma' \in \Aut(\lieg)$ of finite order, the corresponding twisted loop algebras $\lieL(\lieg, \sigma)$ and $\lieL(\lieg, \sigma')$ are isomorphic if and only if the classes of $\sigma$ and $\sigma'$ in 
$\Out(\lieg)$ are conjugate; see \cite[Chapter 8]{Kac} or 
\cite[Section X.5]{Helgason}. 

\smallskip
\noindent
Let   $\overline{R} = \CC[z, z^{-1}]$ and $R = \CC[t, t^{-1}]$, where 
$t = z^m$.

\begin{proposition}\label{P:basicsonloops}
The following results are true.
\begin{enumerate}
\item[(a)] $\lieL$ is a free module of rank $q$ over $R$.  Moreover, for any $\lambda \in \CC$, we have an isomorphism of Lie algebras
$\bigl(R/(t - \lambda)\bigr) \otimes_R \lieL \cong \lieg$. 
\item[(b)] Consider the symmetric $\CC$-bilinear form
\begin{equation}\label{E:FormB}
\lieL \times \lieL \stackrel{B}\lar \CC, \quad B(a z^k, bz^l) = \kappa(a, b) \delta_{k+l, 0}.
\end{equation}
Then $B$ is non-degenerate and invariant. Moreover, the rescaled bilinear form $mB$ coincides with the composition
$
\lieL \times \lieL \stackrel{K}\lar  R \xrightarrow{\res_0^\omega}  \CC,
$
where $K$ is the Killing form of $\lieL$, $\omega = \dfrac{dt}{t}$ and $\res_0^\omega(f) = \res_0(f \omega)$ for any $f \in R$.

\item[(c)] For any $n \in \NN$, the $(n+1)$-fold tensor product 
$\lieL^{\otimes (n+1)}$ does not contain any non-zero ad-invariant elements. 
\end{enumerate}
\end{proposition}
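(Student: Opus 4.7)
The plan is to treat (a) by a direct basis argument, (b) by combining non-degeneracy from Lemma \ref{L:Orthogonality} with an explicit computation of the $R$-linear Killing form, and (c) by a homogeneity reduction followed by an extremality argument. For (a), since $\lieg_{r+m} = \lieg_r$ and $z^{r+km} = t^k z^r$ for $0 \le r < m$, we obtain the decomposition $\lieL = \bigoplus_{r=0}^{m-1} (\lieg_r z^r) \otimes_\CC R$, exhibiting $\lieL$ as a free $R$-module of rank $\sum_{r=0}^{m-1} \dim_\CC \lieg_r = \dim_\CC \lieg = q$. For the second assertion (understood with $\lambda \in \CC^\ast$, since $t - \lambda$ is a unit in $R$ when $\lambda = 0$), choose $\mu \in \CC^\ast$ with $\mu^m = \lambda$ and note that the evaluation $\ev_\mu : \lieL \to \lieg$, $a z^k \mapsto a \mu^k$ (for $a \in \lieg_k$), is a Lie algebra surjection factoring through $\lieL/(t-\lambda)\lieL$; since source and target both have $\CC$-dimension $q$, it descends to an isomorphism.

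For (b), non-degeneracy of $B$ is immediate from Lemma \ref{L:Orthogonality}: if $f = \sum_k a_k z^k$ satisfies $B(f,-) = 0$ then $\kappa(a_k, b) = B(f, bz^{-k}) = 0$ for every $b \in \lieg_{-k}$, forcing $a_k = 0$. Invariance follows from invariance of $\kappa$ and the cyclic symmetry of $\delta_{k+l+n,0}$. For the Killing form identity, using the $R$-basis $\{e_j^{(r)} z^r\}_{0 \le r < m,\, j}$ from (a), one computes
\[
\ad_{az^k}\ad_{bz^l}(e_j^{(r)} z^r) \;=\; [a,[b, e_j^{(r)}]]\, z^{k+l+r},
\]
which contributes to the $R$-diagonal entry at $(r,j)$ precisely when $k + l \equiv 0 \pmod m$; in that case the diagonal sum over $(r,j)$ equals $\kappa(a,b)\, t^{(k+l)/m}$. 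Applying $\res_0^\omega$ with $\omega = dt/t$ then produces $\kappa(a,b)\,\delta_{k+l,0}$, and the overall factor $m$ in $mB = \res_0^\omega \circ K$ reflects the identification $dt/t = m\, dz/z$ under the cyclic cover $z \mapsto t = z^m$.

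For (c), the $\CC^\ast$-action $\lambda \cdot a z^k = \lambda^k a z^k$ is an automorphism of $\lieL$ and preserves the space of ad-invariants in $\lieL^{\otimes(n+1)}$, so any ad-invariant decomposes as a sum of homogeneous ones of fixed total $z$-degree $N$. Write such a $\ttau = \sum_{\vec{k}} v_{\vec{k}}$ with $v_{\vec{k}} \in \lieg_{k_1} \otimes \cdots \otimes \lieg_{k_{n+1}}$ and finite support, and let $M := \max\{k_i : \vec{k} \in \mathrm{supp}(\ttau),\, 1 \le i \le n+1\}$; pick $\vec{k}^\star \in \mathrm{supp}(\ttau)$ and an index $j$ with $k^\star_j = M$. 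For any $l > 0$, the multidegree-$(\vec{k}^\star + l\, e_j)$ component of the relation $\ad_{bz^l}\ttau = 0$ receives contributions only from pairs $(\vec{k}, i)$ with $\vec{k} = (\vec{k}^\star + l\, e_j) - l\, e_i \in \mathrm{supp}(\ttau)$, and any $i \ne j$ would force $k_j = k^\star_j + l > M$, which is impossible; the unique surviving constraint is $\ad_b^{(j)} v_{\vec{k}^\star} = 0$ for all $b \in \lieg_l$. Letting $l$ run over sufficiently large positive integers in each residue class modulo $m$ (using $\lieg_l = \lieg_{l \bmod m}$) yields $\ad_b^{(j)} v_{\vec{k}^\star} = 0$ for every $b \in \lieg$. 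Viewing $v_{\vec{k}^\star}$ as a linear map $\psi$ to $\lieg_{k^\star_j}$ from the dual of the remaining tensor factors, the condition $\ad_b \psi(\phi) = 0$ for all $b \in \lieg$ and all $\phi$ forces $\mathrm{im}(\psi) \subseteq Z(\lieg) = 0$; hence $v_{\vec{k}^\star} = 0$, contradicting $\vec{k}^\star \in \mathrm{supp}(\ttau)$.

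The main obstacle is part (c); parts (a) and (b) reduce to routine computations once the $R$-module structure is in place. In (c), the delicate step is the extremality argument — one must verify that at output degree $\vec{m} = \vec{k}^\star + l\, e_j$ only a single term of the ad-invariance relation survives, and that varying $l$ across residue classes modulo $m$ with $\lieg_l \ne 0$ recovers $\ad_\lieg$-invariance on the $j$-th tensor factor of $v_{\vec{k}^\star}$, allowing simplicity of $\lieg$ to close the argument.
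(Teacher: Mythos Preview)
Your proof is correct. Parts (a) and (b) follow essentially the paper's path: the $R$-basis $\{e_j^{(r)} z^r\}$ is exactly what the paper uses, and your direct computation of $K$ over $R$ is a mild variant of the paper's argument via base change to $\overline{\lieL}$ (the paper observes $\overline{R}\otimes_R\lieL\cong\overline{\lieL}$ and reads off $\overline{K}(az^k,bz^l)=\kappa(a,b)z^{k+l}$, then invokes $dt/t=m\,dz/z$). Your caveat that $\lambda\in\CC^\ast$ is apt; the paper silently makes the same restriction.

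Part (c) is where you diverge interestingly. The paper expands $\ttau$ against an orthonormal basis of $\lieL$ in the last $n$ tensor slots, shows that the span $J\subset\lieL$ of the resulting first-slot coefficients is a finite-dimensional \emph{ideal} of $\lieL$, and then cites \cite[Lemma~8.6]{Kac} to conclude $J=0$. Your argument instead reduces by $\CC^\ast$-homogeneity and runs an extremal-degree trick: at the multidegree $\vec{k}^\star+le_j$ only one term of $\ad_{bz^l}\ttau$ survives, so $\ad_b^{(j)}v_{\vec{k}^\star}=0$ for all $b\in\lieg_l$, and varying $l$ over residues mod $m$ upgrades this to all $b\in\lieg$, whence $v_{\vec{k}^\star}=0$ by $Z(\lieg)=0$. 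This is self-contained and avoids the appeal to Kac's lemma on finite-dimensional ideals; the paper's route is shorter to write but outsources the structural input. Both are clean; yours is more elementary, the paper's is more conceptual (it identifies \emph{why} the obstruction lives in an ideal).
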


\begin{proof}
(a) Let $(f_1, \dots, f_q)$ be any  basis of the  vector space 
$\bigoplus\limits_{j = 0}^{m-1} \lieg_{j}  z^j$.   Then for any  $f \in \lieL$ there exist unique  $p_1, \dots, p_q \in R$ such that $f = p_1 f_1 + \dots + p_q f_q$. 
Hence, $\lieL$ is a free $R$-module  of rank $q$. 

\smallskip
\noindent
The canonical map $\overline{R} \otimes_R \lieL \stackrel{\pi}\lar \overline{\lieL}, z^n \otimes a z^k \mapsto a z^{n+k}$ is an $\overline{R}$--linear surjective morphism of Lie algebras. Since $\overline{R} \otimes_R \lieL$ and $\overline{\lieL}$ are both free $\overline{R}$--modules of the same rank, 
$\pi$ is an isomorphism. Finally, the extension $R \subset \overline{R}$ is unramified, hence for any $\mu \in \CC^*$ the following canonical linear maps
$$R/(t - \mu^m) \otimes_R \lieL  \rightarrow
 \overline{R}/(z-\mu) \otimes_{R} \lieL \rightarrow
 \overline{R}/(z-\mu)
 \otimes_{\overline{R}} \overline{R} \otimes_R \lieL \rightarrow
\overline{R}/(z-\mu) \otimes_{\overline{R}} \overline{\lieL} \rightarrow \lieg
$$
are isomorphisms of Lie algebras. 

\smallskip
\noindent
(b) Let $\overline{\lieL} \times \overline{\lieL} \stackrel{\overline{K}}\lar  \overline{R}$ be the Killing form of $\overline{\lieL}$. Then we have:
$
\overline{K}(a z^k, bz^l) = \kappa(a, b) z^{k+l}.
$
The isomorphism of Lie algebras $\overline{R} \otimes_R \lieL \cong  \overline{\lieL}$ as well as invariance of the Killing form under automorphisms  imply that the following diagram is commutative: 
$$
\xymatrix{
\lieL \times \lieL  \ar@{_{(}->}[d]  \ar[rr]^-{K} & & R \ar@{^{(}->}[d] \\
\overline{\lieL} \times \overline{\lieL}   \ar[rr]^-{\overline{K}} & & \overline{R}.
}
$$
Since  $\omega = \dfrac{dt}{t} = m \dfrac{dz}{z}$, we get  the second statement. 

\smallskip
\noindent
(c) Assume that $\ttau \in \lieL^{\otimes (n+1)}$ is such that 
\begin{equation}\label{E:invariancecond}
\bigl[x \otimes 1 \otimes \dots \otimes 1 + \dots + 1 \otimes \dots \otimes 1 \otimes x, \ttau  \bigr] = 0
\end{equation}
for all $x \in \lieL$. Let $(b_k)_{k \in \NN}$ be an orthonormal basis of $\lieL$ with respect to the form $B$. Then we can express $\ttau$  as a sum
$
\ttau = \sum\limits_{j_1, \dots, j_n =1}^s a_{j_1 \dots j_n} \otimes b_{j_1} \otimes \dots \otimes b_{j_n}.
$
Consider the vector space
$
J := \left\langle a_{j_1\dots j_n} \, \left| \; 1 \le j_1, \dots, j_n \le s
\right.\right\rangle_{\CC} \subset \lieL
$
For any $1 \le i_1, \dots, i_n \le s$, we apply the map
$$\mathbbm{1}_{\lieL} \otimes B(b_{i_1}, -) \otimes \dots \otimes B(b_{i_n}, -): \lieL^{\otimes (n+1)} \lar \lieL$$ to the identity (\ref{E:invariancecond}). It follows that 
$
\bigl[x, a_{i_1 \dots i_n}] \in J
$
 for any $x \in \lieL$, implying that $J$ is an ideal in $\lieL$. However, $\lieL$ does not contain any non-zero finite-dimensional ideals; see \cite[Lemma 8.6]{Kac}. Hence, $\ttau = 0$, as asserted. 
\end{proof}

\smallskip
\noindent
A proof of the following key result can be found in \cite[Lemma 8.1]{Kac}.
\begin{proposition} 
 The algebra $\lieg_{0} = \left\{a \in \lieg \, \big| \, \sigma(a) =  a \right\}$ is non-zero and reductive. 
\end{proposition}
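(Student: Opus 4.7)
The plan is to prove the two assertions in turn, treating non-vanishing and reductivity separately. The central observation, used throughout, is that by Lemma~\ref{L:Orthogonality} the restriction of the Killing form $\kappa$ of $\lieg$ to $\lieg_0$ is non-degenerate: since $\lieg_0$ pairs trivially under $\kappa$ with $\bigoplus_{k \neq 0} \lieg_k$ while $\kappa$ itself is non-degenerate on $\lieg = \bigoplus_k \lieg_k$, the restriction $\kappa|_{\lieg_0 \times \lieg_0}$ must be non-degenerate as well.

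For non-vanishing, I would argue that any finite-order automorphism $\sigma$ of a complex semisimple Lie algebra admits, after conjugation by an inner automorphism (which does not affect the isomorphism class of $\lieg_0$), a $\sigma$-stable pair consisting of a Cartan subalgebra $\lieh \subset \lieg$ and a Borel subalgebra $\lieb \supset \lieh$; this is the classical theorem of Borel--Mostow. In such a form $\sigma|_\lieh$ has finite order and is induced, modulo an inner factor, by an element of the diagram automorphism group $\Aut(\Gamma)$ of $\lieg$. A case-by-case inspection of the irreducible Dynkin diagrams shows that every finite-order element obtained in this way acts on $\lieh$ with non-trivial fixed subspace, yielding $0 \neq \lieh^{\sigma} \subseteq \lieg_0$.

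For reductivity, I would show that the radical $\mathfrak{r} := \mathrm{rad}(\lieg_0)$ is central in $\lieg_0$. A first useful remark is that $\lieg_0$ is stable under the Jordan decomposition inside $\lieg$: for $x \in \lieg_0$ with Jordan decomposition $x = x_s + x_n$ in $\lieg$, applying $\sigma$ yields another Jordan decomposition, and uniqueness forces $x_s, x_n \in \lieg_0$. Next, for $x \in [\mathfrak{r}, \mathfrak{r}]$, Lie's theorem applied to the solvable image of $\mathfrak{r}$ under $\ad_\lieg$ shows that $\ad_\lieg(x)$ is nilpotent. Cartan's criterion, combined with the non-degeneracy of $\kappa|_{\lieg_0}$ established in the first step, then forces $[\mathfrak{r}, \mathfrak{r}] = 0$, so $\mathfrak{r}$ is abelian. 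A further step, again exploiting closure under Jordan decomposition, gives that every element of $\mathfrak{r}$ is semisimple in $\lieg$ and commutes with a Levi complement of $\mathfrak{r}$ in $\lieg_0$; therefore $\mathfrak{r} = Z(\lieg_0)$ and $\lieg_0 = Z(\lieg_0) \oplus [\lieg_0, \lieg_0]$ with $[\lieg_0, \lieg_0]$ semisimple, which is reductivity.

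The main obstacle is the last step in the reductivity argument: bridging the abelianness of $\mathfrak{r}$ with its centrality in $\lieg_0$. Concretely, one must show that $\mathfrak{r}$ acts trivially on a Levi complement, which requires combining the $\sigma$-equivariance of Jordan decomposition with the Killing-form estimates in a delicate way. The non-vanishing step, by contrast, reduces to a well-known structural fact about the existence of $\sigma$-stable Borel--Cartan pairs, and non-degeneracy of $\kappa|_{\lieg_0}$ follows directly from the orthogonality of the eigenspaces in Lemma~\ref{L:Orthogonality}.
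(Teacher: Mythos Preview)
The paper does not prove this statement itself; it simply cites \cite[Lemma 8.1]{Kac}. Kac's argument for reductivity is the short one: $\kappa|_{\lieg_0\times\lieg_0}$ is the trace form of the faithful representation $\ad\colon\lieg_0\to\mathfrak{gl}(\lieg)$, and a Lie algebra admitting a representation with non-degenerate trace form is reductive.

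Your non-vanishing argument via a $\sigma$-stable Borel--Cartan pair is correct. Your reductivity argument, however, takes a detour that creates the very obstacle you flag. Applying Cartan's criterion to $\ad_\lieg(\mathfrak r)$ yields only $\kappa([\mathfrak r,\mathfrak r],\mathfrak r)=0$, not $\kappa([\mathfrak r,\mathfrak r],\lieg_0)=0$, so already step~3 needs more than you state; and then you are left with the ``delicate'' step~4 upgrading abelianness of $\mathfrak r$ to centrality, which you leave vague. Both difficulties disappear if you aim directly for $[\lieg_0,\mathfrak r]=0$: take a $\lieg_0$-composition series of $\lieg$; on each irreducible factor the solvable ideal $\mathfrak r$ acts by a character, so $[\lieg_0,\mathfrak r]$ acts by zero there, and in a basis adapted to the filtration every product $\ad_\lieg(x)\ad_\lieg(y)$ with $x\in[\lieg_0,\mathfrak r]$ and $y\in\lieg_0$ is strictly block-upper-triangular. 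Hence $\kappa\bigl([\lieg_0,\mathfrak r],\lieg_0\bigr)=0$, and non-degeneracy of $\kappa|_{\lieg_0}$ gives $\mathfrak r\subseteq Z(\lieg_0)$ in one stroke. This eliminates the Jordan-decomposition machinery and your step~4 altogether.
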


\begin{remark}\label{R:Triangular} In what follows, we choose a Cartan subalgebra $\lieh \subset \lieg_0$. Let 
$\Delta_{0}$ be the root system of $(\lieg_{0}, \lieh)$. We fix a polarization
$\Delta_{0} = \Delta_{0}^{+} \sqcup \Delta_{0}^{-}$, which 
gives a triangular decomposition $
\lieg_{0} = \lieg_{0}^+ \oplus \lieh \oplus \lieg_{0}^-.
$
One can show that
$
\tilde\lieh:= \bigl\{a \in \lieg \big| [a, h] = 0 \; \mbox{for all} \; h \in \lieh\bigr\}
$
is a Cartan subalgebra of $\lieg$; see \cite[Lemma 8.1]{Kac}. However, in general $\tilde\lieh \ne \lieh$. 
The algebra $\lieg_{0}$ is simple if $\sigma$ is a so-called diagram automorphism of $\lieg$; see \cite[Chapter 8]{Kac}. \hfill $\lozenge$
\end{remark}

\medskip
\noindent
Now we  review the structure theory of twisted loop algebras as well as their relations with affine Lie algebras. For that we  need  the following notions,  notation and facts.

\medskip
\noindent
1. For any $j \in \ZZ$ we put: $\lieL_j = \lieg_{j} z^j \subset \lieL$. Since 
$\bigl[\lieg_{0}, \lieg_{j} \bigr] \subseteq \lieg_{j}$, it follows that $\bigl[\lieg_{0}, \lieL_j \bigr] \subseteq \lieL_j$, too.   A pair $(\alpha, j) \in \lieh^\ast \times \, \ZZ$ is a \emph{root} of 
$(\lieL, \lieh)$ if
$$
\lieL_{(\alpha, j)} := \bigl\{x \in \lieL_j \, \big| \, [h, x] = \alpha(h)x \; \mbox{\rm for all} \; h \in \lieh \bigr\} \ne 0. 
$$
In our convention,  $(0, 0)$ is a root of $(\lieL, \lieh)$. Note that   $\lieL_{(0, 0)} := \lieh$. 

\smallskip
\noindent
Let  $\Phi$ be the set of all roots of $(\lieL, \lieh)$. It is clear that 
$(-\alpha, -j), (\alpha, j + km)  \in \Phi$ for all
$k \in \ZZ$ and $(\alpha, j) \in \Phi$.

\smallskip
\noindent
2.
For any $(\alpha, j), (\alpha', j') \in \lieh^* \times \ZZ$ we put: 
$(\alpha, j) + (\alpha', j') = (\alpha+ \alpha', j + j')$. We have:
$$
\left[\lieL_{(\alpha, j)},  \lieL_{(\alpha', j')}\right] \subseteq 
\lieL_{(\alpha+ \alpha', j + j')}.
$$
A root $(\alpha, j)$ is called real if $\alpha \ne 0$ and imaginary otherwise. 
There exists $m' | m$  such that any imaginary root has the form $(0, k m')$ for some $k \in \ZZ$.  For any real root $(\alpha, j) \in \Phi$ we have: 
$
\dim_{\CC}\bigl(\lieL_{(\alpha, j)}\bigr) = 1$ (see e.g.  \cite[Lemma X.5.4']{Helgason}). A formula for $\dim_{\CC}\bigl(\lieL_{(0, km')}\bigr)$ can be found in   \cite[Corollary 8.3]{Kac}. 

\smallskip
\noindent
Since $\lieg_{0}$ is a reductive Lie algebra, we have a direct sum decomposition
$
\lieL = \bigoplus_{(\alpha, j) \in \Phi} \lieL_{(\alpha, j)}.
$
The sets of positive and negative roots of $(\lieL, \lieh)$ are defined as follows:
\begin{equation}\label{E:LoopsPositiveRoots}
\Phi_\pm := \left\{(\alpha, j) \in \Phi  \, \big|\, \pm j > 0 \right\} \sqcup
\left\{(\alpha, 0) \in \Phi  \, \big|\, \pm \alpha \in \Delta_{0}^+\right\},
\end{equation}
where $\Delta_{0}^+$ is the set positive roots  of  $(\lieg_{0}, \lieh)$. We have: $\Phi = \Phi_+ \sqcup \Phi_- \sqcup\bigl\{(0, 0)\bigr\}$ and $\Phi_- = - \Phi_+$. 

\smallskip
\noindent
3. Since the bilinear form $\lieL \times \lieL \stackrel{B}\lar \CC$ is invariant and non-degenerate, analogously to Lemma \ref{L:Orthogonality} we obtain the following results:
\begin{itemize}
\item  The pairing $\lieL_{(\alpha, j)} \times \lieL_{(\alpha', j')} \stackrel{B}\lar \CC$ is zero unless $(\alpha, j) + (\alpha', j') = (0, 0)$. 
\item For any $(\alpha, j) \in \Phi$, the pairing
$\lieL_{(\alpha, j)} \times \lieL_{(-\alpha, -j)} \stackrel{B}\lar \CC$ is non-degenerate.
\item In particular,  since $B\big|_{\lieh \times \lieh} = \kappa|_{\lieh \times \lieh}$, the pairing
$
\lieh \times \lieh \stackrel{\kappa}\lar \CC
$
is non-degenerate. 
\end{itemize}

\smallskip
\noindent
4. The set $\Pi$ of simple roots of $(\lieL, \lieh)$ is defined as follows:
\begin{equation}\label{E:simplerootsLoop}
\Pi: = 
 \left\{(\alpha, j) \in \Phi_+  \, \big| \, (\alpha - \beta, j - i) \notin \Phi_+ \; \mbox{for all} \, (\beta, i) \in \Phi_+\right\}.
\end{equation}
All elements of $\Pi$ are real roots and we have: $|\Pi| = r + 1$; see \cite[Lemma X.5.7 and Lemma X.5.9]{Helgason}. We use the following notation:
\begin{equation}\label{E:simplerootsloops}
\Pi = \bigl\{(\alpha_0, s_0), \dots, (\alpha_r, s_r)\bigr\}.
\end{equation}

\smallskip
\noindent
5. Since the pairing $
\lieh \times \lieh \stackrel{\kappa}\lar \CC
$
is non-degenerate, we get the induced isomorphism of vector spaces
$\lieh \stackrel{\tilde\kappa}\lar \lieh^\ast$. Abusing the notation, let  $
\lieh^\ast \times \lieh^\ast \stackrel{\kappa}\lar \CC
$
be the transfer of the Killing form $\kappa$ under the isomorphism 
$\tilde\kappa$.

\begin{itemize}
\item For any $0 \le i \le r$ we put: $y_i := \dfrac{2}{\kappa(\alpha_i, \alpha_i)} \bigl(\tilde\kappa\bigr)^{-1}(\alpha_i) \in \lieh$.
\item For any $0 \le i ,j \le r$ we set:
\begin{equation}\label{E:CartanCoeff}
a_{ij} := 2 \dfrac{\kappa(\alpha_i, \alpha_j)}{\kappa(\alpha_i, \alpha_i)}.
\end{equation}
 It turns out that $
a_{ij} \in \ZZ
$ and $A = (a_{ij}) \in \Mat_{(r+1) \times (r+1)}(\ZZ)$ is a generalized Cartan matrix of affine type;  see \cite[Lemma X.5.6 and Lemma X.5.11]{Helgason}.
In particular, we have: $\rk(A) = r$. 
\item For every $0 \le i \le r$ one can choose $x_i^\pm \in \lieL_{\pm(\alpha_i, s_i)}$ such that the following relations are satisfied for all $0 \le i, j \le r$:
$$
\left\{
\begin{array}{l}
[y_i, y_j]  =  0 \\
\smallskip
[x_{i}^{+}, x_{j}^{-}] =   \delta_{ij}\,  y_i \\
\smallskip
[y_i, x_j^{\pm}]  =  \pm a_{ij} \, x_j^{\pm}. 
\end{array}
\right.
$$
 Moreover, for any $0 \le i \ne j \le r$ we have:
$$
\ad_{x_i^{\pm}}^{1 - a_{ij}}(x_j^\pm) = 0
$$
and the elements 
$x_0^\pm, \dots, x_r^\pm, y_0, \dots, y_r$ generate $\lieL$; see 
\cite[Section X.5]{Helgason}.
\item 
Let $\lieG = \lieG_{A}$. 
A theorem of Gabber and Kac asserts that the linear map
\begin{equation}\label{E:GabberKac}
\lieG \stackrel{\varphi}\lar \lieL, \; e_i^\pm \mapsto, x_i^\pm, \, h_i \mapsto y_i
\end{equation}
is an isomorphism of Lie algebras, which identifies  both standard forms 
 on $\lieG$ and on  $\lieL$ (up to an appropriate rescaling);  see 
 \cite[Theorem 8.5]{Kac}.
\end{itemize}

\begin{corollary}\label{C:StandardLieBialgLoops}
We have a Lie bialgebra cobracket  $\lieL \stackrel{\delta_\circ}\lar \wedge^2 \lieL$ (also called \emph{standard}),  given by the formulae
\begin{equation}\label{E:standardbialgstructure2}
\delta_\circ(x_i^\pm) = \dfrac{\kappa(\alpha_i, \alpha_i)}{2} y_i \wedge x_i^\pm  \;\;  \mbox{and} \;\; 
\delta_\circ(y_i) = 0 \;\;  \mbox{for all} \; \;  0 \le i \le r.
\end{equation}
This cobracket is determined by the Manin triple, which is isomorphic to  (\ref{E:StandardManinTriple}).
\end{corollary}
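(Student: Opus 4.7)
The plan is to transport the standard Manin triple (\ref{E:StandardManinTriple}) from $\lieG \times \lieG$ to $\lieL \times \lieL$ via the Gabber--Kac isomorphism $\varphi$ of (\ref{E:GabberKac}), and then read off the resulting cobracket on the Serre-type generators.

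By analogy with (\ref{E:StandardW}), set $\lieL_{\pm}:=\llangle x_0^{\pm},\dots,x_r^{\pm}\rrangle$, $\lieH_{\lieL}:=\langle y_1,\dots,y_r\rangle_{\CC}$, $\lieH'_{\lieL}:=\{(y,-y)\mid y\in\lieH_{\lieL}\}$ and $\lieW^{\circ}_{\lieL}:=(\lieL_{+}\times\lieL_{-})+\lieH'_{\lieL}$, and equip $\lieL\times\lieL$ with the non-degenerate invariant symmetric form
\[
F^{\lieL}\bigl((a_1,b_1),(a_2,b_2)\bigr):=B(a_1,a_2)-B(b_1,b_2).
\]
By the Gabber--Kac theorem $\varphi$ sends $e_i^{\pm}\mapsto x_i^{\pm}$, $h_i\mapsto y_i$, and intertwines the standard forms on $\lieG$ and $\lieL$ up to a non-zero scalar $\lambda_0\in\CC^{*}$. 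Hence $\varphi\times\varphi\colon \lieG\times\lieG\to \lieL\times\lieL$ is a Lie algebra isomorphism sending the diagonal $\lieG$ to the diagonal $\lieL_{\Delta}$, mapping $\overline{\lieW}^{\circ}$ onto $\lieW^{\circ}_{\lieL}$, and acting as a homothety of ratio $\lambda_0$ between $F^{\lieG}$ and $F^{\lieL}$. In view of Theorem \ref{T:Drinfeld}, the decomposition $\lieL\times\lieL=\lieL_{\Delta}\,\dotplus\,\lieW^{\circ}_{\lieL}$ is a Manin triple isomorphic to (\ref{E:StandardManinTriple}), which proves the second assertion of the corollary.

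This Manin triple determines, via Definition \ref{D:CobracketManinTriple}, a Lie bialgebra cobracket $\delta_\circ$ on $\lieL$. Inspecting the defining identity $F(\delta(a),b_1\otimes b_2)=F(a,[b_1,b_2])$ shows that rescaling $F$ by $\mu$ rescales the determined cobracket by $\mu^{-1}$; combined with the homothety above this yields
\[
\delta_\circ \;=\; \lambda_0^{-1}\,(\varphi\otimes\varphi)\circ\delta_\circ^{\lieG}\circ\varphi^{-1}.
\]
The formulas $\delta_\circ^{\lieG}(h_i)=0$ and $\delta_\circ^{\lieG}(e_i^{\pm})=\tfrac{1}{d_i}\,h_i\wedge e_i^{\pm}$ from (\ref{E:standardbialgstructure}) therefore transport to $\delta_\circ(y_i)=0$ and $\delta_\circ(x_i^{\pm})=(\lambda_0 d_i)^{-1}\,y_i\wedge x_i^{\pm}$.

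Finally, one identifies $(\lambda_0 d_i)^{-1}$ with $\kappa(\alpha_i,\alpha_i)/2$: since $B(e_i^{+},e_i^{-})=d_i$ on $\lieG$, the definition of $\lambda_0$ gives $\lambda_0 d_i=B(x_i^{+},x_i^{-})$, and a short computation based on the invariance of $B$ on $\lieh$, the relation $[x_i^{+},x_i^{-}]=y_i$, and $y_i=\tfrac{2}{\kappa(\alpha_i,\alpha_i)}\tilde\kappa^{-1}(\alpha_i)$ shows $B(x_i^{+},x_i^{-})=\tfrac{2}{\kappa(\alpha_i,\alpha_i)}$. The only delicate point in the whole argument is the careful bookkeeping of the Gabber--Kac scaling factor $\lambda_0$ through the Manin-triple-to-cobracket dictionary; everything else is a direct push-forward of structures along $\varphi\times\varphi$.
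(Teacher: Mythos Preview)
Your argument is correct and follows exactly the route the paper intends: the corollary is stated without proof because it is meant to be read off from the Gabber--Kac isomorphism (\ref{E:GabberKac}) combined with Theorem~\ref{T:Drinfeld}, and you have spelled out precisely those details. Your bookkeeping of the scaling factor $\lambda_0$ (via the observation that the pairing on the left of the defining identity in Definition~\ref{D:CobracketManinTriple} is $F\otimes F$, hence quadratic in $F$, while the right side is linear) is the only non-obvious step, and you handle it correctly.
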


\subsection{Bounded Lie subalgebras of twisted loop algebras}
For any $0 \le i \le r$, the corresponding (positive) maximal parabolic Lie subalgebras $\lieP_i \subset \lieL$ is defined as follows:
$$
\lieP_i := \llangle h_0, \dots, h_r, x_0^+, \dots, x_r^+, x_0^-, \dots, 
\widehat{x_i}^-, \dots, x_r^- \rrangle.
$$
A similar argument to \cite[Lemma 1.5]{KacWang} implies that 
\begin{equation}\label{E:ParabDecomp}
\lieP_i  = \lieB_+ \oplus \bigl(\oplus_{(\alpha, j) \in \Phi_i^-} \lieL_{(\alpha, j)} \bigr),
\end{equation}
where 
$
\Phi_i^- :=  \Phi_- \cap \bigl\langle \alpha_0, \dots, \hat{\alpha}_i, \dots, \alpha_r\bigr\rangle_{\NN_0^-}
$
and 
$\lieB_+ := (\lieg_0^+ \oplus \lieh) \oplus \bigl(\bigoplus\limits_{k = 1}^\infty \lieg_k z^k \bigr)$ is a positive Borel subalgebra of $\lieL$.

\begin{lemma}\label{L:MaximalParabolics}
For any $0 \le i \le r$ we have: $t \lieP_i \subseteq \bigl(\lieP_i\bigr)^\perp$, where the orthogonal space is taken with respect to the bilinear form 
$B$, given by the formula (\ref{E:FormB}).  
\end{lemma}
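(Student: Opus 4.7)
My plan is to equip the root lattice of $(\lieL,\lieh)$ with a $\ZZ$-grading whose restriction to the roots appearing in $\lieP_i$ is non-negative, while multiplication by $t$ acts by a strictly positive shift. Combined with the orthogonality $B(\lieL_{(\alpha,j)},\lieL_{(\alpha',j')})=0$ unless $(\alpha,j)+(\alpha',j')=(0,0)$, this will force $B(t\lieP_i,\lieP_i)=0$, which is what is required.

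First I would verify that the affine simple roots $(\alpha_0,s_0),\dots,(\alpha_r,s_r)$ are $\CC$-linearly independent in $\lieh^{\ast}\oplus\CC$. A relation $\sum c_l(\alpha_l,s_l)=0$ gives $\sum c_l\alpha_l=0$; since the kernel of $\vec{c}\mapsto\sum c_l\alpha_l$ is one-dimensional and spanned by the null vector $\vec{k}$ (using non-degeneracy of $\kappa\big|_{\lieh\times\lieh}$ and (\ref{E:CartanCoeff})), one obtains $\vec{c}=\lambda\vec{k}$. The second component then reads $\lambda m'=0$ with $m':=\sum k_l s_l>0$, forcing $\lambda=0$. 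This legitimises the $\ZZ$-linear functional $\deg_i\bigl(\sum n_l(\alpha_l,s_l)\bigr):=n_i$ on the root lattice. Any root in $\Phi_+$ is a non-negative integer combination of simple roots, hence has $\deg_i\geq 0$; any root in $\Phi_i^-$ has $n_i=0$ by the definition of the index set in (\ref{E:ParabDecomp}); and the zero root contributing $\lieh$ has $\deg_i=0$. Therefore every homogeneous component of $\lieP_i$ lies in $\deg_i\geq 0$.

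Since $t=z^m$ sends $\lieL_{(\alpha,j)}$ into $\lieL_{(\alpha,j+m)}$, and since the minimal imaginary root $\delta'=(0,m')$ equals $\sum k_l(\alpha_l,s_l)$, we have $(0,m)=(m/m')\sum k_l(\alpha_l,s_l)$ in the root lattice; hence $\deg_i$ jumps by $(m/m')k_i$, which is strictly positive because $m/m'\in\NN$ and the null vector $\vec{k}\in\NN^{r+1}$ of an affine Cartan matrix has all entries $\geq 1$. Consequently every homogeneous component of $t\lieP_i$ satisfies $\deg_i\geq 1$, and a non-zero pairing $B(x,y)$ with $x,y$ homogeneous would require the two degrees to sum to zero, which is impossible. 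Bilinearity then yields $t\lieP_i\subseteq\lieP_i^{\perp}$. The main subtle point, to be treated at the outset, is the linear independence of the affine simple roots in $\lieh^{\ast}\oplus\CC$ together with the identification of $(0,m)$ with $(m/m')\delta'$ inside the root lattice; once these structural facts are in hand, the conclusion reduces to elementary degree counting.
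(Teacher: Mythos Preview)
Your argument is correct and takes a somewhat different route from the paper. The paper proceeds by contradiction and a two-case analysis: assuming some $x\in\lieL_{(\alpha,j)}\subset\lieP_i$ pairs nontrivially with $ty'\in\lieL_{(\beta,k+m)}$ for $(\beta,k)\in\Phi_i$, one obtains $(\alpha,j)+(\beta,k)=(0,-m)$; if $\alpha=0$ then $(\beta,k)$ is a negative imaginary root lying in $\Phi_i^{-}$, which is impossible since the $\alpha_l$ with $l\ne i$ are linearly independent; if $\alpha\ne0$ the paper invokes a nonvanishing bracket $[\lieL_{(\alpha,j)},\lieL_{(\beta,k)}]\subset\lieL_{(0,-m)}\cap\lieP_i$ to force $(0,-m)\in\Phi_i^{-}$ and reach the same contradiction. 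Your grading $\deg_i$ absorbs both cases at once: once $\deg_i\ge0$ on all of $\Phi_i$ and $\deg_i((0,-m))<0$ are established, the relation $(\alpha,j)+(\beta,k)=(0,-m)$ is immediately impossible, with no need to separate real from imaginary roots or to appeal to brackets. One small comment: the exact value $\deg_i((0,m))=(m/m')k_i$ presupposes the identification $(0,m')=\sum_l k_l(\alpha_l,s_l)$, i.e.\ $m'=\sum_l k_l s_l$, which is standard but not spelled out in the paper. For the lemma only strict positivity is required, and that already follows from your linear-independence step: expressing the root $(0,-m)$ in the simple-root basis forces the coefficient vector to be a negative integer multiple of $\vec{k}$, whence $\deg_i((0,-m))<0$.
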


\begin{proof} 
 Since the roots $\alpha_0, \dots, \hat\alpha_i, \dots, \alpha_r$ are linearly independent elements of $\lieh^\ast$, it follows that  $(0, - km') \notin \Phi_i^-$ for all $k \in \NN$. 
Let $\Phi_i := \Phi_+ \sqcup\{(0, 0)\} \sqcup \Phi_i^-$. Then we have:
$$
\lieP_i  = \bigoplus\limits_{(\alpha, j) \in \Phi_i} \lieL_{(\alpha, j)}
\quad \mbox{\rm and} \quad t \lieP_i  = \bigoplus\limits_{(\beta, k) \in \Phi_i} \lieL_{(\beta, k +m)}.
$$
Let  $(\alpha, j), (\beta, k) \in \Phi_i$, $x \in \lieL_{(\alpha, j)}$ and $y \in \lieL_{(\beta, k +m)}$ are such that  $B(x, y) \ne 0$. Then we have: $\alpha = -\beta$ and $j = - k - m$. 

\smallskip
\noindent
\underline{Case 1}. Assume that  $\alpha = 0$.  Then $(\alpha, j) \in 
\Phi_+ \sqcup\{(0, 0)\}$ and $(\beta, k) = (0, - j - m) \in \Phi_i^-$ is a negative imaginary root. Contradiction. 

\smallskip
\noindent
\underline{Case 2}. Assume that $(\alpha, j)$ is a real root. Then there exist
 $x \in \lieL_{(\alpha, j)}$ and $y \in \lieL_{(\beta, k +m)}$ such that 
 $[x, y] \ne 0$; see \cite[Lemma X.5.5']{Helgason}. Hence, 
$
 \lieL_{(0, -m)} \cap \lieP_i \ne 0.
$
It follows from the decomposition (\ref{E:ParabDecomp}) that $(0, -m) \in \Phi_i^-$. Contradiction. 

\smallskip
\noindent
We have  shown that the pairing $t \lieP_i\times \lieP_i \stackrel{B}\lar \CC$ is zero, what implies the claim. 
\end{proof}

\smallskip
\noindent
For any $n \in \ZZ$ we put: $\lieL_{\ge n} := t^n \lieL_{\ge 0}$, where 
$\lieL_{\ge 0}:= \bigoplus\limits_{j \ge 0} \lieL_j$. 
 Note that for any $n \in \NN$ we have: $\bigl(\lieL_{\ge n}\bigr)^\perp \subseteq 
\lieL_{\ge -n}$.

\begin{definition}
 A Lie subalgebra $\lieO \subseteq \lieL$ is \emph{bounded} if 
$
\lieL_{\ge n} \subseteq \lieO \subseteq \lieL_{\ge - n}
$
for some $n \in \NN$.
\end{definition}

\smallskip
\noindent
Let $\widetilde\lieL = \lieL \dotplus \CC c$ be a central extension of $\lieL$ with the Lie bracket  given by the formulae 
\begin{equation}\label{E:CommRelationsCentralExt}
[az^k,  b z^l] := [a, b] z^{k+l} + k \delta_{k+l, 0}\,  \kappa(a, b)\,  c \quad \mbox{and} \quad [az^k, c] = 0
\end{equation}
for all  $k, l \in \ZZ$, $a \in \lieg_k$ and $b \in \lieg_l$. Let $A \in \Mat_{(r+1) \times (r+1)}(\ZZ)$ be the generalized Cartan matrix of affine type, given by (\ref{E:CartanCoeff}) and  
$\doublewidetilde{\lieG} = \doublewidetilde{\lieG}_A$ be the corresponding affine Kac--Moody Lie algebra. Then $\doublewidetilde{\lieG}$ has one-dimensional center $\lieZ$,  $\widetilde{\lieG} = \bigl[\doublewidetilde{\lieG}, \doublewidetilde{\lieG}\bigr]$ and  $\lieG = \widetilde{\lieG}/\lieZ$. The Gabber--Kac isomorphism 
$\lieG \stackrel{\varphi}\lar \lieL$ given by (\ref{E:GabberKac})  extends
to an isomorphism of Lie algebras $\widetilde\lieG \stackrel{\tilde\varphi}\lar \widetilde\lieL$. The entire picture can be  summarized in the following 
commutative diagram of Lie algebras and Lie algebra homomorphisms:
\begin{equation}\label{E:Panorama}
\begin{array}{c}
\xymatrix{
\doublewidetilde{\lieG} & \ar@{_{(}->}[l] \widetilde{\lieG} \ar@{->>}[r] \ar[d]_{\tilde\varphi} & \lieG \ar[d]^-\varphi\\
& \widetilde{\lieL} \ar@{->>}[r] & \lieL. 
}
\end{array}
\end{equation}
For $0\le i \le r$, let $\widetilde\lieG_+ \subset \widetilde\lieP_i := \lieP_i \dotplus 
\CC c \subset \doublewidetilde\lieG$ be the corresponding maximal parabolic Lie subalgebra.

\begin{proposition}\label{P:OrdersinLoops}
Let $\lieO \subseteq \lieL$ be a bounded Lie subalgebra. Then there exists an $R$-linear  automorphism $\phi$ of $\lieL$ and $0 \le i \le r$ such that $\lieO \subseteq \phi\bigl(\lieP_{i}\bigr)$. 
\end{proposition}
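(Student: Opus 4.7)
The plan is to reduce the statement to a conjugacy theorem for parabolic-type subalgebras in the affine Kac--Moody setting, via the diagram \eqref{E:Panorama}. Since $\lieO$ is bounded, the preimage $\widetilde\lieO := \lieO \dotplus \CC c \subseteq \widetilde{\lieL}$ is a Lie subalgebra satisfying $\widetilde\lieL_{\ge n} \dotplus \CC c \subseteq \widetilde\lieO \subseteq \widetilde\lieL_{\ge -n} \dotplus \CC c$. Transferring along the Gabber--Kac isomorphism $\tilde\varphi^{-1}$, I obtain a bounded subalgebra $\widetilde\lieO' \subseteq \widetilde\lieG$. Enlarging this by the scaling derivation if necessary, I can view $\widetilde\lieO'$ inside $\doublewidetilde{\lieG}$ and attempt to bring it inside a standard maximal parabolic $\widetilde\lieP_i \dotplus \CC c$.

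Second, I would invoke (or establish directly, by imitating the argument of \cite[Lemma 1.5]{KacWang}) the fact that every bounded subalgebra of the affine Kac--Moody Lie algebra is, up to conjugation by an inner automorphism of the form $\exp(\ad_x)$ with $x \in \lieL$ locally nilpotent, contained in a standard maximal parabolic $\widetilde\lieP_i$ for some $0 \le i \le r$. Concretely, I would analyse the finite-dimensional quotient $\lieL_{\ge -n}/\lieL_{\ge n}$, in which $\lieO/\lieL_{\ge n}$ projects to a subalgebra of a reductive Lie algebra, and use the Borel--Morozov theorem to conjugate this image into a standard parabolic of the quotient; one then lifts the conjugating element to an inner automorphism of $\lieL$.

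Third, I check that such a conjugating automorphism $\phi$ is automatically $R$-linear. The crucial point is that the Lie bracket on $\lieL$ is $R$-bilinear (cf.~Proposition \ref{P:basicsonloops}(a)), so for any $x \in \lieL$ the operator $\ad_x$ commutes with multiplication by $t$; hence each factor $\exp(\ad_x)$ in $\phi$ is $R$-linear, and the central element $c$ is preserved, so the automorphism descends through $\widetilde{\lieL} \twoheadrightarrow \lieL$ to the required $R$-linear $\phi \in \Aut(\lieL)$ with $\lieO \subseteq \phi(\lieP_i)$.

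The main obstacle is the middle step. Care is needed to ensure that the conjugating element can be chosen inside $\lieL$ itself (rather than in some completion or in the full loop group over $\CC(\!(z)\!)$), so that the resulting $\phi$ is an \emph{algebraic} $R$-linear automorphism of $\lieL$ and not merely a formal one. The boundedness hypothesis $\lieL_{\ge n} \subseteq \lieO \subseteq \lieL_{\ge -n}$ is precisely what makes this possible: the conjugation need only be performed on the finite-dimensional slice $\lieL_{\ge -n}/\lieL_{\ge n}$, and the corresponding element $x$ can be chosen as a polynomial in $z^{\pm 1}$ of bounded degree, guaranteeing that $\exp(\ad_x)$ is a well-defined $R$-linear automorphism of $\lieL$.
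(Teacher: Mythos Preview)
Your overall architecture matches the paper: lift $\lieO$ to $\widetilde\lieO = \lieO \dotplus \CC c$ inside $\widetilde\lieL \cong \widetilde\lieG \subset \doublewidetilde{\lieG}$, conjugate by an inner automorphism into a maximal parabolic, then descend (and your third step on $R$-linearity is fine). But the middle step as you sketch it has a real gap.

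The quotient $\lieL_{\ge -n}/\lieL_{\ge n}$ is \emph{not} a Lie algebra: $\lieL_{\ge n}$ is not an ideal of $\lieL_{\ge -n}$, since for instance $[\lieg_{-n} z^{-n}, \lieg_n z^{n}] \subseteq \lieg_0$ lands in degree $0$, not in $\lieL_{\ge n}$. So there is no reductive target on which to run Borel--Morozov, and the lifting argument does not get off the ground. Moreover, the input you need is not \cite[Lemma 1.5]{KacWang} (which only classifies subalgebras already containing $\widetilde\lieB_+$) but \cite[Proposition 2.8]{KacWang}, whose hypothesis is that $\widetilde\lieO$ contains an ideal $\lieI$ with $\lieI \subseteq \widetilde\lieG_+$ and $\dim_{\CC}(\widetilde\lieG_+/\lieI) < \infty$.

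The paper manufactures exactly such an ideal: set $\lieI := t^{2n+1}\lieO$. This is an ideal of $\lieO$ in $\lieL$ with $\lieL_{\ge 3n+1} \subseteq \lieI \subseteq \lieL_{\ge n+1}$, and it \emph{remains} an ideal of $\widetilde\lieO$ with respect to the bracket of $\widetilde\lieL$, because the central-extension cocycle in (\ref{E:CommRelationsCentralExt}) vanishes on $\lieL_{\ge n+1} \times \lieL_{\ge -n}$. Then \cite[Proposition 2.8]{KacWang} produces an inner $\tilde\psi$ with $[\widetilde\lieP_i, \tilde\psi(\widetilde\lieO)] \subseteq \widetilde\lieP_i$; since $\widetilde\lieP_i$ equals its own normalizer in $\doublewidetilde{\lieG}$ (by maximality and \cite[Lemma 1.5]{KacWang}), this forces $\tilde\psi(\widetilde\lieO) \subseteq \widetilde\lieP_i$, and one finishes as you indicated.
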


\begin{proof} 
Let $n \in \NN$ be such that $
\lieL_{\ge n} \subseteq \lieO \subseteq \lieL_{\ge - n}
$
and $\lieI := t^{2n+1} \lieO$.  Obviously, $\lieI$ is a Lie  ideal in $\lieO$ and  
$
\lieL_{\ge (3n+1)} \subseteq \lieI \subseteq \lieL_{\ge (n+1)}.
$
We can view $\lieI$ and $\lieL$ as vector subspaces in $\widetilde\lieL$.

\smallskip
\noindent
Let $\widetilde\lieO := \lieO \dotplus \CC c$.  Since $\lieI \subseteq \lieL_{\ge (n+1)}$ and $\lieO \subseteq \lieL_{\ge - n}$, the relations (\ref{E:CommRelationsCentralExt}) imply that $\bigl[x, y\bigr]_{\lieL} = \bigl[x, y\bigr]_{\widetilde\lieL}$ for all $x \in \lieI$ and 
$y \in \lieO$. Hence,  $\lieI \subset \widetilde{\lieO}$ is a Lie ideal with respect to the Lie bracket $\bigl[-,-\bigr]_{\widetilde\lieL}$. 
Embedding  $\widetilde\lieL$ into  $\doublewidetilde{\lieG}$ via  $\tilde{\varphi}$, we see that $\lieI \subseteq \widetilde{\lieG}_+$ and 
$\dim_{\CC}\bigl(\widetilde{\lieG}_+/\lieI\bigr) < \infty$.

\smallskip
\noindent
By   \cite[Proposition 2.8]{KacWang},  there exists an inner automorphism $\tilde\psi$ of $\widetilde\lieG$ and $0 \le i \le r$ such that 
$
\bigl[\widetilde\lieP_i , \tilde\psi(\lieO)\bigr] \subseteq \widetilde\lieP_i.
$
\smallskip
\noindent
According to \cite[Lemma 1.5]{KacWang}, for any Lie subalgebra $\widetilde\lieP \subset \doublewidetilde{\lieG}$ containing  $\widetilde{\lieB}_+$, there exists $0 \le i \le r$ such that $\widetilde\lieP \subseteq \widetilde{\lieP}_i$. Since the only proper ideals of $\doublewidetilde{\lieG}$ are
$\widetilde{\lieG}$ and $\lieZ$ (see e.g. ~\cite[Section 1.2]{KacWang}), we deduce from 
maximality of $\widetilde{\lieP}_i$ that
\begin{equation}\label{E:NormalizerParabolic}
N_{\doublewidetilde{\lieG}}\bigl(\widetilde\lieP_i\bigr) := \bigl\{x \in \doublewidetilde{\lieG} \, \big| \,
[x, y] \in \widetilde\lieP_i\; \mbox{for all} \, y \in \widetilde{\lieP}_i \bigr\} = 
\widetilde{\lieP}_i.
\end{equation}
It follows that 
\begin{equation}\label{E:Include}
\tilde\psi(\widetilde\lieO) \subseteq 
N_{\doublewidetilde{\lieG}}\bigl(\widetilde{\lieP}_i\bigr)  = 
\widetilde{\lieP}_i.
\end{equation}
Consider the automorphism $\lieG \stackrel{\psi}\lar \lieG$ induced by $\tilde\psi$. Since $\tilde\psi$ is inner, $\psi$ is $R$-linear.  
Applying to (\ref{E:Include}) the projection  $\widetilde\lieG \twoheadarrow \lieG$ and identifying $\lieG$ with $\lieL$, we finally  end up with an inclusion 
$\lieO \subseteq \phi\bigl(\lieP_{i}\bigr)$, where $\phi = \psi^{-1}$. 
\end{proof}

\begin{theorem}\label{P:CoisotropicSubalgebras}
Let $\lieO \subseteq \lieL$ be a bounded coisotropic Lie subalgebra of $\lieL$. Then we have: $t \lieO \subseteq \lieO^\perp$, i.e.~$\lieO$ is stable under the multiplication with the elements of $\CC[t]$. 
\end{theorem}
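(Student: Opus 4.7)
The plan is to combine Proposition \ref{P:OrdersinLoops} — which embeds the bounded Lie subalgebra $\lieO$ into a conjugate of a maximal parabolic $\lieP_i$ — with the key orthogonality property $t\lieP_i \subseteq \lieP_i^\perp$ established in Lemma \ref{L:MaximalParabolics}. The coisotropy hypothesis is not actually needed for the inclusion $t\lieO \subseteq \lieO^\perp$ itself; it only enters at the very end to turn that inclusion into the desired $\CC[t]$-stability.

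First, I would invoke Proposition \ref{P:OrdersinLoops} to pick an $R$-linear Lie algebra automorphism $\phi$ of $\lieL$ and an index $0 \le i \le r$ with $\lieO \subseteq \phi(\lieP_i)$. The crucial next step is to verify that $\phi$ is an isometry with respect to $B$. Since $\phi$ is $R$-linear and $\lieL$ is a finitely generated free $R$-module of rank $q$, the standard trace identity $\ad_{\phi(x)} = \phi\,\ad_x\,\phi^{-1}$ shows that the $R$-valued Killing form $K$ of $\lieL$ satisfies $K(\phi(x),\phi(y)) = K(x,y)$. By Proposition \ref{P:basicsonloops}(b), $mB = \res_0^\omega \circ K$, so $\phi$ preserves $B$ as well.

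The remainder is a short chain of inclusions. Applying the $R$-linear isometry $\phi$ to Lemma \ref{L:MaximalParabolics} yields
\[
t\,\phi(\lieP_i) \;=\; \phi(t\,\lieP_i) \;\subseteq\; \phi\bigl(\lieP_i^\perp\bigr) \;=\; \phi(\lieP_i)^\perp.
\]
Because $\lieO \subseteq \phi(\lieP_i)$, taking orthogonals gives $\phi(\lieP_i)^\perp \subseteq \lieO^\perp$, so chaining the inclusions produces
\[
t\,\lieO \;\subseteq\; t\,\phi(\lieP_i) \;\subseteq\; \phi(\lieP_i)^\perp \;\subseteq\; \lieO^\perp,
\]
which is the asserted statement. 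Coisotropy then gives $\lieO^\perp \subseteq \lieO$, hence $t\,\lieO \subseteq \lieO$, and iteration yields $\CC[t]\cdot \lieO \subseteq \lieO$.

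The main obstacle I anticipate is confirming that the automorphism $\phi$ from Proposition \ref{P:OrdersinLoops} is indeed $B$-preserving; this hinges on tracing through how $\phi$ is produced (as the descent to $\lieL$ of an inner automorphism of the affine Kac--Moody algebra $\doublewidetilde{\lieG}$) to ensure both its $R$-linearity and its compatibility with the Killing form. Once that is in hand, the argument is a one-line application of Lemma \ref{L:MaximalParabolics} to a suitably chosen parabolic.
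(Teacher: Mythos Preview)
Your proof is correct and follows essentially the same route as the paper: invoke Proposition~\ref{P:OrdersinLoops} to embed $\lieO$ in $\phi(\lieP_i)$, use that the $R$-linear automorphism $\phi$ preserves $B$, apply Lemma~\ref{L:MaximalParabolics}, and chain the inclusions. Your extra care in justifying the $B$-isometry of $\phi$ via the $R$-valued Killing form and Proposition~\ref{P:basicsonloops}(b) is a welcome clarification of a step the paper simply asserts.
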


\begin{proof}
According to Proposition \ref{P:OrdersinLoops}, there exists $0 \le i \le r$ and 
$\phi \in \Aut_{R}(\lieL)$
 such that $\lieO \subseteq 
\phi(\lieP_i)$.  Since 
$B\bigl(\phi(f), \phi(g)\bigr) = B(f, g)$ for all $f, g \in \lieL$,  we get (applying Lemma \ref{L:MaximalParabolics}): 
$$
t \lieO \subseteq t \phi(\lieP_i) = \phi(t \lieP_i) \subseteq \phi\bigl(
\lieP_i^\perp\bigr) = \bigl(\phi(\lieP_i)\bigr)^\perp \subseteq \lieO^\perp \subseteq \lieO,
$$
as asserted. 
\end{proof}

\section{Twists of the standard Lie bialgebra structure on a twisted loop algebra}\label{S:TwistsStandardStructure}

\noindent
Recall our notation: $\lieg$ is a  simple complex Lie algebra of dimension $q$, $\sigma \in \Aut_{\CC}(\lieg)$ is an automorphism of order $m$ and $\varepsilon = \exp\Bigl(\dfrac{2\pi i}{m}\Bigr)$.  For any $k \in \ZZ$ we denote:
$$
\lieg_{k} := \left\{a \in \lieg \, \big| \, 
\sigma(a) = \varepsilon^k a \right\} 
\quad \mbox{\rm and} \quad 
\lieg_{k}^\ddagger := \left\{a \in \lieg \, \big| \, 
\sigma(a) = \varepsilon^{-k} a \right\}.
$$
Let 
$\lieL = \lieL(\lieg, \sigma) = \bigoplus\limits_{k \in \ZZ} \lieg_k z^k$ and 
$
\lieL^\ddagger = \lieL(\lieg, \sigma^{-1}) = \bigoplus\limits_{k \in \ZZ} \lieg_k^\ddagger z^k
$ 
be the corresponding twisted loop algebras and 
$\lieL \times \lieL \stackrel{B}\lar \CC$, respectively  $\lieL^\ddagger  \times \lieL^\ddagger \stackrel{B^\ddagger}\lar \CC$, be the corresponding standard bilinear forms.  Note that the linear map
\begin{equation}\label{E:DaggerMap}
\lieL \lar \lieL^\ddagger,  \quad az^k \mapsto \bigl(az^k\bigr)^\ddagger :=  az^{-k} 
\; \mbox{for any}\, k \in \ZZ \; \mbox{and}\, a \in \lieg_k
\end{equation}
is an  isomorphism of Lie algebras as well as an isometry with respect to the bilinear forms $B$ and $B^\dagger$. Let us denote 
$ \lieL_+ = \bigoplus\limits_{k \in \ZZ} \lieg_k z_{+}^k$ and $\lieL_-= \bigoplus\limits_{k \in \ZZ} \lieg_k^\ddagger z_{-}^k$. Then we put:
$$
\lieM := \lieL_+ \times \lieL_-  \cong \lieL \times \lieL^\ddagger.
$$
Note that we  have a non-degenerate invariant symmetric bilinear form
\begin{equation}\label{E:FormOnLiem}
\lieM \times \lieM \stackrel{F}\lar \CC, \; \bigl((f_+, f_-), (g_+, g_-)\bigr) \mapsto B(f_+, g_+) - B^\ddagger(f_-, g_-).
\end{equation}
We fix a triangular decomposition $\lieg_0 = \lieg_0^+ \oplus \lieh \oplus \lieg_0^- = \lieg_0^\ddagger$ and  denote:
$$
\lieB_+ := (\lieg_0^+ \oplus \lieh) \oplus \bigl(\bigoplus\limits_{k = 1}^\infty \lieg_k z_{+}^k \bigr) \quad \mbox{\rm and} \quad 
\lieB_- := (\lieg_0^- \oplus \lieh) \oplus \bigl(\bigoplus\limits_{k = 1}^\infty \lieg_k^\ddagger z_{-}^k \bigr).
$$
Let $\lieB_\pm \stackrel{\pi_\pm}\lar \lieh$ be the  canonical projections. Then we put:  
\begin{equation}
\lieW^\circ := \bigl\{(f_+, f_-) \in \lieB_+ \times \lieB_- \, \big| \, 
\pi_+(f_+) + \pi_-(f_-) = 0 \bigr\} \; \mbox{\rm and} \;
\lieD := \bigl\{(f, f^\ddagger) \, \big| \, f \in \lieL \bigr\}.
\end{equation}
Similarly to  Theorem \ref{T:Drinfeld}, we have a  Manin triple
\begin{equation}\label{E:StandardManinTripleLoops}
\lieM = \lieD \dotplus \lieW^\circ.
\end{equation}
Let 
$\lieL \stackrel{\delta_\circ}\lar  \lieL \wedge \lieL$ be the standard Lie bialgebra cobracket on $\lieL$. According to  Theorem \ref{T:Drinfeld},  
$\delta_\circ$ is determined by (\ref{E:StandardManinTripleLoops}), where we use the identification $\lieL \stackrel{\cong}\lar \lieD, f \mapsto (f, f^\ddagger)$ 

\smallskip
\noindent
For 
$\ttau = \sum\limits_{i = 1}^n a_i \otimes b_i \in \lieD^{\otimes 2}$,  let
$\lieW^\circ \stackrel{f_\ttau}\lar \lieD, \, w \mapsto \sum\limits_{i = 1}^n F(w, a_i) b_i$ be the corresponding linear map, $\delta_\ttau = \delta_\circ + \partial_\ttau$  and 
$
\lieW_\ttau := \bigl\{w + f_\ttau(w) \, |\, w \in \lieW^\circ\bigr\}.
$ 
\begin{theorem}\label{T:ManinTriplesLoopTwists} For $\ttau \in \wedge^2\lieL \cong \wedge^2\lieD$, the corresponding subspace $\lieW_\ttau \subset \lieM$ is a Lie subalgebra if and only $(\lieL, \delta_\ttau)$ is a Lie bialgebra. Moreover, 
the corresponding map
$$
\Bigl\{\ttau \in \wedge^2\lieL \Big| \, (\lieL, \delta_\ttau) \, \mbox{is a Lie bialgebra} \Bigr\} \lar
\mathsf{MT}\bigl(\lieM, \lieD; \lieW^\circ)
$$
is a bijection and  $\lieL \stackrel{\delta_\ttau}\lar  \lieL \wedge \lieL$ is determined by the Manin triple $\lieM = \lieD \dotplus \lieW_\ttau$. 
\end{theorem}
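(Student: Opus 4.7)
The plan is to view Theorem \ref{T:ManinTriplesLoopTwists} as a direct specialization of Theorem \ref{T:ManinTriplesTwists} combined with the vanishing of ad-invariants in $\lieL^{\otimes 3}$. Concretely, I would take the Manin triple $(\lieR,\lieR_+,\lieR_-) = (\lieM, \lieD, \lieW^\circ)$ of (\ref{E:StandardManinTripleLoops}); by Theorem \ref{T:Drinfeld} (transferred to the twisted setting via the isomorphism (\ref{E:DaggerMap}) and the Gabber--Kac isomorphism (\ref{E:GabberKac})), this Manin triple determines the standard cobracket $\delta_\circ$ on $\lieD \cong \lieL$ in the sense of Definition \ref{D:CobracketManinTriple}. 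Theorem \ref{T:ManinTriplesTwists} then already provides the bijection
\[
\Bigl\{\ttau \in \wedge^2(\lieD)\,\big|\, \alt\bigl((\delta_\circ\otimes\mathbbm{1})(\ttau)\bigr) - [[\ttau,\ttau]] = 0\Bigr\}\;\longrightarrow\;\mathsf{MT}(\lieM, \lieD;\lieW^\circ),\quad \ttau\mapsto \lieW_\ttau,
\]
and asserts that the cobracket $\delta_\ttau$ on $\lieD \cong \lieL$ is determined by $\lieM = \lieD \dotplus \lieW_\ttau$.

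The only remaining task is to match the index set on the left with $\{\ttau \in \wedge^2\lieL : (\lieL,\delta_\ttau) \text{ is a Lie bialgebra}\}$. By Proposition \ref{P:TwistLieBialgStructures}, $(\lieL,\delta_\ttau)$ is a Lie bialgebra if and only if the tensor
\[
T_\ttau := \alt\bigl((\delta_\circ \otimes \mathbbm{1})(\ttau)\bigr) - [[\ttau,\ttau]] \;\in\; \lieL^{\otimes 3}
\]
is ad-invariant. Here I invoke the key rigidity result Proposition \ref{P:basicsonloops}(c), which states that $\lieL^{\otimes 3}$ contains no non-zero ad-invariant tensors. Hence $T_\ttau$ is ad-invariant if and only if $T_\ttau = 0$, i.e.\ if and only if $\ttau$ satisfies the twist equation (\ref{E:TwistEquation}). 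This identifies the index set above precisely with those $\ttau \in \wedge^2\lieL$ for which $(\lieL,\delta_\ttau)$ is a Lie bialgebra, and combining with Theorem \ref{T:ManinTriplesTwists} immediately gives both the equivalence and the bijection claimed in the theorem.

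The subtle point, rather than a real obstacle, is to make sure the identification $\lieL \cong \lieD$ used throughout is compatible with all the structures in play. One should verify that under the Lie algebra isomorphism $\lieL \to \lieD$, $f \mapsto (f,f^\ddagger)$, the form $B$ corresponds to the restriction of $F$ from (\ref{E:FormOnLiem}) to $\lieD$ (up to the sign convention), wedge tensors correspond to wedge tensors, and the cobracket on $\lieL$ given by (\ref{E:standardbialgstructure2}) agrees with the one determined by the Manin triple $\lieM = \lieD \dotplus \lieW^\circ$. This is an essentially formal check, analogous to the argument for finite-dimensional simple Lie algebras in \cite[Example 1.3.8]{ChariPressley}, but it is what legitimates the transport of Theorem \ref{T:ManinTriplesTwists} from the abstract Manin-triple setting to the concrete loop-algebra setting, and it is the place where one really uses that $\lieD$ is the diagonal embedding (not just some isomorphic copy of $\lieL$) inside $\lieM$.
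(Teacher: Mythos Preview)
Your proposal is correct and follows essentially the same route as the paper: invoke Proposition \ref{P:basicsonloops}(c) to kill ad-invariants in $\lieL^{\otimes 3}$, deduce via Proposition \ref{P:TwistLieBialgStructures} that $(\lieL,\delta_\ttau)$ is a Lie bialgebra if and only if the twist equation (\ref{E:TwistEquation}) holds, and then apply Theorem \ref{T:ManinTriplesTwists} to the Manin triple $(\lieM,\lieD,\lieW^\circ)$. The paper's proof is just the terse three-sentence version of exactly this argument; your extra paragraph on the compatibility of the identification $\lieL\cong\lieD$ is reasonable elaboration but not a deviation in strategy.
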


\begin{proof}
By  Proposition \ref{P:basicsonloops},  $\lieL^{\otimes 3}$ does not contain any non-zero ad-invariant elements. According to Proposition \ref{P:TwistLieBialgStructures}, 
  $(\lieL, \delta_\ttau)$ is a Lie bialgebra if and only if 
 $\ttau$ satisfies the twist equation (\ref{E:TwistEquation}). Hence, the result follows from Theorem \ref{T:ManinTriplesTwists}.
\end{proof}

\smallskip
\noindent
Let $\lieW \subset \lieM$ be a Lie subalgebra as in Theorem \ref{T:ManinTriplesLoopTwists} and $\lieW_\pm \subseteq \lieL_\pm$ be its image under the projections $\lieM \twoheadarrow \lieL_\pm$. Starting with the embedding $\lieW \subseteq \lieW_+ \times \lieW_-$, we get:
$$
\lieW^\perp_+ \times \lieW^\perp_- = 
\bigl(\lieW_+ \times \lieW_-)^\perp \subseteq \lieW^\perp = \lieW \subseteq \lieW_+ \times \lieW_-.
$$
It follows that $\lieW_\pm^\perp \subseteq \lieW_\pm$,  
$\lieW_+^\perp \times \{0\} \subseteq \lieW$ and $\{0\} \times \lieW_-^\perp \subseteq \lieW$.

\smallskip
\noindent
The assumption $\lieW \asymp 	 \lieW^\circ$ implies that there exists $n \in \NN$ such that $\bigl(\lieL_{+}\bigr)_{\ge n} \times \bigl(\lieL_{-}\bigr)_{\ge n}  \subseteq \lieW$. Hence, we obtain:
$$
\bigl(\lieL_{+}\bigr)_{\ge n} \times \bigl(\lieL_{-}\bigr)_{\ge n} 
\subseteq \lieW = \lieW^\perp \subseteq
\Bigl(\bigl(\lieL_{+}\bigr)_{\ge n} \times \bigl(\lieL_{-}\bigr)_{\ge n}\Bigr)^\perp \subseteq
\bigl(\lieL_{+}\bigr)_{\ge -n} \times \bigl(\lieL_{-}\bigr)_{\ge -n} 
$$
It follows that $\bigl(\lieL_{\pm}\bigr)_{\ge n} \subseteq \lieW_\pm \subseteq \bigl(\lieL_{\pm}\bigr)_{\ge -n}$, i.e.~$\lieW_\pm$ are   bounded coisotropic Lie subalgebras  of the twisted loop algebra $\lieL_\pm$. 

\begin{remark}
Since the linear map (\ref{E:DaggerMap})  is an isomorphism of Lie algebras, compatible with the standard bilinear forms, one can equally parametrize  twists of the standard Lie bialgebra cobracket $\lieL \stackrel{\delta_\circ}\lar  \wedge^2(\lieL)$ via Manin triples 
$$
\lieL \times \lieL = \overline{\lieD} \dotplus \overline{\lieW}, \quad \overline{\lieW} \asymp \overline{\lieW}^\circ
$$
where $\overline{\lieD} = \bigl\{(f, f) \,\big| \, f \in \lieL \bigr\}$ and $\overline{\lieW}^\circ$ is given by (\ref{E:StandardW}).  The usage of such Manin triples would be quite in  the spirit of the conventional  notation \cite{Drinfeld, ChariPressley} of  Theorem \ref{T:Drinfeld}. However,  as we shall see later on, Manin triples from  
Theorem \ref{T:ManinTriplesLoopTwists} are more natural from the 
algebro-geometric viewpoint. \hfill $\lozenge$
\end{remark}

\smallskip
\noindent
We put: $R = \CC[t, t^{-1}]$, $R_\pm = \CC[t_\pm, t_\pm^{-1}]\supset L_\pm = \CC[t_\pm]$, where $t = z^m$ and $t_\pm = z_\pm^m$. We shall use the identifications $R \stackrel{\cong}\lar R_\pm, t \mapsto t_{\pm}^{\pm 1}$.
Theorem  \ref{P:CoisotropicSubalgebras} implies that 
\begin{equation}\label{E:EmbeddingsOrders}
t_\pm \lieW_\pm \subseteq \lieW_\pm^\perp \subseteq \lieW_\pm.
\end{equation}
\begin{lemma}\label{L:OrdersFromMT1}
The following results are true.
\begin{enumerate}
\item[(a)] The Lie algebra $\lieW_\pm$ is a free module of rank $q$ over  $L_\pm$. Moreover, the canonical map
$
R_{\pm} \otimes_{L_{\pm}} \lieW_\pm \lar  \lieL_\pm
$
is an isomorphism of Lie algebras. 
\item[(b)] We have: $(t_+, t_-)\lieW = t_+ \lieW_+ \times t_- \lieW_- \subseteq \lieW$, where $(t_+, t_-)$ is the ideal in $R_+ \times R_-$ generated by $t_+$ and $t_-$. In particular, $\lieW$ is a finitely generated torsion free module over the algebra $O:= \CC[t_+, t_-]/(t_+ t_-)$. 
\item[(c)] The linear map $\lieW/(t_+, t_-)\lieW \lar  \bigl(\lieW_+/t_+ \lieW_+\bigr) \times \bigl(\lieW_-/t_- \lieW_-\bigr)$ is an injective morphism of Lie algebras, whereas both maps  $\lieW/(t_+, t_-)\lieW \lar  \lieW_\pm/t_\pm \lieW_\pm$ are surjective morphisms of Lie algebras. 
\end{enumerate}
\end{lemma}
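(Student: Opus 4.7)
The plan is to use the boundedness of $\lieW$ and its projections $\lieW_\pm$ together with the orthogonality inclusions established in the text immediately preceding the lemma; the rest is essentially module-theoretic bookkeeping over a PID and over a singular one-dimensional affine algebra.

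For (a), the central observation is that for every $k \in \ZZ$ the subspace $(\lieL_\pm)_{\ge k}$ is a free $L_\pm$-module of rank $q$: grouping the graded pieces $\lieg_j z_\pm^j$ by residue class of $j$ modulo $m$ and using $\lieg_{j+m} = \lieg_j$ realises each class as a shifted copy of $L_\pm$, with total rank $\sum_{s=0}^{m-1}\dim_\CC\lieg_s = q$. The boundedness inclusion $(\lieL_\pm)_{\ge n}\subseteq\lieW_\pm\subseteq(\lieL_\pm)_{\ge -n}$, combined with (\ref{E:EmbeddingsOrders}), then exhibits $\lieW_\pm$ as a finitely generated $L_\pm$-submodule of the free module $(\lieL_\pm)_{\ge -n}$. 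Since $L_\pm$ is a PID, $\lieW_\pm$ is free, and its rank equals $q$ by squeezing between two free modules of rank $q$. Extending scalars produces an $R_\pm$-linear surjection $R_\pm\otimes_{L_\pm}\lieW_\pm\twoheadrightarrow\lieL_\pm$ (surjective since the image already contains $(\lieL_\pm)_{\ge n}$ and $R_\pm\cdot(\lieL_\pm)_{\ge n}=\lieL_\pm$), and a surjection between free $R_\pm$-modules of rank $q$ is an isomorphism.

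For (b), I would first deduce $t_+\lieW_+\times t_-\lieW_-\subseteq\lieW$ by combining (\ref{E:EmbeddingsOrders}) with the inclusions $\lieW_+^\perp\times\{0\}$ and $\{0\}\times\lieW_-^\perp\subseteq\lieW$ given immediately before the lemma. The $O$-module structure on $\lieM$ is the one induced by the ring embedding $O\hookrightarrow L_+\times L_-$ sending $t_+\mapsto(t_+,0)$, $t_-\mapsto(0,t_-)$, with $L_+\times L_-$ acting componentwise; under this action $t_+\cdot(w_+,w_-)=(t_+w_+,0)$ and $t_-\cdot(w_+,w_-)=(0,t_-w_-)$, so the previous inclusion is precisely the statement that $\lieW$ is an $O$-submodule of $\lieM$ and that $(t_+,t_-)\lieW = t_+\lieW_+\times t_-\lieW_-$. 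Finite generation follows because $\lieW\subseteq(\lieL_+)_{\ge -n}\times(\lieL_-)_{\ge -n}$, and the latter is finitely generated over $L_+\times L_-$ by (a), while $L_+\times L_-$ is generated over the Noetherian ring $O$ by the single element $(1,0)$. Torsion-freeness over $O$ is then checked against non-zero-divisors, which are precisely the $f\in O$ whose image $(f_+,f_-)\in L_+\times L_-$ has both entries nonzero; such $f$ act injectively on $\lieL_+\times\lieL_-$ by $L_\pm$-torsion-freeness of $\lieL_\pm$, hence on $\lieW$.

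For (c), the equality $(t_+,t_-)\lieW=t_+\lieW_+\times t_-\lieW_-$ from (b) makes everything transparent. The kernel of the composite $\lieW\hookrightarrow\lieW_+\times\lieW_-\twoheadrightarrow(\lieW_+/t_+\lieW_+)\times(\lieW_-/t_-\lieW_-)$ is $\lieW\cap(t_+\lieW_+\times t_-\lieW_-)=(t_+,t_-)\lieW$ (the last equality because the right-hand side is already contained in $\lieW$ by (b)), yielding injectivity of the induced map on $\lieW/(t_+,t_-)\lieW$. Surjectivity of each component $\lieW/(t_+,t_-)\lieW\to\lieW_\pm/t_\pm\lieW_\pm$ follows from the defining surjectivity of the projection $\lieW\twoheadrightarrow\lieW_\pm$. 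Finally, the identity $[t_\pm u,v]=t_\pm[u,v]$ in $\lieL_\pm$ shows that $(t_+,t_-)\lieW$ and $t_\pm\lieW_\pm$ are Lie ideals, so the maps in question are genuine Lie algebra morphisms. The main technical point is in (b): since $O=\CC[t_+,t_-]/(t_+t_-)$ is singular (not a domain), torsion-freeness must be phrased via non-zero-divisors, and the finite generation of $L_+\times L_-$ over $O$ is what glues the two pieces into a module over the correct base ring; once these are accepted, (a) and (c) are essentially formal.
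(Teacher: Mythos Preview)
Your argument is correct and follows essentially the same path as the paper's proof: both rely on the inclusions $t_\pm\lieW_\pm\subseteq\lieW_\pm^\perp$ from (\ref{E:EmbeddingsOrders}) and on $\lieW_\pm^\perp\times\{0\},\ \{0\}\times\lieW_-^\perp\subseteq\lieW$ to obtain $t_+\lieW_+\times t_-\lieW_-\subseteq\lieW$, and then read off (c) from the resulting equality $(t_+,t_-)\lieW=t_+\lieW_+\times t_-\lieW_-$. The only mild difference is in the bookkeeping for finite generation in (b): the paper invokes $\lieW\asymp\lieW^\circ$ together with the fact that $\lieW^\circ$ itself is finitely generated over $O$, whereas you pass through the Noetherianness of $O$ and the inclusion $\lieW\subseteq(\lieL_+)_{\ge -n}\times(\lieL_-)_{\ge -n}$; both are equally valid and equally short.
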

\begin{proof} (a) We get from (\ref{E:EmbeddingsOrders}) that $\lieW_\pm$ is a $L_\pm$-submodule of $\lieL_\pm$. It follows from $\lieW_\pm \asymp 	 \lieW_\pm^\circ = \lieB_\pm$ that the canonical map $
R_{\pm} \otimes_{L_{\pm}} \lieW_\pm \lar  \lieL_\pm
$
is an isomorphism of Lie algebras as well as that  $\lieW_\pm$ is a free module of rank $q$ over $L_\pm$. 

\smallskip
\noindent
(b) It follows from the embedding $\lieW \subseteq \lieW_+ \times \lieW_-$ that 
$(t_+, t_-) \lieW \subseteq t_+ \lieW_+ \times t_-\lieW_-$. On the other hand, 
it follows from the inclusions (\ref{E:EmbeddingsOrders}) that
$$
t_+ \lieW_+ \times t_- \lieW_- \subseteq \lieW_+^\perp \times \lieW_-^\perp \subseteq \lieW^\perp = \lieW. 
$$
Abusing the notation, we can view $O$ as the subalgebra of $R_+ \times R_-$ generated by the elements 
$t_+ = (t_+, 0)$ and $t_- = (0, t_-)$.  It follows from the assumption $\lieW \asymp 	 \lieW^\circ$ and the fact that $\lieW^\circ$ is finitely generated over $O$ that 
$\lieW$ is a finitely generated torsion free $O$-module, as asserted. 

\smallskip
\noindent
(c) Both  results  follow from the definition of $\lieW_\pm$ and the previous statement. 
\end{proof}

\begin{lemma}\label{L:OrdersFromMT2}
Let $\lieV_\pm$ be the preimage of $\lieW_\pm$ under the isomorphism $\lieL \to \lieL_\pm$ and (abusing the notation) $L_\pm = \CC[t^{\pm 1}]$. Then the following results are true:
\begin{enumerate}
\item[(a)] $\lieL = \lieV_+ + \lieV_-$;
\item[(b)]  the linear map $\lieV_+ \cap \lieV_- \stackrel{\imath}\lar \bigl(\lieW_+ \times \lieW_-\bigr)/\lieW, f \mapsto \overline{(f, f^\ddagger)}$ is an isomorphism; 
\item[(c)] $\lieV_\pm$ is a free module of rank $q$ over $L_\pm$ and both canonical maps  $R   \otimes_{L_\pm} \lieV_\pm \to \lieL$ are isomorphisms of Lie algebras. 
\end{enumerate}
\end{lemma}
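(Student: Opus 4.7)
The plan is to extract (a) and (b) directly from the Manin triple decomposition $\lieM = \lieD \dotplus \lieW$ by decomposing well-chosen elements, and to deduce (c) by transporting Lemma \ref{L:OrdersFromMT1}(a) along the canonical isomorphisms $\lieL \cong \lieL_\pm$.

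For (a), given $f \in \lieL$, I would apply the Manin decomposition to $(f, 0) \in \lieM$, writing it uniquely as $(h, h^\ddagger) + (w_+, w_-)$ with $h \in \lieL$ and $(w_+, w_-) \in \lieW$. Reading off the two components gives $f = h + w_+$ with $w_+ \in \lieW_+$ (so $w_+ \in \lieV_+$ under $\lieL \cong \lieL_+$), and $h^\ddagger = -w_- \in \lieW_-$ (so $h \in \lieV_-$), whence $f \in \lieV_+ + \lieV_-$.

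For (b), well-definedness of $\imath$ is immediate since $f \in \lieV_+ \cap \lieV_-$ forces $(f, f^\ddagger) \in \lieW_+ \times \lieW_-$. Injectivity follows from $\lieD \cap \lieW = 0$: if $(f, f^\ddagger) \in \lieW$, then combined with $(f, f^\ddagger) \in \lieD$ this gives $f = 0$. For surjectivity, given $(w_+, w_-) \in \lieW_+ \times \lieW_-$, one Manin-decomposes it as $(h, h^\ddagger) + (w_+', w_-')$ with $(w_+', w_-') \in \lieW$. Then $h = w_+ - w_+' \in \lieW_+$ and $h^\ddagger = w_- - w_-' \in \lieW_-$ place $h$ in $\lieV_+ \cap \lieV_-$, and $\imath(h) = \overline{(w_+, w_-)}$ as required.

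For (c), the isomorphism $\lieL \to \lieL_+$ sends $z \mapsto z_+$ and hence $t \mapsto t_+$, so $L_+ = \CC[t] \subset R$ is identified with $\CC[t_+] \subset R_+$, and $\lieV_+$ with $\lieW_+$; freeness of rank $q$ and the base-change isomorphism therefore transport directly from Lemma \ref{L:OrdersFromMT1}(a). For the minus side, the isomorphism $\lieL \to \lieL_-$ factors as $(-)^\ddagger$ followed by $z \mapsto z_-$ and sends $t = z^m$ to $z_-^{-m} = t_-^{-1}$, so $L_- = \CC[t^{-1}] \subset R$ is identified with $\CC[t_-] \subset R_-$ and $\lieV_-$ with $\lieW_-$; the same transport argument applies. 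The only subtlety is the bookkeeping of identifications, in particular the sign switch in the exponent of $t$ that appears on the minus side; I do not anticipate any genuine obstacle beyond that.
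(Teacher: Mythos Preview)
Your proposal is correct and follows essentially the same approach as the paper: parts (a) and (b) are obtained by Manin-decomposing $(f,0)$ and $(w_+,w_-)$ respectively, with injectivity in (b) coming from $\lieD \cap \lieW = 0$, and part (c) is deduced by transporting Lemma~\ref{L:OrdersFromMT1}(a) across the identifications $\lieL \cong \lieL_\pm$. The paper is in fact terser on (c), simply calling it a ``translation'' of Lemma~\ref{L:OrdersFromMT1}(a), whereas you spell out the bookkeeping of the $t \mapsto t_\pm^{\pm 1}$ identifications; this is fine and there is no gap.
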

\begin{proof} (a) Take any $f \in \lieL$. It follows from the direct sum decomposition $\lieM = \lieD \dotplus \lieW$ that there exist $g \in \lieL$ and
$(w_+, w_-) \in \lieW$ such that 
$
(f, 0) = (g, g^\ddagger) + (w_+, w_-).
$
Let $v_\pm \in \lieV$ be the elements corresponding to $w_\pm \in \lieW_\pm$ under the isomorphisms $\lieL \to \lieL_\pm$. 
It follows that $f = v_+ - v_- \in \lieV_+ + \lieV_-$, as asserted. 

\smallskip
\noindent
(b) Let $v \in \lieV_+ \cap \lieV_-$ be such that $\overline{(v, v^\ddagger)} = 0$ in $\bigl(\lieW_+ \times \lieW_-\bigr)/\lieW$. It follows that 
$(v, v^\ddagger) \in \lieD \cap \lieW = 0$, hence $v = 0$, what implies injectivity of $\imath$. 

\smallskip
\noindent
Consider an arbitrary element $(w_+, w_-) \in \lieW_+ \times \lieW_-$. Then there exist $w \in \lieL$ and $(w'_+, w'_-) \in \lieW$ such that
$
(w_+, w_-) = (w, w^\ddagger) + (w'_+, w'_-).
$
It follows that $w = w_+ - w'_+ \in \lieW_+$ and $w^\ddagger = w_- - w'_- \in \lieW_-$, thus 
$
\overline{(w_+, w_-)} = \overline{(w, w^\ddagger)} \in \bigl(\lieW_+ \times \lieW_-\bigr)/\lieW.
$
We conclude  that  $\imath$ is surjective, hence  an isomorphism. 

\smallskip
\noindent
(c) This statement is a translation of the corresponding result from Lemma
\ref{L:OrdersFromMT1}.
\end{proof}

\smallskip
\noindent
Let $\widehat{L} = \CC\llbrace t\rrbrace$ and $\widehat\lieL := \widehat{L} \otimes_R \lieL$. We identify elements of $\widehat\lieL$ with formal power series $\sum\limits_{k \gg -\infty} a_k z^k$ (where $a_k \in \lieg_k$ for all $k \in \ZZ$). Obviously, we have an embedding of Lie algebras $\lieL \longhookrightarrow \widehat{\lieL}$. We  extend the standard form ${\lieL} \times {\lieL} \stackrel{{B}} \lar \CC$ to a bilinear  form $\widehat{\lieL} \times \widehat{\lieL} \stackrel{\widehat{B}} \lar \CC$,  defining it by the same formula (\ref{E:FormB}). Next, we put:
$\widehat{\lieM}:= \widehat{\lieL} \times \widehat{\lieL}^\ddagger$ and denote by
$\widehat{\lieM} \times \widehat{\lieM} \stackrel{\widehat{F}}\lar \CC$ the bilinear form given by the same recipe as in (\ref{E:FormOnLiem}). Note that
$\lieL \to \widehat\lieM, f \mapsto (f, f^\ddagger)$ is an embedding of Lie algebras, whose image is an isotropic subspace with respect to $\widehat{F}$. 

\smallskip
\noindent
Let 
$
\widehat\lieB_+ := (\lieg_0^+ \oplus \lieh) \oplus \bigl(\prod\limits_{k = 1}^\infty \lieg_k z_{+}^k \bigr)$  and $
\widehat\lieB_- := (\lieg_0^- \oplus \lieh) \oplus \bigl(\prod\limits_{k = 1}^\infty \lieg_k^\ddagger z_{-}^k \bigr).
$
We put:
$$
\widehat{\lieW}^\circ := \bigl\{(f_+, f_-) \in \widehat{\lieB}_+ \times \widehat{\lieB}_- \, \big| \, 
\pi_+(f_+) + \pi_-(f_-) = 0 \bigr\} \quad \mbox{\rm and} \quad
\lieD := \bigl\{(f, f^\ddagger) \, \big| \, f \in \lieL \bigr\}.
$$
Analogously to (\ref{E:StandardManinTripleLoops}), we have a Manin triple
\begin{equation}\label{E:CompletedStandardManinTripleLoops}
\widehat{\lieM} = \lieD \, \dotplus\,  \widehat{\lieW}^\circ.
\end{equation}
\smallskip
\noindent
Our next goal is to reformulate the theory of twists of the standard Lie bialgebra structure on $\lieL$  in the terms of completed Manin triples. 

\smallskip
\noindent
For any $n \in \NN$, we define the linear map $\widehat\lieL \stackrel{\jmath_n}\lar \lieL$  as the composition 
$$
\widehat\lieL \rightarrowdbl \bigoplus\limits_{k \gg -\infty}^n \lieg_k z^k 
\longhookrightarrow  \lieL, \; \sum\limits_{k \gg -\infty}^\infty  a_k z^k \mapsto \sum\limits_{k \gg -\infty}^n a_k z^k.
$$
Next, for any $f, g \in \widehat\lieL$ there exists $n_0 \in \NN$ such that for all $n \ge n_0$ we have: 
\begin{equation}\label{E:FormJets}
\widehat{B}(f, g) = B\bigl(\jmath_n(f), \jmath_n(g)\bigr).
\end{equation}

\smallskip
\noindent
Let $\lieM = \lieD \dotplus \lieW$ be a Manin triple from Theorem \ref{T:ManinTriplesLoopTwists}. According to Lemma \ref{L:OrdersFromMT1}, $\lieW$ is a finitely generated $O$-module. We can definite the completed Lie algebra $\widehat\lieW$ as follows: 
\begin{equation}\label{E:CompletedLagrSubspace}
\widehat\lieW:= \varprojlim \bigl(\lieW/\mathfrak{m}^k \lieW\bigr) \cong \widehat{O} \otimes_O \lieW \subset (\widehat{L}_+ \times \widehat{L}_-)\otimes_O \lieW \cong \widehat{\lieM}, 
\end{equation}
where $\mathfrak{m} = (t_+, t_-)$, $\widehat{L}_\pm = \CC\llbrace t_\pm\rrbrace$ and $\widehat{O}= \varprojlim O/\mathfrak{m}^k \cong \CC[\![t_+,t_-]\!]/(t_+t_-) \subset \widehat{L}_+ \times \widehat{L}_-$.  It follows from $\lieW \asymp 	 \lieW^\circ$ that $\widehat\lieW = \lieW \,+\, \mathfrak{m}^k \widehat{\lieW}^\circ$ for all sufficiently large $k \in \NN$, which can serve as an alternative definition
of $\widehat{\lieW}$.

\begin{proposition}\label{P:CompletedTwistsManinTriples}
We have the following commutative diagram of bijections:
\begin{equation}\label{E:DiagramOfBijections}
\begin{array}{c}
    \xymatrix{&\Bigl\{\ttau \in \wedge^2\lieL \Big| \, (\lieL, \delta_\ttau) \, \mbox{is a Lie bialgebra} \Bigr\}\ar[dr]\ar[dl]&\\\mathsf{MT}\bigl(\lieM, \lieC; \lieW^\circ)\ar[rr]&&\mathsf{MT}\bigl(\widehat\lieM, \lieC; \widehat\lieW^\circ)}
    \end{array}.
\end{equation}
Here, the left diagonal arrow is given in Theorem \ref{T:ManinTriplesLoopTwists} and the horizontal arrow is given by \(\lieW \mapsto \widehat\lieW\). Moreover, if \(\delta_\ttau\) is a Lie bialgebra cobracket for some \(\ttau \in \wedge^2\lieL\), it is determined by the Manin triple \(\widehat\lieM = \lieD \, \dotplus \, \widehat\lieW_\ttau\).
\end{proposition}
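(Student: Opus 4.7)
The plan is to apply Theorem \ref{T:ManinTriplesTwists} a second time, now in the completed setting $(\widehat\lieM,\widehat F,\lieD,\widehat\lieW^\circ)$, to parameterize $\mathsf{MT}(\widehat\lieM,\lieD;\widehat\lieW^\circ)$ by the same source set of admissible twists already used in Theorem \ref{T:ManinTriplesLoopTwists}. Commutativity of the diagram will then exhibit the horizontal arrow $\lieW\mapsto\widehat\lieW$ as the composition of the two diagonal bijections, so that all three arrows are bijections simultaneously.

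The hypotheses of Theorem \ref{T:ManinTriplesTwists} applied to the completed triple $(\widehat\lieM,\widehat F,\lieD,\widehat\lieW^\circ)$ decompose into: the Manin-triple structure itself, which is equation (\ref{E:CompletedStandardManinTripleLoops}); non-degeneracy of $\widehat F$ on $\widehat\lieM$, which is immediate from the non-degenerate pairings $\kappa\colon\lieg_k\times\lieg_{-k}\to\CC$ of Lemma \ref{L:Orthogonality}; the assertion that $(\widehat\lieM,\lieD,\widehat\lieW^\circ)$ determines the standard cobracket $\delta_\circ$ on $\lieD\cong\lieL$; and the absence of non-zero ad-invariant tensors in $\lieD^{\otimes 3}\cong\lieL^{\otimes 3}$, which is Proposition \ref{P:basicsonloops}(c). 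The penultimate point follows from its uncompleted analogue (Theorem \ref{T:Drinfeld} and Corollary \ref{C:StandardLieBialgLoops}): since $\delta_\circ(a)\in\wedge^2\lieL$ is a finite tensor and $a\in\lieL$ has bounded degree, the required identity $\widehat F(\delta_\circ(a),\hat b_1\otimes \hat b_2)=\widehat F(a,[\hat b_1,\hat b_2])$ for $\hat b_1,\hat b_2\in\widehat\lieW^\circ$ reduces, modulo a sufficiently deep power of the ideal $\mathfrak{m}=(t_+,t_-)$, to the corresponding identity over $\lieW^\circ$. Theorem \ref{T:ManinTriplesTwists} then yields the right diagonal as the bijection $\ttau\mapsto(\widehat\lieW^\circ)_\ttau$, and crucially the twist equation (\ref{E:TwistEquation}) cutting out the source set is the \emph{same} in both applications, so both diagonals share a common domain.

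Commutativity then reduces to the identity $\widehat{\lieW_\ttau}=(\widehat\lieW^\circ)_\ttau$ for every admissible $\ttau$, where the left-hand side is the completion from (\ref{E:CompletedLagrSubspace}). Writing $\ttau=\sum_{i=1}^n a_i\otimes b_i$, the finite-rank operator $f_\ttau\colon\lieW^\circ\to\lieD$ extends by the same formula (using $\widehat F$) to $\hat f_\ttau\colon\widehat\lieW^\circ\to\lieD$. The decisive observation is that each $a_i$ lies in bounded $R$-degree, so $\hat f_\ttau$ annihilates $\mathfrak{m}^k\widehat\lieW^\circ$ for $k\gg 0$. Hence $\mathfrak{m}^k\widehat\lieW^\circ\subseteq(\widehat\lieW^\circ)_\ttau$, and combined with the evident inclusion $\lieW_\ttau\subseteq(\widehat\lieW^\circ)_\ttau$ this yields $\widehat{\lieW_\ttau}=\lieW_\ttau+\mathfrak{m}^k\widehat\lieW^\circ\subseteq(\widehat\lieW^\circ)_\ttau$; the reverse inclusion is obtained by decomposing a generic $\hat w\in\widehat\lieW^\circ$ as $w+\hat w'$ with $w\in\lieW^\circ$ and $\hat w'\in\mathfrak{m}^k\widehat\lieW^\circ$. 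The ``moreover'' statement then drops out of the conclusion of Theorem \ref{T:ManinTriplesTwists} applied in the completed setting. The main technical obstacle is precisely this completion bookkeeping: extending $f_\ttau$ to $\hat f_\ttau$, controlling its vanishing on deep filtration layers, and reconciling the alternative description $\widehat{\lieW_\ttau}=\lieW_\ttau+\mathfrak{m}^k\widehat\lieW^\circ$ with the $\widehat O$-tensor definition in (\ref{E:CompletedLagrSubspace}) --- all of which rests on the finite support of $\ttau$ and the bounded-degree structure of $\lieW^\circ$.
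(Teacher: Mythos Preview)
Your proposal is correct and follows essentially the same approach as the paper's proof: both apply Theorem \ref{T:ManinTriplesTwists} to the completed Manin triple $\widehat\lieM=\lieD\dotplus\widehat\lieW^\circ$, reduce the ``determines $\delta_\circ$'' check to the uncompleted case by truncation modulo a high power of $\mathfrak m$, and establish commutativity by observing that $\hat f_\ttau$ vanishes on $\mathfrak m^k\widehat\lieW^\circ$ for large $k$, whence $(\widehat\lieW^\circ)_\ttau=\lieW_\ttau+\mathfrak m^k\widehat\lieW^\circ=\widehat{\lieW_\ttau}$. The paper phrases the truncation via the explicit maps $\jmath_n$ and states the final equality directly rather than as two inclusions, but the substance is identical.
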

\begin{proof}

We first show that the Manin triple $\widehat{\lieM} = \lieD \dotplus \widehat{\lieW}^\circ$ determines $\delta_\circ$.
By abuse of notation, we write $\jmath_n((f,g^\ddagger)) = (\jmath_n(f),\jmath_n(g)^\ddagger) \in \lieM$ for any $(f,g^\ddagger) \in \widehat\lieM$, where $g^\ddagger(z) = g(z^{-1}) \in \widehat\lieL^\ddagger$ and $f,g \in \widehat\lieL$.
Let $f \in \lieD$ and $g', g'' \in \widehat\lieW^\circ$. Then for all $n \in \NN$ we have: $\jmath_n(g'), \jmath_n(g'') \in \lieW^\circ$ and 
$$
\widehat{F}\bigl(\delta_\circ(f), g' \otimes g''\bigr) = 
F\bigl(\delta_\ttau(f), \jmath_n(g') \otimes \jmath_n(g'')\bigr) = 
F\bigl(f, \bigl[\jmath_n(g'), \jmath_n(g'') \bigr]\bigr),
$$
where the last equality follows from Theorem \ref{T:ManinTriplesLoopTwists}.
Taking $n$ sufficiently large, we continue:
$$
F\bigl(f, \bigl[\jmath_n(g'), \jmath_n(g'') \bigr]\bigr) = F\bigl(f, 
\jmath_n\bigl[g', g''\bigr]\bigr)= \widehat{F}\bigl(f, [g',  g'']\bigr),
$$
what implies that $\widehat{F}\bigl(\delta_\circ(f), g' \otimes g''\bigr) = \widehat{F}\bigl(f, [g',  g'']\bigr)$, as asserted.

\smallskip
\noindent
By  Proposition \ref{P:basicsonloops},  $\lieL^{\otimes 3}$ does not contain any non-zero ad-invariant elements, hence according to Proposition \ref{P:TwistLieBialgStructures}, 
$(\lieL, \delta_\ttau)$ is a Lie bialgebra if and only if 
$\ttau$ satisfies the twist equation (\ref{E:TwistEquation}) and we obtain the right diagonal bijection using Theorem \ref{T:ManinTriplesTwists}. It remains to show that the diagram is commutative.

\smallskip
\noindent
Next,  for any tensor $\ttau = \sum_{i = 1}^n a_i \otimes b_i$ there exists  $k \in \NN$ such that for all $w \in \widehat\lieW^\circ$ we have 
$$
 \widehat f_\ttau(w) := \sum\limits_{i = 1}^n \widehat F(w,a_i)b_i = \sum_{i = 1}^n  F(\jmath_k (w),a_i)b_i.
$$
Since \(\widehat F\) extends \(F\), we obtain: $\lieW_\ttau = \bigl\{w + \widehat f_\ttau(w)\mid w \in \lieW^\circ\bigr\}$. As a consequence, 
$$\left\{w + \widehat f_\ttau(w)\Big| w \in \widehat\lieW^\circ\right\} = \lieW_\ttau + \mathfrak{m}^{k+1}\widehat\lieW^\circ = \widehat\lieW_\ttau,$$
showing that the diagram (\ref{E:DiagramOfBijections}) is indeed commutative.
\end{proof}

\section{On algebraic geometry of the classical Yang--Baxter equation}\label{S:ReviewGeomCYBE}

\smallskip
\noindent
Let $\lieg$ be  a finite dimensional  simple Lie algebra over $\CC$ of dimension $q$, $\lieg \times \lieg 
\stackrel{\kappa}\lar \CC$  be its  Killing form and $\gamma  \in \lieg \otimes \lieg$ the Casimir element.

\subsection{Classical Yang--Baxter equation and associated Lie subalgebras of $\lieg\llbrace z\rrbrace$}\label{SS:SurveyCYBE}

\smallskip
\noindent
Recall that the germ of a tensor-valued meromorphic function $\bigl(\CC^2, 0\bigr) \stackrel{r}\lar \lieg \otimes \lieg$   is a skew-symmetric solution of  the classical Yang--Baxter equation  (CYBE) if 
\begin{equation}\label{E:CYBE}
\left\{
\begin{array}{l}
\bigl[r^{12}(x, y), r^{13}(x, z)\bigr] + \bigl[r^{13}(x ,z), r^{23}(y,z)\bigr] +
\bigl[r^{12}(x, y),
r^{23}(y, z)\bigr] = 0
\\
r^{12}(x, y) = 
-r^{21}(y, x).
\end{array} 
\right.
\end{equation}
\noindent
The Killing form $\lieg \times \lieg \stackrel{\kappa}\lar \CC$ induces an isomorphism of vector spaces
\begin{equation}\label{E:KillingIsom}
\mathfrak{g} \otimes \mathfrak{g} \stackrel{\widetilde\kappa}\lar \End_{\CC}(\mathfrak{g}), \quad a \otimes b \mapsto \bigl(c \mapsto
\kappa(a,c) \cdot b\bigr).
\end{equation}
A solution $r$ of (\ref{E:CYBE}) is called \emph{non-degenerate}, if for  a generic point $(x_1^\circ, x_2^\circ)$ in the domain of definition of $r$, the linear
map $\widetilde\kappa\bigl({r}(x_1^\circ, x_2^\circ)\bigr) \in \End_{\CC}(\lieg)$ is an isomorphism.

\smallskip
\noindent
One can perform the following transformations with solutions   of (\ref{E:CYBE}).
\begin{itemize}
\item \emph{Gauge transformations}. For any holomorphic  germ
$(\mathbb{C},0) \stackrel{\phi}\lar {\mathsf{Aut}}_{\CC}(\mathfrak{g})$,  the  function
\begin{equation}\label{E:equiv1}
 \overline{r}(x, y) :=
\bigl(\phi(x) \otimes \phi(y)\bigr) r(x,y).
\end{equation}
is again a  solution of  (\ref{E:CYBE}).
\item \emph{Change of variables}. Let $(\CC, 0) \stackrel{\eta}\lar (\CC, 0)$ be a non-constant  map of germs. Then 
\begin{equation}\label{E:equiv2}
 \overline{\overline{r}}(x, y) :=
r\bigl(\eta(x), \eta(y)\bigr).
\end{equation}
is again a  solution of  (\ref{E:CYBE}).
\end{itemize}
It is clear that both  transformations $(\ref{E:equiv1})$ and $(\ref{E:equiv2})$ map  non-degenerate  solutions of  (\ref{E:CYBE}) into non-degenerate ones.

\smallskip
\noindent
Belavin and Drinfeld proved in \cite{BelavinDrinfeld2} that any non-degenerate solution of  (\ref{E:CYBE}) can be transformed by above transformations  to a solution of  the form
\begin{equation}\label{E:formcanonique}
r(x, y) = \frac{1}{x-y} \gamma + h(x, y), \quad h(x, y) = - h^{21}(y, x)
\end{equation}
where $\bigl(\CC^2, 0\bigr) \stackrel{h}\lar \lieg \otimes \lieg$ is the germ of a  holomorphic function.
Moreover, they showed  that one can  always find  a gauge transformation  $\phi$ and a change of variables $\eta$ such that 
 $\bigl(\phi(x) \otimes \phi(y)\bigr) r\bigl(\eta(x), \eta(y)\bigr) =  \varrho(x-y)$
 for some meromorphic $(\CC, 0) \stackrel{\varrho}\lar \lieg \otimes \lieg$. 
 In other words, (\ref{E:CYBE}) reduces to the equation
\begin{equation}\label{E:CYBE1}
\bigl[\varrho^{12}(x), \varrho^{13}(x+y)\bigr]  +
\bigl[\varrho^{12}(x), \varrho^{23}(y)\bigr] + \bigl[\varrho^{13}(x+y), \varrho^{23}(y)\bigr] = 0
\end{equation}
(the so-called CYBE with \emph{one} spectral parameter).
Belavin and Drinfeld proved in \cite{BelavinDrinfeld} that  any  non--degenerate
solution of (\ref{E:CYBE1}) is automatically skew-symmetric, has a simple pole at $0$ with  residue equal
to a multiple of the Casimir element $\gamma \in \lieg \otimes \lieg$. Moreover, $\varrho$  can be meromorphically extended on the entire plane $\CC$ and its poles form 
an additive subgroup  $\Lambda \subseteq \CC$ such that $\rk(\Lambda) \le 2$; see 
\cite[Theorem 1.1]{BelavinDrinfeld}.
\begin{itemize}
\item If $\rk(\Lambda) = 2$ than the corresponding solution $\varrho$ is elliptic. Elliptic solutions exist only for $\lieg \cong  \mathfrak{sl}_n(\CC)$. A full list of them is given in \cite[Section 5]{BelavinDrinfeld}.
\item If $\rk(\Lambda) = 1$ than the corresponding solution $\varrho$ is trigonometric. A full classification of these solutions  is given in \cite[Section 6]{BelavinDrinfeld},  see also \cite[Chapter 7]{BelavinDrinfeldBook}.
\item If $\Lambda = 0$ then $\varrho$ is a rational solution, i.e. $\varrho(x) = \dfrac{\gamma}{x} + \xi(x)$, where $\xi \in (\lieg \otimes \lieg)[x]$. The problem of classification of all rational solutions for $\lieg = \mathfrak{sl}_n(\CC)$ contains a representation-wild problem of classification of pairs of matrices $a, b \in \lieg$ such that 
$[a, b] = 0$, see Remark \ref{R:Wildness} below.  Nonetheless, the structure theory of rational solutions was developed by Stolin in \cite{Stolin}.
\end{itemize}

\smallskip
\noindent
Among various  constructions which attach to a  solution of (\ref{E:CYBE}) a Lie bialgebra there is the following \emph{universal one}, which  dates  back to the works \cite{GelfandCherednik, ReymanST}.

\smallskip
\noindent
Consider the Lie algebra of formal Laurent series $\lieR:= \lieg\llbrace z \rrbrace$. It is equipped with a symmetric non-degenerate invariant form
\begin{equation}\label{E:FormSeries}
\lieR\times \lieR  \stackrel{F}\lar \CC, (a z^k, b z^l) \mapsto \delta_{k+l+1, 0} \, \kappa(a, b). 
\end{equation}
Let $r$ be a solution of (\ref{E:CYBE})  having the form (\ref{E:formcanonique}).
We write its formal power series expansion 
\begin{equation}\label{E:ExpansionRMatr}
\tilde{r}(x; y) = \sum\limits_{k = 0}^\infty 
r_{k}(x) y^k \in \bigl(\lieR \otimes \lieg\bigr)\llbracket y\rrbracket, \; \mbox{\rm where} \; \left. r_k(x) = \frac{1}{k!} \frac{\partial^k r}{\partial y^k}\right|_{y = 0}.
\end{equation}
For any $k \in \NN_0$  let $\lieW_k := \bigl\langle (1 \otimes \lambda)r_k(x)  \, \big|\, \lambda \in \lieg^\ast\bigr\rangle_{\CC} \subseteq \lieR$. Then we put: 
\begin{equation}\label{E:FormalLieAlgW}
\lieW := \sum\limits_{k \in \NN_0} \lieW_k 
\subseteq \lieR.
\end{equation}
More concretely, let  $(g_1, \dots, g_q)$ be an orthonormal basis of $\lieg$ with respect of $\kappa$. Then 
$\gamma = g_1 \otimes g_1 + \dots + g_q \otimes g_q$  and the power series expansion (\ref{E:ExpansionRMatr}) can be written as 
\begin{equation}\label{E:ExpansionConcrete}
\tilde{r}(x; y) = \sum\limits_{k = 0}^\infty \sum\limits_{i = 1}^q 
\left(w_{(k, i)} \otimes g_i\right) y^k \in \bigl(\lieR \otimes \lieg\bigr)\llbracket y\rrbracket, \end{equation}
where  $w_{(k, i)} = g_i x^{-k - 1} + v_{k, i}$ for some $v_{k, i} \in \lieg\llbracket x\rrbracket$. We have: 
$$
\lieW := \bigl\langle w_{(k, i)} \, \big|\, 1 \le i \le q, k \in \NN_0 \bigr\rangle_{\CC} \subset \lieR.
$$
Let $\Upsilon = \left\{(k, i) \, \big| \, k \in \NN_0, 1 \le i \le q \right\}$ and $g_{(k, i)}:= g_i x^k$
for any $(k, i) \in \Upsilon$. Then we have: 
\begin{equation}\label{E:OrthBasis}
F\bigl(w_{(k', i')}, g_{(k'', i'')}\bigr) = \delta_{k', k''} \delta_{i', i''} \; \;  \mbox{for all}\; \;  (k', i'), (k'', i'') \in \Upsilon.
\end{equation}
Let $\bigl(\CC^2, 0\bigr) \stackrel{r}\lar \lieg \otimes \lieg$  be of the form (\ref{E:formcanonique}). Then (\ref{E:CYBE})  can be rewritten as the system of the following constraints  on the 
coefficients $r_k(x) \in \lieR$ of the series $\tilde{r}(x; y)$:
\begin{equation}\label{E:CYBEformali}
\left[r_k^{13}(x_1) + r_k^{23}(x_2), r^{12}(x_1, x_2)\right] =
\sum\limits_{\substack{k', k'' \ge 0 \\
k' + k'' = k}} \left[r_{k'}^{13}(x_1),  r_{k''}^{23}(x_2)\right] \quad \mbox{for all} \; k \in \NN_0.
\end{equation}
In more concrete terms,  (\ref{E:CYBEformali}) can be rewritten as the following equality:
\begin{multline}\label{E:CYBEformaliVariation}
\sum\limits_{i = 1}^q \left[w_{(k, i)}(x_1) \otimes 1 + 1 \otimes w_{(k, i)}(x_2), r(x_1, x_2)\right] \otimes g_i = \\ 
\sum\limits_{\substack{(k', i') \in \Upsilon \\
(k'', i'') \in \Upsilon \\
k' + k'' = k}}
 w_{(k', i')}(x_1) \otimes w_{(k'', i'')}(x_2) \otimes
\bigl[g_{i'}, g_{i''}\bigr]
\end{multline}
in the vector space $\bigl((\lieg \otimes \lieg)\llbrace x_1, x_2\rrbrace\bigr) \otimes \lieg$, where the right-hand side of (\ref{E:CYBEformaliVariation})  is embedded into $\bigl((\lieg \otimes \lieg)\llbrace x_1, x_2\rrbrace\bigr) \otimes \lieg$ via the canonical linear map 
$\lieg\llbrace x_1\rrbrace \otimes \lieg\llbrace x_2\rrbrace \longhookrightarrow
(\lieg \otimes \lieg)\llbrace x_1, x_2\rrbrace$
(it follows from (\ref{E:formcanonique}) that the left-hand side belongs to 
$\lieg\llbrace x_1\rrbrace \otimes \lieg\llbrace x_2\rrbrace \otimes \lieg$ as well). Therefore, we have a linear map
\begin{equation}\label{E:cobracketpowerseries}
\lieW \stackrel{\delta}\lar \lieW \otimes \lieW, \; w(x) \mapsto \bigl[w(x_1) \otimes 1 + 1 \otimes w(x_2), r(x_1, x_2)\bigr].
\end{equation}
The system of constraints (\ref{E:CYBEformali}) can be stated for any expression $r(x, y)$ of the form (\ref{E:formcanonique}) with $h(x,y) \in 
(\lieg \otimes \lieg)\llbracket x, y\rrbracket$ (without requiring the convergence of $h(x, y)$ and even passing from $\CC$ to an arbitrary  field $\kk$), so  one may speak on \emph{formal solutions} of CYBE.

\smallskip
\noindent
We have the following result, see e.g. \cite[Subsection 6.3.3]{EtingofSchiffmann}) for a proof.
\begin{theorem}\label{E:FormalCYBEManinTriples}
Let $r= \dfrac{1}{x-y} \gamma + h(x, y)$ be any formal  solution of CYBE. Then the corresponding 
vector subspace $\lieW \subseteq \lieR$, given by (\ref{E:FormalLieAlgW}),  is a Lagrangian Lie subalgebra  with respect to the  bilinear form (\ref{E:FormSeries}). Moreover, we have a direct sum decomposition $
\lieR  = \lieg\llbracket z \rrbracket \dotplus \lieW
$ and the map $\lieW \stackrel{\delta}\lar \lieW \otimes \lieW$, given 
by (\ref{E:cobracketpowerseries}), is a Lie bialgebra cobracket.

\smallskip
\noindent
Conversely, let $\lieR = \lieg\llbracket z \rrbracket \dotplus \lieW$ be a Manin triple. Then the linear map $\lieg\llbracket x\rrbracket \stackrel{\widetilde{F}}\lar \lieW^\ast$ is an isomorphism and 
there exists a uniquely determined family $(w_{(k, i)})_{(k, i)(x) \in \Upsilon}$ of elements of $\lieW$ such that $w_{(k, i)} = g_i x^{-k - 1} + v_{(k, i)}$ for some $v_{(k, i)} \in \lieg\llbracket x\rrbracket$. This family forms a basis
of $\lieW$, which  is dual to the topological basis  $(g_{(k, i)})_{(k, i) \in \Upsilon}$ of $\lieg\llbracket x\rrbracket$ and the formal power series 
 (\ref{E:ExpansionConcrete})
 is a formal  solution of CYBE.
\end{theorem}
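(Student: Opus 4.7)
The plan is to establish the three required properties in sequence, using the duality (\ref{E:OrthBasis}) between $(w_{(k,i)})$ and $(g_{(k,i)})$ as the main bookkeeping device. The direct sum $\lieR = \lieg\llbracket z\rrbracket \dotplus \lieW$ is forced by the explicit shape $w_{(k,i)} = g_i x^{-k-1} + v_{(k,i)}$ with $v_{(k,i)} \in \lieg\llbracket x\rrbracket$: the principal parts $g_i x^{-k-1}$ form a basis of the negative-Laurent complement $z^{-1}\lieg[z^{-1}]$, so every Laurent series splits uniquely. For isotropy I would expand $F(w_{(k',i')}, w_{(k'',i'')})$ using (\ref{E:OrthBasis}): the principal-principal contribution vanishes since $k' + k'' + 1 = 0$ is impossible for $k', k'' \ge 0$, and the two mixed principal-regular terms cancel because the skew-symmetry $h(x,y) = -h^{21}(y,x)$ imposes the Taylor-coefficient identity $c^{(k',i')}_{k'',i''} = -c^{(k'',i'')}_{k',i'}$. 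Hence $\lieW$ is Lagrangian by Lemma \ref{L:LagrangianBasic}(a).

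\textbf{Closure under the bracket.} This is the CYBE content. I would use (\ref{E:CYBEformaliVariation}): extracting the coefficient of $g_j$ in the third tensor factor, and then pairing the first factor with $g_{i_1} x_1^{-n-1}$ via $F$, transforms the right-hand side into a finite $\CC$-linear combination of $w_{(k'',i'')}(x_2) \in \lieW$, while the left-hand side produces $[w_{(k,j)}, w_{(n,i_1)}]$ together with commutators involving $r$ that already lie in $\lieW$ thanks to the form (\ref{E:formcanonique}). Hence $[\lieW, \lieW] \subseteq \lieW$, and the Manin triple is established. Definition \ref{D:CobracketManinTriple} then produces a unique Lie bialgebra cobracket on $\lieW$; its agreement with (\ref{E:cobracketpowerseries}) is a direct computation using the explicit form of $r$ and the duality (\ref{E:OrthBasis}).

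\textbf{Converse direction.} Given a Manin triple $\lieR = \lieg\llbracket z\rrbracket \dotplus \lieW$, note first that $\lieg\llbracket z\rrbracket$ is automatically isotropic (indices $k, l \ge 0$ never satisfy $k+l+1 = 0$), so Lemma \ref{L:LagrangianBasic}(a) makes it Lagrangian. Applying the direct sum decomposition to each $g_i x^{-k-1} \in \lieR$ produces unique $w_{(k,i)} = g_i x^{-k-1} + v_{(k,i)}$ with $v_{(k,i)} \in \lieg\llbracket x\rrbracket$; the orthogonality (\ref{E:OrthBasis}) is then a one-line computation. Any $w \in \lieW$ has finite principal part, so subtracting the appropriate finite $\CC$-combination of the $w_{(k,i)}$ lands $w$ in $\lieg\llbracket z\rrbracket \cap \lieW = 0$, showing that $(w_{(k,i)})_{(k,i) \in \Upsilon}$ is a basis of $\lieW$. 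This makes $\widetilde F : \lieg\llbracket x\rrbracket \to \lieW^*$ a bijection dual to the topological basis $(g_{(k,i)})$.

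\textbf{CYBE for the generating series.} Finally I would verify that the $\tilde r(x;y)$ defined by (\ref{E:ExpansionConcrete}) solves CYBE. Closure $[\lieW, \lieW] \subseteq \lieW$ lets one expand $[w_{(k',i')}, w_{(k'',i'')}]$ uniquely in the dual basis with coefficients $d^{(k,i)}_{(k',i'),(k'',i'')} = F([w_{(k',i')}, w_{(k'',i'')}], g_{(k,i)})$; invariance of $F$ combined with isotropy of $\lieg\llbracket z\rrbracket$ rewrites these in terms of the structure constants of $\lieg$. Reassembling these identities into the generating-series formalism reproduces exactly (\ref{E:CYBEformaliVariation}), which is equivalent to CYBE. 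The main obstacle is the bookkeeping of infinite sums: one must check that for fixed $(k',i'), (k'',i'')$ the constants $d^{(k,i)}_{(k',i'),(k'',i'')}$ vanish for all but finitely many $(k,i)$, so that the resulting series genuinely has the form $\gamma/(x-y) + h(x,y)$ with $h \in (\lieg \otimes \lieg)\llbracket x, y\rrbracket$ rather than living in a larger completion.
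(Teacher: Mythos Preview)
The paper does not actually prove this theorem: it states the result and refers the reader to \cite[Subsection~6.3.3]{EtingofSchiffmann} for a proof. So there is no ``paper's own proof'' to compare against; your outline is essentially the standard argument one finds in that reference, and the overall architecture (dual bases via the form $F$, skew-symmetry of $h$ giving isotropy, CYBE $\Leftrightarrow$ closure of $\lieW$, generating-series reassembly for the converse) is correct.

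Two places deserve tightening. First, your closure-under-bracket paragraph is the weakest link: the phrase ``commutators involving $r$ that already lie in $\lieW$'' is not quite right as stated, since $r(x_1,x_2)$ is not an element of $\lieW\otimes\lieW$. The clean way is to note that $\lieW$ is already known to be Lagrangian, so $[w,w']\in\lieW$ is equivalent to $F\bigl([w,w'],w''\bigr)=0$ for all $w''\in\lieW$; by invariance this is $F\bigl(w,[w',w'']\bigr)=0$, and the cyclic symmetry of this condition lets you reduce to a single identity extracted from (\ref{E:CYBEformaliVariation}) after pairing with basis elements in all three slots. Second, Definition~\ref{D:CobracketManinTriple} does not \emph{produce} a cobracket from a Manin triple; it only says when a given cobracket is determined by one. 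In the infinite-dimensional setting you must verify directly that the formula (\ref{E:cobracketpowerseries}) lands in $\lieW\otimes\lieW$ (this is exactly what (\ref{E:CYBEformaliVariation}) says), is a cocycle, and satisfies co-Jacobi --- the last two following from the Jacobi identity in $\lieg\llbracket z\rrbracket$ via the duality you have set up.
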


\smallskip
\noindent
In the notation of Theorem \ref{E:FormalCYBEManinTriples}, we have the following result.
\begin{proposition}\label{P:BielgebraPowerSeriesDetermined}
The  Lie bialgebra cobracket $\lieW \stackrel{\delta}\lar \lieW \otimes \lieW$ is determined by the corresponding Manin triple $\lieR = \lieg\llbracket x\rrbracket \dotplus \lieW$. 
\end{proposition}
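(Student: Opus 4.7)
The plan is to verify the defining identity from Definition \ref{D:CobracketManinTriple}, namely
\begin{equation*}
F(\delta(w), b_1 \otimes b_2) = F(w, [b_1, b_2]) \quad \text{for all } w \in \lieW, \; b_1, b_2 \in \lieg\llbracket x\rrbracket,
\end{equation*}
working directly from the formula (\ref{E:cobracketpowerseries}) that defines $\delta$. The key tools are the invariance of $F$ and the explicit expansion $r = \sum_{J \in \Upsilon} w_J \otimes g_J$ delivered by Theorem \ref{E:FormalCYBEManinTriples}, together with the duality relation (\ref{E:OrthBasis}).

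First I would expand
\[
\delta(w) = \bigl[w(x_1) \otimes 1, r(x_1, x_2)\bigr] + \bigl[1 \otimes w(x_2), r(x_1, x_2)\bigr] = \sum_{J} \bigl([w, w_J] \otimes g_J + w_J \otimes [w, g_J]\bigr)
\]
and pair the result with $b_1 \otimes b_2$ via $F \otimes F$. The contribution of the first summand vanishes because the pairing $F(g_J, b_2)$ is zero for any $g_J, b_2 \in \lieg\llbracket x\rrbracket$: indeed, $F(a x^k, b x^l) = \delta_{k+l+1,0}\kappa(a,b)$ is trivially zero when $k, l \geq 0$. The invariance of $F$ applied to the second summand gives $F([w, g_J], b_2) = -F(g_J, [w, b_2])$, so
\[
F(\delta(w), b_1 \otimes b_2) = -\sum_{J} F(w_J, b_1)\cdot F(g_J, [w, b_2]).
\]

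Then I would evaluate the remaining sum via the explicit expression $w_J = g_{i_J} x^{-k_J-1} + v_J$ with $v_J \in \lieg\llbracket x\rrbracket$: the factor $F(w_J, b_1)$ picks out the $g_{i_J}$-component of the $x^{k_J}$-coefficient of $b_1$, while $F(g_J, [w, b_2])$ picks out the $g_{i_J}$-component of the $x^{-k_J - 1}$-coefficient of $[w, b_2]$. Summing over $i_J$ via the orthonormality identity $\sum_i \kappa(g_i, a)\kappa(g_i, b) = \kappa(a, b)$ and over $k_J$ reassembles the pairing $F(b_1, [w, b_2])$, so $F(\delta(w), b_1 \otimes b_2) = -F(b_1, [w, b_2])$. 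A final double application of invariance, $-F(b_1, [w, b_2]) = F([w, b_1], b_2) = F(w, [b_1, b_2])$, closes the argument.

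The only real subtlety is the well-definedness of the infinite sum $\sum_J F(w_J, b_1)F(g_J, [w, b_2])$. Since $w \in \lieR = \lieg\llbrace x\rrbrace$ has only finitely many strictly negative powers of $x$, the Laurent series $[w, b_2]$ is bounded below in order, so $F(g_J, [w, b_2])$ vanishes for all but finitely many values of $k_J$; the sum is thus genuinely finite and every formal manipulation above is legitimate. Once this bookkeeping is in place, the proposition reduces to the invariance of $F$ and the duality between the basis $(w_J)$ of $\lieW$ and the topological basis $(g_J)$ of $\lieg\llbracket x\rrbracket$.
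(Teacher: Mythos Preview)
Your argument is correct and takes a somewhat different route from the paper's proof. The paper first reduces to basis vectors, then invokes the CYBE in the form (\ref{E:CYBEformaliVariation}) to extract the explicit expansion $\delta(w_{(l,j)}) = \sum \lambda^{(l,j)}_{(k',i'),(k'',i'')}\, w_{(k',i')} \otimes w_{(k'',i'')}$, where the $\lambda$'s are the structure constants of $\lieg\llbracket x\rrbracket$; the orthogonality (\ref{E:OrthBasis}) then immediately matches both sides. By contrast, you never unpack the CYBE: you work directly with the commutator formula for $\delta$, exploit that $\lieg\llbracket x\rrbracket$ is isotropic to kill one of the two sums, and then use invariance of $F$ together with the completeness relation $b = \sum_J F(w_J, b)\, g_J$ (which is what your ``reassembly'' step amounts to) to finish. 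Your approach is more elementary and makes transparent that the result is really about duality and invariance rather than the CYBE per se, while the paper's approach has the advantage of staying inside $\lieW \otimes \lieW$ throughout and exhibiting the structure constants explicitly. The one point you gloss over is why pairing the infinite formal expansion term by term with $b_1 \otimes b_2$ agrees with pairing the honest element $\delta(w) \in \lieW \otimes \lieW$; this is easily justified by viewing both in $(\lieR \otimes \lieg)\llbrace x_2\rrbrace$ and noting that the extended pairing depends on only finitely many coefficients, but it deserves a sentence.
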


\begin{proof} We have to show the following identity  for any $w \in \lieW$ and $f_1, f_2 \in \lieg\llbracket x\rrbracket$:
\begin{equation}\label{E:CobracketDeterminacy}
F\bigl(\bigl[w(x_1) \otimes 1 + 1 \otimes w(x_2), r(x_1, x_2)\bigr], f_1(x_1) \otimes f_2(x_2)\bigr) = F\bigl(w(x), \bigl[f_1(x), f_2(x)\bigr]\bigr). 
\end{equation}
Note that for any $w \in \lieW$ there exists $n \in \NN$ such that
$$
F\bigl(\delta(w), f_1 \otimes f_2\bigr) = 0  = 
F\bigl(w, \bigl[f_1, f_2\bigr] \bigr),
$$
provided $f_1 \in x^n \lieg\llbracket x\rrbracket$ or $f_2 \in x^n \lieg\llbracket x\rrbracket$. Therefore, it is sufficient to prove that
$$
F\bigl(\delta(w_{(l, j)}), g_{(k', i')} \otimes g_{(k'', i'')} \bigr) = 
F\bigl(w_{(l, j)}, \bigl[g_{(k', i')}, g_{(k'', i'')}\bigr] \bigr)
$$
for all $(l, j), (k', i'), (k'', i'') \in \Upsilon$.

\smallskip
\noindent
First note that we have a finite sum:
$
\bigl[g_{(k', i')},  g_{(k'', i'')}\bigr] = \sum\limits_{(k, i) \in \Upsilon}
\lambda^{(k, i)}_{(k', i'), (k'', i'')} g_{(k, i)},
$
where $\lambda^{(k, i)}_{(k', i'), (k'', i'')} \in \CC$. It is clear that
$\lambda^{(k, i)}_{(k', i'), (k'', i'')} \ne 0$ only if $k = k' + k''$. 
In particular, for any $(k, i) \in \Upsilon$ there exist only finitely many
$(k', i'), (k'', i'') \in \Upsilon$ such that $\lambda^{(k, i)}_{(k', i'), (k'', i'')} \ne 0$.

\smallskip
\noindent
Next, we can rewrite the classical Yang--Baxter equation (\ref{E:CYBEformaliVariation}) as
$$
\sum\limits_{(k, i) \in \Upsilon} \delta\bigl(w_{(k, i)}\bigr) \otimes g_{(k, i)} = \sum\limits_{\substack{(k', i') \in \Upsilon \\
(k'', i'') \in \Upsilon}}
 w_{(k', i')}  \otimes w_{(k'', i'')} \otimes
\bigl[g_{(k', i')}, g_{(k'', i'')}\bigr],
$$
implying  that
$
\delta\bigl(w_{(k, i)}\bigr) = \sum\limits_{\substack{(k', i') \in \Upsilon \\
(k'', i'') \in \Upsilon}} \lambda^{(k, i)}_{(k', i'), (k'', i'')} w_{(k', i')} \otimes w_{(k'', i'')}$. Applying (\ref{E:OrthBasis}) we get
$
F\Bigl(\delta\bigl(w_{(l, j)}\bigr), g_{(k', i')} \otimes g_{(k'', i'')}\Bigr) = \lambda^{(l, j)}_{(k', i'), (k'', i'')} = F\bigl(w_{(l, j)}, 
\bigl[g_{(k', i')}, g_{(k'', i'')}\bigr]\Bigr),
$
as asserted. 
\end{proof}

\subsection{Geometric CYBE datum}\label{SS:GeomCYBEDatum}
Now we make  a quick  review of the algebro-geometric theory of the classical Yang--Baxter equation (\ref{E:CYBE}),
following the work  \cite{BurbanGalinat}.

\smallskip
\noindent
 A \emph{Weierstra\ss{} curve} is an irreducible projective curve over $\CC$ of arithmetic genus one. 
For $g_2, g_3 \in \CC$, let
$
E_{(g_2, g_3)} = \overline{V\bigl(u^2 - 4v^3 + g_2 v + g_3\bigr)} \subset \PP^2.
$
It is well-known that any Weierstra\ss{} curve $E$ is isomorphic  to $E_{(g_2, g_3)}$ for some $g_2, g_3 \in \CC$.  Moreover, $E_{(g_2, g_3)}$ is smooth  if and only if $g_2^3  \ne   27 g_3^2$.
If $g_2^3  =   27 g_3^2$ then $E_{(g_2, g_3)}$ has a unique singular point $s$, which is
 a nodal singularity  if $(g_2, g_3) \ne (0, 0)$ and a cuspidal singularity  if $(g_2, g_3) =  (0, 0)$.
We have: $\Gamma(E, \Omega) \cong \CC$, where $\Omega$ is the sheaf of regular differential one-forms on $E$, taken in the Rosenlicht sense
if  $E$ is singular; see e.g.~\cite[Section II.6]{BarthHulekPetersVen}.

\smallskip
\noindent
Assume that $\kA$ is a coherent sheaf of Lie algebras  on $E$ such that:
\begin{enumerate}
\item $\kA$ is acyclic, i.e.~$H^0(E, \kA) = 0 = H^1(E, \kA)$;
\item $\kA$ is weakly $\lieg$--locally free on the regular part $U$ of $E$, i.e.~$\kA\big|_x \cong \lieg$ for all $x \in U$.
\end{enumerate}
From the first assumption it follows that  the sheaf $\kA$ is torsion free.
The second assumption on $\kA$  implies that the  canonical isomorphism of $\kO_{U}$-modules $\kA\big|_{U} \otimes  \kA\big|_{U} \rightarrow \mathit{End}_{U}\bigl(\kA\bigr)$, induced by the Killing forms of the Lie algebras of local sections of $\kA$, is an isomorphism. As a consequence, 
the space  $\lieA_{\mathbbm{K}}$ of global sections of the rational envelope of $\kA$ is  a simple Lie algebra over the field  $\mathbbm{K}$ of meromorphic functions on $E$.

\smallskip
\noindent
Choosing  a global regular one-form $0 \ne \omega \in \Gamma(E, \Omega)$, we get  the so-called residue short exact sequence:
\begin{equation}\label{E:residue1}
0 \lar \kO_{E \times U} \lar \kO_{E \times U}(\Sigma) \xrightarrow{\res_\Sigma^\omega} \kO_\Sigma \lar 0,
\end{equation}
where $\Sigma \subset E \times U$ denotes the diagonal, see  \cite[Section 3.1]{BurbanGalinat}. Tensoring  (\ref{E:residue1})  with $\kA \boxtimes \kA\big|_{U}$ and then applying the functor $\Gamma(E \times U, \,-\,)$, we obtain
a $\CC$--linear map $$\End_{U}(\kA) \stackrel{T_\omega}\lar \Gamma\bigl(U \times U \setminus \Sigma, \kA \boxtimes \kA\bigr),$$ making the following diagram
\begin{equation}\label{E:DefineGeomRMat}
\begin{array}{c}
\xymatrix{
\Gamma\bigl(U, \kA \otimes \kA\bigr) \ar[d]_-\cong & & \Gamma\bigl(E \times U, \kA \boxtimes \kA|_U(\Sigma)\bigr) \ar[ll]_-{\cong} \ar@{^{(}->}[d] \\
 \End_{U}(\kA) \ar[rr]^-{T_\omega} & & \Gamma\bigl(U \times U \setminus \Sigma, \kA \boxtimes \kA\bigr)
}
\end{array}
\end{equation}
commutative. In this way, we get a distinguished section
\begin{equation}
\rho:= T_\omega(\mathbbm{1}) \in \Gamma\bigl(U \times U \setminus \Sigma, 
\kA \boxtimes \kA\bigr),
\end{equation}
called a \emph{geometric $r$--matrix} attached to a pair $(E, \kA)$ as above.

\smallskip
\noindent
If the curve $E$ is singular, we additionally require  that
\begin{enumerate}
  \setcounter{enumi}{2}
  \item
$\kA$ is isotropic at $s$, i.e. the germ $\lieA_s$ of the sheaf $\kA$ at the singular point $s$ is an isotropic Lie subalgebra of $\lieA_\mathbbm{K}$ with respect to the pairing
  $$
F^\omega_{s}: \; \lieA_\mathbbm{K} \times \lieA_\mathbbm{K} \stackrel{K}\lar \mathbbm{K} \stackrel{\res_s^\omega}\lar \CC,
$$
where $K$ is the Killing form of $\lieA_{\KK}$ and $\res_s^\omega(f) = \res_s(f\omega)$ for $f \in \mathbbm{K}$ (taken in the Rosenlicht sense).
\end{enumerate}

\smallskip
\noindent
A pair $(E, \kA)$ satisfying the properties (1)--(3) above will be  called \emph{geometric CYBE datum}.

\smallskip
\noindent
We have  the following result; see  \cite[Theorem 4.3]{BurbanGalinat}.
\begin{theorem}\label{T:GeoemtryCYBE}
Let $(E, \kA)$ be a geometric CYBE datum. 
Then we have:

\smallskip
\noindent
1.~The geometric $r$-matrix $\rho$ satisfies the following sheaf-theoretic version of the classical Yang--Baxter equation: 
\begin{equation}\label{E:SheafyCYBE1}
\bigl[\rho^{12}, \rho^{13}\bigr] + \bigl[\rho^{12}, \rho^{23}\bigr] + \bigl[\rho^{13}, \rho^{23}\bigr] = 0,
\end{equation}
where both sides of the above  equality are viewed as meromorphic sections of   $\kA \boxtimes \kA \boxtimes \kA$ over the triple product $U \times U \times U$.

\smallskip
\noindent
2.~Moreover,  $\rho$ is skew-symmetric and non-degenerate i.e.
\begin{equation}\label{E:SheafyCYBE2}
\rho(x_1, x_2)^{12} = - \rho(x_2, x_1)^{21} \in \bigl(\kA \boxtimes \kA\bigr)\big|_{(x_1, x_2)} \cong \kA\big|_{x_1} \otimes \kA\big|_{x_2} \quad \mbox{\rm for any} \; x_1 \ne x_2 \in U
\end{equation}
and there exists an open subset $U' \subseteq U$ such that for any $x_1 \ne x_2 \in U'$, the tensor $\rho(x_1, x_2) \in \kA\big|_{x_1} \otimes \kA\big|_{x_2}$ is non-degenerate.
\end{theorem}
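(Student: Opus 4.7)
The plan is to first establish the local behavior of $\rho$ near the diagonal $\Sigma$, and then to deduce in turn the skew-symmetry, the CYBE, and the non-degeneracy. Locally around a point $p \in U$, choose a coordinate $t$ with $\omega = dt$ near $p$ and a trivialization $\kA|_V \cong \lieg \otimes \kO_V$ on a neighborhood $V \subset U$. Under the isomorphism $\kA \otimes \kA \cong \End(\kA)$ on $U$ induced by the Killing form, the identity endomorphism corresponds fiberwise to the Casimir tensor $\gamma \in \lieg \otimes \lieg$. The residue sequence (\ref{E:residue1}) then forces any lift of $\mathbbm{1}$ through $\res_\Sigma^\omega$ to have the form $\rho(x_1, x_2) = \gamma/(x_1 - x_2) + h(x_1, x_2)$ on $V \times V$, with $h$ a holomorphic section of $\kA \boxtimes \kA$. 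Uniqueness of this lift is guaranteed by $H^0(E \times U, \kA \boxtimes \kA|_U) = 0$, which follows from the Künneth formula and the acyclicity $H^0(E, \kA) = 0$.

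For skew-symmetry, consider $\sigma(x_1, x_2) := \rho(x_1, x_2) + \rho(x_2, x_1)^{21}$, initially a meromorphic section of $\kA \boxtimes \kA$ on $U \times U \setminus \Sigma$. Since $\gamma = \gamma^{21}$, the two polar contributions $\gamma/(x_1 - x_2)$ and $\gamma^{21}/(x_2 - x_1)$ cancel, so $\sigma$ extends across $\Sigma$ to a regular section on $U \times U$. Using the isotropy condition on $\kA_s$ (together with the construction of $\rho$ itself), one extends $\sigma$ further to a global section on $E \times E$. By Künneth combined with the acyclicity $H^0(E, \kA) = 0$, the space $H^0(E \times E, \kA \boxtimes \kA)$ vanishes, so $\sigma = 0$ and $\rho$ is skew-symmetric.

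For the CYBE, set $Y := [\rho^{12}, \rho^{13}] + [\rho^{12}, \rho^{23}] + [\rho^{13}, \rho^{23}]$, a meromorphic section of $\kA \boxtimes \kA \boxtimes \kA$ on $U^3$ with potential first-order poles along the three pairwise diagonals $\Sigma_{12}, \Sigma_{13}, \Sigma_{23}$. The key technical step is to verify regularity of $Y$ along each diagonal. Near $\Sigma_{12}$, only $\rho^{12}$ has a singular term, contributing a polar part proportional to $[\gamma^{12}, \rho^{13}(x_1, x_3) + \rho^{23}(x_1, x_3)]/(x_1 - x_2)$; ad-invariance of the Casimir (that is, $[a \otimes 1 + 1 \otimes a, \gamma] = 0$ for every $a \in \lieg$) makes this polar part vanish. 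Symmetric arguments dispose of the poles along $\Sigma_{13}$ and $\Sigma_{23}$. Hence $Y$ extends to a regular section on $U^3$, and using the isotropy condition, further to a global section on $E^3$. By Künneth and $H^0(E, \kA) = 0$, we have $H^0(E^3, \kA^{\boxtimes 3}) = 0$, so $Y = 0$.

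Finally, non-degeneracy follows directly from the local expansion: the endomorphism $\widetilde\kappa(\rho(x_1, x_2)) \in \Hom_\CC(\kA|_{x_1}, \kA|_{x_2})$ is dominated to leading order by $\widetilde\kappa(\gamma)/(x_1 - x_2)$, which is invertible since $\gamma$ is the Casimir of a simple Lie algebra. By semicontinuity of the rank of $\widetilde\kappa(\rho(x_1, x_2))$, non-degeneracy persists on a non-empty Zariski-open subset $U' \times U' \setminus \Sigma \subset U \times U$. The principal obstacle in the whole argument is the extension step across the singular point $s$ in the skew-symmetry and CYBE proofs: the isotropy hypothesis on $\kA_s$ is precisely what is needed to guarantee that a regular section on the $U$-parts extends to the projective closure, and verifying this requires a careful analysis of the local expansions of $\rho$ (and its Lie brackets) at fibers containing $s$.
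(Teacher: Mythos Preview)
The paper gives no proof here; it cites \cite[Theorem~4.3]{BurbanGalinat}. Your overall architecture (local expansion, cancellation of diagonal poles, vanishing via K\"unneth and acyclicity) is the right one and matches that reference, but the extension step is both misdirected and incomplete.

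The key fact you overlook is that by construction (see diagram~(\ref{E:DefineGeomRMat})) $\rho$ is the restriction of a section $\varrho\in\Gamma\bigl(E\times U,\kA\boxtimes\kA(\Sigma)\bigr)$: it already extends to $E$ in the \emph{first} factor automatically. For the CYBE this is decisive and makes isotropy irrelevant. Each of $\rho^{12},\rho^{13},\rho^{23}$ then lives on $E\times U\times U$ with poles only along the pairwise diagonals (the third one trivially, as it does not involve $x_1$); your Casimir argument removes those poles, and one obtains $Y\in\Gamma\bigl(E\times U\times U,\kA^{\boxtimes 3}\bigr)$. Freezing $(x_2,x_3)\in U\times U$ now gives $Y(\,\cdot\,,x_2,x_3)\in H^0(E,\kA)\otimes\kA|_{x_2}\otimes\kA|_{x_3}=0$. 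There is no need to reach $E^3$, and the isotropy hypothesis plays no role in part~1.

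For skew-symmetry the difficulty is real, and this is where isotropy is actually used. The swap $\rho(x_2,x_1)^{21}$ extends over $U\times E$, not $E\times U$, so after the diagonal pole cancels you only know $\sigma\in\Gamma(U\times U,\kA\boxtimes\kA)$; neither summand by itself lets you push $\sigma(\,\cdot\,,x_2)$ across $s$. What must be shown is that for each fixed $x_2\in U$ the section $\sigma(\,\cdot\,,x_2)$ extends from $U$ to $E$---equivalently, that it acquires no pole at $s$---and it is precisely the isotropy of $\lieA_s$ with respect to the residue pairing $F^\omega_s$ that forces this. Your last paragraph correctly flags the difficulty but supplies only a slogan; the actual argument in \cite{BurbanGalinat} is a residue computation at $s$, not a generic extension principle, and in particular your attribution of isotropy to the CYBE step is misplaced.
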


\smallskip
\noindent
In what follows,  we write \(\kO = \kO_E\).
Let $V \subseteq U$ be an open affine subset, $R_V = \Gamma(V, \kO)$  and 
$\lieA_V := \Gamma(V, \kA)$. Assume that $V$ is sufficiently small so that 
$\lieA_V$ is free as $R_V$-module. Since $\lieA$ is weakly $\lieg$-locally free, the Killing form $\lieA_V \times \lieA_V \rightarrow R_V$ is non-degenerate. Let
$(c_1, \dots, c_q)$ be a basis of $\lieA_V$ over $R_V$ and $(c_1^{\ast}, \dots, c_q^\ast)$ be the dual basis. Then $\chi:= c_1^\ast \otimes c_1 + \dots + c_q^\ast \otimes c_q \in \lieA_V \otimes_{R_V} \lieA_V$ is the canonical Casimir element. Let $\widetilde\chi:= c_1^\ast \otimes c_1 + \dots + c_q^\ast \otimes c_q \in \lieA_V \otimes_{\CC} \lieA_V$. Then $\widetilde\chi$ is a (non-canonical) lift 
of $\chi$ under the canonical  map $\lieA_V \otimes_{\CC} \lieA_V \rightarrowdbl  \lieA_V \otimes_{R_V} \lieA_V$. Choosing coordinates $(u, v)$ on 
$V \times V$, we may write:
\begin{equation}\label{E:GeomRMatrixLocal}
\rho\Big|_{(V \times V)\setminus \Sigma} = \dfrac{f(v)}{u-v} \widetilde{\chi} + 
h(u, v)
\end{equation}
for some $h(u, v) \in \lieA_V \otimes_{\CC} \lieA_V$, where $\omega\Big|_{V} = \dfrac{dv}{f(v)}$ for some invertible element  $f \in R_V$.

\smallskip
\noindent
There are two consistent ways to proceed from the abstract geometric $r$-matrix $\rho$  attached to $(E, \kA)$ to a concrete solution of (\ref{E:CYBE}), respectively (\ref{E:CYBEformali}).

\smallskip
\noindent
1. Let us view $E$ as a complex-analytic variety and $\kA$ as a sheaf of Lie algebras in the euclidean topology. As in  \cite[Lemma 2.1]{Kiranagi} one can show that for any $p \in U$ there exists   an  open neighbourhood  $p \in V \subset U$ together with  a   $\Gamma(V, 
\kO^{\mathrm{an}})$--linear isomorphism of Lie algebras
$\Gamma(V, \kA) \stackrel{\xi}\lar \lieg \otimes_{\CC} \Gamma\bigl(V, \kO^{\mathrm{an}}\bigr).$
Then the trivialized section  $\rho^\xi$  can be viewed as   a meromorphic tensor-valued function $V \times V \stackrel{\rho^\xi}\lar \lieg \otimes \lieg$.
It follows from (\ref{E:SheafyCYBE1}) and (\ref{E:SheafyCYBE2}) that  after a choice of a local coordinate on $V$, we get 
   a non-degenerate solution  of (\ref{E:CYBE}). Another choice of a trivialization $\xi$ and a local coordinate on $V$ leads to an equivalent solution (in the sense of (\ref{E:equiv1}) and (\ref{E:equiv2})).  \hfill $\lozenge$

\smallskip
\noindent
2. 
Let $p \in E$ be an arbitrary point,  $\widehat{O}_p$  (respectively $\widehat{\lieA}_p$) be the completion of the stalk of the structure sheaf $\kO$ (respectively, of  $\kA$) at $p$, $\widehat{Q}_{p}$ be the total ring of fractions  of $\widehat{O}_p$ , $E_p := E \setminus \{p\}$, $U_p := U \setminus \{p\}$, $R_p = \Gamma(E_p, \kO)$, $R_p^\circ = \Gamma(U_p, \kO)$, $\lieA_{(p)} := \Gamma(E_p, \kA)$, $\lieA_{(p)}^\circ := \Gamma(U_p, \kA)$ and $\widetilde{\lieA}_p := 
\widehat{Q}_{p} \otimes_{\widehat{O}_p} \widehat{\lieA}_p \cong 
\widehat{Q}_p \otimes_{R_p} \lieA_{(p)}.
$

\noindent
From now on 
suppose that $p \in U$. 
Then we  have the  bilinear form 
$\widetilde{\lieA}_p \times \widetilde{\lieA}_p \stackrel{\widetilde{F}_p^\omega}\lar \CC$ given as the composition 
\begin{equation}\label{E:FormAtp}
\widetilde{\lieA}_p \times \widetilde{\lieA}_p \stackrel{\widetilde{K}_p}\lar \widetilde{Q}_{p} \stackrel{\res_p^\omega}\lar \CC,
\end{equation}
 where $\widetilde{K}_p$ denotes the Killing form of $\widetilde{\lieA}_p$. 
Since the differential form $\omega$ is non-vanishing at $p$, there exists 
a unique isomorphism $\widehat{O}_p \stackrel{\vartheta}\lar \CC\llbracket y \rrbracket$ identifying $\widehat{\omega}_p$ with the differential form $dy$. Moreover, the assumption that $\kA$ is $\lieg$-weakly locally free implies that there exists a 
$\widehat{O}_p$--$\CC\llbracket y \rrbracket$--equivariant isomorphism of Lie algebras $\widehat{\lieA}_p \stackrel{\zeta}\lar \lieg\llbracket y\rrbracket$;
see \cite{GerstenhaberSchack}.
This isomorphism induces a $\widehat{Q}_p$--$\CC\llbrace y \rrbrace$--equivariant isomorphism of Lie algebras $\widetilde{\lieA}_p \stackrel{\tilde\zeta}\lar \lieg\llbrace y\rrbrace$. In this way, we identify  the bilinear form $\widetilde{F}_p^\omega$  with the bilinear form $F$ given by 
(\ref{E:FormSeries}).

\smallskip
\noindent
The following sequence of vector spaces and linear maps 
\begin{equation}\label{E:MayerVietoris}
0 \lar H^0(E, \kA) \lar \lieA_{(p)} \, \oplus \, \widehat{\lieA}_p \lar \widetilde{\lieA}_p  \lar H^1(E, \kA) \lar 0
\end{equation}
is exact, see e.g. \cite[Proposition 3]{Parshin} (it is a version of the Mayer--Vietoris exact sequence).
Since $H^0(E, \kA) = 0 = H^1(E, \kA)$, it follows  that $\lieA_{(p)} \cap \, \widehat{\lieA}_p = 0$ and $\lieA_{(p)} + \, \widehat{\lieA}_p = \widetilde{\lieA}_p$, where we identify the Lie algebras $\lieA_{(p)}$ and $\widehat{\lieA}_p$ with their images in $\widetilde{\lieA}_p$ under the corresponding canonical embeddings. It follows from the isotropy assumption (3) on the sheaf $\kA$ that  $\lieA_{(p)}$ and $\widehat{\lieA}_p$ are isotropic Lie subalgebras  of 
 $\widetilde{\lieA}_p$
with respect to the bilinear form $\widetilde{F}_p^\omega$, i.e.
$\widetilde{\lieA}_p = \widehat{\lieA}_p \dotplus \lieA_{(p)}$ is
 a Manin triple. Identifying $\widetilde{\lieA}_p$ with $\lieR$,  $\widehat{\lieA}_p$ with $\lieg\llbracket y\rrbracket$ and  $\lieA_{(p)}$ with 
 its image $\lieW$ in $\lieR$,  we  end up with a Manin triple
 $\lieR = \lieg\llbracket y\rrbracket \dotplus \lieW$ as in Theorem
 \ref{E:FormalCYBEManinTriples}.

\smallskip
\noindent
We have a family of compatible linear maps $\Gamma\bigl((E \times U) \setminus \Sigma, \kA \boxtimes \kA\bigr) \stackrel{\upsilon_n}\lar \lieW \otimes \lieg[y]/(y^n)$ given as the composition
$$
\Gamma\bigl((E \times U) \setminus \Sigma, \kA \boxtimes \kA\bigr) \stackrel{\nu_n}\lar \lieA_{(p)} \otimes \bigl(\widehat{\lieA}_{p}/\mathfrak{m}^n_p \widehat{\lieA}_{p}\bigr) \stackrel{\zeta_n}\lar  \lieW \otimes \lieg[y]/(y^n).
$$
Here, $\zeta_n$ is induced by the trivializations
$\zeta$ and $\tilde\zeta$ and $\nu_n := (i\times \iota_n)^*$, where the morphism $ \textnormal{Spec}(\widehat{O}_p/\mathfrak{m}^n) \stackrel{\iota_n}\lar E$ maps the unique closed point of $\textnormal{Spec}(\widehat{O}_p/\mathfrak{m}^n)$ to $p$ and $E_p \stackrel{i}\lar E$ is the canonical inclusion. Taking the projective limit of  $(\upsilon_n)_{n \in \NN}$, we get a linear map $$\Gamma\bigl((E \times U) \setminus \Sigma, \kA \boxtimes \kA\bigr) \stackrel{\upsilon}\lar (\lieW \otimes \lieg)\llbracket y\rrbracket.$$ 
In \cite[Theorem 6.4]{BurbanGalinat} it was shown that
$$
\tilde{r}^{\zeta}(x; y) := \upsilon(\rho) = \sum\limits_{k = 0}^\infty  \left(\sum\limits_{i = 1}^q w_{(k, i)}(x) \otimes g_i\right)y^k = \frac{\gamma}{x-y} + \sum\limits_{k=0}^\infty \left(\sum\limits_{i = 1}^q v_{(k, i)}(x) \otimes  g_i\right) y^k,
$$
where $w_{(k, i)} = g_i x^{-k-1} + v_{(k, i)} \in \lieW$ are such that $v_{(k, i)} \in \lieg\llbracket x\rrbracket$ for all $(k, i) \in \Upsilon$.
It follows from Theorem \ref{E:FormalCYBEManinTriples} that $\tilde{r}^{\zeta}(x; y)$
is a formal skew-symmetric solution of CYBE (\ref{E:CYBEformali}). \hfill $\lozenge$

\begin{remark}
According to  Theorem \ref{E:FormalCYBEManinTriples},  $\lieA_{(p)}$ is a Lie bialgebra. Now we give a sheaf-theoretic description of the corresponding Lie bialgebra cobracket
$\lieA_{(p)} \stackrel{\delta_p}\lar \lieA_{(p)} \otimes \lieA_{(p)}$. 

\smallskip
\noindent
Let $\varrho \in \Gamma\bigl(E \times U, \kA \boxtimes \kA(\Sigma)\bigr)$ the preimage of $\rho$ under the canonical restriction map (it follows from  (\ref{E:DefineGeomRMat}) that such preimage exists and is unique). Then we have a linear map
\begin{equation}\label{E:left}
\lieA_{(p)} \stackrel{\delta_p^{(l)}}\lar \Gamma\bigl(E_p \times U_p, (\kA \boxtimes \kA)(\Sigma)\bigr), \; 
f \mapsto \left[f \otimes 1 + 1 \otimes f, \varrho\big|_{E_p \times U_p} \right].
\end{equation}
Analogously, we have a distinguished section $\varrho^{\sharp} \in \Gamma\bigl(U \times E, \kA \boxtimes \kA(\Sigma)\bigr)$ such that
$$
\varrho^\sharp\Big|_{(x, y)} = \left(\varrho\Big|_{(y, x)} \right)^{21}
 \in \kA\Big|_{x} \otimes \kA\Big|_{y} \;  \mbox{for all} \; x \ne y \in U.
$$
Consider the  linear map
\begin{equation}\label{E:right}
\lieA_{(p)} \stackrel{\delta_p^{(r)}}\lar \Gamma\bigl(U_p \times E_p, (\kA \boxtimes \kA)(\Sigma)\bigr), \; 
f \mapsto \left[f \otimes 1 + 1 \otimes f, -\varrho^{\sharp}\big|_{U_p \times E_p} \right].
\end{equation}
It follows from the skew-symmetry of $\rho$ that both maps $\delta_p^{(l)}$  and 
$\delta_p^{(r)}$ can be glued to a linear map
 $\lieA_{(p)} \stackrel{\delta_p^{(t)}}\lar \Gamma\bigl(E_p \times E_p, (\kA \boxtimes \kA)(\Sigma)\bigr)$.
Let $\tilde{\delta}_p^{(t)}$ be the composition
$$
\lieA_{(p)} \stackrel{\delta_p^{(t)}}\lar \Gamma\bigl(E_p \times E_p, (\kA \boxtimes \kA)(\Sigma)\bigr)
\longhookrightarrow  \Gamma\bigl((E_p \times E_p) \setminus \Sigma, \kA \boxtimes \kA\bigr).
$$
Consider the linear map 
$$
\lieA_{(p)}^\circ \stackrel{\delta_p^{(\rho)}}\lar  \Gamma\bigl((U_p \times U_p) \setminus \Sigma, \kA \boxtimes \kA\bigr), \; f \mapsto \bigl[f \otimes 1 + 1 \otimes f, \rho\big|_{U_p\times U_p}\bigr].
$$
For any $f \in \lieA_{(p)}^\circ$ the section $\delta_p^{(\rho)}(f)$
has no pole along the diagonal; see \cite[Proposition 4.12]{BurbanGalinat}.
It follows from the  commutative diagram
\begin{equation}
\begin{array}{c}
\xymatrix{
\lieA_{(p)}  \ar[r]^-{\tilde{\delta}_p^{(t)}} \ar@{_{(}->}[d] & \Gamma\bigl((E_p \times E_p) \setminus \Sigma, \kA \boxtimes \kA\bigr) \ar@{^{(}->}[d] \\
\lieA_{(p)}^\circ \ar[r]^-{\delta_p^{(\rho)}} \ar[rd]_-{\delta_p^{(\rho)}} & \Gamma\bigl((U_p \times U_p) \setminus \Sigma, \kA \boxtimes \kA\bigr) \\
& \Gamma\bigl(U_p \times U_p, \kA \boxtimes \kA\bigr) \ar@{_{(}->}[u]
}
\end{array}
\end{equation}
that $\tilde{\delta}_p^{(t)}$ can be extended to a linear map
$\lieA_{(p)} \stackrel{\delta_p}\lar \Gamma\bigl((E_p \times E_p) \setminus \{(s, s)\}, \kA \boxtimes \kA\bigr)$. It remains to note that $R_p \otimes_{\CC} R_p$ is a reduced Cohen--Macaulay $\CC$--algebra of Krull dimension two and 
$\lieA_{(p)} \otimes_{\CC} \lieA_{(p)}$ is a maximal Cohen--Macaulay $(R_p \otimes_{\CC} R_p)$--module. As a consequence,  the canonical restriction map 
$$\lieA_{(p)} \otimes \lieA_{(p)} \cong \Gamma\bigl(E_p \times E_p, \kA \boxtimes \kA\bigr)\lar \Gamma\bigl((E_p \times E_p) \setminus \{(s, s)\}, \kA \boxtimes \kA\bigr)$$ is an isomorphism; see e.g.~\cite[Section 3]{SurvOnCM}. It follows that $\delta_p$ can be extended to a linear map $\lieA_{(p)} \stackrel{\delta_p}\lar \lieA_{(p)} \otimes \lieA_{(p)}$. According to \cite[Proposition 4.12]{BurbanGalinat}, 
 $\lieA_{(p)}^\circ \stackrel{\delta_p^{(\rho)}}\lar \lieA^\circ_{(p)} \otimes \lieA^\circ_{(p)}$  is a Lie bialgebra cobracket. It follows that 
 $(\lieA_{(p)}, \delta_p)$ is a Lie bialgebra, too. Moreover, identifying the Manin triples $\widetilde{\lieA}_p = \widehat{\lieA}_p \dotplus \lieA_{(p)}$ and  $\lieR =  \lieg\llbracket y\rrbracket \dotplus \lieW$, the cobracket $\delta_p$ gets identified with the cobracket (\ref{E:cobracketpowerseries}) on the Lie algebra $\lieW$.  \hfill $\lozenge$
\end{remark}

\begin{proposition}\label{P:LieBialgSmoothPoint}
Let $(E, \kA)$ be a geometric CYBE datum and $p \in U$. Then the Lie bialgebra cobracket  $\lieA_{(p)} \stackrel{\delta_p}\lar \lieA_{(p)} \otimes \lieA_{(p)}$ is determined by the Manin triple $\widetilde{\lieA}_p = \widehat{\lieA}_p \dotplus \lieA_{(p)}$.
\end{proposition}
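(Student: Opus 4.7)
The plan is to reduce the proposition to its ``formal power series'' analogue, namely Proposition \ref{P:BielgebraPowerSeriesDetermined}, by transporting the entire picture at $p$ through a suitable trivialization of $\kA$. Since $p \in U$ and $\omega$ does not vanish at $p$, we may choose the isomorphism $\widehat O_p \stackrel{\vartheta}\lar \CC\llbracket y \rrbracket$ identifying $\widehat\omega_p$ with $dy$, together with a $\widehat O_p$--$\CC\llbracket y\rrbracket$--equivariant Lie algebra isomorphism $\widehat\lieA_p \stackrel{\zeta}\lar \lieg\llbracket y\rrbracket$ (which exists by the $\lieg$--weak local freeness at $p$, cf.~\cite{GerstenhaberSchack}). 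Extension of scalars yields $\widetilde\lieA_p \stackrel{\tilde\zeta}\lar \lieR = \lieg\llbrace y \rrbrace$, and the discussion following (\ref{E:FormAtp}) shows that $\tilde\zeta$ is an isometry between $\widetilde F_p^\omega$ and the form $F$ from (\ref{E:FormSeries}).

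Under $\tilde\zeta$, the Mayer--Vietoris sequence (\ref{E:MayerVietoris}) combined with the acyclicity of $\kA$ and the isotropy of $\lieA_{(p)}$ shows that the Manin triple $\widetilde\lieA_p = \widehat\lieA_p \dotplus \lieA_{(p)}$ is sent isometrically to a Manin triple $\lieR = \lieg\llbracket y\rrbracket \dotplus \lieW$, where $\lieW := \tilde\zeta\bigl(\lieA_{(p)}\bigr)$. By \cite[Theorem 6.4]{BurbanGalinat}, the trivialized section $\upsilon(\rho) = \tilde r^\zeta(x;y)$ is precisely the formal $r$--matrix of the form (\ref{E:formcanonique}) whose associated Lagrangian subspace via Theorem \ref{E:FormalCYBEManinTriples} is $\lieW$ itself.

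It therefore remains to verify that, under the isomorphism $\tilde\zeta$, the cobracket $\lieA_{(p)} \stackrel{\delta_p}\lar \lieA_{(p)}\otimes\lieA_{(p)}$ constructed sheaf-theoretically in the remark preceding the proposition coincides with the cobracket $\lieW \stackrel{\delta}\lar \lieW\otimes\lieW$ defined by (\ref{E:cobracketpowerseries}) for $\tilde r^\zeta$. For $f \in \lieA^\circ_{(p)}$ this is essentially tautological: the trivialization of $\delta^{(\rho)}_p(f) = [f\otimes 1 + 1\otimes f, \rho|_{U_p\times U_p}]$ computed at $p$ in the second variable is, by construction of $\upsilon$, exactly $[f(x)\otimes 1 + 1\otimes f(y),\tilde r^\zeta(x;y)]$. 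The two sides extend continuously to $\lieA_{(p)}$, respectively to $\lieW$, through the same maximal Cohen--Macaulay extension argument used in the remark, so the identification of cobrackets persists on all of $\lieA_{(p)}$.

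Granted this identification, Proposition \ref{P:BielgebraPowerSeriesDetermined} applied to $\tilde r^\zeta$ gives
\[
F\bigl(\delta(w), g_1 \otimes g_2\bigr) = F\bigl(w, [g_1,g_2]\bigr) \quad \mbox{for all } w \in \lieW,\; g_1,g_2 \in \lieg\llbracket y\rrbracket,
\]
and transporting this identity back through $\tilde\zeta^{-1}$ yields exactly the defining relation of Definition \ref{D:CobracketManinTriple} for the Manin triple $\widetilde\lieA_p = \widehat\lieA_p \dotplus \lieA_{(p)}$. The main obstacle is the verification in the third paragraph that the sheaf-theoretic cobracket $\delta_p$ agrees with the power-series cobracket (\ref{E:cobracketpowerseries}) attached to $\upsilon(\rho)$; once this compatibility between the local-analytic trivialization of $\rho$ and the algebraic trivialization of the Manin triple at $p$ is established, the statement becomes a direct translation of Proposition \ref{P:BielgebraPowerSeriesDetermined}.
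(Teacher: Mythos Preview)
Your proposal is correct and follows the same route as the paper. The paper's proof is a single line (``It is a consequence of Proposition \ref{P:BielgebraPowerSeriesDetermined}''), relying on the remark immediately preceding the proposition, which already records that under the trivialization $\tilde\zeta$ the Manin triple $\widetilde\lieA_p=\widehat\lieA_p\dotplus\lieA_{(p)}$ is identified with $\lieR=\lieg\llbracket y\rrbracket\dotplus\lieW$ and that the cobracket $\delta_p$ is identified with the cobracket (\ref{E:cobracketpowerseries}); you have simply unpacked these identifications explicitly.
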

\begin{proof}
It is a consequence of Proposition \ref{P:BielgebraPowerSeriesDetermined}.
\end{proof}

\subsection{Manin triples and geometric CYBE data on singular Weierstra\ss{} curves}\label{SS:NodalManinTriples}
Let $(E, \kA)$ be a geometric CYBE datum, where $E$ is a singular Weierstra\ss{} curve. As in the previous subsection, let $s$ be the singular point of $E$ and $U = E \setminus \{s\}$. To simplify the notation, we denote:  $\widehat{O} = \widehat{O}_s$, $\widehat{Q} = \widehat{Q}_{s}$ and $R = R_{(s)}$ as well as 
$\widehat{\lieA}= \widehat{\lieA}_s$,   $\lieA = \lieA_{(s)}$ and  $\widetilde{\lieA} = \widetilde{\lieA}_{s}$. Moreover, let $\PP^1 \stackrel{\nu}\lar E$ be the normalization map.

\smallskip
\noindent
Apart of Remark \ref{R:CuspidalCase}, we assume in this subsection that 
$E$ is nodal.  Let $s_\pm \in \PP^1$ be  such that $\nu(s_\pm) = s$. Next, let 
$\widehat{O}_\pm$ be the completion of the stalk of $\kO_{\PP^1}$ at $s_\pm$ and $\widehat{Q}_\pm$ be the fraction field of $\widehat{O}_\pm$. Then we have an injective homomorphism of $\CC$--algebras $\widehat{O} \stackrel{\nu^\ast}\lar  \widehat{O}_+ \times \widehat{O}_-$, which induces an isomorphism of the corresponding total rings of fractions $\widehat{Q} \stackrel{\nu^\ast}\lar  \widehat{Q}_+ \times \widehat{Q}_-$.

\smallskip
\noindent
We choose homogeneous coordinates $(w_+: w_-)$ on $\PP^1$ so that $s_+ = (0:1)$ and $s_- = (1: 0)$. Then the rational functions  $u = u_+ := \dfrac{w_+}{w_-}$ and $u_- := \dfrac{w_-}{w_+}$ are local parameters at the points $s_+$ and $s_-$, respectively.  In these terms we have an  algebra isomorphism
$$
R = \Gamma(U, \kO) \stackrel{\nu^\ast}\lar \Gamma\bigl(\nu^{-1}(U), 
\kO_{\PP^1}\bigr) \cong  \CC\bigl[u, u^{-1}\bigr]$$
as well as  
$
\widehat{O}_\pm \cong \CC\llbracket u_\pm\rrbracket$, $
\widehat{Q}_\pm \cong \CC\llbrace u_\pm\rrbrace$, $\widehat{Q} \cong  \CC\llbrace u_+\rrbrace \times \CC\llbrace u_-\rrbrace$ and 
$
\widehat{O} \cong \CC\llbracket u_+, u_-\rrbracket/(u_+ u_-).
$
We shall view the following rational  differential one-form on $\PP^1$ $$\omega:= \dfrac{du}{u} = \dfrac{du_+}{u_+} = -\dfrac{du_-}{u_-}$$   as a generator of $\Gamma(E, \Omega)$. It follows from the assumption that $\kA$ is weakly $\lieg$-locally free that the Killing form $\lieA \times \lieA \stackrel{K}\lar \widehat{Q}$ is non-degenerate. Hence, the Killing form
$\widetilde\lieA \times \widetilde\lieA \stackrel{\widetilde{K}}\lar \widehat{Q}$ is non-degenerate, too. Recall that the Rosenlicht residue map $\widehat{Q}\xrightarrow{\res_{s}^\omega} \CC$ 
 with respect to the form $\omega$ is given by the formula
\begin{equation}
\res^\omega_s(f) = \res_{s_+}\bigl(f_+ \omega) + \res_{s_-}\bigl(f_- \omega) = 
\res_{0}\left(f_+ \dfrac{du_+}{u_+}\right) - \res_{0}\left(f_- \dfrac{du_-}{u_-}\right),
\end{equation}
where we use the identifications $f = (f_+, f_-) \in  \widehat{Q} \cong \widehat{Q}_+ \times \widehat{Q}_- \cong \CC\llbrace u_+\rrbrace \times \CC\llbrace u_-\rrbrace$. Similarly to (\ref{E:FormAtp}),  we get an invariant symmetric bilinear form
$\widetilde\lieA \times \widetilde\lieA \stackrel{\widetilde{F}^\omega_s}\lar \CC$ given by 
\begin{equation}\label{E:Forma}
\widetilde\lieA \times \widetilde\lieA \stackrel{\widetilde{K}}\lar \widehat{Q} 
\stackrel{\res_s^\omega}\lar \CC.
\end{equation}
It is easy to see that $\widetilde{F}^\omega_s$ is non-degenerate. 

\smallskip
\noindent
It can be shown that the Mayer--Vietoris sequence (\ref{E:MayerVietoris}) is exact at the singular point $s$ as well; see e.g.~\cite[Theorem 3.1]{ThesisGalinat}. It follows from the cohomology vanishing 
$H^0(E, \kA) = 0 = H^1(E, \kA)$ that we have a Manin triple $\widetilde\lieA = 
\widehat\lieA \dotplus \lieA$. According to \cite[Proposition 4.12]{BurbanGalinat}
\begin{equation}\label{E:GeomCobracket}
\lieA  \stackrel{\delta}\lar \lieA \otimes \lieA, \; f \mapsto [f \otimes 1 + 1 \otimes f, \rho]
\end{equation}
is a Lie bialgebra cobracket, where $\rho \in \Gamma\bigl((U \times U) \setminus \Sigma, \kA \boxtimes \kA\bigr)$ is the geometric $r$-matrix. 

\begin{theorem}\label{T:main}
Let $(E, \kA)$ be a geometric CYBE datum, where $E$ is a nodal Weierstra\ss{} curve. Then the Lie bialgebra cobracket  (\ref{E:GeomCobracket})  is determined by the Manin triple $\widetilde{\lieA} = \widehat{\lieA} \dotplus \lieA$.
\end{theorem}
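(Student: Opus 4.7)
The plan is to verify the defining identity of Definition~\ref{D:CobracketManinTriple}: for all $a \in \lieA$ and $b_1, b_2 \in \widehat{\lieA}$,
$$
\widehat{F}\bigl(\delta(a), b_1 \otimes b_2\bigr) = \widehat{F}\bigl(a, [b_1, b_2]\bigr),
$$
where $\widehat{F}$ denotes the bilinear form on $\widetilde{\lieA}^{\otimes 2}$ induced by $\widetilde{F}_s^\omega$. This is the nodal analogue of Proposition~\ref{P:LieBialgSmoothPoint}, whose proof via Proposition~\ref{P:BielgebraPowerSeriesDetermined} reduces to a dual-basis computation around a single formal disk; my strategy is to carry out an analogous argument in the presence of two branches.

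First I would pass to the normalization $\PP^1 \stackrel{\nu}\lar E$. The isomorphism $\widehat{Q} \cong \widehat{Q}_+ \times \widehat{Q}_-$ induces $\widetilde{\lieA} \cong \widetilde{\lieA}_+ \times \widetilde{\lieA}_-$, and the Rosenlicht residue decomposes (with a sign on the $-$ branch coming from $\omega = du_+/u_+ = -du_-/u_-$). Any $b \in \widehat{\lieA}$ corresponds to a pair $(b^+, b^-) \in \widehat{\lieA}_+ \times \widehat{\lieA}_-$ obeying a gluing condition at $s$. Choose topological bases $(g_{(k,i)}^\pm)$ of $\widehat{\lieA}_\pm \cong \lieg\llbracket y\rrbracket$ together with their duals $(w_{(k,i)}^\pm) \subset \lieA$ under the branch-wise residue pairing; by the same reasoning as in Theorem~\ref{E:FormalCYBEManinTriples}, these duals are precisely the coefficients of the expansion of $(\nu \times \nu)^*\rho$ in the local variables at $(s_\pm, s_\pm)$, where $(\nu \times \nu)^*\rho$ has the canonical form $\widetilde{\chi}/(u-v) + h_\pm(u,v)$ coming from (\ref{E:GeomRMatrixLocal}).

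Next I would use bilinearity to split both sides of the target identity into four summands indexed by pairs $(\epsilon_1, \epsilon_2) \in \{+,-\}^2$, corresponding to the branches of $b_1^{\epsilon_1}$ and $b_2^{\epsilon_2}$. For the off-diagonal pairs $\epsilon_1 \neq \epsilon_2$, the local expansion of $(\nu \times \nu)^*\rho$ is regular at $(s_+, s_-)$ and $(s_-, s_+)$, because the diagonal $\Sigma \subset U \times U$ meets neither of these preimage points; consequently the corresponding contributions to the left-hand side vanish. The right-hand side also vanishes on these pieces, since under the product decomposition $\widetilde{\lieA} \cong \widetilde{\lieA}_+ \times \widetilde{\lieA}_-$ we have $[b_1^+, b_2^-] = 0$ and $[b_1^-, b_2^+] = 0$. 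For the diagonal pairs $\epsilon_1 = \epsilon_2$, the computation becomes branch-by-branch identical to the one performed in the proof of Proposition~\ref{P:BielgebraPowerSeriesDetermined}: the dual-basis relation together with the CYBE~(\ref{E:CYBEformali}) applied to the expansion of $(\nu \times \nu)^*\rho$ around $(s_\pm, s_\pm)$ yields the identity at each branch separately.

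The main obstacle I expect is the careful bookkeeping across the node: one must reconcile the two branch-wise dual-basis structures on $\lieA$ with the signed decomposition of $\widetilde{F}_s^\omega$, and rigorously verify that no component of $(\nu \times \nu)^{-1}(\Sigma)$ other than the two ``diagonal'' ones passes through the off-diagonal preimages of $(s,s)$. Once this bookkeeping is in place, the theorem reduces to two independent applications of the smooth-point argument, glued together by the compatibility condition satisfied by elements of $\widehat{\lieA} \subset \widehat{\lieA}_+ \times \widehat{\lieA}_-$.
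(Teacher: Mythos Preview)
Your branch-by-branch reduction does not go through, and the gap is structural rather than a matter of bookkeeping. The isotropy of $\lieA$ in $\widetilde{\lieA}$ holds only with respect to the \emph{full} Rosenlicht form $\widetilde{F}_s^\omega = F_+ - F_-$; the two branch contributions cancel. With respect to $F_+$ alone, $\lieA$ (viewed inside $\widetilde{\lieA}_+$ via the first projection) is \emph{not} isotropic: for $a = g u^k$ and $a' = g^\ast u^{-k}$ one has $F_+(a,a') = \kappa(g,g^\ast) \ne 0$. Consequently there is no Manin triple of the shape $\widetilde{\lieA}_+ = \widehat{\lieA}_+ \dotplus \lieA$ on either branch, and the hypotheses of Proposition~\ref{P:BielgebraPowerSeriesDetermined} are simply not available branch by branch. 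Your ``duals $(w_{(k,i)}^\pm) \subset \lieA$'' therefore cannot exist as stated: the expansion coefficients of $(\nu\times\nu)^\ast\rho$ near $(s_+,s_+)$ do not land in $\lieA$, and in any case do not form a basis dual to one of $\widehat{\lieA}_+$ under $F_+$. The off-diagonal claim is also unjustified: regularity of $(\nu\times\nu)^\ast\rho$ at $(s_+,s_-)$ says nothing about the vanishing of $F_+\otimes F_-$ applied to $\delta(a)\in\lieA\otimes\lieA$, since $\delta(a)$ may well have poles there.

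What the paper does instead is keep the two branches coupled in the \emph{first} tensor factor while leaving the second factor global. One expands $\varrho$ via the map $\varepsilon^\ast$ into $\overline{\widehat{\lieA}\otimes_\CC\lieA}$, obtaining $\varepsilon^\ast(\varrho)=\sum_{(k,i)\in\Upsilon} b_{(k,i)}\otimes a_{(k,i)}$ with $b_{(k,i)}\in\widehat{\lieA}$ and $(a_{(k,i)})_{(k,i)\in\Upsilon}$ a $\CC$-basis of $\lieA$ (here $\Upsilon = \ZZ\times\{1,\dots,q\}$, not $\NN_0\times\{1,\dots,q\}$). The key computation is the two-branch power-series expansion of $\dfrac{v}{u-v}$, which shows that $b_{(k,i)} = w_{(k,i)} + h_{(k,i)}$ with $h_{(k,i)}\in\lieA$ almost all zero, and hence that $(b_{(k,i)})$ is the topological basis of $\widehat{\lieA}$ dual to $(-a_{(k,i)})$ under the genuine form $\widetilde{F}_s^\omega$. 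With this asymmetric expansion in hand, the identity $F(\delta(a),b'\otimes b'') = F(a,[b',b''])$ follows from the isotropy of $\widehat{\lieA}$ (not of $\lieA$) together with invariance of $F$, exactly as in the smooth case --- but now applied once, to the actual Manin triple $\widetilde{\lieA}=\widehat{\lieA}\dotplus\lieA$, rather than twice to nonexistent branch-wise triples.
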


\begin{proof}
For any $k \in \NN$ we put:
\begin{itemize}
\item 
$P^{(k)} := \widehat{O}/\idm^k \otimes_{\CC} R$, $\widetilde{P}^{(k)}_\pm  := \widehat{O}_\pm/\idm_\pm^k \otimes_{\CC} R$ and $\widetilde{P}^{(k)} := 
\widetilde{P}^{(k)}_+ \times \widetilde{P}^{(k)}_-$.
\item $X^{(k)} := \Spec(P^{(k)})$, $\widetilde{X}^{(k)}_\pm  := \Spec(\widetilde{P}^{(k)}_\pm)$  and $\widetilde{X}^{(k)} := 
\widetilde{X}^{(k)}_+ \sqcup \widetilde{X}^{(k)}_-$.
\end{itemize}
Then we set: $P := \varprojlim (P^{(k)})$, $\widetilde{P}_\pm := \varprojlim (\widetilde{P}_\pm^{(k)})$, $\widetilde{P} = \widetilde{P}_+ \times \widetilde{P}_-$, $X := \Spec(P)$, $\widetilde{X}_\pm := \Spec(\widetilde{P}_\pm)$ and $\widetilde{X} := \Spec(\widetilde{P}) = 
\widetilde{X}_+ \sqcup \widetilde{X}_-$. Note that  $P \cong \CC[v, v^{-1}]\llbracket u_+, u_-\rrbracket/(u_+ u_-)$ and $\widetilde{P}_\pm  \cong 
\CC[v, v^{-1}]\llbracket u_\pm \rrbracket$. Finally, let $D := \CC\bigl[u, u^{-1}, v, v^{-1}\bigr]$, 
$S_\pm := \CC[v, v^{-1}]\llbrace u_\pm\rrbrace$, $Y_\pm := \Spec(S_\pm)$, 
$S := S_+ \times S_-$  and 
$Y := Y_+ \sqcup Y_-$. 
Consider the algebra homomorphism
$$
D \stackrel{\psi}\lar S,  \; u \mapsto (u_+, u_-^{-1}), v \mapsto (v, v).
$$
The formulae 
$
(u_+ - v) \sum\limits_{k = 0}^\infty  v^{-k-1} u_+^k = -1 $ and $
(u_-^{-1} - v) \sum\limits_{k = 0}^\infty   v^{k} u_-^{k+1} = 1
$
imply that $\psi(u-v)$ is a unit in $S$. As a consequence, $\psi$ can be extended to the algebra homomorphism
$
\CC\left[u, u^{-1}, v, v^{-1}, \dfrac{1}{u-v}\right] \stackrel{\widetilde\psi}\lar S.
$
Note that
\begin{equation}\label{E:powerseriesformula}
\begin{array}{c}
\xymatrix{
\widetilde{\psi}\left(\dfrac{v}{u-v}\right) = \left(- \sum\limits_{k = 0}^\infty v^{-k} u_+^k, \sum\limits_{k = 1}^\infty v^{k} u_-^{k}\right).
}
\end{array}
\end{equation}
Next,  we have a family of morphisms of schemes $\left(X^{(k)} \stackrel{\varepsilon_k}\lar (E \times U)\setminus \Sigma\right)_{k \in \NN}$. Taking the corresponding direct limit, we get a morphism $X \stackrel{\varepsilon}\lar (E \times U)\setminus \Sigma$. In a similar way, we have a family of morphisms $\left(\widetilde{X}^{(k)} 
\stackrel{\widetilde{\varepsilon}_k}\lar (\PP^1 \times U)\setminus \Sigma\right)_{k \in \NN}$
as well as the corresponding direct limit  
$\widetilde{X} \stackrel{\widetilde{\varepsilon}}\lar (\PP^1 \times U)\setminus  \Sigma$. 
Summing up, we get  the following commutative diagram in the category of schemes:
\begin{equation}\label{E:CommDiagramKeyNodalCase}
\begin{array}{c}
\xymatrix{
(E \times U)\setminus \Sigma & (\PP^1 \times U)\setminus \Sigma \ar[l]_-{\widetilde{\nu}} & 
\ar@{_{(}->}[l]_-\imath (U \times U)\setminus \Sigma \ar@{^{(}->}[r] & U \times U \\
X^{(k)} \ar[u]_-{\varepsilon_k} \ar[d]^-{\pi_k} & \widetilde{X}^{(k)} 
\ar[l]_{\bar{\nu}_k} 
\ar[u]^-{\widetilde{\varepsilon}_k} \ar[d]_-{\widetilde\pi_k} & & \\
X \ar@/^20pt/[uu]^-{\varepsilon} & \widetilde{X} \ar[l]_-{\bar{\nu}} \ar@/_20pt/[uu]_-{\widetilde\varepsilon} &  Y \ar[l]_-{\eta} \ar[uu]_-{
\widetilde\jmath} \ar[uur]_-\jmath & \\
}
\end{array}
\end{equation}
where  $\widetilde{\nu}$ is the restriction of $\nu \times \id$ on $(\PP^1 \times U)\setminus \Sigma$ and 
$\bar{\nu}_k$, $\bar{\nu}$, $\eta$, $\jmath$ and  $\widetilde{\jmath}$ are morphisms of affine schemes corresponding to the algebra embeddings 
 $P^{(k)} \hookrightarrow  \widetilde{P}^{(k)}$, 
 $P\hookrightarrow  \widetilde{P}$, $\widetilde{P} \hookrightarrow S$,  
 $\psi$ and $\widetilde{\psi}$, respectively. 

\smallskip
\noindent
Since $\kA$ is torsion free, we get an injective map
$$
\Gamma\bigl((E \times U) \setminus \Sigma, \kA \boxtimes \kA\bigr) 
\stackrel{\varepsilon^\ast}\lar 
\overline{\widehat{\lieA} \otimes_{\CC} \lieA} := \varprojlim 
\left(\widehat{\lieA}/\idm^k \widehat\lieA \otimes_{\CC} \lieA\right) \cong 
\Gamma\bigl(X, \varepsilon^\ast\bigl(\kA \boxtimes \kA\big|_{(E \times U)\setminus \Sigma}\bigr)\bigr).
$$

\smallskip
\noindent
Let $\Upsilon$ be a countable set and $(a_l)_{l \in \Upsilon}$ be a basis of $\lieA$ over $\CC$. Then there exists a uniquely determined family $(b_{l})_{l \in \Upsilon}$  of elements of $\widehat\lieA$ such that for any $k \in \NN$ there exists a finite subset 
$\Upsilon_k \subset \Upsilon$ satisfying the following properties:
\begin{itemize}
\item  the class $b_l^{(k)}$  of $b_l$ in $\widehat{\lieA}/\idm^k \widehat{\lieA}$ is zero for all $l \notin \Upsilon_k$ (i.e. $b_l \in \idm^k 
\widehat{\lieA}$ for $l \notin \Upsilon_k$)  and 
\item $\varepsilon_k^\ast(\varrho) = \sum\limits_{l \in \Upsilon_k} b_l^{(k)} \otimes a_l$.
\end{itemize}
In these terms we may informally write: $
\varepsilon^\ast(\varrho) = \sum\limits_{l \in \Upsilon} b_l \otimes a_l \in 
\overline{\widehat{\lieA} \otimes_{\CC} \lieA}$.

\noindent
Let $\Upsilon = \left\{(k, i) \, \big| \, k \in \ZZ, 1 \le i \le q \right\}$,
$(c_1, \dots, c_q)$ be a basis of $\lieA$ viewed as module over $R = \CC\bigl[v, v^{-1}\bigr]$ and $a_{(k, i)} := c_i v^k$ for $(k, i) \in \Upsilon$.  Then 
$(a_{(k,i)})_{(k, i) \in \Upsilon}$ is a basis of $\lieA$ viewed as a vector space over $\CC$.  From what was said above it follows that there exists a uniquely determined family of elements $(b_{(k, i)})_{(k, i) \in \Upsilon}$ of $\widehat\lieA$ such that 
\begin{equation}\label{E:RMatrixExpansion}
\varepsilon^\ast(\varrho) = \sum\limits_{(k, i) \in \Upsilon} b_{(k, i)} \otimes a_{(k, i)}.
\end{equation}
Let $(c_1^\ast, \dots, c_q^\ast)$ be the dual basis of $\lieA$ with respect to the Killing form $\lieA \times \lieA \stackrel{K}\lar R$. Then the tensor
$c_1^\ast \otimes c_1 + \dots + c_q^\ast \otimes c_q \in \lieA \otimes_{\CC} \lieA$ is mapped to the Casimir element of $\lieA$ under the canonical projection $\lieA \otimes_{\CC} \lieA \twoheadarrow   \lieA \otimes_{R} \lieA$. Since $\omega\big|_{U} = \dfrac{dv}{v}$, the geometric $r$-matrix $\rho$ has the following presentation:
\begin{equation}
\rho = \dfrac{v}{u-v} \sum\limits_{i = 1}^q c_i^\ast \otimes c_i +  h(u, v) \in 
\Gamma\left((U \times U)\setminus \Sigma, \kA \boxtimes \kA\right),
\end{equation}
where $h  \in \lieA \otimes_{\CC} \lieA$; see (\ref{E:GeomRMatrixLocal}). It follows from (\ref{E:powerseriesformula}) that we have the following expansion 
$$
\widetilde{\jmath}^\ast(\rho) = 
\sum\limits_{(k, i)\in \Upsilon} (w_{(k, i)} + h_{(k, i)}) \otimes a_{(k, i)},
$$
where $h_{(k, i)} \in \lieA \subset \widetilde\lieA = \widetilde\lieA_+ \times 
\widetilde\lieA_-
$
are determined by the expression  $h = \sum\limits_{(k, i) \in \Upsilon} h_{(k, i)} \otimes a_{(k, i)}$ (which is a finite sum in $\lieA \otimes_\CC \lieA$) and 
\begin{equation}\label{E:dualelements}
\widetilde\lieA_+ \times 
\widetilde\lieA_- \ni w_{(k, i)} = 
\left\{
\begin{array}{lcc}
\bigl(0, u_-^k c_i^\ast) & \mbox{if} & k \ge 1 \\
\bigl(-u_+^{-k} c_i^\ast, 0\bigr) & \mbox{if} & k \le 0. \\
\end{array}
\right.
\end{equation}
It follows from (\ref{E:CommDiagramKeyNodalCase})  that
$(\bar{\nu} \eta)^\ast\bigl(\varepsilon^\ast(\varrho)\bigr) = \widetilde{\jmath}^\ast(\rho)$. Hence,  for any $(k, i) \in \Upsilon$ we have:
\begin{equation}\label{E:expansiongeomrmatrnodalpoint}
\widehat\lieA \ni b_{(k, i)}  = w_{(k, i)} + h_{(k, i)} \in \widetilde\lieA = 
\widetilde\lieA_+ \times 
\widetilde\lieA_-.
\end{equation}
Since all $h_{(k, i)}$ but finitely many are zero, 
$b_{(k, i)}  = w_{(k, i)}$ for all but finitely many $(k, i) \in \Upsilon$. As $\lieA$ is an isotropic subalgebra of $\widetilde\lieA$, we deduce  from (\ref{E:dualelements}) the following relation: 
\begin{multline}\label{E:Orthogonality}
F\bigl(b_{(k', i')}, a_{(k'', i'')}\bigr) = F\bigl(w_{(k', i')}, a_{(k'', i'')}\bigr) = F\Bigl(w_{(k', i')}, \bigl(u_+^{k''} c_{i''}, u_-^{-k''} c_{i''}\bigr)\Bigr)  \\
=  -\delta_{k' k''} \delta_{i' i''}, 
\end{multline}
where
$F = \widetilde{F}^\omega_s$ is the form given by (\ref{E:Forma}).
This formula in particular implies that the elements $(b_{(k, i)})_{(k, i) \in \Upsilon}$ are linearly independent. It follows from the direct sum decomposition 
$\widetilde\lieA = \widehat\lieA \dotplus \lieA$ that $(b_{(k, i)})_{(k, i) \in \Upsilon}$ is in fact a topological basis of $\widehat\lieA$. 

\smallskip
\noindent
After establishing these preparatory results, we can proceed to the proof of the actual statement:
$
F\bigl(\delta(a), b' \otimes b'' \bigr) = 
F\bigl(a, [b', b''] \bigr)
$
for all $a \in \lieA$ and $b', b'' \in \widehat{\lieA}$.
Arguing as in the proof of Proposition \ref{P:BielgebraPowerSeriesDetermined}, we conclude that it is sufficient to prove the formula
\begin{equation}\label{E:DeltaIsDefined}
F\bigl(\delta(a), b_{(k', i')} \otimes b_{(k'', i'')} \bigr) = 
F\bigl(a, \bigl[b_{(k', i')},  b_{(k'', i'')}\bigr]\bigr)
\end{equation}
for any $(k', i'), (k'', i'') \in \Upsilon$. In order to use the expansion (\ref{E:RMatrixExpansion}), we embed $\widetilde\lieA \otimes \widetilde\lieA$ into a larger vector space $\overline{\widetilde\lieA \otimes \widetilde\lieA}$ defined as follows.

\smallskip
\noindent
Let $T^{+}_{\pm} := \CC\llbrace v_+ \rrbrace\llbrace u_\pm \rrbrace$, 
$T^{-}_{\pm} := \CC\llbrace v_- \rrbrace\llbrace u_\pm \rrbrace$, $T_\pm := T_\pm^+ \times T_\pm^-$
 and
$T := T_+ \times T_-$. Clearly, we have injective algebra homomorphisms
$
S_\pm \longhookrightarrow  T_\pm, u_\pm \mapsto u_\pm, v \mapsto \bigl(v_+, v_-^{-1}\bigr)$
which define  the embedding $S \longhookrightarrow  T$. Summing up, we have two chains
of algebra embeddings
$$
P \longhookrightarrow \widetilde{P} \longhookrightarrow S \longhookrightarrow T \quad \mbox{and} \quad D \longhookrightarrow S \longhookrightarrow T.
$$
Now we  put: $\overline{\widetilde\lieA \otimes \widetilde\lieA}:= T \otimes_{D} \bigl(\lieA \otimes_{\CC} \lieA\bigr)$ and $\overline{\widetilde\lieA \otimes \lieA}:= S \otimes_{P} \bigl(\overline{\widehat\lieA \otimes \lieA}\bigr)$. It is clear that $
 \overline{\widetilde\lieA \otimes \widetilde\lieA}  \cong T \otimes_{S} \bigl(\overline{\widetilde\lieA \otimes \lieA}\bigr)$. Moreover, we  have canonical injective linear maps 
$\widetilde{\lieA} \otimes \widetilde{\lieA} \hookrightarrow
\overline{\widetilde\lieA \otimes \widetilde\lieA} $ and 
$\overline{\widehat\lieA \otimes \lieA} \hookrightarrow
\overline{\widetilde\lieA \otimes \widetilde\lieA}$, which are moreover morphisms of $\lieA$-modules with respect to the adjoint action of $\lieA$.

\smallskip
\noindent
Consider the following   residue  map:
\begin{equation}\label{E:residuetwodim}
\CC\llbrace v\rrbrace \llbrace u\rrbrace \stackrel{\res}\lar \CC, \sum\limits_{k\ge -\infty} f_k(v) u^k \mapsto \res_0\left(f_0(v) \dfrac{dv}{v}\right).
\end{equation}
The Killing form $\lieA \times \lieA \stackrel{K}\lar R$ together with  the linear map
$T \stackrel{\res}\lar \CC$ defined by 
 (\ref{E:residuetwodim}) define  the bilinear form
$
\overline{\widetilde\lieA \otimes \widetilde\lieA} \times \overline{\widetilde\lieA \otimes \widetilde\lieA} \stackrel{F}\lar  \CC,
$
which extends  $
\bigl(\widetilde\lieA \otimes \widetilde\lieA\bigr) \times \bigl(\widetilde\lieA \otimes \widetilde\lieA\bigr) \stackrel{F}\lar  \CC$.

\smallskip
\noindent
Using  the power series expansion (\ref{E:RMatrixExpansion}), we can write
$\delta(a) = 
[a  \otimes 1 + 1 \otimes a, \rho] \in \lieA \otimes \lieA$ as
$$
\delta(a) = 
\sum\limits_{(k, i) \in \Upsilon}
\bigl[a, b_{(k, i)}\bigr] \otimes a_{(k, i)} + 
\sum\limits_{(k, i) \in \Upsilon} b_{(k, i)} \otimes \bigl[a, a_{(k, i)}\bigr] \in \overline{\widetilde\lieA \otimes \widetilde{\lieA}},
$$
Since $\widehat\lieA$ is an isotropic subspace of 
$\widetilde\lieA$, it follows  that $F(t, b' \otimes b'') = 0$ for any 
$t \in \overline{\widehat\lieA \otimes \lieA}$ and $b', b'' \in \widehat{\lieA}$. 
As a consequence, we have: 
$$
F\bigl(\delta(a), b_{(k', i')} \otimes b_{(k'', i'')} \bigr) =
F\left(\sum\limits_{(k, i) \in \Upsilon} \bigl[a, b_{(k, i)}\bigr] \otimes a_{(k, i)},  b_{(k', i')} \otimes b_{(k'', i'')}\right).
$$
Taking into account  the orthogonality relation (\ref{E:Orthogonality})
 as well
as invariance of the form $F$, we finally get:
$$
F\bigl(\delta(a), b_{(k', i')} \otimes b_{(k'', i'')} \bigr)
= - F\bigl([a, b_{(k'', i'')}],
b_{(k', i')}\bigr) = F\bigl(a, \bigl[b_{(k', i')},  b_{(k'', i'')}\bigr]\bigr),
$$
as asserted. \end{proof}

\smallskip
\noindent
Note that in the course of the proof of Theorem \ref{T:main} we have shown the following result.

\begin{theorem}\label{T:main2}
Let $(E, \kA)$
be as in  Theorem \ref{T:main},  $(c_1, \dots, c_q)$ be a basis of $\lieA$ viewed as module over $R$,
$(c^*_1, \dots, c^*_q)$ be its dual basis with respect to the Killing form $\lieA \times \lieA \stackrel{K}\lar R$, $\Upsilon := \left\{(k, i) \, \big| \, k \in \ZZ, 1 \le i \le q \right\}$, $a_{(k, i)} := c_i v^k$ for $(k, i) \in \Upsilon$ and 
$\bigl(b_{(k, i)}\bigr)_{(k, i) \in \Upsilon}$ be the topological basis of $\widehat{\lieA}$ dual to $(-a_{(k, i)})_{(k, i) \in \Upsilon}$. Then for any $(k, i) \in \Upsilon$ we have: 
$
b_{(k, i)} = w_{(k, i)} + h_{(k, i)},
$
where $w_{(k, i)}$ are given by the formula (\ref{E:dualelements}), $h_{(k, i)} \in \lieA$ and all but finitely many elements $h_{(k, i)}$ are zero. Moreover, the geometric $r$-matrix corresponding to $(E, \kA)$ is given by the following expression:
\begin{equation}\label{E:geomRMatrNodal}
\rho = \dfrac{v}{u-v} \sum\limits_{i = 1}^q c_i^\ast \otimes c_i +  \sum\limits_{(k, i) \in \Upsilon} h_{(k, i)}(u) \otimes v^k c_i .
\end{equation}
\end{theorem}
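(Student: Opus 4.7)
The plan is to extract the explicit formula from the computational core of the proof of Theorem~\ref{T:main} rather than reprove anything substantial. I recall the commutative diagram (\ref{E:CommDiagramKeyNodalCase}) and the algebra homomorphism $\widetilde{\psi}\colon \CC\bigl[u,u^{-1},v,v^{-1},(u-v)^{-1}\bigr] \to S$, together with the crucial two-sided power-series expansion (\ref{E:powerseriesformula}). Starting from the local form (\ref{E:GeomRMatrixLocal}) of the geometric $r$-matrix, I write $\rho = \frac{v}{u-v}\sum_{i=1}^q c_i^\ast \otimes c_i + h(u,v)$ with $h \in \lieA \otimes_\CC \lieA$ a \emph{finite} sum. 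Expanding $h$ in the countable $\CC$-basis $(a_{(k,i)})$ of $\lieA$, I get $h = \sum_{(k,i)} h_{(k,i)} \otimes a_{(k,i)}$ with only finitely many nonzero $h_{(k,i)} \in \lieA$.

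Next, I apply $\widetilde{\jmath}^{\,\ast}$: the Casimir summand becomes $\sum_{(k,i)} w_{(k,i)} \otimes a_{(k,i)}$ by direct application of (\ref{E:powerseriesformula}), with the $w_{(k,i)}$ exactly as in (\ref{E:dualelements}); the term $h$ is carried along unchanged. Commutativity of the lower portion of (\ref{E:CommDiagramKeyNodalCase}) yields $(\bar\nu\,\eta)^\ast \bigl(\varepsilon^\ast(\varrho)\bigr) = \widetilde{\jmath}^{\,\ast}(\rho)$. On the left, $\varepsilon^\ast(\varrho) = \sum_{(k,i)} b_{(k,i)} \otimes a_{(k,i)}$ by the very definition of the $b_{(k,i)}$ used in the proof of Theorem~\ref{T:main}. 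The natural inclusion $\overline{\widehat\lieA \otimes \lieA} \hookrightarrow \overline{\widetilde\lieA \otimes \widetilde\lieA}$ is injective, and the family $(a_{(k,i)})$ is $\CC$-linearly independent, so matching coefficients gives $b_{(k,i)} = w_{(k,i)} + h_{(k,i)}$, which is the first assertion.

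The duality claim — that $(b_{(k,i)})$ is the topological basis of $\widehat\lieA$ dual to $(-a_{(k,i)})$ with respect to $F = \widetilde{F}^\omega_s$ — is precisely the orthogonality relation (\ref{E:Orthogonality}) established in the course of the same proof. Because $\widetilde{\lieA} = \widehat{\lieA} \dotplus \lieA$ is a Manin triple with both summands Lagrangian, such a dual family in $\widehat{\lieA}$ is uniquely determined, so our $b_{(k,i)}$ coincide with those in the statement. Substituting $a_{(k,i)} = v^k c_i$ and the finitely supported $h_{(k,i)}$ back into $\rho$ then yields the formula (\ref{E:geomRMatrNodal}) at once.

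The only point that requires genuine care is bookkeeping: tracking in which ambient space each tensor lives (the completions $\overline{\widehat\lieA \otimes \lieA}$, $\overline{\widetilde\lieA \otimes \widetilde\lieA}$, and the image of $\widetilde{\jmath}^{\,\ast}$ inside $T \otimes_D (\lieA \otimes_\CC \lieA)$) and ensuring that $\widetilde{\jmath}^{\,\ast}$ is applied consistently. The finiteness of $\{(k,i) : h_{(k,i)} \ne 0\}$ is not an extra hypothesis but is forced by the fact that $h$ is a global section of $\kA \boxtimes \kA$ on $U \times U$ rather than a completed tensor — this is the geometric input that distinguishes the present statement from its purely formal counterpart for Manin triples of power series.
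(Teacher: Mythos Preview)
Your proposal is correct and follows precisely the paper's approach: the paper does not give a separate proof of this theorem but simply notes that it was established in the course of proving Theorem~\ref{T:main}, and you have accurately extracted those computational steps (the diagram (\ref{E:CommDiagramKeyNodalCase}), the expansion via (\ref{E:powerseriesformula}), the identification (\ref{E:expansiongeomrmatrnodalpoint}), and the orthogonality (\ref{E:Orthogonality})). Your remark on uniqueness of the dual family is a welcome clarification, since in Theorem~\ref{T:main} the $b_{(k,i)}$ arise from the expansion of $\varepsilon^\ast(\varrho)$ whereas in the present statement they are \emph{defined} as the dual topological basis, and one needs the Lagrangian decomposition to see these coincide.
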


\begin{remark}\label{R:CuspidalCase} Let $(E, \kA)$ be  a geometric CYBE datum, where $E$ is a cuspidal  plane cubic curve. Then the  cobracket  
$\lieA \stackrel{\delta}\lar \lieA_{} \otimes \lieA_{}$ is determined by the Manin triple $\widetilde{\lieA} = \widehat{\lieA} \dotplus \lieA$.

\smallskip
\noindent
Fix an isomorphism $R = \Gamma(U, \kO_E) \cong \CC[v]$. Then 
$\omega = dv$ is a generator of $\Gamma(E, \Omega_E)$. Let $(c_1, \dots, c_q)$ be a basis of $\lieA$ and $(c^\ast_1, \dots, c^\ast_q)$ be the dual basis of $\lieA$ with respect to the Killing form $\lieA \times \lieA \stackrel{K}\lar R$. Now we put: $\overline{\Upsilon} = \left\{(k, i) \, \big| \, k \in \NN_0, 1 \le i \le q \right\}$. Then  $a_{(k, i)} := c_i v^k$ for $(k, i) \in \overline{\Upsilon}$
form a basis of $\lieA$ over $\CC$.  Let 
$\bigl(b_{(k, i)}\bigr)_{(k, i) \in \overline{\Upsilon}}$ be the topological basis of $\widehat{\lieA}$ dual to $\bigl(a_{(k, i)}\bigr)_{(k, i) \in \overline{\Upsilon}}$. Then for any $(k, i) \in \overline{\Upsilon}$ we have a decomposition
$
b_{(k, i)} = c_i^\ast v^{-k-1} + h_{(k, i)}
$
for some uniquely determined  $h_{(k, i)} \in \lieA$. Again, all but finitely many elements $h_{(k, i)}$ are zero. The geometric $r$-matrix corresponding to $(E, \kA)$ is given by the following expression:
\begin{equation}\label{E:geomRMatrCusp}
\rho = \dfrac{1}{u-v} \sum\limits_{i = 1}^q c_i^\ast \otimes c_i +  \sum\limits_{(k, i) \in \overline{\Upsilon}} h_{(k, i)}(u) \otimes v^k c_i.
\end{equation}
The corresponding proofs are  completely  analogous to the ones of Proposition \ref{P:BielgebraPowerSeriesDetermined} and Theorem \ref{T:main} and therefore are left to an interested reader. \hfill $\lozenge$
\end{remark}

\begin{remark}
Let $(E, \kA)$ be a geometric CYBE datum, where $E$ is an arbitrary Weierstra\ss{} curve. There are also other natural ways to attach to $(E, \kA)$ Lie bialgebras and Manin triples. For example,  let  $p_+ \ne p_- \in E$ be any pair of points such that $s \in \bigl\{p_+, p_-\bigr\}$ provided $E$ is singular, 
$R_{p_+, p_-}:= \Gamma\bigl(E \setminus \{p_+, p_-\}, \kO\bigr)$ and 
$\lieA_{(p_+, p_-)} := \Gamma\bigl(E \setminus \{p_+, p_-\}, \kA\bigr)$. Then we have a Manin triple
$
\lieA_{(p_+, p_-)}\, = \,  \lieA_{(p_+)}\, \dotplus \, \lieA_{(p_-)}, 
$
where the underlying bilinear form $\lieA_{(p_+, p_-)} \times \lieA_{(p_+, p_-)} 
\to  \CC$ is given by the composition
$$
\lieA_{(p_+, p_-)} \times \lieA_{(p_+, p_-)} \stackrel{K}\lar R_{p_+, p_-}
\stackrel{\res_{p_+}^\omega}\lar \CC.
$$
Here, as usual,  $K$ is the Killing form of $\lieA_{(p_+, p_-)}$, viewed as a Lie algebra over $R_{p_+, p_-}$. \hfill $\lozenge$
\end{remark}

\section{Geometrization of twists of the standard Lie bialgebra structure on loop algebras and trigonometric solutions of CYBE}\label{S:GeometrizationTrigonomSolutions}

\subsection{Some basic facts on torsion free sheaves on a  nodal Weierstra\ss{} curve}\label{SS:BasisBBDG}
Let $E$ be a nodal Weierstra\ss{} curve, $s$ be its singular point,  $\PP^1 \stackrel{\nu}\lar   E$ be a normalization morphism and 
$\nu^{-1}(s)  =  \left\{s_+, s_-\right\}$.
Then  the following  diagram in the category of schemes
\begin{equation}\label{E:Bicartesian}
\begin{array}{c}
\xymatrix{
\left\{s_+, s_-\right\} \ar@{^{(}->}[r]^-{\tilde\eta} \ar@{->>}[d]_-{\tilde\nu} & \PP^1 \ar@{->>}[d]^-{\nu} \\
\{s\} \ar@{^{(}->}[r]^-{\eta} & E
}
\end{array}
\end{equation}
is bicartesian, i.e.~ it it both pullback and pushout diagram.
For any torsion free coherent sheaf $\kF$ on $E$, we get the locally free sheaf  $\widetilde\kF := \nu^*\kF/t(\nu^*\kF)$ on $\PP^1$, where  $t(\nu^*\kF)$ denotes the torsion part of $\nu^*\kF$. It is not hard to show that
\begin{itemize}
\item the canonical linear map $\kF\Big|_{s} \lar 
    \widetilde{\kF}\Big|_{s_+} \oplus \widetilde{\kF}\Big|_{s_-}$ is injective.
\item the canonical morphism of $(\CC\times \CC)$--modules $\theta_{\kF}$ given as the composition 
$$
    \tilde{\nu}^*(\kF\Big|_{s}) \lar \tilde\eta^*(\nu^*\kF) \lar \tilde\eta^*(\widetilde\kF) =  \widetilde{\kF}\Big|_{s_+} \oplus \widetilde{\kF}\Big|_{s_-}$$
     is surjective;
\item the following diagram  in the category $\Coh(E)$ of coherent sheaves on $E$ 
\begin{equation*}
\begin{array}{c}
\xymatrix{
\kF \ar[r] \ar[d] & \kF\Big|_{s} \ar[d]\\
\nu_*(\widetilde\kF) \ar[r] & \widetilde{\kF}\Big|_{s_+} \oplus \widetilde{\kF}\Big|_{s_-}
}
\end{array}
\end{equation*}
 is a pullback diagram, where  all morphisms are the canonical ones and skyscraper sheaves supported at $s$ are identified with their stalks.
    \end{itemize}
Consider the comma category $\overline{\Tri}(E)$ associated  with a pair of functors
$$
\xymatrix{
\VB(\PP^1) \ar[r]^-{\FF} & \bigl(\CC\times \CC\bigr)-\mathsf{mod} &  \ar[l]_-{\GG} \CC-\mathsf{mod}},
$$
where $\FF(\kG):= \kG\Big|_{s_+} \oplus \kG\Big|_{s_-}$  for any $\kG \in \VB(\PP^1)$ and 
$\GG = (\CC \times \CC)\otimes_{\CC} \,-\,$.
By definition, any object of $\overline{\Tri}(E)$  is a triple $\bigl(\kG, V, \theta\bigr)$, where $\kG$ is a locally free  coherent sheaf on $\PP^1$, $V$ is a finite dimensional vector space over $\CC$  and $\GG(V) \stackrel{\theta}\lar \FF(\kG)$ 
is given by a pair of  linear maps
$V \stackrel{\theta_\pm}\lar \kG\Big|_{s_\pm}$. 
The definition of morphisms in $\overline{\Tri}(E)$ is straightforward.

\noindent
The following result is a special case of    \cite[Theorem 16]{Survey}; see also 
\cite[Theorem 3.2]{Thesis}.

\begin{theorem}\label{T:keyonTF}
The functor 
$
\TF(E) \stackrel{\EE}\lar  \overline{\Tri}(E),\;  \kF \mapsto \bigl(\widetilde\kF, \kF\big|_{s}, \theta_{\kF}\bigr)
$
is fully faithful. The essential image $\Tri(E)$ of $\TF(E)$ consists of  those triples $\bigl(\kG, V, \theta\bigr)$, for which both linear maps $\theta_\pm$ are surjective and the linear map $\tilde{\theta}= \left(\begin{array}{c} \theta_+ \\ \theta_- \end{array}\right): V  \lar \kG\Big|_{s_+} \oplus \kG\Big|_{s_-}$ is injective, whereas the  essential image of the category $\VB(E)$ consists of those triples $\bigl(\kG, V, \theta\bigr)$, for which $\theta$ is an isomorphism. In other words, the functor 
$
\TF(E) \stackrel{\EE}\lar  \Tri(E)
$
is an equivalence of categories. 
Conversely, given  an object $\kT = \bigl(\kG, V, \theta\bigr)$ of $\Tri(E)$,  consider the torsion free sheaf $\kF$ on $E$ defined as a pullback 
\begin{equation}\label{E:PullBack}
\begin{array}{c}
\xymatrix{
\kF \ar[r] \ar[d] & V \ar[d]^-{\tilde\theta}\\
\nu_*(\kG) \ar[r] & \kG\Big|_{s_+} \oplus \kG\Big|_{s_-}
}
\end{array}
\end{equation}
in the category $\Coh(E)$. Then we have: $\EE(\kF) \cong \kT$. 
\end{theorem}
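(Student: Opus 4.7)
The plan is to exploit the pushout structure encoded in the bicartesian diagram (\ref{E:Bicartesian}): since $E$ is obtained from $\PP^1$ by identifying $s_+$ with $s_-$, any coherent sheaf on $E$ ought to be reconstructible from descent data consisting of a vector bundle on $\PP^1$ together with a gluing identification at the fibres over $\{s_+, s_-\}$. The three bullet points preceding the theorem statement already verify, on the one hand, that the functor $\EE$ takes values in $\Tri(E)$ (the maps $\theta_{\kF,\pm}$ are surjective and $\tilde\theta_\kF$ is injective for torsion-free $\kF$), and on the other hand that $\kF$ is recovered as a pullback in $\Coh(E)$ from the triple it determines; so only the matter of constructing morphisms and inverting $\EE$ remains.

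For fully faithfulness, I would argue as follows. Given $\kF_1, \kF_2 \in \TF(E)$, a morphism $\EE(\kF_1) \to \EE(\kF_2)$ consists of a $\kO_{\PP^1}$-linear map $\widetilde{\kF}_1 \to \widetilde{\kF}_2$ and a $\CC$-linear map $\kF_1|_s \to \kF_2|_s$ intertwining $\theta_{\kF_1}$ and $\theta_{\kF_2}$. Pushing forward the first map via $\nu_*$ and combining with the second yields a morphism of cospans in $\Coh(E)$ whose pullbacks are $\kF_1$ and $\kF_2$. The universal property of fibre products then produces a unique morphism $\kF_1 \to \kF_2$ inducing the prescribed data, while the reverse direction is automatic from the functoriality of $\EE$. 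This gives the required bijection on Hom sets.

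For essential surjectivity onto $\Tri(E)$, take $(\kG, V, \theta) \in \Tri(E)$ and define $\kF$ by the pullback (\ref{E:PullBack}). Away from $s$ the vertical maps of (\ref{E:PullBack}) are locally isomorphisms, so $\kF|_U \cong \nu_*\kG|_U$ is locally free. All the substance is concentrated at $s$: identifying $\nu_*(\kG)_s$ with $\kG_{s_+} \oplus \kG_{s_-}$ (as $\nu$ is finite), the stalk $\kF_s$ is the fibre product of $V$ with this sum over $\kG|_{s_+} \oplus \kG|_{s_-}$ via $\tilde\theta$ and the reduction-modulo-maximal-ideal map. Surjectivity of $\theta_\pm$ ensures that the projection $\kF_s \to V$ is surjective so that $\kF|_s \cong V$, while injectivity of $\tilde\theta$ implies that the other projection $\kF_s \to \nu_*(\kG)_s$ is injective; since $\nu_*(\kG)_s$ is a torsion-free module over $\widehat{O}_s \cong \CC\llbracket u_+, u_-\rrbracket/(u_+u_-)$, so is $\kF_s$. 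It then follows that $\widetilde{\kF} \cong \kG$ (after killing the torsion part of $\nu^*\kF$) and that the canonical map $\theta_\kF$ coincides with $\theta$, establishing $\EE(\kF) \cong (\kG, V, \theta)$. The restriction to $\VB(E)$ is immediate: $\kF$ is locally free at $s$ iff $\kF|_s$ has the same dimension as $\widetilde{\kF}|_{s_+} \oplus \widetilde{\kF}|_{s_-}$, which is exactly the condition that $\tilde\theta$ be an isomorphism.

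The main obstacle is the local analysis at $s$: one must verify not only that the pullback (\ref{E:PullBack}) produces a torsion-free module over $\widehat{O}_s$ under the conditions defining $\Tri(E)$, but also that every torsion-free $\widehat{O}_s$-module arises in this way from its normalization. This amounts to the classical description of maximal Cohen--Macaulay modules over the nodal singularity, whose indecomposable summands are $\widehat{O}_s$, $\widehat{O}_{s_+}$ and $\widehat{O}_{s_-}$; the surjectivity and injectivity constraints on $\theta$ cut out precisely the triples whose pullback yields a module decomposing into such summands, so the equivalence is established once this local picture is reconciled with the global pullback description.
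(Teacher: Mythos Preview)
The paper does not actually prove this theorem: it is stated as ``a special case of \cite[Theorem 16]{Survey}; see also \cite[Theorem 3.2]{Thesis}'' and no argument is given. So there is no paper-proof to compare against, and your sketch is in effect supplying what the paper delegates to those references. The overall strategy you describe---using the pushout description of $E$, full faithfulness via the universal property of the pullback diagram, and essential surjectivity by a local analysis of torsion-free modules over the nodal local ring $\widehat{O}_s\cong\CC\llbracket u_+,u_-\rrbracket/(u_+u_-)$---is exactly the standard approach and matches what is carried out in the cited references. Your remarks that surjectivity of $\theta_\pm$ forces $\kF|_s\cong V$ and that injectivity of $\tilde\theta$ forces $\kF_s\hookrightarrow\nu_*(\kG)_s$ (hence torsion-freeness) are correct, though the first of these deserves one more line: one checks that $\mathfrak{m}_s\kF_s$ coincides with the kernel of the projection $\kF_s\to V$, which uses surjectivity of \emph{both} $\theta_+$ and $\theta_-$.

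There is one genuine slip in your last sentence. You write that $\kF$ is locally free at $s$ iff $\tilde\theta$ is an isomorphism. This cannot be right: if $\kF$ is locally free of rank $n$, then $\dim_\CC V=n$ while $\dim_\CC\bigl(\kG|_{s_+}\oplus\kG|_{s_-}\bigr)=2n$, so $\tilde\theta$ is never surjective. The correct condition (and the one stated in the theorem) is that $\theta$, viewed as a map of $(\CC\times\CC)$-modules $V\oplus V\to\kG|_{s_+}\oplus\kG|_{s_-}$, is an isomorphism---equivalently, that \emph{each} of $\theta_+$ and $\theta_-$ is a linear isomorphism. This is what detects that $\kF_s$ is free over $O_s$ rather than merely torsion-free.
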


\begin{remark}
Let  $(\kB, \liea, \theta)$ be an object of $\Tri(E)$, for which 
$\kB$ is a sheaf of Lie algebras on $\PP^1$, $\liea$ is a  Lie algebra  and $\theta$ is a morphism of Lie algebras. Then the torsion free coherent sheaf $\kA$
defined by the  pullback diagram (\ref{E:PullBack}) corresponding to $(\kB, \liea, \theta)$ is a sheaf of Lie algebras on $E$. It follows from (\ref{E:PullBack}) that  the following sequences of vector spaces is exact:
\begin{equation}\label{E:LongSequence}
0 \rightarrow \Gamma(E, \kA) \rightarrow \Gamma(\PP^1, \kB) \oplus \liea 
\xrightarrow{\left(\begin{smallmatrix} \ev_+ & \theta_+ \\ \ev_- & \theta_-\end{smallmatrix}\right)} 
\kB\Big|_{s_+} \oplus \kB\Big|_{s_-} \rightarrow H^1(E, \kA) \rightarrow H^1(\PP^1, \kB) \rightarrow 0, 
\end{equation}
where $\Gamma(\PP^1, \kB) \stackrel{\ev_\pm}\lar \kB\Big|_{s_\pm}$ denotes the canonical evaluation map at the point $s_\pm$. \hfill $\lozenge$
\end{remark}

\subsection{Geometrization of twists of the standard Lie bialgebra structure on twisted loop algebras}\label{SS:GeometrizationTwists}
Now we  return to the setting of Section \ref{S:StandardLiebialgStructure}. Let $\lieM = \lieD \dotplus \lieW$ be a Manin triple as in Theorem \ref{T:ManinTriplesLoopTwists}.  Let
 $\lieV_\pm \subset \lieL$ be Lie subalgebras from  Lemma \ref{L:OrdersFromMT2}. Recall that
 $\lieV_\pm$ is a free module of rank $q$ over $L_\pm = \CC\bigl[t_{\pm}\bigr] \subset R =  \CC\bigl[t, t^{-1}\bigr]$, where $t_\pm = t^{\pm 1}$.  In what follows, we shall view the projective line $\PP^1$ as the 
 pullback of the pair of morphisms 
 $$
 \Spec(L_+) \lar \Spec(R) \longleftarrow \Spec(L_-),
 $$
identifying $\Spec(L_\pm)$ with open subsets $U_\pm \subset \PP^1$ and $\Spec(R)$ with $U := U_+ \cap U_-$. Let $s_\pm \in U_\pm$ be the point corresponding to the maximal ideal $(t_{\pm}) \subset L_\pm$, then  $t_\pm$ is a local parameter at  $s_\pm$. 

\begin{proposition}\label{P:SheafLieAlgProjLine}
There exists a unique coherent sheaf of Lie algebras $\kB$ on $\PP^1$ such that
$\Gamma(V, \kB) \subset \CC(t) \otimes_R \lieL$ for any open subset $V \subseteq \PP^1$ and such that
the following diagram  of Lie algebras
\begin{equation}\label{E:SheafB}
\begin{array}{c}
\xymatrix{
\Gamma(U_+, \kB) \ar@{^{(}->}[r] \ar[d]_-{=} & \Gamma(U, \kB) \ar[d]^-{=} & \Gamma(U_-, \kB) \ar@{_{(}->}[l] \ar[d]^-{=} \\
\lieV_+ \ar@{^{(}->}[r] & \lieL & \lieV_- \ar@{_{(}->}[l]
}
\end{array}
\end{equation}
is commutative.  We have: \begin{equation}\label{E:PropertiesOfB}
\Gamma(\PP^1, \kB) = \lieV_+ \cap \lieV_- \quad \mbox{\rm and} \quad H^1(\PP^1, \kB) = 0.
\end{equation}
The completion of the stalk of $\kB$ at $s_{\pm}$ is naturally isomorphic to $\widehat{\lieW}_\pm$ as a Lie algebra over $\widehat{L}_\pm = \CC\llbracket t_\pm\rrbracket$, where $\lieW_\pm$ is the Lie algebra from 
Lemma \ref{L:OrdersFromMT2}. In particular, we can identify the fiber $\kB\Big|_{s_\pm}$ with the 
Lie algebra $\liew_\pm:= \widehat{\lieW}_\pm/t_\pm \widehat{\lieW}_\pm$. 
\end{proposition}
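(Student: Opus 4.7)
The plan is to construct $\kB$ by gluing on the affine cover $\PP^1 = U_+ \cup U_-$. By Lemma \ref{L:OrdersFromMT2}(c), $\lieV_\pm$ is a free $L_\pm$-module of rank $q$, and both canonical maps $R \otimes_{L_\pm} \lieV_\pm \to \lieL$ are isomorphisms of Lie algebras. Hence the quasi-coherent sheaves of Lie algebras on $U_\pm$ associated with the $L_\pm$-modules $\lieV_\pm$ both restrict, on the intersection $U = U_+ \cap U_-$, to the sheaf associated with $\lieL$ as an $R$-module. The composite
$$R \otimes_{L_+} \lieV_+ \stackrel{\cong}\lar \lieL \stackrel{\cong}\lar R \otimes_{L_-} \lieV_-$$
provides the gluing isomorphism and yields a coherent sheaf of Lie algebras $\kB$ on $\PP^1$ for which the diagram (\ref{E:SheafB}) commutes by construction. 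Since $\lieV_\pm \subseteq \lieL \subseteq \CC(t)\otimes_R \lieL$, all sections of $\kB$ lie in $\CC(t) \otimes_R \lieL$ as required, and uniqueness is then immediate: the restrictions of any such $\kB'$ to $U_\pm$ are forced to agree with those of $\kB$ by the containment condition together with the commutativity of (\ref{E:SheafB}).

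For cohomology, I would use the Cech complex associated with the affine cover $\{U_+, U_-\}$, which yields the four-term exact sequence
$$0 \rightarrow \Gamma(\PP^1, \kB) \rightarrow \lieV_+ \oplus \lieV_- \xrightarrow{\iota} \lieL \rightarrow H^1(\PP^1, \kB) \rightarrow 0,$$
where $\iota(v_+, v_-) = v_+ - v_-$ under the identifications of $\lieV_\pm$ with subspaces of $\lieL$. The kernel of $\iota$ is $\lieV_+ \cap \lieV_-$, which gives the first assertion of (\ref{E:PropertiesOfB}), while Lemma \ref{L:OrdersFromMT2}(a) asserts that $\lieL = \lieV_+ + \lieV_-$, so $\iota$ is surjective and $H^1(\PP^1, \kB) = 0$.

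Finally, the completion of the stalk of $\kB$ at $s_\pm$ is $\widehat{L}_\pm \otimes_{L_\pm} \lieV_\pm$. Transporting via the isomorphism $\lieL \stackrel{\cong}\lar \lieL_\pm$ from Lemma \ref{L:OrdersFromMT2}, $\lieV_\pm$ is identified with $\lieW_\pm \subset \lieL_\pm$ as an $L_\pm$-module and Lie algebra, so that
$$\widehat{L}_\pm \otimes_{L_\pm} \lieV_\pm \; \cong \; \widehat{L}_\pm \otimes_{L_\pm} \lieW_\pm \; \cong \; \widehat{\lieW}_\pm$$
in view of the alternative description of $\widehat{\lieW}$ given in (\ref{E:CompletedLagrSubspace}); reducing modulo $t_\pm$ yields the identification of $\kB\big|_{s_\pm}$ with $\liew_\pm$. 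The only delicate step is the careful bookkeeping of the $L_\pm$-module structures under the identification $R \cong R_\pm$, $t \mapsto t_\pm^{\pm 1}$, which is what guarantees that the $L_\pm$-module structure on $\lieV_\pm$ inherited from the inclusion $\lieV_\pm \subset \lieL$ matches the one on $\lieW_\pm \subset \lieL_\pm$ used to form $\widehat{\lieW}_\pm$; once this is done, every other step is a direct application of Lemmas \ref{L:OrdersFromMT1} and \ref{L:OrdersFromMT2}.
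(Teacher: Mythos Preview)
Your proof is correct and follows essentially the same approach as the paper's: the paper also invokes the Mayer--Vietoris (\v{C}ech) sequence for the cover $\{U_+,U_-\}$ together with Lemma~\ref{L:OrdersFromMT2}(a) to obtain (\ref{E:PropertiesOfB}), and declares the existence/uniqueness of $\kB$ and the stalk statements to be clear. You have simply spelled out in more detail the gluing construction and the identification of the completed stalks, which the paper leaves implicit.
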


\begin{proof} Existence and uniqueness of $\kB$ characterized by (\ref{E:SheafB})  is clear. We have the Mayer--Vietoris exact sequence
$$
0 \lar \Gamma(\PP^1, \kB) \lar \Gamma(U_+, \kB) \oplus \Gamma(U_-, \kB) \lar \Gamma(U, \kB) \lar H^1(\PP^1, \kB) \lar 0. 
$$
According to Lemma \ref{L:OrdersFromMT2}, we have: $\lieL = \lieV_+ + \lieV_-$. If follows from
(\ref{E:SheafB}) that the formulae (\ref{E:PropertiesOfB}) are true. The remaining statements are obvious. 
\end{proof}

\smallskip
\noindent
Next, we can \emph{define}  $E$ via the pushout diagram (\ref{E:Bicartesian}). It follows that $E$ is a nodal Weierstra\ss{} curve. Let $\widehat{O}_\pm$ be the completion of the stalk of $\kO_{\PP^1}$ at $s_\pm$,  $\widehat{O}$ be the completion of the stalk of $\kO_{E}$ at $s$ and $\widehat{Q}$ be the total ring of quotients of $\widehat{O}$.  Then we have: $\widehat{O}_\pm \cong \CC\llbracket t_\pm \rrbracket$, 
$\widehat{O} \cong  \CC\llbracket t_+, t_-\rrbracket/(t_+ t_-)$ and 
$\widehat{Q} = \CC\llbrace t_+ \rrbrace \times \CC\llbrace t_- \rrbrace$. 
 According to Lemma \ref{L:OrdersFromMT1}, the completed Lie algebra $\widehat\lieW$ is an $\widehat{O}$--module. We put:
$$
\liew:= \widehat{\lieW}/(t_+, t_-)\widehat{\lieW} \subset \widehat{\lieW}_+/t_+ \widehat{\lieW}_+ \times \widehat{\lieW}_+/t_+ \widehat{\lieW}_- = \liew_+ \times \liew_-.
$$
Again, according to Lemma \ref{L:OrdersFromMT1}, the morphism of Lie algebras 
$\liew \stackrel{\theta_\pm}\lar \liew_\pm$ defined as the composition $\liew \hookrightarrow \liew_+ \times \liew_- 
\twoheadarrow \liew_\pm$ is surjective.  It follows that $(\lieB, \liew, \theta)$ is an object of the category $\Tri(E)$ from Theorem \ref{T:keyonTF}.  

\begin{proposition}\label{P:GeomDataFromTwists} Let  $\kA$ be the  sheaf of Lie algebras on $E$, corresponding to the triple
$(\lieB, \liew, \theta)$.
Then $(E, \kA)$ is a geometric CYBE datum.
\end{proposition}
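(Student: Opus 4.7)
The plan is to verify the three defining properties of a geometric CYBE datum: weak $\lieg$-local freeness on $U$, acyclicity, and isotropy at the singular point $s$. The two non-singular conditions (weak local freeness and the vanishing of $H^0$) will follow by combining the description of $\kA$ in (\ref{E:PullBack}) with the structural facts about $\lieL$, $\lieV_\pm$ and $\lieW_\pm$ established in Section \ref{S:TwistsStandardStructure}; the isotropy condition will reduce directly to the Lagrangian property of $\widehat{\lieW}$ from Proposition \ref{P:CompletedTwistsManinTriples}.

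First I would dispose of weak local freeness. Away from $s$, the map $\nu$ is an isomorphism onto $U$, so $\kA\big|_{U} \cong \nu_\ast(\kB\big|_{\PP^1 \setminus \{s_+, s_-\}})$ and hence fibers of $\kA$ over points of $U$ agree with fibers of $\kB$ over the corresponding points of $\PP^1 \setminus\{s_+,s_-\}$. A point in this open set corresponds to some $\lambda \in \CC^\ast$ in either chart $U_\pm = \Spec(L_\pm)$, and the stalk of $\kB$ there is the localization of $\lieV_\pm$ at $(t_\pm - \lambda)$. Using the isomorphism $R \otimes_{L_\pm}\lieV_\pm \cong \lieL$ from Lemma \ref{L:OrdersFromMT2}(c) together with Proposition \ref{P:basicsonloops}(a), this fiber is isomorphic to $\lieg$, as required.

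Next I would verify acyclicity via the exact sequence (\ref{E:LongSequence}) applied to the triple $(\lieB, \liew, \theta)$. Since $H^1(\PP^1,\kB) = 0$ and $\Gamma(\PP^1,\kB) = \lieV_+ \cap \lieV_-$ (Proposition \ref{P:SheafLieAlgProjLine}), both cohomology claims reduce to showing that the linear map
\[
\Phi \colon (\lieV_+ \cap \lieV_-) \oplus \liew \lar \liew_+ \oplus \liew_-, \quad (v, \bar w) \mapsto \bigl(\overline{v} + \overline{w_+},\, \overline{v^\ddagger} + \overline{w_-}\bigr)
\]
is bijective, where $w = (w_+, w_-) \in \lieW$ represents $\bar w$. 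For surjectivity, given $(\bar a_+, \bar a_-) \in \liew_+ \oplus \liew_-$, I lift to $(\tilde a_+, \tilde a_-) \in \lieW_+ \times \lieW_-$ and use the Manin triple $\lieM = \lieD \dotplus \lieW$ to write $(\tilde a_+, \tilde a_-) = (v, v^\ddagger) + w$ with $v \in \lieL$ and $w \in \lieW$; by construction this $v$ lies in $\lieV_+ \cap \lieV_-$ and $(v, \bar w)$ has the required image. For injectivity, if $\Phi(v, \bar w) = 0$ then $(v, v^\ddagger) + w \in t_+\lieW_+ \times t_-\lieW_-$, which equals $(t_+, t_-)\lieW \subseteq \lieW$ by Lemma \ref{L:OrdersFromMT1}(b); hence $(v, v^\ddagger) \in \lieD \cap \lieW = 0$, forcing $v = 0$ and $w \in (t_+, t_-)\lieW$, so $\bar w = 0$.

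Finally, for isotropy at $s$, I would identify $\widehat{\lieA}_s$ with $\widehat{\lieW}$ sitting inside $\widetilde{\lieA}_s \cong \widehat{Q} \otimes_{\widehat{O}} \widehat{\lieW} \cong \widehat{\lieM}$, where the last isomorphism uses $\widehat{Q} \cong \CC\llbrace t_+\rrbrace \times \CC\llbrace t_-\rrbrace$ and the freeness of $\lieW_\pm$ over $L_\pm$ from Lemma \ref{L:OrdersFromMT1}(a). With the generator $\omega = dt/t$ of $\Gamma(E,\Omega)$, the Rosenlicht residue form on $\widetilde{\lieA}_s$ splits as $\res_{s_+}^\omega K_+ - \res_{s_-}^\omega K_-$ of the respective Killing forms, which by Proposition \ref{P:basicsonloops}(b) coincides up to a nonzero scalar with the form $\widehat{F}$ on $\widehat{\lieM}$ given by (\ref{E:FormOnLiem}). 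Since $\widehat{\lieM} = \lieD \,\dotplus\, \widehat{\lieW}$ is a Manin triple (Proposition \ref{P:CompletedTwistsManinTriples}), $\widehat{\lieW}$ is Lagrangian, so in particular $\widehat{\lieA}_s$ is isotropic. The main obstacle, though quite concrete, is tracking these identifications carefully: matching the two charts $\widehat{L}_\pm$ of $\widehat{Q}$ with the two loop algebras $\widehat{\lieL}$ and $\widehat{\lieL}^\ddagger$ in $\widehat{\lieM}$, and accounting for the sign flip $\omega = du_+/u_+ = -du_-/u_-$ that produces the defining sign in the form $F$.
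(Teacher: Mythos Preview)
Your proof is correct and follows essentially the same route as the paper: weak $\lieg$-local freeness via the identification $\Gamma(U,\kA)\cong\lieL$ and Proposition \ref{P:basicsonloops}(a), acyclicity via the exact sequence (\ref{E:LongSequence}) together with the bijectivity of the map $(\lieV_+\cap\lieV_-)\oplus\liew\to\liew_+\oplus\liew_-$, and isotropy by identifying the Manin triple $\widetilde{\lieA}_s=\widehat{\lieA}_s\dotplus\lieA$ with $\widehat{\lieM}=\widehat{\lieW}\dotplus\lieD$. The only difference is granularity: the paper simply invokes Lemma \ref{L:OrdersFromMT2} for the bijectivity step (its part (b) is precisely the statement that $\lieV_+\cap\lieV_-\cong(\lieW_+\times\lieW_-)/\lieW$), whereas you unpack this directly using $\lieD\cap\lieW=0$ and Lemma \ref{L:OrdersFromMT1}(b); similarly, you spell out the sign bookkeeping for $\omega$ that the paper leaves implicit.
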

\begin{proof} We keep the notation of Subsection \ref{SS:NodalManinTriples}.
First observe that
the canonical map 
$
\lieA = \Gamma(U, \kA) \rightarrow
\Gamma\bigl(U, \nu_\ast(\kB)\bigr) =  \lieL
$ is an isomorphism of Lie algebras. This implies that $\kA$ is $\lieg$--weakly locally free; see 
Proposition \ref{P:basicsonloops}. 
Next, 
by   Lemma \ref{L:OrdersFromMT2}, the linear map 
$$
\bigl(\lieV_+ \cap \lieV_-\bigr) \oplus \liew = 
\Gamma(\PP^1, \kB) \oplus \liew 
\xrightarrow{\left(\begin{smallmatrix} \ev_+ & \theta_+ \\ \ev_- & \theta_-\end{smallmatrix}\right)} 
\kB\Big|_{s_+} \oplus \kB\Big|_{s_-} \cong \liew_+ \oplus \liew_-
$$
is an isomorphism. Since $H^1(\PP^1, \kB) = 0$, the exact sequence (\ref{E:LongSequence}) implies  that
$H^0(E, \kA) = 0 = H^1(E, \kA)$.
Moreover, it follows from the construction of $\kA$ that  the canonical morphism of Lie algebras 
$\widehat\lieA = \widehat{\kA}_s \stackrel{\nu^\ast}\lar \widehat{\kB}_{s_+} \times \widehat{\kB}_{s_-} \cong \widehat{\lieW}_+ \times \widehat{\lieW}_-$ is injective and 
its image is  the Lie algebra  $\widehat\lieW$. Hence, 
$\widetilde\lieA = \widehat{Q} \otimes_{\widehat{O}} \widehat\lieA$ can be identified with the Lie algebra 
$\widehat{\lieM}$.

\smallskip
\noindent
It follows from the construction of $E$ that the differential form $\omega = \dfrac{dt}{t}$ is a generator of $\Gamma(E, \Omega_E)$. 
The following observation is crucial: under the  isomorphism $\widetilde{\lieA} \rightarrow \widehat\lieM$  the bilinear form $\widetilde\lieA \times \widetilde\lieA \stackrel{\widetilde{F}^\omega_s}\lar \CC$  given by (\ref{E:Forma})  gets identified  (up to an appropriate rescaling) with the bilinear form $\widehat{\lieM} \times \widehat{\lieM} \stackrel{\widehat{F}}\lar \CC$, given by (\ref{E:FormOnLiem})! Summing up, 
$\widetilde\lieA = \widehat\lieA \dotplus \lieA$ is a Manin triple, isomorphic to
the Manin triple $\widehat{\lieM} =  \widehat{\lieW} \dotplus \lieD$. In particular, $\widehat\lieA$ is an isotropic Lie subalgebra of $\widetilde\lieA$.

\smallskip
\noindent
All together, we have proven that   $\kA$ is an acyclic, $\lieg$--weakly locally free isotropic coherent sheaf of Lie algebras on $E$, as asserted. 
\end{proof}

\smallskip
\noindent
Let $(E, \kA)$ be a geometric datum as in Proposition \ref{P:GeomDataFromTwists} above
and $\rho  \in \Gamma\bigl(U \times U \setminus \Sigma, \kA \boxtimes \kA\bigr)$ the corresponding geometric 
$r$--matrix. Recall that the construction of $\kA$ also provides an isomorphism of Lie algebras
$\lieA \stackrel{\cong}\lar \lieL$. 
Let $\widetilde{U} = \Spec(\overline{R}) \stackrel{\pi}\lar U = \Spec(R)$ be the \'etale covering corresponding to the algebra extension $R \subseteq \overline{R}$. By Proposition
\ref{P:basicsonloops}, we have an isomorphism of Lie algebras
$\Gamma\bigl(\widetilde{U}, \pi^\ast(\kA)\bigr) \cong \overline{R} \otimes_{R} \lieL \cong \overline{\lieL}$. The pullback \begin{equation}\label{E:GeomTrigonom}
\tilde{\rho}:= (\pi \times \pi)^\ast(\rho) \in \Gamma\bigl(\widetilde{U} \times \widetilde{U} \setminus \widetilde{\Sigma}, \pi^\ast(\kA) \boxtimes \pi^\ast(\kA)\bigr)
\end{equation}
satisfies the equalities (\ref{E:SheafyCYBE1}) and (\ref{E:SheafyCYBE2}), where
$\widetilde\Sigma = (\pi \times \pi)^{-1}(\Sigma)$. 
Trivializing $\pi^\ast(\kA)$ as above,  we get from $\tilde{\rho}$ a genuine skew-symmetric non-degenerate solution of the classical Yang--Baxter equation (\ref{E:CYBE}). Our next goal is to compute this solution explicitly. 

\subsection{Geometric $r$--matrix corresponding to twists of the standard Lie bialgebra structure of a twisted loop algebra} \label{SS:GeomRMatTwists}
Recall our notation:  $\lieg$ is  a finite dimensional complex simple Lie algebra of dimension $q$,  $\sigma \in \Aut_{\CC}(\lieg)$ is an automorphism of order $m$, 
$
\lieg = \oplus_{k = 0}^{m-1} \lieg_{k}$ the corresponding  decomposition of $\lieg$ into a direct sum of eigenspaces of $\sigma$, $
\gamma = \sum\limits_{k = 0}^{m-1} \gamma_{k}
$
the decomposition of the Casimir element $\gamma \in \lieg \otimes \lieg$ 
with components  $\gamma_{k} \in \lieg_{k} \otimes \lieg_{-k}$. 
Let 
$
\lieg_{0} = \lieg_{0}^+ \oplus \lieh \oplus \lieg_{0}^-
$
be a triangular decomposition as in Remark \ref{R:Triangular}. We denote by $\gamma_{0}^{0}$ 
and $\gamma_{0}^{\pm}$ the projections of $\gamma_0$ on $\lieh \otimes \lieh$ and 
$\lieg_{0}^\pm \otimes \lieg_{0}^\mp$, respectively. 

\begin{proposition}\label{P:GeomStandardStructure}
Let $\lieM = \lieD \dotplus \lieW^\circ$ be the Manin triple (\ref{E:StandardManinTripleLoops}), corresponding to the standard Lie bialgebra cobracket $\lieL 
\stackrel{\delta_\circ}\lar 
 \wedge^2(\lieL)$ and $(E, \kA_\circ)$ be the corresponding geometric CYBE datum defined in Proposition \ref{P:GeomDataFromTwists}. Then the trivialization of the corresponding geometric 
$r$-matrix (\ref{E:GeomTrigonom}) gives the following solution of (\ref{E:CYBE}):
\begin{equation}\label{E:RMatStandard}
r_\circ(x, y) = \left(\frac{\gamma_{0}^{0}}{2} + \gamma_{0}^-\right) + \frac{y^m}{x^m - y^m} \sum\limits_{k = 0}^{m-1} \left(\frac{x}{y}\right)^k \gamma_k.
\end{equation}
\end{proposition}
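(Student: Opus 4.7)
The plan is to compute the geometric $r$-matrix $\rho$ of the standard datum $(E, \kA_\circ)$ via the explicit formula of Theorem \ref{T:main2}, and then pull the result back along the normalization map $\pi\colon \tilde U \to U$ corresponding to $R = \CC[t, t^{-1}] \subset \overline R = \CC[z, z^{-1}]$ with $t = z^m$. Trivializing $\pi^\ast(\kA_\circ)$ over $\tilde U$ as the full loop algebra $\overline\lieL = \lieg[z, z^{-1}]$ converts $\pi^\ast(\rho)$ into a genuine solution of (\ref{E:CYBE}) on $\CC^\ast \times \CC^\ast$, which must be matched against (\ref{E:RMatStandard}).

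For the rational part, I would fix for each $d \in \{0, \dots, m-1\}$ a basis $\bigl(g_j^{(d)}\bigr)_j$ of $\lieg_d$ with $\kappa$-dual basis $\bigl(g_j^{*(-d)}\bigr)_j \subset \lieg_{-d}$, so that $\gamma_d = \sum_j g_j^{(d)} \otimes g_j^{*(-d)}$. An adapted $R$-basis of $\Gamma(U, \kA_\circ) \cong \lieL$ is then $c_{(d, j)} := g_j^{*(-d)} z^{-d}$ with Killing-dual $c^*_{(d, j)} = g_j^{(d)} z^d$. A direct computation shows that the Casimir term in Theorem \ref{T:main2} pulls back as
$$
\pi^\ast\!\Bigl(\tfrac{v}{u-v}\sum_{(d, j)} c^*_{(d, j)} \otimes c_{(d, j)}\Bigr) \;=\; \tfrac{y^m}{x^m - y^m}\sum_{d = 0}^{m-1} (x/y)^d \gamma_d,
$$
matching the rational part of (\ref{E:RMatStandard}), after regrouping by the $\ZZ/m\ZZ$-grading.

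The correction terms $\sum h_{(k, i)}(u) \otimes v^k c_i$ of Theorem \ref{T:main2} are fixed by the condition that the topological dual basis $b_{(k, i)}$ of the completion of $\kA_\circ$ at the node, which by Proposition \ref{P:GeomDataFromTwists} is isomorphic to $\widehat\lieW^\circ$, must lie inside $\widehat\lieW^\circ \subset \widehat\lieB_+ \times \widehat\lieB_-$. For every index $(k, (d, j))$ with $k \neq 0$, or with $k = 0$ and $d \neq 0$, the ``raw'' element $w_{(k, i)}$ of (\ref{E:dualelements}) is already supported at a nonzero level of the loop grading and hence lies directly in the appropriate Borel factor, so $h_{(k, i)}$ vanishes. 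The only nontrivial corrections occur at $k = 0$, $d = 0$: decomposing $c^*_{(0, j)} = g_j^{(0)} = g_j^{+} + g_j^{0} + g_j^{-}$ along $\lieg_0^+ \oplus \lieh \oplus \lieg_0^-$, the combined requirements that the two components of $b_{(0, (0, j))}$ lie in $\widehat\lieB_+$ respectively $\widehat\lieB_-$, together with the Cartan constraint $\pi_+ + \pi_- = 0$, force the level-zero part of $h_{(0, (0, j))}$ to equal $g_j^0/2 + g_j^-$ (while orthogonality forces all higher-level parts to vanish). Summing over $j$ then gives $\sum_j \bigl(g_j^0/2 + g_j^-\bigr) \otimes g_j^{*(0)} = \gamma_0^0/2 + \gamma_0^-$, using the identities $(\gamma_0^0)^{21} = \gamma_0^0$ and $(\gamma_0^+)^{21} = \gamma_0^-$, which is exactly the constant term of (\ref{E:RMatStandard}). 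The main obstacle is precisely this level-zero bookkeeping: the symmetric splitting of the Cartan by the diagonal constraint $\pi_+ + \pi_- = 0$ interlocks with the asymmetric distribution of $\lieg_0^\pm$ across the two Borel branches $\widehat\lieB_\pm$ in a way that is easy to mishandle by signs or factors, and verifying the vanishing of higher-level components of $h_{(0, (0, j))}$ requires carefully tracing the orthogonality relation (\ref{E:Orthogonality}) through the splitting $\widehat{\lieL} \times \widehat{\lieL}^\ddagger$.
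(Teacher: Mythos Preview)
Your proposal is correct and follows essentially the same route as the paper: both apply Theorem~\ref{T:main2} and identify the completion $\widehat{\kA}_s$ with $\widehat{\lieW}^\circ$ via Proposition~\ref{P:GeomDataFromTwists}, then read off the corrections $h_{(k,i)}$ from the direct sum decomposition $\widehat{\lieM} = \lieD \dotplus \widehat{\lieW}^\circ$. The only notable difference is that the paper fixes the $R$-basis $\bigl(e_\imath^\pm, h_l, g_d^{(j)} z^d\bigr)$ adapted to the triangular decomposition of $\lieg_0$, whereas you take the dual basis $c_{(d,j)} = g_j^{*(-d)} z^{-d}$; with your choice the Casimir lift $\sum c^\ast_i \otimes c_i$ is already $\sum_d (x/y)^d\gamma_d$ and the corrections for $d\ge 1$ vanish, while in the paper's convention those corrections are nonzero and must be absorbed back into the rational part --- either way one lands on the same formula for $r_\circ$.
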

\begin{proof}
Let $q_k = \dim_{\CC}\bigl(\lieg_k\bigr)$ for $k \in \ZZ$.  By Lemma \ref{L:Orthogonality}, we can choose 
a basis  $\bigl(g_k^{(1)}, \dots, g_k^{(q_k)}\bigr)$ of  $\lieg_k$ such that 
$\kappa\Bigl(g_k^{(i)}, g_{-k}^{(j)}\Bigr) = \delta_{ij}$ for all $1 \le i, j \le q_k$. For $k = 0$ we make an additional  choice: let $(h_1, \dots, h_r)$ be a basis of $\lieh$ and $(e_1^\pm, \dots, e_{p}^\pm)$ a basis of $\lieg_0^\pm$ such that 
$$
\kappa(h_\imath, h_\jmath) = \delta_{\imath \jmath} \; \mbox{for all}\; 1 \le \imath, \jmath \le r \; \mbox{and}\; 
\kappa(e_\imath^+, e_\jmath^-) = \delta_{\imath \jmath} \; \mbox{for all}\; 1 \le \imath, \jmath \le p.
$$
Then we have the following basis of $\lieL = \bigoplus\limits_{k \in \ZZ} \lieg_k z^k$ viewed as a module over 
$R = \CC\bigl[t, t^{-1}\bigr]$:
\begin{equation}\label{E:BasisOfL}
\bigl(e_1^+, \dots, e_p^+, h_1, \dots, h_r, e_1^-, \dots, e_p^-, 
g_{1}^{(1)} z, \dots, g_{1}^{(q_1)} z, \dots,  g_{m-1}^{(1)} z^{m-1}, \dots, g_{m-1}^{(q_{m-1})} z^{m-1}\bigr)
\end{equation}
 where $t = z^m$. 
As usual, let  $\lieL \times \lieL
\stackrel{K}\lar R$ be the Killing form. 
For any $\lambda \in \CC^\ast$, let $\bigl(R/(t-\lambda)\bigr) \otimes_{R} \lieL \stackrel{\varepsilon_\lambda}\lar \lieg$ be the Lie algebra isomorphism from Proposition
\ref{P:basicsonloops} and $R \stackrel{\ev_\lambda}\lar \CC$ be the evaluation map.  Then the diagram
$$
\xymatrix{
\lieL \times \lieL \ar[r]^-K \ar[d]_{\varepsilon_\lambda \times \varepsilon_\lambda} 
& R \ar[d]^-{\ev_\lambda} \\
\lieg \times \lieg \ar[r]^-{\kappa} & \CC
}
$$
is commutative and 
$$
\bigl(e_1^-, \dots, e_p^-, h_1, \dots, h_r, e_1^+, \dots, e_p^+, 
g_{-1}^{(1)} z^{-1}, \dots, g_{-1}^{(q_1)} z^{-1}, \dots,  g_{1-m}^{(1)} z^{1-m}, \dots, g_{1-m}^{(q_{m-1})} z^{1-m}\bigr)
$$
is the basis of $\lieL$ over $R$ which is dual to (\ref{E:BasisOfL}) with respect to the Killing form $K$. Hence, 
$$
\chi = \left(\sum\limits_{\imath = 1}^p \bigl(e_\imath^+ \otimes e_\imath^- + 
e_\imath^- \otimes e_\imath^+ \bigr) + \sum\limits_{l = 1}^r h_l \otimes h_l\right) +
\left(\sum\limits_{k = 1}^{m-1} \sum\limits_{j = 1}^{q_k} g_k^{(j)} z^k \otimes 
g_{-k}^{(j)} z^{-k}\right) 
 \in \lieL \otimes_R \lieL
$$
is the canonical Casimir element of $\lieL$ (viewed as a Lie algebra over $R$).

\smallskip
\noindent
We identify $\rho$ with  $\tilde{\rho} \in \Gamma\bigl(\widetilde{U} \times \widetilde{U} \setminus \widetilde{\Sigma}, \pi^\ast(\kA) \boxtimes \pi^\ast(\kA)\bigr)$. To proceed with computations,  we make the following choices: let $(u, v)$ be coordinates on $\CC^\ast \times \CC^\ast \cong 
U \times U $  and $(x, y)$ be  coordinates on the \'etale covering $\CC^\ast \times \CC^\ast \cong   \widetilde{U} \times \widetilde{U}$. We have: $u = x^m$ and $v = y^m$. Consider the following expression: 
$$
\widetilde{\chi} :=  \sum\limits_{k = 0}^{m-1} \gamma_k   \left(\dfrac{x}{y}\right)^k \in \lieL \otimes_{\CC} \lieL \subseteq (\lieg \otimes \lieg)\bigl[x, x^{-1}, y, y^{-1}\bigr].
$$ 
It is easy to see that $\widetilde{\chi}$ is mapped to 
 $\chi$ under the canonical  linear map $\lieL  \otimes_{\CC} \lieL
 \twoheadarrow  \lieL\otimes_{R} \lieL$.

\smallskip
\noindent
Recall that the geometric $r$-matrix $\rho$ corresponding to $(E, \kA)$ is given by the formula  (\ref{E:geomRMatrNodal}).
For any $(k, i) \in \Upsilon$ we have  $w_{(k, i)} \in \widehat{\lieM}$, given by the formula (\ref{E:dualelements}) with respect to the  $R$-basis of $\lieL$ fixed above. 
Then there exist uniquely determined $b_{(k, i)} \in \widehat{\lieW}^\circ$ and 
$h_{(k, i)} \in \lieL \cong \lieD$ such that $b_{(k,\, i)} = w_{(k,\, i)} + 
h_{(k,\, i)}$. It is not hard to see  that $h_{(k, i)} = 0$ for all $k \ne 0$. 
For $k = 0$, we have the following decompositions: 
$$
\widehat{\lieW}^\circ \ni \left\{
\begin{array}{ccl}
 (0, e_\imath^-) & = & (-e_\imath^-, 0) + (e_\imath^-, e_\imath^-) \\
  (-e_\imath^+, 0) & = & (-e_\imath^+, 0) + (0, 0) \\
\left(-\frac{1}{2} h_l, \frac{1}{2} h_l\right) & = & (-h_l, 0) + \left(\frac{1}{2} h_l, \frac{1}{2} h_l\right).
\end{array}
\right.
$$
All together, taking into account the formulae (\ref{E:dualelements}), (\ref{E:expansiongeomrmatrnodalpoint}) and (\ref{E:Orthogonality}), we obtain from (\ref{E:geomRMatrNodal})  the following explicit expression:
$$
r_\circ(x, y) =  \frac{y^m}{x^m - y^m} \sum\limits_{k = 0}^{m-1} \left(\frac{x}{y}\right)^k \gamma_k + \left(\sum\limits_{\imath = 1}^p e_\imath^- \otimes e_\imath^+ + \sum\limits_{l = 1}^r \frac{1}{2} h_l \otimes h_l\right),
$$
which coincides with the formula (\ref{E:RMatStandard}), as asserted.
\end{proof}

\smallskip
\noindent
We get the following corollary, which seems to be well-known to the experts (another, more direct proof,  can be found in \cite{AbedinMaximov}).

\begin{corollary}\label{C:StandradStructure} We have the following formula for the 
standard Lie bialgebra cobracket:
$$
\lieL \stackrel{\delta_\circ}\lar  \lieL \wedge \lieL, \quad f(z) \mapsto 
\bigl[f(x) \otimes 1 + 1 \otimes f(y), r_\circ(x,y)\bigr],
$$
where $r_\circ(x, y)$ is the \emph{standard} $r$-matrix given by (\ref{E:RMatStandard}). 
\end{corollary}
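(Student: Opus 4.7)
\emph{Proof plan.} The strategy is to deduce the corollary directly from Proposition \ref{P:GeomStandardStructure} by invoking the uniqueness of the Lie bialgebra cobracket determined by a Manin triple.

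First, I would let $(E, \kA_\circ)$ be the geometric CYBE datum produced by applying Proposition \ref{P:GeomDataFromTwists} to the Manin triple $\lieM = \lieD \dotplus \lieW^\circ$ of Theorem \ref{T:Drinfeld} (i.e.~to the trivial twist $\ttau = 0$). The construction yields an isomorphism of Lie algebras $\lieL \cong \Gamma(U, \kA_\circ) = \lieA$ under which the completed Manin triple for $\ttau = 0$ is isometrically identified with the geometric Manin triple:
\[
\bigl(\widehat\lieM = \widehat\lieW^\circ \dotplus \lieD\bigr) \,\cong\, \bigl(\widetilde\lieA = \widehat\lieA \dotplus \lieA\bigr).
\]
By Theorem \ref{T:Drinfeld} together with Proposition \ref{P:CompletedTwistsManinTriples} applied at $\ttau = 0$, the standard cobracket $\delta_\circ$ is the unique cobracket determined by $\widehat\lieM = \lieD \dotplus \widehat\lieW^\circ$. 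On the other hand, by Theorem \ref{T:main} the geometric cobracket $\lieA \stackrel{\delta}\lar \lieA \otimes \lieA$ from (\ref{E:GeomCobracket}) is the unique cobracket determined by $\widetilde\lieA = \widehat\lieA \dotplus \lieA$. The isomorphism of Manin triples therefore forces $\delta_\circ$ to agree with $\delta$ under $\lieL \cong \lieA$, so
\[
\delta_\circ(f) = \bigl[f \otimes 1 + 1 \otimes f,\, \rho\bigr] \quad \text{for all}\; f \in \lieL,
\]
where $\rho \in \Gamma((U \times U) \setminus \Sigma, \kA_\circ \boxtimes \kA_\circ)$ is the geometric $r$-matrix attached to $(E, \kA_\circ)$.

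To extract the concrete formula stated in the corollary, I would pull back via the étale cover $\pi: \widetilde U \to U$ from Subsection \ref{SS:GeometrizationTwists}, under which $\pi^\ast \kA_\circ$ becomes canonically trivialized with fiber $\lieg$ and $\Gamma(\widetilde U, \pi^\ast \kA_\circ) \cong \overline\lieL = \lieg[z, z^{-1}]$, with $\lieL$ corresponding to the $\sigma$-twisted subalgebra. The sheaf-theoretic bracket then reduces to the entrywise bracket in $(\lieg \otimes \lieg)\bigl[x^{\pm 1}, y^{\pm 1}, (x^m - y^m)^{-1}\bigr]$, and by Proposition \ref{P:GeomStandardStructure} the pullback $(\pi \times \pi)^\ast \rho$ equals $r_\circ(x, y)$. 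Injectivity of pullback along the étale cover then delivers the stated identity.

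There is no substantive obstacle to this plan: the whole argument is careful bookkeeping of the previously established results. The essential content is the isometric identification $\widehat\lieM \cong \widetilde\lieA$ built into the construction of $\kA_\circ$ in Proposition \ref{P:GeomDataFromTwists}, which reduces the corollary to combining (\ref{E:GeomCobracket}) with the explicit trivialization of $\rho$ computed in Proposition \ref{P:GeomStandardStructure}.
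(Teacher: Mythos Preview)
Your proposal is correct and matches the paper's implicit argument: the corollary is stated without proof, but the reasoning you outline is precisely the $\ttau = 0$ instance of the argument spelled out in the proof of Theorem~\ref{T:TwistsCYBEGeometry}, combining Proposition~\ref{P:CompletedTwistsManinTriples}, Theorem~\ref{T:main}, and the identification of Manin triples from Proposition~\ref{P:GeomDataFromTwists} with the explicit computation of Proposition~\ref{P:GeomStandardStructure}.
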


\begin{remark}\label{R:RemarkBDsetting}
Let $\lieg = \lien^{+} \dotplus \tilde\lieh \dotplus \lien^{-}$ be a fixed triangular decomposition of the  finite dimensional simple complex Lie algebra $\lieg$ corresponding to a  Dynkin diagram $\Gamma$. Then any $\phi \in \Aut(\Gamma)$ defines
an automorphism $\tilde\phi \in \Aut_{\CC}(\lieg)$. Let $\sigma \in \Aut_{\CC}(\lieg)$ be a \emph{Coxeter automorphism} corresponding  to  $\phi$ and $m$ be the order of $\sigma$; see \cite[Section 6]{BelavinDrinfeld} for an explicit description of $\sigma$. Then we have: 
 $\lieL := \lieL(\lieg, \sigma) \cong \lieL(\lieg, \tilde\phi)$; 
  see \cite[Proposition 8.1]{Kac}. An advantage  to use the Coxeter automorphism $\sigma$ to define twisted loop algebra is due to the fact that the fixed point Lie algebra 
$\left\{a \in \lieg \, \big| \, \sigma(a) =  a \right\}$  
  is abelian. In particular, the standard $r$-matrix (\ref{E:RMatStandard}) takes  the following shape:
  \begin{equation}\label{E:StandardFormKulish}
r_\circ(x, y) = \frac{\gamma_{0}}{2} + \frac{y^m}{x^m - y^m} \sum\limits_{k = 0}^{m-1} \left(\frac{x}{y}\right)^k \gamma_k   = \frac{\gamma_{0}}{2} + \frac{1}{\exp(w) -1} \sum\limits_{k = 0}^{m-1} \exp\left(\frac{k w}{m}\right) \gamma_j,
\end{equation}
where $\exp\left(\dfrac{w}{m}\right) = \dfrac{x}{y}$. For $\phi = \mathsf{id}$, this 
 solution was discovered for the first time  by Kulish (see \cite[formula (38)]{Kulish})  and generalized 
 by  Belavin and Drinfeld  (see \cite[Proposition 6.1]{BelavinDrinfeld}) for an arbitrary $\phi$.
\end{remark}

\begin{remark}\label{R:KPSST-Setting}
Let $\lieg = \lien^{+} \dotplus \tilde\lieh \dotplus \lien^{-}$ be again a fixed triangular decomposition of  $\lieg$, $\Phi_+$ be the  set of positive roots of $(\lieg, \lieh)$ and $\sigma = \mathsf{id}$. Then $\lieL = \lieL(\lieg, \sigma) = 
\lieg\bigl[z, z^{-1}\bigr]$ and the standard $r$-matrix (\ref{E:RMatStandard}) takes the following form: 
\begin{equation}\label{E:StandardFormJimbo}
r_{\circ}(x, y) = \dfrac{1}{2}\Bigl(\dfrac{x+y}{x-y} \gamma + \sum\limits_{\alpha \in \Phi_+} e_{-\alpha} \wedge e_{\alpha}\Bigr),
\end{equation}
which can be for instance found in \cite{KPSST}. It can be shown that the solution (\ref{E:StandardFormJimbo})
is equivalent (in the sense of (\ref{E:equiv1}) and (\ref{E:equiv2})) to the solution (\ref{E:StandardFormKulish}) (for the identity automorphism of the Dynkin diagram of $\lieg$); see for instance \cite{AbedinMaximov} for details. \hfill $\lozenge$
\end{remark}

\begin{theorem}\label{T:TwistsCYBEGeometry} For any skew-symmetric tensor $\ttau \in \wedge^2\lieL \subset (\lieg \otimes \lieg)\bigl[x, x^{-1}, y, y^{-1}\bigr]$ we put: 
\begin{equation}\label{E:StandardTwist}
\delta_\ttau = \delta_\circ + \partial_{\ttau} \quad \mbox{and} \quad 
r_{\ttau}(x, y) = r_\circ(x, y) + \ttau(x, y).
\end{equation}
Then $\lieL \stackrel{\delta_\ttau}\lar  \lieL \wedge \lieL$ is a Lie bialgebra cobracket if and only 
if $r_\ttau(x, y)$ is a solution of the classical Yang--Baxter equation (\ref{E:CYBE}). 
In this case, let $\lieM = \lieW_\ttau \dotplus \lieD$ be the corresponding Manin triple (see 
Theorem \ref{T:ManinTriplesLoopTwists}) 
and
$(E, \kA_\ttau)$ be the corresponding geometric  CYBE datum (see Proposition 
\ref{P:GeomDataFromTwists}). Then  the geometric $r$-matrix $\rho_\ttau$ of  $(E, \kA_\ttau)$ with respect to the trivialization, described at the end of Subsection \ref{SS:GeometrizationTwists}, coincides with $r_\ttau(x, y)$. 
\end{theorem}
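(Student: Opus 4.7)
My plan is to split the statement into two essentially independent parts: the equivalence between $(\lieL, \delta_\ttau)$ being a Lie bialgebra and $r_\ttau$ solving CYBE, and the identification $\rho_\ttau = r_\ttau$ under the trivialization fixed at the end of Subsection \ref{SS:GeometrizationTwists}.

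For the first equivalence I would combine Theorem \ref{T:ManinTriplesLoopTwists} with a direct expansion. By that theorem, $(\lieL, \delta_\ttau)$ is a Lie bialgebra if and only if $\ttau$ satisfies the twist equation $\alt\bigl((\delta_\circ \otimes \mathbbm{1})(\ttau)\bigr) = [[\ttau, \ttau]]$. Writing $r_\ttau = r_\circ + \ttau$ and expanding,
$$[[r_\ttau, r_\ttau]] \;=\; [[r_\circ, r_\circ]] \;+\; [[\ttau, \ttau]] \;+\; X,$$
where $X$ is the sum of the six mixed terms $[r_\circ^{ij}, \ttau^{kl}] + [\ttau^{ij}, r_\circ^{kl}]$ over $(ij,kl) \in \{(12,13),(12,23),(13,23)\}$. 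Proposition \ref{P:GeomStandardStructure} combined with Theorem \ref{T:GeoemtryCYBE} gives $[[r_\circ, r_\circ]] = 0$, and the formula $\delta_\circ(f) = [f(x) \otimes 1 + 1 \otimes f(y), r_\circ(x,y)]$ from Corollary \ref{C:StandradStructure}, applied to $\ttau = \sum_\alpha p_\alpha \otimes q_\alpha$, lets one check by a position-indexed computation that the cyclic-alternating sum of $(\delta_\circ \otimes \mathbbm{1})(\ttau)$ equals $-X$. Hence $[[r_\ttau, r_\ttau]] = 0$ is equivalent to the twist equation, giving the first part.

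For the identification $\rho_\ttau = r_\ttau$, assume $(\lieL, \delta_\ttau)$ is a Lie bialgebra, so that Proposition \ref{P:GeomDataFromTwists} produces $(E, \kA_\ttau)$ together with an identification of the Manin triple $\widetilde\lieA = \widehat\lieA \dotplus \lieA$ with $\widehat\lieM = \widehat\lieW_\ttau \dotplus \lieD$. Fix an $R$--basis $(c_1,\dots,c_q)$ of $\lieA \cong \lieL$ and set $a_{(k,i)} = c_i t^k$. In the standard case, let $b_{(k,i)}^\circ = w_{(k,i)} + h_{(k,i)}^\circ \in \widehat\lieW^\circ$ be the topological basis dual to $(-a_{(k,i)})$ in the sense of (\ref{E:Orthogonality}); by Proposition \ref{P:GeomStandardStructure} the resulting formula (\ref{E:geomRMatrNodal}) reproduces $\rho_\circ = r_\circ$. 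For the twist, since $\widehat\lieW_\ttau = \{w + \widehat f_\ttau(w) \mid w \in \widehat\lieW^\circ\}$, $\widehat f_\ttau(w) \in \lieD$, and $\lieD$ is isotropic, the family $b_{(k,i)}^\ttau := b_{(k,i)}^\circ + \widehat f_\ttau(b_{(k,i)}^\circ)$ is the dual topological basis of $\widehat\lieW_\ttau$, still with singular part $w_{(k,i)}$. Writing $b_{(k,i)}^\ttau = w_{(k,i)} + h_{(k,i)}^\ttau$ with $h_{(k,i)}^\ttau = h_{(k,i)}^\circ + \widehat f_\ttau(b_{(k,i)}^\circ)$ and substituting into (\ref{E:geomRMatrNodal}) yields
$$\rho_\ttau - \rho_\circ \;=\; \sum_{(k,i) \in \Upsilon} \widehat f_\ttau(b_{(k,i)}^\circ) \otimes a_{(k,i)}.$$
Unfolding $\widehat f_\ttau(w) = \sum_\alpha F(w, p_\alpha) q_\alpha$ for $\ttau = \sum_\alpha p_\alpha \otimes q_\alpha$, expanding $p_\alpha$ in the basis $(a_{(k,i)})$, and using (\ref{E:Orthogonality}), the right-hand side collapses to $-\sum_\alpha q_\alpha \otimes p_\alpha = -\ttau^{21} = \ttau$, giving $\rho_\ttau = r_\ttau$.

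The main technical obstacle is the sign and positional bookkeeping in the identification $X = -\alt((\delta_\circ \otimes \mathbbm{1})(\ttau))$ of the first paragraph: the expression $[[r_\ttau, r_\ttau]]$ naturally lives in a localization of $(\lieg^{\otimes 3})[x_1^{\pm 1}, x_2^{\pm 1}, x_3^{\pm 1}]$ rather than in $\lieL^{\otimes 3}$, so one must carefully track how each cyclic shift acts on both positions and spectral variables, and repeatedly invoke $r_\circ^{ij}(x_i, x_j) = -r_\circ^{ji}(x_j, x_i)$ at intermediate steps. Once a consistent convention is fixed, the three pairs of cross terms match the three cyclic translates of $(\delta_\circ \otimes \mathbbm{1})(\ttau)$ and the equivalence follows. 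By contrast, the computations in the second part are essentially formal: because $\ttau$ is a finite sum of simple tensors, the corrections $\widehat f_\ttau(b_{(k,i)}^\circ)$ are non-zero for only finitely many $(k,i) \in \Upsilon$, so the finiteness hypothesis of Theorem \ref{T:main2} is automatic.
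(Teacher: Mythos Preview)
Your first part is essentially the paper's own argument: expand $[[r_\ttau, r_\ttau]]$, use $[[r_\circ, r_\circ]] = 0$, and identify the cross terms with $-\alt\bigl((\delta_\circ \otimes \mathbbm{1})(\ttau)\bigr)$ via Corollary~\ref{C:StandradStructure} and the skew-symmetry of $r_\circ$ and $\ttau$. The paper groups the six cross terms by which factor carries $r_\circ$ (writing them as $[r_\circ^{12}, \ttau^{13}+\ttau^{23}] + [r_\circ^{13}, \ttau^{23}+\ttau^{21}] + [r_\circ^{23}, \ttau^{21}+\ttau^{31}]$) rather than pair-by-pair as you suggest, but the content and the use of Theorem~\ref{T:ManinTriplesLoopTwists} / Proposition~\ref{P:basicsonloops} to reduce to the twist equation are the same.

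Your second part takes a genuinely different route. The paper argues \emph{indirectly}: the trivialized geometric $r$-matrix $\tilde r_\ttau$ and the tensor $r_\ttau$ both define Lie bialgebra cobrackets on $\lieL$, and by Theorem~\ref{T:main} together with Proposition~\ref{P:CompletedTwistsManinTriples} each of these cobrackets is determined by the \emph{same} Manin triple $\widehat\lieM = \widehat\lieW_\ttau \dotplus \lieD$; hence the cobrackets agree, and since $\lieL^{\otimes 2}$ has no non-zero ad-invariant elements (Proposition~\ref{P:basicsonloops}), $\tilde r_\ttau = r_\ttau$. You instead compute $\rho_\ttau$ \emph{directly} from the formula~(\ref{E:geomRMatrNodal}) of Theorem~\ref{T:main2}, by observing that the dual topological basis shifts as $b_{(k,i)}^\ttau = b_{(k,i)}^\circ + \widehat f_\ttau(b_{(k,i)}^\circ)$ and that the resulting correction $\sum_{(k,i)} \widehat f_\ttau(b_{(k,i)}^\circ) \otimes a_{(k,i)}$ collapses to $-\ttau^{21} = \ttau$ via~(\ref{E:Orthogonality}). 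Your approach is more explicit and avoids both Theorem~\ref{T:main} and the ad-invariance step entirely; the paper's approach is cleaner and explains conceptually why the identity holds (it is forced by the Manin triple), at the price of invoking the heavier ``cobracket is determined'' machinery of Section~\ref{S:ReviewGeomCYBE}.
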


\begin{proof}
By Proposition \ref{P:basicsonloops}, $\lieL^{\otimes 3}$ does not have any non-zero ad-invariant elements. Hence, Proposition \ref{P:TwistLieBialgStructures} implies that  $\delta_{\ttau}$ is a Lie bialgebra cobracket if and only if $\ttau$ satisfies the twist equation (\ref{E:TwistEquation}). On the other hand, since $r_\circ$ solves the CYBE, we can rewrite the CYBE for $r_\ttau$ as
$$[[\ttau, \ttau]] + \bigl[r_\circ^{12}, \ttau^{13} + \ttau^{23}\bigr] + \bigl[r_\circ^{13}, \ttau^{23} + \ttau^{21}\bigr] + 
\bigl[r_\circ^{23}, \ttau^{21} + \ttau^{31}\bigr] = 0.
$$ 
We have: $\bigl[r_\circ^{12}, \ttau^{13} + \ttau^{23}\bigr] = - (\delta_\circ \otimes \mathbbm{1})(\ttau)$. It follows that $r_\ttau$ solves the CYBE if and only if 
$\alt\bigl((\delta_\circ \otimes \mathbbm{1})(\ttau)\bigr)  = [[\ttau, \ttau]],
$
implying the first statement. 

\smallskip
\noindent
As it was explained in the proof of Proposition \ref{P:GeomDataFromTwists}, 
the Manin triple $\widehat{\lieM} = \widehat{\lieW}_\ttau \dotplus \lieD$ is isomorphic to the geometric Manin triple $\widetilde\lieA = 
\widehat\lieA_{\ttau} \dotplus \lieA$. Let $\tilde{r}_\ttau(x, y)$ be the trivialization of the geometric $r$-matrix $\rho_\ttau$ with respect to the trivialization $\lieA \cong \lieL$
introduced at the end of Subsection \ref{SS:GeometrizationTwists}. Then we get the geometric Lie bialgebra cobracket
$$
\lieL \stackrel{\delta}\lar \lieL \wedge \lieL, \; f(z) \mapsto \bigl[f(x) \otimes 1 + 1 \otimes f(y), \tilde{r}_\ttau(x, y)\bigr].
$$
On the other hand, Corollary \ref{C:StandradStructure} implies that 
$$
\delta_\ttau(f)  :=  \delta_\circ(f) + \bigl[f(x) \otimes 1 + 1 \otimes f(y), \ttau(x, y)\bigr] = \bigl[f(x) \otimes 1 + 1 \otimes f(y), r_\circ(x, y) + \ttau(x, y)\bigr].
$$
According to Proposition \ref{P:CompletedTwistsManinTriples} and  Theorem \ref{T:main}, both Lie bialgebra cobrackets $\delta$ and $\delta_\ttau$ are determined by the same Manin triple $\widehat{\lieM} = \widehat{\lieW}_\ttau \dotplus \lieD$. It follows that $\delta = \delta_\ttau$.
Since $\lieL^{\otimes 2}$ has no  non-zero ad-invariant elements (see Proposition \ref{P:basicsonloops}), we conclude that
$\tilde{r}_\ttau(x, y) = r_\circ(x, y) + \ttau(x, y) = r_\ttau(x, y)$, as asserted. 
\end{proof}

\subsection{On the theory of trigonometric solutions of CYBE}\label{SS:ReviewBDTheory}
Consider the setting of Remark \ref{R:RemarkBDsetting}.
Let $\lieg = \lien^{+} \dotplus \tilde\lieh \dotplus \lien^{-}$ be a  triangular decomposition of  $\lieg$,  $\Gamma$ be the Dynkin diagram of $\lieg$ and  $\phi \in \Aut(\Gamma)$. Let $\sigma \in \Aut_{\CC}(\lieg)$ be a \emph{Coxeter automorphism} corresponding  to  $\phi$, $m$ be the order of $\sigma$ and  $\lieL := \lieL(\lieg, \sigma)$. 
Recall that 
 $\lieg_{0} = \lieh$  is an abelian Lie algebra.  For  $1 \le k \le m-1$ and $\alpha \in \lieh^\ast$, let $
\lieg_{k}^\alpha := \bigl\{x \in \lieg_{k} \, \big| \, [h, x] = \alpha(h)x \; \mbox{\rm for all} \; h \in \lieh \bigr\}.
$
We put 
$$
\Lambda_k := \bigl\{\alpha \in \lieh^\ast \, \big| \, \lieg_{k}^\alpha \ne 0 \bigr\}
\quad \mbox{\rm and} \quad \Xi := \left\{(\alpha, k) \, \big| \, 1 \le k \le m - 1 \;  \mbox{and} \; \alpha \in \Lambda_k \right\}.
$$
Then we have a direct sum decomposition
\begin{equation}\label{E:SplittingLieg}
 \lieg =  \lieh \oplus
 \bigoplus_{(\alpha, k) \in \Xi} \lieg_{k}^{\alpha},
\end{equation}
and the vector  space 
$\lieg_{k}^\alpha$ is  one-dimensional for any $(\alpha, k) \in \Xi$. 

\smallskip
\noindent
The main  advantage to define the twisted loop algebra $\lieL$ corresponding to 
$\nu \in \Aut(\Gamma)$ 
using a  Coxeter automorphism (even for $\phi = \mathsf{id}$) is due to the following special structure of the set $\Pi$ of positive simple roots of $(\lieL, \lieh)$:
$
\Pi = \bigl\{(\alpha, 1) \, \big| \, \alpha \in \Lambda_1\bigr\}.
$
In particular, we have: $ \big|\Lambda_1\big| = r + 1= \dim_{\CC}(\lieh) + 1$ and the elements of $\Lambda_1$ are in a bijection with the vertices of the affine Dynkin diagram $\widehat{\Gamma}$ such that $\lieL \cong \lieG_{\widehat{\Gamma}}$ via the Gabber--Kac isomorphism (\ref{E:GabberKac}). 

\smallskip
\noindent
Recall  that a \emph{Belavin--Drinfeld triple} is a tuple $\bigl(\Gamma_1, \Gamma_2, \tau\bigr)$, where 
 $\Gamma_i \subsetneqq \Lambda_1$ for $i = 1, 2$ are subsets and 
$\Gamma_1 \stackrel{\tau}\lar \Gamma_2$ is a bijection satisfying the following conditions:
\begin{itemize}
\item $\kappa\bigl(\tau(\alpha), \tau(\beta)\bigr) = \kappa(\alpha, \beta)$ for all $\alpha, \beta \in \Gamma_1$;
\item for any $\alpha \in \Gamma_1$ there exists $l = l(\alpha) \in \NN$ such that  $\alpha, \dots, \tau^{l-1}(\alpha) \in \Gamma_1$ but $\tau^l(\alpha) \notin \Gamma_1$. 
\end{itemize}

\smallskip
\noindent
For $i = 1, 2$,  let $\lien_i$ be the Lie subalgebra of $\lieg$ generated by the vector subspace $\oplus_{\alpha \in \Gamma_i} \lieg_1^{\alpha}$.
Then  $\lien_i$ is isomorphic to the positive part
of the semisimple Lie algebra defined by the Dynkin diagram $\Gamma_i$
and  we  have  a direct sum decomposition
\begin{equation}\label{E:SplittingLiea}
\lien_i = \bigoplus_{(\alpha, k) \in \Xi_i} \lieg_{k}^{\alpha}.
\end{equation}
for an appropriate subset  $\Xi_i  \subset \Xi$.
Fixing  non-zero elements in $\left(\lieg_1^\alpha\right)_{\alpha \in \Lambda_1}$, one can extend the bijection $\Gamma_1 \stackrel{\tau}\lar \Gamma_2$  to an isomorphism of Lie algebras $\lien_1 \stackrel{\tilde\tau}\lar  \lien_2$.  

\smallskip
\noindent
Let 
$\lieg \stackrel{\vartheta}\lar \lieg$ be a linear map defined as the composition
$
\lieg \stackrel{\pi}\rightarrowdbl \lien_1 \stackrel{\tilde\tau}\lar \lien_2 \stackrel{\imath}\longhookrightarrow \lieg,
$
where $\pi$ and $\imath$ are the canonical projection and embedding with respect to the direct sum decompositions (\ref{E:SplittingLieg})  and (\ref{E:SplittingLiea}).
Then $\vartheta$ is nilpotent and $\vartheta(\lieg_k) \subset \lieg_k$ for all $1 \le k \le m-1$. 
Let 
$
\psi  = \dfrac{\vartheta}{\mathbbm{1} - \vartheta} = \sum\limits_{l = 1}^\infty \vartheta^l.
$
It follows that $\psi(\lieg_k) \subset \lieg_k$ for all $1 \le k \le m-1$ as well.

\smallskip
\noindent
For any  Belavin--Drinfeld triple $\bigl(\Gamma_1, \Gamma_2, \tau\bigr)$,  the system of linear equations 
\begin{equation}\label{E:Consistency}
 \bigl(\tau(\alpha) \otimes \mathbbm{1} + \mathbbm{1} \otimes \alpha\bigr)\Bigl(\ttaus+ \frac{\gamma_{0}}{2}\Bigr) = 0 \quad \mbox{for all} \; \alpha \in \Gamma_1
\end{equation}
for $\ttaus \in \lieh \wedge \lieh$ is consistent; see \cite[Lemma 6.8]{BelavinDrinfeld}. According to 
\cite[Theorem 6.1]{BelavinDrinfeld}, trigonometric solutions of (\ref{E:CYBE1}) are parametrized by \emph{Belavin--Drinfeld quadruples} 
$Q = \bigl(\bigl(\Gamma_1, \Gamma_2, \tau\bigr), \ttaus\bigr)$,  where $\bigl(\Gamma_1, \Gamma_2, \tau\bigr)$ is a Belavin--Drinfeld triple and $\ttaus \in \lieh \wedge \lieh$ satisfies  (\ref{E:Consistency}). The solution of (\ref{E:CYBE1}) corresponding to $Q$ is given by the following formula: 
\begin{equation}\label{E:BDtrigonometricformula}
\varrho_Q(w) = \varrho_\circ(w) + \ttaus +
\sum\limits_{j = 1}^{m-1} \left(- \exp\left(\frac{jw}{m}\right)(\psi \otimes \mathbbm{1}) \gamma_j + \exp\left(-\frac{jw}{m}\right) (\mathbbm{1} \otimes \psi) \gamma_{-j}\right),
\end{equation}
where $\varrho_\circ(w)$ is given by (\ref{E:StandardFormKulish}).

\smallskip
\noindent
Let us rewrite the formula (\ref{E:BDtrigonometricformula}) in different terms. 
Choose elements $\bigl(g_{(\alpha, k)} \in \lieg_{k}^\alpha\bigr)_{(\alpha, k) \in \Xi}$ such that  
$\kappa\bigl(g_{(\alpha, k)}, g_{(\beta, l)}\bigr) = \delta_{\alpha+\beta, 0} \delta_{k+l, 0}$. Then for any $1 \le k \le m-1$ we have:
$
\gamma_{\pm k} = \sum\limits_{\alpha \in \Xi_{\pm k}} g_{(\pm \alpha, \pm k)} \otimes g_{(\mp \alpha, \mp k)}.
$
It follows that 
$$
\left\{
\begin{array}{l}
(\psi \otimes \mathbbm{1})(\gamma_k) \; \,  = 
\sum\limits_{l = 1}^\infty  \sum\limits_{\alpha \in \Xi_k} 
\vartheta^l\bigl(g_{(\alpha, k)}) \otimes g_{(-\alpha, -k)} \\
(\mathbbm{1} \otimes \psi)(\gamma_{-k}) = 
\sum\limits_{l = 1}^\infty \sum\limits_{\alpha \in \Xi_k} 
 g_{(-\alpha, -k)} \otimes \vartheta^l\bigl(g_{(\alpha, k)}).
\end{array}
\right.
$$
Consider the following expression
\begin{equation}\label{E:twist}
t_Q(x, y) := \ttaus + \sum\limits_{l = 1}^\infty  \sum\limits_{(\alpha, k) \in \Xi} 
\bigl(-\vartheta^l\bigl(g_{(\alpha, k)}\bigr) \otimes g_{(-\alpha, -k)} \left(\frac{x}{y}\right)^k + g_{(-\alpha, -k)} \otimes \vartheta^l\bigl(g_{(\alpha, k)}\bigr) \left(\frac{y}{x}\right)^k\Bigr).
\end{equation}
Then we have:
\begin{equation}\label{E:TwistsCoxCase}
r_Q(x, y) := r_\circ(x, y) +  t_Q(x, y) = \varrho_Q(w)
\end{equation}
where $x, y$ and $w$ are related by the formula $\dfrac{x}{y} = \exp\left(\dfrac{w}{m}\right)$.  In other words, $r_Q$ is the solution of the classical Yang--Baxter equation (\ref{E:CYBE}) corresponding to the Belavin--Drinfeld quadruple $Q = \bigl((\Gamma_1, \Gamma_2, \tau), \ttaus\bigr)$. 

\begin{corollary}\label{R:BDasTwist}
For $az^k, bz^l \in \overline{\lieL}$ we put:  
$az^k \wedge b z^l := a x^k \otimes b y^l - b x^l \otimes a y^k \in \overline{\lieL} \wedge \overline{\lieL}$. Then  $t_Q$ given by (\ref{E:twist}) can be viewed as an element of 
$\wedge^2(\lieL)$.
As a consequence, the trigonometric solution $r_Q(x, y)$ is of the form (\ref{E:StandardTwist}) and can be realized as the geometric $r$-matrix defined by an appropriate geometric CYBE datum $(E, \kA)$, where $E$ is a nodal Weierstra\ss{} curve. 
 \end{corollary}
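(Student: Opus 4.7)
The plan is to derive this corollary as a direct application of Theorem \ref{T:TwistsCYBEGeometry}, the only non-trivial verification being that the tensor $t_Q$ defined in (\ref{E:twist}) actually lies in $\wedge^2(\lieL)$. Once this membership is established, the first assertion is immediate, the second (identifying $r_Q$ with a tensor of the form (\ref{E:StandardTwist})) follows from (\ref{E:TwistsCoxCase}), and the third is an invocation of Theorem \ref{T:TwistsCYBEGeometry} with $\ttau = t_Q$, since the Belavin--Drinfeld classification already guarantees that $r_Q$ solves (\ref{E:CYBE}).

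For the key step I would unpack (\ref{E:twist}) using the paper's wedge convention. The excerpt already records that $\vartheta$ is nilpotent and that $\vartheta(\lieg_k) \subseteq \lieg_k$ for all $1 \le k \le m-1$, so there exists $N \in \NN$ with $\vartheta^n = 0$ for $n \ge N$, truncating the outer sum over $n$ in (\ref{E:twist}) to a finite one, and each vector $\vartheta^n(g_{(\alpha,k)})$ lies in $\lieg_k$. Consequently, for every $(\alpha,k) \in \Xi$ and every $n \ge 1$, the elements $\vartheta^n(g_{(\alpha,k)}) z^k \in \lieg_k z^k \subset \lieL$ and $g_{(-\alpha,-k)} z^{-k} \in \lieg_{-k} z^{-k} \subset \lieL$ both belong to $\lieL$. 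A routine computation in the wedge notation of the corollary yields
\[
-\vartheta^n\bigl(g_{(\alpha,k)}\bigr) z^k \wedge g_{(-\alpha,-k)} z^{-k} = -\vartheta^n\bigl(g_{(\alpha,k)}\bigr) \otimes g_{(-\alpha,-k)} \Bigl(\tfrac{x}{y}\Bigr)^k + g_{(-\alpha,-k)} \otimes \vartheta^n\bigl(g_{(\alpha,k)}\bigr) \Bigl(\tfrac{y}{x}\Bigr)^k,
\]
which reproduces exactly the pair of summands appearing in (\ref{E:twist}) for fixed $(\alpha,k,n)$. Together with $\ttaus \in \lieh \wedge \lieh \subset \wedge^2(\lieL)$ (via the canonical inclusion $\lieh = \lieg_0 z^0 \hookrightarrow \lieL$ of the Coxeter setting), this exhibits $t_Q$ as a \emph{finite} sum of elements of $\wedge^2(\lieL)$, so $t_Q \in \wedge^2(\lieL)$.

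With $t_Q \in \wedge^2(\lieL)$ in hand, the relation $r_Q = r_\circ + t_Q$ from (\ref{E:TwistsCoxCase}) places $r_Q$ in the shape (\ref{E:StandardTwist}), and since $r_Q$ solves CYBE by Belavin--Drinfeld, Theorem \ref{T:TwistsCYBEGeometry} applies verbatim: it produces the twisted Lie bialgebra cobracket $\delta_{t_Q} = \delta_\circ + \partial_{t_Q}$ on $\lieL$, its Manin triple $\lieM = \lieW_{t_Q} \dotplus \lieD$ (Theorem \ref{T:ManinTriplesLoopTwists}), the associated geometric CYBE datum $(E, \kA_{t_Q})$ on the nodal Weierstra\ss{} curve constructed in Proposition \ref{P:GeomDataFromTwists}, and the identification of the trivialized geometric $r$-matrix of $(E, \kA_{t_Q})$ with $r_Q(x,y)$. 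The main obstacle is essentially notational: it lies in matching the asymmetric placement of $x$ and $y$ in the wedge convention of the corollary against the tensorial conventions of (\ref{E:twist}) and (\ref{E:RMatStandard}); once this bookkeeping is settled, no further ingredient is required beyond the results already proved in the paper.
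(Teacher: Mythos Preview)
Your proposal is correct and follows precisely the approach the paper intends: the corollary is stated without a separate proof because it is meant to be an immediate consequence of Theorem \ref{T:TwistsCYBEGeometry} once one observes $t_Q \in \wedge^2(\lieL)$, and your verification of this membership (via nilpotency of $\vartheta$, the invariance $\vartheta(\lieg_k)\subseteq\lieg_k$, and the wedge bookkeeping) is exactly the intended justification.
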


\smallskip
\noindent
A proof of the following result is analogous to \cite{BelavinDrinfeld2} and 
\cite[Theorem 19]{KPSST}.

\begin{proposition}\label{P:TwistIsTrigonometric}
Let
$
r(x, y) = \dfrac{y^m}{x^m - y^m} \sum\limits_{j = 0}^{m-1} \left(\dfrac{x}{y}\right)^j \gamma_j + g(x, y)
$
be a solution of (\ref{E:CYBE}), where $\CC^2 \stackrel{g}\lar   \lieg \otimes \lieg$ is a holomorphic  function. Then $r$ is equivalent (in the sense of Subsection \ref{SS:SurveyCYBE}) to a trigonometric solution of (\ref{E:CYBE1}).  
\end{proposition}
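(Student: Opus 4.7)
The plan, following the scheme of Belavin--Drinfeld \cite{BelavinDrinfeld2} and \cite[Theorem 19]{KPSST}, is to verify that $r$ is non-degenerate, reduce it via the equivalence transformations (\ref{E:equiv1})--(\ref{E:equiv2}) to a one-variable solution $\varrho(w)$ of (\ref{E:CYBE1}), and finally identify the pole lattice of $\varrho$ as having rank one.

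First I check non-degeneracy. Near the diagonal the expansion $x^m - y^m = m y^{m-1}(x-y) + O\bigl((x-y)^2\bigr)$ together with $\sum_{j=0}^{m-1}(x/y)^j \gamma_j \big|_{x=y} = \gamma$ shows that $r$ has a simple pole along $\{x = y\}$ with leading term $\tfrac{y}{m(x-y)}\gamma$. Since $\widetilde{\kappa}(\gamma) = \mathbbm{1}_{\lieg}$ by (\ref{E:KillingIsom}), the endomorphism $\widetilde{\kappa}\bigl(r(x_0,y_0)\bigr) \in \End_{\CC}(\lieg)$ is invertible at a generic point $(x_0, y_0)$, so $r$ is non-degenerate in the sense of Subsection \ref{SS:SurveyCYBE}.

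The reduction theorem of \cite{BelavinDrinfeld2} then produces a holomorphic gauge transformation $\phi$ and a change of variable $\eta$ such that
$$
\bigl(\phi(x) \otimes \phi(y)\bigr)\, r\bigl(\eta(x), \eta(y)\bigr) = \varrho(x - y)
$$
for some meromorphic germ $(\CC, 0) \stackrel{\varrho}\lar \lieg \otimes \lieg$; the resulting $\varrho$ is then a non-degenerate skew-symmetric solution of (\ref{E:CYBE1}). By \cite[Theorem 1.1]{BelavinDrinfeld}, $\varrho$ extends meromorphically to all of $\CC$, has residue a non-zero multiple of $\gamma$ at $0$, and its pole set forms an additive subgroup $\Lambda \subseteq \CC$ of rank at most $2$.

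The main obstacle is to show $\rk(\Lambda) = 1$. The hypothesized singular set of $r$ is precisely the union of the $m$ lines $\{x = \varepsilon^k y\}_{k = 0, \dots, m-1}$, all passing through the origin with pairwise distinct tangent directions. Compatibility with the identity $r_{\mathrm{new}}(u,v) := \bigl(\phi(u) \otimes \phi(v)\bigr)\, r\bigl(\eta(u), \eta(v)\bigr) = \varrho(u - v)$ forces $\eta$ to satisfy the functional equation $\eta(u) = \varepsilon^k \eta(v) \Leftrightarrow u - v = \lambda_k$ for constants $\lambda_0 = 0, \lambda_1, \dots, \lambda_{m-1} \in \Lambda$ that are pairwise distinct; iterating $\eta(u + \lambda_1) = \varepsilon\, \eta(u)$ gives $\lambda_k = k\lambda_1$ with $\lambda_1 \ne 0$, hence a rank-one sublattice $\ZZ \lambda_1 \subseteq \Lambda$, which rules out the rational case $\Lambda = 0$. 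A rank-two $\Lambda$ would in turn produce additional singular curves in $r$ incompatible with the holomorphy of $g$. Consequently $\rk(\Lambda) = 1$, the reduced solution $\varrho$ is trigonometric, and $r$ is equivalent to a trigonometric solution of (\ref{E:CYBE1}), as claimed.
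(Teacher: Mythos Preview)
Your first two steps (non-degeneracy and invoking the abstract Belavin--Drinfeld reduction) are fine; the gap is in the final paragraph, where you try to read off $\rk(\Lambda)$ by tracking the lines $\{x=\varepsilon^k y\}$ through the equivalence. The transformations (\ref{E:equiv1})--(\ref{E:equiv2}) are only \emph{germs}. With $\eta(0)=0$ all $m$ singular lines of $r$ pass through $(\eta(0),\eta(0))=(0,0)$, and their preimages under $(\eta,\eta)$ are $m$ analytic curves through the origin with pairwise distinct tangent directions $u=\varepsilon^k v$; such curves cannot simultaneously be of the form $u-v=\lambda_k$ (which are parallel to the diagonal), so your functional equation $\eta(u+\lambda_1)=\varepsilon\,\eta(u)$ is never forced. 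If instead you centre the germ at a generic point such as $(1,1)$, only the diagonal meets the image of $(\eta,\eta)$ and the remaining $m-1$ lines are invisible; the meromorphic continuation of $\varrho$ to all of $\CC$ supplied by \cite[Theorem 1.1]{BelavinDrinfeld} comes from the CYBE itself, not from analytic continuation through the germ equivalence, so the local data give you no handle on the distant poles of $\varrho$. In either reading your pole-counting argument does not get started, and neither $\Lambda=0$ nor $\rk(\Lambda)=2$ is ruled out.

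The paper circumvents this by making both steps of the reduction explicit and \emph{global}. One first substitutes $x=\exp(u/m)$, $y=\exp(v/m)$, an entire map that carries each line $x=\varepsilon^k y$ to the line $u-v=2\pi i k$; thus $\tilde r(u,v):=r\bigl(\exp(u/m),\exp(v/m)\bigr)$ is meromorphic on all of $\CC^2$ with pole set exactly $\{u-v\in 2\pi i\ZZ\}$. From CYBE one then derives the identity
\[
\bigl[h(u)\otimes 1+1\otimes h(v),\,\tilde r(u,v)\bigr]=\Bigl(\dfrac{\partial}{\partial u}+\dfrac{\partial}{\partial v}\Bigr)\tilde r(u,v)
\]
for a specific holomorphic $h:\CC\to\lieg$, and the linear ODE $\dot\varphi=\ad_h\circ\varphi$, $\varphi(0)=\mathbbm{1}$, has an \emph{entire} solution $\varphi:\CC\to\Aut_\CC(\lieg)$. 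Gauging by $\varphi^{-1}$ removes the dependence on $u+v$ without introducing or erasing any singularities, so $\varrho(u-v):=(\varphi(u)^{-1}\otimes\varphi(v)^{-1})\tilde r(u,v)$ has pole set precisely $2\pi i\ZZ$, hence is trigonometric. The idea you are missing is that both the change of variables and the gauge must be constructed globally in order to transport the pole structure; the black-box reduction you invoke does not provide this.
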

\begin{proof} For $a, b, c, d \in \lieg$ put:
$
\lconvol a \otimes b , c \otimes d \rconvol:= [a, c] \otimes [b, d].
$
Proceeding similarly to \cite{BelavinDrinfeld2},
one can deduce from (\ref{E:CYBE}) the following identities:
$$
\left\{
\begin{array}{l}
\lconvol r(x, y), r(x,y)\rconvol + \left[r(x, y), 1 \otimes f(y)\right] + \dfrac{y}{m} \dfrac{\partial r}{\partial y}(x, y) =  0 \\
\lconvol r(x, y), r(x,y)\rconvol - \left[r(x, y), f(x)  \otimes 1\right] - \dfrac{x}{m} \dfrac{\partial r}{\partial x}(x, y) =  0,
\end{array}
\right.
$$
where $f(z) := \lfloor g(z, z) + \frac{1}{m} \sum\limits_{k= 1}^{m-1} k \gamma_k \rfloor$ (here, $\lfloor a \otimes b \rfloor = [a, b]$ for $a, b \in \lieg$). 
It follows that
$$
\bigl[f(x) \otimes 1 + 1 \otimes f(y), r(x, y)\bigr] = \dfrac{x}{m} \dfrac{\partial r}{\partial x}(x, y) + \dfrac{y}{m} \dfrac{\partial r}{\partial y}(x, y).
$$
Let 
$\tilde{r}(u, v) := r\left(\exp\left(\dfrac{u}{m}\right),  
\exp\left(\dfrac{v}{m}\right)\right)$ and
$h(u) := f\left(\exp\left(\dfrac{u}{m}\right)\right)$. Then  
$\CC^2 \stackrel{\tilde{r}}\lar  \lieg \otimes \lieg$ is a meromorphic solution of (\ref{E:CYBE}) equivalent to $r$ (whose set of poles is given by the union of lines $\left\{(u, v) \in \CC^2 \, \big| \, u - v = 2\pi i k \right\}$ for $k \in \ZZ$), $\CC \stackrel{h}\lar \lieg$ is a holomorphic function and 
$$
\bigl[h(u) \otimes 1 + 1 \otimes h(v), \tilde{r}(u, v)\bigr] = \left(
\dfrac{\partial}{\partial u} + \dfrac{\partial}{\partial v}\right) \tilde{r}(u, v).
$$
Let $(\CC, 0) \stackrel{\varphi}\lar \End_{\CC}(\lieg)$ the germ of a holomorphic function satisfying the differential equation 
 $\dot\varphi = \ad_h \circ \varphi$ and the initial condition $\varphi(0) = \mathbbm{1}$, where  $\CC \stackrel{\ad_h}\lar \End_{\CC}(\lieg)$ is given by the rule $\bigl(\ad_h(u)\bigr)(\xi) = \bigl[h(u), \xi\bigr]$ for $u \in \CC$ and $\xi \in \lieg$. 
Then $\varphi$ can be extended to a holomorphic function  on the entire complex plane (see \cite[Theorem 19]{KPSST}).  
The initial condition $\varphi(0) = \mathbbm{1}$ and the continuity of $\varphi$ imply that $\det\bigl(\varphi(u)\bigr) = 1$ for all $u \in \CC$ (see the proof of 
\cite[Proposition 2.2]{BelavinDrinfeld}). Hence, we have  an entire function 
$\CC \stackrel{\varphi}\lar \Aut_{\CC}(\lieg)$. 
Let $$
\tilde\rho(u, v) := \bigl(\varphi(u)^{-1} \otimes \varphi(v)^{-1}\bigr) \tilde{r}(u, v)
$$
It follows that $\left(
\dfrac{\partial}{\partial u} + \dfrac{\partial}{\partial v}\right) \tilde\rho(u, v) = 0$, i.e. $\tilde{\rho}(u, v) = \varrho(u-v)$ for some meromorphic solution
$\CC \stackrel{\varrho}\lar \lieg \otimes \lieg$ of (\ref{E:CYBE1}), whose set of poles is $2 \pi i\ZZ$. It follows that $\varrho$ is a trigonometric solution of 
(\ref{E:CYBE1}).
\end{proof}

\subsection{Concluding remarks on the geometrization of trigonometric solutions}
Let $(E, \kA)$ be a geometric CYBE datum as in Proposition \ref{P:GeomDataFromTwists}. Within that construction, we additionally made the following choices.
\begin{itemize}
\item $\PP^{1} \stackrel{\nu}\lar E$ is a fixed normalization map. We have  fixed homogeneous coordinates $(w_+: w_-)$ on $\PP^1$ such that
$\nu^{-1}(s) = \left\{s_+, s_-\right\}$, where $s_+ = (0: 1)$ and $s_-  = (1: 0)$. 
\item We have an algebra isomorphism $\Gamma(U, \kO) \cong  \CC\bigl[u, u^{-1}\bigr]$ as well as an $\Gamma(U, \kO)$-$\CC\bigl[u, u^{-1}\bigr]$-equivariant  isomorphism of Lie algebras 
$\lieA \cong\lieL = \bigoplus\limits_{k \in \ZZ} \lieg_k x^k,$ where $u = \dfrac{w_+}{w_-} = x^m$. We also put: $\omega = \dfrac{du}{u}$. 
\end{itemize}
Let $p:= \nu\bigl((1: 1)\bigr) \in E$. Equipping $U \subset E$ with the usual group law 
(on the set of smooth point of a singular Weierstra\ss{} curve)  with $p$ being neutral element,  the map $\CC^\ast \rightarrow U, t \mapsto \nu\bigl(1:t\bigr)$ becomes a group isomorphism. 

\smallskip
\noindent
 Consider the algebra homomorphism
$\CC\bigl[u, u^{-1}\bigr] \rightarrow \CC\llbracket z\rrbracket, u \mapsto \exp(z).$ 
As $(\exp(z) -1) \in \CC\llbracket z\rrbracket$ is a local parameter, we get an induced algebra isomorphism $\widehat{O}_p \rightarrow \CC\llbracket z\rrbracket$.
In these terms, the differential form $\widehat{\omega}_p$ gets identified with 
 $dz$. 
 Moreover, the linear map
$
\widehat{\lieA}_p \rightarrow \lieg\llbracket z\rrbracket, \; a x^k \mapsto a \exp\left(\dfrac{z}{m}k\right)
$
is a ($\widehat{O}_p$--$\CC\llbracket z\rrbracket$)--equivariant isomorphism of Lie algebras. Consider the \'etale covering $\CC^\ast = \widetilde{U} \rightarrow U = \CC^\ast$  of degree $m$, given by the formula $x \mapsto x^m = u$. It extends to a finite morphism
$\PP^1 \stackrel{\tilde{\pi}}\lar  \PP^1, (w_+: w_-) \mapsto (w_+^m: w_-^m)$. Since  $\tilde{\pi}(s_\pm) = s_\pm$ and (\ref{E:Bicartesian}) is a pulldown diagram, 
we obtain an  induced finite morphism $E \stackrel{\pi}\lar E$. Let $\widetilde\kA = \pi^\ast(\kA)$. 
 Then we have the following commutative diagram. 
\begin{equation}
\begin{array}{c}
\xymatrix{
\Gamma\bigl((E \times \widetilde{U}) \setminus \widetilde{\Sigma}, \widetilde\kA \boxtimes
\widetilde\kA\bigr) \ar@{^{(}->}[rr] & & \Gamma\bigl((\widetilde{U} \times \widetilde{U}) \setminus \widetilde{\Sigma}, \widetilde\kA  \boxtimes
\widetilde\kA\bigr) \\
\Gamma\bigl((E \times U) \setminus \Sigma, \kA \boxtimes
\kA\bigr) \ar@{^{(}->}[rr] \ar@{^{(}->}[u] \ar@{_{(}->}[d] & & \Gamma\bigl((U \times U) \setminus \Sigma, \kA \boxtimes
\kA\bigr) \ar@{_{(}->}[u] \ar@{^{(}->}[d] \\
\overline{\lieA_{(p)} \otimes \widehat{\lieA}_p} \ar@{^{(}->}[rr] 
\ar@{_{(}->}[rd] & & 
\overline{\lieA_{(p)}^\circ \otimes \widehat{\lieA}_p} \ar@{^{(}->}[ld] \\
& \overline{\widetilde{\lieA}_{(p)} \otimes \widehat{\lieA}_p} & 
}
\end{array}
\end{equation}
Then   $\varrho \in \Gamma\bigl((E \times U) \setminus \Sigma, \kA \boxtimes
\kA\bigr)$, $\rho \in \Gamma\bigl((U \times U) \setminus \Sigma, \kA \boxtimes
\kA\bigr)$, $\tilde\rho \in \Gamma\bigl((\widetilde{U} \times \widetilde{U}) \setminus \widetilde{\Sigma}, \widetilde\kA  \boxtimes
\widetilde\kA\bigr)$ and $\bar\rho \in \overline{\widetilde{\lieA}_{(p)} \otimes \widehat{\lieA}_p}$ are identified with each other under the corresponding maps.
Taking  the trivialization $\Gamma(\widetilde{U}, \kA) \cong \overline{\lieL} = \lieg\bigl[x, x^{-1}\bigr]$, we get a solution of (\ref{E:CYBE}) 
$$
r(x, y) = r_\circ(x, y) + \ttau(x, y) =  \left(\frac{y^m}{x^m - y^m} \sum\limits_{k = 0}^{m-1} \left(\frac{x}{y}\right)^k \gamma_k\right)  + \frac{\gamma_{0}}{2} + \ttau(x, y),
$$
where $\ttau \in \wedge^2\lieL \subset (\lieg \otimes \lieg)\bigl[x, x^{-1}, y, y^{-1}\bigr]$.
Making the substitutions $x = \exp\left(\dfrac{z}{m}\right)$ and $y = 
\exp\left(\dfrac{w}{m}\right)$, we obtain the solution
\begin{equation}\label{E:TrigonomBDForm}
r(z, w) = \left(\frac{1}{\exp(z-w) -1}\sum\limits_{k = 0}^{m-1} \exp\left( \frac{z-w}{m} k\right) \gamma_k\right)  + \frac{\gamma_{0}}{2} + \ttau\left(\exp\left(\dfrac{z}{m}\right), \exp\left(\dfrac{w}{m}\right)\right).
\end{equation}
The corresponding element of $\bigl(\lieg\llbrace z\rrbrace \otimes \lieg\bigr)\llbracket w\rrbracket$
viewed as a solution of (\ref{E:CYBEformali}), 
coincides with the image of $\bar\rho$ under the isomorphism $
\overline{\widetilde{\lieA}_{(p)} \otimes \widehat{\lieA}_p} \cong 
 \bigl(\lieg\llbrace z\rrbrace \otimes \lieg\bigr)\llbracket w\rrbracket$.

\begin{remark} The set of Manin triples $\lieL \times \lieL^\ddagger = \lieD \, \dotplus \,
\lieW$ from Theorem \ref{T:ManinTriplesLoopTwists} admits a natural involution $\lieW \mapsto \lieW^\ddagger$ induced by the Lie algebra automorphism 
\begin{equation}\label{E:InvolutionManinTriples}
\lieL \times \lieL^\ddagger \lar \lieL \times \lieL^\ddagger, \; (f, g) 
\mapsto(g^\ddagger, f^\ddagger)
\end{equation}
Note that  (\ref{E:InvolutionManinTriples})  is an involution which fixes   the Lie subalgebra $\lieD$.  Let $(E, \kA)$ end
 $(E, \kA^\ddagger)$ be the geometric CYBE data from Proposition 
\ref{P:GeomDataFromTwists},  
 corresponding to $\lieW$ and $\lieW^\ddagger$, respectively.  It is not hard to see that
 $\kA^\ddagger \cong \imath^\ast(\kA)$, where $E \stackrel{\imath}\lar E$ is the involution, induced by the involution 
 $\PP^1 \rightarrow \PP^1, \, (w_+: w_-) \mapsto (w_-: w_+)$. It is clear that $\imath(p) = p$. 
 Moreover, the solutions $r(z, w)$ and $r^\ddagger(z, w)$ corresponding to $\lieW$ and $\lieW^\ddagger$ and given by  (\ref{E:TrigonomBDForm})  are related by the formula:
 $r^\ddagger(z, w) = r(-z, -w)$.
\hfill $\lozenge$
\end{remark}

\smallskip
\noindent
\textbf{Summary}.
Let $\ttau \in \wedge^2\lieL$
be a twist of the standard Lie bialgebra cobracket $\lieL \stackrel{\delta_\circ}\lar  \lieL \wedge \lieL$. Then $r_\ttau(x, y) = r_\circ(x, y) + \ttau(x, y)$ is a solution of (\ref{E:CYBE}), which is equivalent to a trigonometric solution $\varrho_\ttau$ of (\ref{E:CYBE1}) with respect to the equivalence relations (\ref{E:equiv1}) and (\ref{E:equiv2}). On the other hand, any 
trigonometric solution of (\ref{E:CYBE1}) is equivalent to a solution $r_\ttau(x, y)$ for some 
$\ttau \in \wedge^2\lieL$. Moreover, it was shown in  \cite{AbedinMaximov}  that 
for two twists $\ttau', \ttau'' \in \wedge^2\lieL$ of $\delta_\circ$  the corresponding Lie bialgebras $(\lieL, \delta_{\ttau'})$ and $(\lieL, \delta_{\ttau''})$ are related  by an  $R$--linear automorphism of $\lieL$   if and only if the solutions $\varrho_{\ttau'}$ and $\varrho_{\ttau''}$ are equivalent.

\begin{remark}
The presented way of geometrization of twists of the standard Lie bialgebra structure can be viewed as an alternative approach to classification of trigonometric solutions of (\ref{E:CYBE1}). On the other hand,  methods developed in this work are adaptable for a study of analogues of trigonometric solutions of (\ref{E:CYBE1})  for simple Lie algebras defined over algebraically non-closed fields like $\mathbb{R}$ (what is interesting  because  of applications to classical integrable systems \cite{BabelonBernardTalon, ReymanST}) or $\CC\llbrace h\rrbrace$ (motivated by the  problem of quantization of Lie bialgebras; see \cite{EtingofKazhdan, KKPS, KPS}).
We are going to return to these questions in the  future.$\lozenge$
\end{remark}

\section{Explicit computations}\label{S:ExplicitComputations}
\subsection{On explicit geometrization of certain solutions for $\mathfrak{sl}_n(\CC)$}
Let $\kP$ be a simple vector bundle on a Weierstra\ss{} curve $E$ (i.e.~$\End_E(\kP) = \CC$) of rank $n$ and degree $d$. 
Then 
  $\mathsf{gcd}(n, d) = 1$ and for any other simple vector bundle $\kQ$ with the same rank and degree  there exists a line bundle
 $\kL \in \Pic^0(E)$ such that $\kQ \cong \kP \otimes \kL$. Conversely,  for any $(n,d) \in \NN \times \ZZ$ satisfying the condition $\gcd(n, d) = 1$, there exists a simple vector bundle of rank $n$ and degree $d$ on $E$; see \cite{Atiyah, Burban1, BodnarchukDrozd} for the case when $E$ is elliptic, nodal and cuspidal, respectively. In what follows, we put $c := n - d$. 

 \smallskip
 \noindent
 Let $\kA  = \mbox{\it Ad}_E(\kP)$ be the sheaf of Lie algebras on $E$ given by the short exact sequence
\begin{equation}
0 \lar \kA \lar \mbox{\it End}_E(\kP) \stackrel{\mathsf{tr}}\lar \kO \lar 0.
\end{equation}
From what was said above we see  that  $\kA = \kA_{(c,d)}$ does not depend (up to an automorphism) on the particular choice of simple vector bundle $\kP$ and is uniquely determined
by the pair $(c, d)$. For any $p \in E$ we have: 
$\kA\big|_{p} \cong \lieg = \mathfrak{sl}_n(\CC)$. Simplicity of $\kP$ implies 
that $H^0(E, \kA) = 0 = H^1(E, \kA)$. It follows that  the pair $(E, \kA)$ is a geometric CYBE datum.

\smallskip
\noindent
Let $
K = K_{(c,d)} :=  \left(
\begin{array}{cc}
0 & I_d \\
I_c & 0
\end{array}
\right)
$
 and $T = T_{(c, d)}(u_-) = \left(
\begin{array}{cc}
I_c & 0  \\
0 & u_-^{-1} I_d
\end{array}
\right)
$, where $c = n-d$. We put:
$
\lied_{(c, d)} := \bigl\{\bigl(a, \mathsf{Ad}_{K}(a)\bigr) \, \big| \, a \in \lieg \bigr\}
$ (where $\mathsf{Ad}_{K}(a) := K a K^{-1}$) and $$\widehat{\lieW}_{(c, d)}^{\mathrm{trg}} = \bigl(\mathbbm{1} \times \mathsf{Ad}_{T}\bigr)\Bigl(\bigl(u_+ \lieg\llbracket u_{+}\rrbracket \times \{0\}\bigr)  + \bigl(\{0\} \times u_{-} \lieg\llbracket u_{-}\rrbracket\bigr) + \lied_{(c, d)}\Bigr) \subseteq \widehat{\lieM} = \widehat{\lieL}_+ \times \widehat{\lieL}_-,$$
where $\widehat{\lieL}_\pm = \lieg\llbrace u_\pm\rrbrace$. 

\begin{theorem}\label{T:CremmerGervaisGeometry} Let $E$ be a nodal Weierstra\ss{} curve, $s$ be its singular point and $\kA = 
\kA_{(c, d)}$
be a sheaf of Lie algebras attached to the pair $(c, d)$, where $c, d \in \NN$ are coprime. Then the Manin triple
$\widetilde\lieA_s = \widehat\lieA_s \dotplus \lieA_{(s)}$ is isomorphic to the Manin triple
 $\widehat{\lieM} = 
\widehat{\lieW}_{(c, d)}^{\mathrm{trg}} \dotplus \lieD$.
\end{theorem}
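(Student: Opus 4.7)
The plan is to realise $\kA_{(c,d)}$ explicitly via the equivalence $\TF(E) \cong \Tri(E)$ of Theorem \ref{T:keyonTF} and then to exhibit an explicit Lie algebra isomorphism of Manin triples $\widetilde{\lieA}_s \to \widehat{\lieM}$. Take $\tilde{\kP} = \kO_{\PP^1}^c \oplus \kO_{\PP^1}(1)^d$ (which has rank $n$ and degree $d$) with the trivialising frames $(e_1, \dots, e_c, w_- f_1, \dots, w_- f_d)$ on $U_+ = \PP^1 \setminus \{s_-\}$ and $(e_1, \dots, e_c, w_+ f_1, \dots, w_+ f_d)$ on $U_- = \PP^1 \setminus \{s_+\}$, and define $\kP$ on $E$ as the descent of $\tilde{\kP}$ via the gluing isomorphism $K = K_{(c,d)} \colon \tilde{\kP}|_{s_+} \to \tilde{\kP}|_{s_-}$ expressed in these frames. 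The main technical point will be the verification that $\kP$ is simple -- this is where the hypothesis $\gcd(c,d) = 1$ combined with the specific choice of $K$ as a cyclic shift plays a role; it can be handled by a direct computation of $\End_E(\kP)$ via the Mayer--Vietoris exact sequence attached to the normalisation $\PP^1 \to E$. Once this is done, $\kA := \mbox{\it Ad}(\kP) \cong \kA_{(c,d)}$ (both are $\mbox{\it Ad}$-bundles of simple rank-$n$ degree-$d$ vector bundles, which are unique up to a $\Pic^0(E)$-twist not affecting $\mbox{\it Ad}$).

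A direct computation of the transition matrix on $U_+ \cap U_- = U$ yields $v^{(-)} = G\, v^{(+)}$ with $G = \diag(I_c, u_- I_d) = T(u_-)^{-1}$ (since $w_- = u_- w_+$ on the overlap), so the induced transition on sections of $\tilde{\kA} = \mbox{\it Ad}(\tilde{\kP})$ is $M^{(-)} = \mathsf{Ad}_{T^{-1}}(M^{(+)})$. Trivialising $\tilde{\kA}|_U$ via the $U_+$-frame identifies $\lieA_{(s)}$ with $\lieg[u, u^{-1}]$ for $u = u_+$. Embedding $\widetilde{\lieA}_s$ into $\widehat{\lieM} = \widehat{\lieL}_+ \times \widehat{\lieL}_-$ via the $U_+$-frame at $s_+$ and the $U_-$-frame at $s_-$, the node-gluing condition $\kA|_s = \lied_{(c,d)}$ translates into
\begin{equation*}
\widehat{\lieA}_s \;=\; \bigl(u_+ \lieg\llbracket u_+ \rrbracket \times \{0\}\bigr) \,+\, \bigl(\{0\} \times u_- \lieg\llbracket u_- \rrbracket\bigr) \,+\, \lied_{(c,d)},
\end{equation*}
while a section $M(u) \in \lieA_{(s)}$ embeds as $(M(u_+),\, \mathsf{Ad}_{T^{-1}}(M(u_-^{-1})))$, the second component reflecting the frame change at $s_-$.

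Finally, I consider the map $\Phi = \mathbbm{1} \times \mathsf{Ad}_T \colon \widehat{\lieM} \to \widehat{\lieM}$; it is well defined because $T \in \GL_n(\CC[u_-, u_-^{-1}])$, so $\mathsf{Ad}_T$ is an automorphism of $\widehat{\lieL}_- = \lieg(\CC\llbrace u_-\rrbrace)$. Since inner automorphisms preserve the Killing form, $\Phi$ is an isometry with respect to the residue-based form $F$ on $\widehat{\lieM}$. Combining this with the preceding paragraph, $\Phi(\lieA_{(s)}) = \{(M(u_+), M(u_-^{-1}))\} = \lieD$, whereas $\Phi(\widehat{\lieA}_s) = (\mathbbm{1} \times \mathsf{Ad}_T)(\widehat{\lieA}_s) = \widehat{\lieW}_{(c,d)}^{\mathrm{trg}}$ by the very definition of the latter, yielding the required isomorphism of Manin triples. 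Thus apart from the simplicity check in the first paragraph, the argument reduces to systematic bookkeeping of the transition matrix between the two natural trivialisations of $\tilde{\kP}$.
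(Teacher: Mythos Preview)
Your proposal is correct and follows essentially the same route as the paper. Both arguments realise $\kA_{(c,d)}$ via the triple $\bigl(\Ad(\kO_{\PP^1}^c\oplus\kO_{\PP^1}(1)^d),\, \lieg,\, (\mathbbm{1},\mathsf{Ad}_K)\bigr)$, compute $\widehat{\lieA}_s$ in the pair of local frames $(\xi_+,\xi_-)$ at $(s_+,s_-)$, and then pass to the pair $(\xi_+,\xi_+)$ by applying $\mathbbm{1}\times\mathsf{Ad}_T$ on the $s_-$ side; the only difference is that the paper cites \cite[Proposition 3.3]{BurbanGalinatStolin} for the identification of this triple with $\kA_{(c,d)}$, whereas you propose to verify the simplicity of $\kP$ directly via Mayer--Vietoris.
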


\begin{proof} Let us first  recall our notation and give an explicit description of the sheaf $\kA$. 
We choose homogeneous coordinates $(w_+: w_-)$ on $\PP^1$ and view them  as global sections:
$w_\pm \in \Gamma\bigl(\PP^1, \kO_{\PP^1}(1)\bigr)$. Let $s_\pm \in \PP^1$ be the point of vanishing of $w_\pm$, i.e. $s_+ = (0: 1)$ and $s_- = (1: 0)$. We put: 
$U_\pm := \PP^1 \setminus \{s_\pm\}$, $U = U_+ \cap U_-$  and $u_\pm := \dfrac{w_\pm}{w_\mp}$. It is clear that
$s_\pm \in U_\pm$ and that the rational function $u_\pm$ is a local parameter at $s_\pm$. We put:
$L_\pm := \Gamma\bigl(U_\pm, \kO_{\PP^1}\bigr) \cong \CC[u_\pm]$. Let $\widehat{O}_{\pm}$ be the completion of the stalk of $\kO_{\PP^1}$ at $s_\pm$ and $\widehat{Q}_\pm$ be the corresponding quotient field. Then we have:  
$\widehat{O}_{\pm} \cong \CC\llbracket u_\pm\rrbracket$ and $\widehat{Q}_{\pm} \cong \CC\llbrace u_\pm\rrbrace$ Finally, let
$R:= \Gamma\bigl(U, \kO_{\PP^1}\bigr) \cong \CC\bigl[u_\pm, u_\pm^{-1}\bigr] = 
\CC\bigl[u, u^{-1}\bigr],$
where $u = u_+ = u_-^{-1}$. We fix  the following trivializations:
\begin{equation}\label{E:TrivializationKeyChoice}
\Gamma\bigl(U_\pm, \kO_{\PP^1}(1)\bigr) \stackrel{\xi_\pm}\lar L_\pm, \; f \mapsto \dfrac{f}{w_\mp\big|_{U_\pm}}.
\end{equation}
As a consequence, for any $c, d \in \NN_0$ and $\kG  = \kG_{(c, d)}:= \kO_{\PP^1}^{\oplus c} \oplus \bigl(\kO_{\PP^1}(1)\bigr)^{\oplus d}$ we have the induced trivializations
$\Gamma\bigl(U_\pm, \kG\bigr) \stackrel{\xi_\pm^\kG}\lar L_\pm^{\oplus n}$, where $n = c + d$.
Let $\kB = \kB_{(c, d)} := \Ad(\kG)$. Then $\xi^\kG_\pm$ induces  trivializations 
$\Gamma\bigl(U_\pm, \kB\bigr) \stackrel{\xi_\pm^\kB}\lar \lieg[u_\pm]$. Let $\widehat{\kB}_{\pm}$ be the completion of the stalk of $\kB$ at $s_{\pm}$, $\widetilde{\kB}_{\pm}$ its rational envelope and $\kB\big|_{s_\pm}$ the fiber of $\kB$ over $s_\pm$.
Then we get induced isomorphisms 
$$
\widehat{\kB}_\pm \stackrel{\widehat{\xi}^\kB_\pm}\lar  \lieg\llbracket u_\pm \rrbracket, \, 
\widetilde{\kB}_\pm \stackrel{\widetilde{\xi}^\kB_\pm}\lar  \lieg\llbrace u_\pm \rrbrace\quad
\mbox{and}\quad \kB\big|_{s_\pm} \stackrel{\bar{\xi}^\kB_\pm}\lar \lieg.
$$ 
We define a nodal Weierstra\ss{} curve $E$ via the pushout diagram (\ref{E:Bicartesian}). We recall now the description of the sheaf $\kA$ given in \cite[Proposition 3.3]{BurbanGalinatStolin} (see also 
\cite[Section 5.1.2]{BK4}). 
Consider the embedding of Lie algebras
$
\lieg \stackrel{\tilde{\theta}_{(c,d)}}\lar \lieg \times \lieg, \; a \mapsto \bigl(a, \mathsf{Ad}_{K}(a)\bigr). 
$
Then  $\kA$ is defined via the following pullback diagram in the category $\Coh(E)$:
\begin{equation}\label{E:SheafA}
\begin{array}{c}
\xymatrix{
\kA \ar[r] \ar[d] & \lieg \ar@{^{(}->}[d]^-{\tilde\theta_{(c, d)}}\\
\nu_*(\kB) \ar[r]^-{\bar{\xi}} & \lieg \times \lieg
}
\end{array}
\end{equation}
where we view $\lieg$ and $\lieg \times \lieg$ as skyscraper sheaves supported at  $s$ and $\bar\xi$ is the composition 
$$
\nu_*(\kB)  \stackrel{\ev}\lar \kB\Big|_{s_+} \times \kB\Big|_{s_-} \xrightarrow{\bar{\xi}^\kB_+ \times \bar{\xi}^\kB_-} \lieg \times \lieg.
$$
In the notation of Theorem \ref{T:keyonTF}, $\bigl(\kB, \lieg, (\mathbbm{1}, \mathsf{Ad}_K)\bigr)$ is a triple corresponding to $\kA$. Let $\widehat{O}$ be the completion of the stalk of $\kO_E$ at $s$ and
$\widehat{Q}$ be the corresponding total ring of fractions. Then we have:
$
\Gamma\bigl(U, \kO_E\bigr) \cong \Gamma\bigl(\nu^{-1}(U), \kO_{\PP^1}\bigr) \cong \CC\bigl[u, u^{-1}\bigr]$, $\widehat{O} \cong \CC\llbracket u_+, u_-\rrbracket/(u_+ u_-)$ and $\widehat{Q} \cong \widehat{Q}_+ \times \widehat{Q}_- = \CC\llbrace u_+\rrbrace \times \CC\llbrace u_-\rrbrace$.

\smallskip
\noindent 
From (\ref{E:SheafA}) we get  the following commutative diagram of Lie algebras:
\begin{equation}\label{E:ComputingSheafA}
\begin{array}{c}
\xymatrix{
\widehat{\kA}_s \ar[r] \ar@{_{(}->}[d] & \lieg \ar@{^{(}->}[d]^-{\tilde\theta_{(c, d)}}\\
\widehat{\kB}_+ \times \widehat{\kB}_- \ar@{->>}[r]  \ar[d]_-{\widehat{\xi}_+^{\kB} \times 
\widehat{\xi}_-^{\kB}} & \kB\Big|_{s_+} \times \kB\Big|_{s_-}
 \ar[d]^-{\bar{\xi}_+^\kB \times 
\bar{\xi}_-^\kB}\\
 \lieg\llbracket u_+ \rrbracket \times \lieg\llbracket u_- \rrbracket \ar@{->>}[r] & \lieg \times \lieg.
}
\end{array}
\end{equation}
It follows that the image   of $\widehat{\kA}_s$ in  $\lieg\llbracket u_+\rrbracket \times \lieg\llbracket u_-\rrbracket$ under the composition of two left vertical maps in (\ref{E:ComputingSheafA})
 is the Lie algebra $\bigl(u_+ \lieg[u_{+}] \times \{0\}\bigr)  + \bigl(\{0\} \times u_{-} \lieg[u_{-}]\bigr) + \lied_{(c, d)}$. Passing to the rational hulls, we end up with  the embedding of Lie algebras
$$
\widehat{\kA}_s \longhookrightarrow \widetilde{\kA}_s \stackrel{\cong}\lar \widetilde{\kB}_+ \times \widetilde{\kB}_- \xrightarrow{\widetilde{\xi}_+^\kB \times 
\widetilde{\xi}_-^\kB} \lieg\llbrace u_+\rrbrace \times \lieg\llbrace u_-\rrbrace.
$$
On the other hand, the trivialization  $\Gamma\bigl(U_+, \kB\bigr) \stackrel{\xi_+^\kB}\lar \lieg[u]$
restricts to an isomorphism $\Gamma(U, \kB) \stackrel{\xi_\circ^\kB}\lar  \lieg\bigl[u, u^{-1}\bigr]$
and  induces 
the   isomorphisms of Lie algebras $\Gamma(U, \kA) \stackrel{\xi}\lar \lieg \bigl[u, u^{-1}\bigr]$ given as the composition 
$
\Gamma(U, \kA) \stackrel{\nu^\ast}\lar \Gamma(U, \kB) \stackrel{\xi_\circ^\kB}\lar  \lieg\bigl[u, u^{-1}\bigr].
$
We get
the induced   isomorphism  $\widetilde{\kB}_+ \times \widetilde{\kB}_- 
\xrightarrow{\widetilde{\xi}_+^\kB \times \breve{\xi}^\kB_+} \lieg\llbrace u_+\rrbrace \times \lieg\llbrace u_-\rrbrace$ as well 
as the following commutative diagram:
$$
\xymatrix{
\Gamma(U, \kA) \ar@{_{(}->}[d] \ar[r]^-{\nu^\ast} & \Gamma(U, \kB) \ar[r]^-{\xi_\circ^\kB} \ar@{^{(}->}[d] & \lieg\bigl[u, u^{-1}\bigr] \ar@{^{(}->}[d]\\
\widetilde{\kA}_s \ar[r] & \widetilde{\kB}_+ \times \widetilde{\kB}_- 
\ar[r]^-{\widetilde{\xi}_+^\kB \times \breve{\xi}^\kB_+}
& \lieg\llbrace u_+\rrbrace \times \lieg\llbrace u_-\rrbrace.
}
$$
It follows that the image of $\Gamma(U, \kA)$
under the embedding
$$
\Gamma(U, \kA) \longhookrightarrow \widetilde{\kA}_s \longhookrightarrow \widetilde{\kB}_+ \times \widetilde{\kB}_- \xrightarrow{\widetilde{\xi}^\kB_+ \times 
\breve{\xi}^\kB_+} \lieg\llbrace u_+\rrbrace \times \lieg\llbrace u_-\rrbrace
$$
 is the Lie algebra $\lieD = \bigl\{(a u_+^n, a u_-^n) \, \big| \, a \in \lieg, n \in \ZZ \bigr\}$.
 
\smallskip
\noindent
The formal trivializations  $\widetilde{\xi}^\kB_-$ and $\breve{\xi}^\kB_+$ are related by the following 
commutative diagram
$$
\xymatrix{
& \widetilde{\kB}_-  \ar[rd]^-{\breve{\xi}^\kB_+} \ar[ld]_-{\widetilde{\xi}_-^\kB} &  \\
\lieg\llbrace u_-\rrbrace \ar[rr]^-{\mathsf{Ad}_T} & & \lieg\llbrace u_-\rrbrace. 
}
$$
It follows that the image of $\widehat{\kA}_s$ under the embedding 
$$
 \widehat{\kA}_s \longhookrightarrow \widetilde{\kA}_s \longhookrightarrow \widetilde{\kB}_+ \times \widetilde{\kB}_- \xrightarrow{\widetilde{\xi}^\kB_+ \times 
\breve{\xi}^\kB_+} \lieg\llbrace u_+\rrbrace \times \lieg\llbrace u_-\rrbrace,
$$
is the Lie algebra 
$
\widehat{\lieW}_{(c, d)}^{\mathrm{trg}} = 
\bigl(\mathbbm{1} \times \mathsf{Ad}_{T}\bigr)\Bigl(\bigl(u_+ \lieg\llbracket u_{+}\rrbracket \times \{0\}\bigr)  + \bigl(\{0\} \times u_{-} \lieg\llbracket u_{-}\rrbracket\bigr) + \lied_{(c, d)}\Bigr),
$
as asserted. 
\end{proof}

\begin{example}
Let $\lieg = \mathfrak{sl}_2(\CC)$ and  $
h =
\left(
\begin{array}{cc}
1 & 0 \\
0 & -1
\end{array}
\right),$
$
e =
\left(
\begin{array}{cc}
0 & 1 \\
0 & 0
\end{array}
\right)$, $
f =
\left(
\begin{array}{cc}
0 & 0 \\
1 & 0
\end{array}
\right)$. Then  $\widehat{\lieW}_{(1, 1)}^{\mathrm{trg}} = \bigl(u_+ \lieg\llbracket u_+\rrbracket \times \{0\}\bigr) + 
\bigl(\{0\} \times u_-^2 \lieg\llbracket u_-\rrbracket\bigr) + \lieu$, where   
$$
\lieu =  \bigl\langle (0, u_{-} h), (0, u_{-} f), (0, f), 
(f, u_{-} e), (e, u^{-1}_{-} f), (h, -h)\bigr\rangle_{\CC}.
$$
The formula (\ref{E:geomRMatrNodal}) gives the following solution of (\ref{E:CYBE}):
\begin{equation}\label{E:SolutionForPair11}
r_{(1, 1)}^{\mathrm{trg}}(u, v) = \dfrac{1}{4} \dfrac{u+v}{u-v} h \otimes h + \dfrac{u}{u-v} f \otimes e +
\dfrac{v}{u-v} e \otimes f + (v-u) f \otimes f.
\end{equation}
\end{example}

\begin{remark}\label{R:RemarkCremmerGervais} Let $E$ be a Weierstra\ss{} curve and $\kA = \kA_{(n ,d)}$ be the sheaf of Lie algebras
attached to a pair $(n, d)$, where $0 < d < n$ and $\mathsf{gcd}(n, d) = 1$. Explicit expressions for the corresponding geometric $r$-matrix $\rho^{E}_{(n, d)}$ are known.

\smallskip
\noindent
1. Let $E$ be an elliptic curve.  The corresponding solution $r^{\mathrm{ell}}_{(c, d)}(x, y)$ of 
(\ref{E:CYBE}) is an elliptic solution discovered by Belavin \cite{Belavin}; see e.g.~\cite[Theorem 5.5]{BH}. For any $p \in E$, we have the Manin triple
$\widetilde{\lieA}_p = \widehat{\lieA}_p \dotplus \lieA_{(p)}$, which can be identified
with a Manin triple of the form $\lieg\llbrace z \rrbrace = \lieg\llbracket z \rrbracket \dotplus \lieW_{(c, d)}^{\mathrm{ell}}$ for an appropriate Lagrangian subalgebra $\lieW_{(c, d)}^{\mathrm{ell}} \subset \lieg\llbrace z \rrbrace$. This Manin triple appeared for the first time in the work of Reyman and Semenov-Tyan-Shansky \cite{ReymanST}. A description of the Lie algebra
$\lieW_{(c, d)}^{\mathrm{ell}}$ via generators and relations was given for $(c, d) = (1, 1)$ by 
Golod \cite{Golod}, and for arbitrary $(c, d)$ by Skrypnyk \cite{SkrypnykEllAlg}.

\smallskip
\noindent
2. Let $E$ be nodal.
The  (quasi-)trigonometric solution $r^{\mathrm{trg}}_{(c, d)}(x, y)$  of  (\ref{E:CYBE})  was computed in \cite[Theorem A]{BurbanGalinatStolin}. We recall the corresponding formula.
 Let
$$\bar{\Phi} := \bigl\{(i, j) \in \NN^2 \,\big|\, 1 \le i, j \le n \bigr\} \cong 
\ZZ_n \times \ZZ_n \quad \mbox{and} \quad 
\Phi_+ := \bigl\{(i, j) \in \bar{\Phi} \,\big|\,  i < j \bigr\}.$$ Then we have a permutation
$
\bar{\Phi} \stackrel{\tau}\lar \bar{\Phi}, \; (i, j) \mapsto (i+c, j+c)$ of order $n$. For any $\alpha \in \Phi_+$, let
$
p(\alpha) = \min\left\{k \in \NN \, \big|\, \tau^k(\alpha) \notin \Phi_+\right\}.
$
For any $1 \le i \le n-1$, we put:
$
q_i := \tau^{i}(\varepsilon) - \tau^{i-1}(\varepsilon)$ and $f_i := \frac{1}{2}\bigl(\tau^{i}(\varepsilon) + \tau^{i-1}(\varepsilon)\bigr) - \frac{1}{n} I,
$
where $I$ is the identity matrix and $\varepsilon = e_{11}$ is the first matrix unit. Then $(q_1, \dots, q_{n-1})$ is a basis of the standard Cartan part $\lieh$ of the Lie
algebra $\lieg$. Let $(q_1^\ast, \dots, q_{n-1}^\ast)$ be the dual basis of  $\lieh$ with respect to the trace form. The solution of (\ref{E:CYBE}) corresponding
to $(E, \kA)$ is given by the formula 
\begin{equation}\label{E:TrigonomCremmerGervais}
r_{(c, d)}^{\mathrm{trg}}(x, y) = r_{\circ}(x, y) + \ttau_{(c, d)}(x, y),
\end{equation}
where
$
r_{\circ}(x, y) 
$
is the standard trigonometric $r$-matrix (\ref{E:StandardFormJimbo}) and 
$$
\ttau_{(c, d)}(x,y) = \sum\limits_{\alpha \in \Phi_+}\Bigl(\Bigl(\sum\limits_{k = 1}^{p(\alpha)-1} e_{\tau^k(\alpha)} \wedge e_{-\alpha}\Bigr) +
x e_{\tau^{p(\alpha)}(\alpha)} \otimes
e_{-\alpha}  - y e_{-\alpha} \otimes x e_{\tau^{p(\alpha)}(\alpha)}\Bigr) + \sum\limits_{i=1}^{n-1} q_i^\ast \otimes f_i.
$$
For $(c, d) = (1, 1)$ we recover the formula (\ref{E:SolutionForPair11}) above. 

\smallskip
\noindent
3. Let $E$ be cuspidal.  The corresponding rational solution $r^{\mathrm{rat}}_{(c, d)}(x, y)$ of
(\ref{E:CYBE}) was computed  in \cite[Theorem 9.6 and Example 9.7]{BH}. The Manin triple 
$\widetilde{\lieA}_s = \widehat{\lieA}_s \dotplus \lieA_{(s)}$ (where $s$ is the singular point of $E$) has the form 
$
\lieg\llbrace z^{-1}\rrbrace = \widehat{\lieW}_{(c, d)}^{\mathrm{rat}} \dotplus \lieg[z]
$
and the corresponding Lagrangian subalgebra $\widehat{\lieW}_{(c, d)}^{\mathrm{rat}} \subset
\lieg\llbrace z^{-1}\rrbrace$ was explicitly described in  \cite[Lemma 9.2]{BH}. \hfill $\lozenge$
\end{remark}

\subsection{Explicit geometrization of quasi-constant solutions of CYBE}
Let $\lieg$ be a simple Lie algebra. According to the Whitehead's lemma, we have: $H^1\bigl(\lieg, \wedge^2(\lieg)\bigr) = 0$.
Moreover, it can be shown  that any Lie bialgebra structure $\lieg \stackrel{\delta}\lar \lieg \otimes \lieg$ is of the form
$\delta = \partial_\ttau$,  where $\ttau \in \lieg \otimes \lieg$ is such that
\begin{equation}\label{E:quasiconstant}
[\ttau^{12}, \ttau^{13}] + [\ttau^{12}, \ttau^{23}] + [\ttau^{13}, \ttau^{23}] = 0 \quad \mbox{\rm and} \quad 
\ttau^{12} + \ttau^{21} = \lambda \gamma
\end{equation}
for some $\lambda \in \CC$, i.e.~$\ttau$ is a  solution of the classical Yang--Baxter equation for constants (cCYBE); see e.g. \cite[Section 5.1]{EtingofSchiffmann}. Of course, without  loss of generality we may assume that $\lambda \in \left\{0, 1\right\}$.

\smallskip
\noindent
The following result is due to Stolin \cite{StolinBialg}. 

\begin{theorem}\label{T:StolinConstantSolutions} Solutions of cCYBE can be described in the following terms. 
\begin{itemize}
\item[(a)] Tensors $\ttau \in \lieg \otimes \lieg$ satisfying 
\begin{equation}\label{E:trigonomconstant}
[\ttau^{12}, \ttau^{13}] + [\ttau^{12}, \ttau^{23}] + [\ttau^{13}, \ttau^{23}] = 0 \quad \mbox{\rm and} \quad 
\ttau^{12} + \ttau^{21} = \gamma
\end{equation}
stand in bijection with Manin triples $\liem = \lied \dotplus \liew$, where 
$$\lied = \bigl\{(a, a) \, |\, a \in \lieg\bigr\} \subset \liem := \lieg \times \lieg$$ and the 
bilinear form
$\liem \times \liem \stackrel{F}\lar \CC$ is given by the rule:
$$F\bigl((a', b'), (a'', b'')\bigr) = \kappa(a', a'') - \kappa(b', b'').$$
\item[(b)] Tensors $\ttau \in \lieg \otimes \lieg$ satisfying
\begin{equation}\label{E:rationalconstant}
[\ttau^{12}, \ttau^{13}] + [\ttau^{12}, \ttau^{23}] + [\ttau^{13}, \ttau^{23}] = 0 \quad \mbox{\rm and} \quad 
\ttau^{12} + \ttau^{21} = 0
\end{equation}
stand in bijection with Manin triples $\liem = \lied \dotplus \liew$, where 
$$\lied = \bigl\{a \, |\, a \in \lieg\bigr\} \subset \liem := \lieg[\varepsilon]/(\varepsilon^2)$$ and the 
bilinear form bilinear form
$\liem \times \liem \stackrel{F}\lar \CC$ is given  by the rule:
$$
F\bigl((a' + \varepsilon b'),  (a'' + \varepsilon b'')\bigr) = \kappa(a', b'') + 
\kappa(a'', b').
$$
\end{itemize}
\end{theorem}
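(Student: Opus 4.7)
\smallskip
\noindent
The plan is to deduce both statements from Theorem~\ref{T:ManinTriplesTwists} by choosing in each case an appropriate ``base'' Manin triple $(\liem, \lied, \liew_0)$. Two observations apply uniformly. First, since $\liem$ is finite dimensional, the commensurability requirement $\liew \asymp \liew_0$ in the definition of $\mathsf{MT}(\liem, \lied; \liew_0)$ is automatic, so this set coincides with the set of all Manin triples of the form $\liem = \lied \dotplus \liew$. Second, decomposing $\ttau = \ttau^s + \ttau^a$ into symmetric and antisymmetric parts, the normalization of $\ttau^s$ imposed by (\ref{E:trigonomconstant}) or (\ref{E:rationalconstant}) pins the symmetric part down, so admissible $\ttau$ form an affine subspace of $\lieg \otimes \lieg$ parametrized by $\ttau^a \in \wedge^2 \lieg$.

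\smallskip
\noindent
I would begin with part (b) as the easier case. Take $\liew_0 := \varepsilon \lieg \subset \liem = \lieg[\varepsilon]/(\varepsilon^2)$: it is an abelian Lie subalgebra (since $\varepsilon^2 = 0$), it is isotropic with respect to $F$, and it complements $\lied$, so $(\liem, \lied, \liew_0)$ is a Manin triple. The induced cobracket on $\lied \cong \lieg$ is identically zero, because $F(a, [\varepsilon b_1, \varepsilon b_2]) = 0$ for all $a, b_1, b_2 \in \lieg$ and the pairing of $\lied$ with $\liew_0$ is non-degenerate. For $\delta_0 = 0$, the twist equation (\ref{E:TwistEquation}) reduces to the bare CYBE $[[\ttau, \ttau]] = 0$, and every $\ttau \in \wedge^2 \lieg$ automatically satisfies $\ttau + \ttau^{21} = 0$; Theorem~\ref{T:ManinTriplesTwists} then produces exactly the bijection asserted in (b).

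\smallskip
\noindent
For part (a), the natural base is the finite-dimensional Drinfeld Manin triple. Fix a triangular decomposition $\lieg = \lieg_+ \oplus \lieh \oplus \lieg_-$ and set $\liew_0 := (\lieg_+ \times \lieg_-) + \{(h, -h) \mid h \in \lieh\}$; by (the finite-dimensional version of) Theorem~\ref{T:Drinfeld}, the triple $(\liem, \lied, \liew_0)$ is a Manin triple determining the standard Drinfeld--Jimbo cobracket $\delta_{\mathrm{DJ}}$ on $\lieg \cong \lied$. By Whitehead's lemma and simplicity of $\lieg$, $\delta_{\mathrm{DJ}} = \partial_{\ttau_0}$ for an explicit $\ttau_0 \in \lieg \otimes \lieg$ which itself satisfies (\ref{E:trigonomconstant}). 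Writing any admissible $\ttau$ uniquely as $\ttau = \ttau_0 + s$ with $s \in \wedge^2 \lieg$ and expanding bilinearly,
\begin{equation*}
[[\ttau_0 + s, \ttau_0 + s]] = [[\ttau_0, \ttau_0]] + \widetilde{B}(\ttau_0, s) + [[s, s]] = \widetilde{B}(\ttau_0, s) + [[s, s]],
\end{equation*}
where $\widetilde{B}$ denotes the symmetrization of the bilinear extension of $[[-, -]]$. A standard calculation shows $\widetilde{B}(\ttau_0, s) = -\alt\bigl((\delta_{\mathrm{DJ}} \otimes \mathbbm{1})(s)\bigr)$, so the exact cCYBE (\ref{E:trigonomconstant}) for $\ttau$ is equivalent to the twist equation (\ref{E:TwistEquation}) for $s$, and Theorem~\ref{T:ManinTriplesTwists} supplies the desired bijection with $\mathsf{MT}(\liem, \lied; \liew_0)$.

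\smallskip
\noindent
The main obstacle is this last identity $\alt\bigl((\partial_{\ttau_0} \otimes \mathbbm{1})(s)\bigr) = -\widetilde{B}(\ttau_0, s)$ in $\lieg^{\otimes 3}$, which closes the equivalence in part (a). It is a polynomial identity proved by unpacking the definition of $\partial_{\ttau_0}$, exploiting skew-symmetry of $s$ and antisymmetrization in the three tensor slots; the key manipulation is already carried out implicitly in the proof of Proposition~\ref{P:TwistLieBialgStructures}, so once the bookkeeping is set up correctly the verification is essentially routine.
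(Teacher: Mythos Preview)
Your proposal is correct and takes a route that the paper itself does not: the paper does not prove Theorem~\ref{T:StolinConstantSolutions} at all, but attributes it to Stolin~\cite{StolinBialg} and merely records, in the ``Comment to the proof'', the explicit dual-basis formula for the bijection in each case. Your argument instead \emph{derives} both parts as specializations of Theorem~\ref{T:ManinTriplesTwists}, which is conceptually attractive because it exhibits the finite-dimensional theorem as a degenerate instance of the paper's general machinery. The trade-off is that the paper's external citation also pins down the explicit correspondence $\liew \mapsto \ttau = \sum g_i \otimes w_i^+$, whereas your argument produces \emph{a} bijection $\ttau \mapsto (\liew_0)_{\ttau - \ttau_0}$ without checking it agrees with Stolin's; for the bare statement this suffices, but if the explicit formula is used later you would need to verify compatibility.

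One technical point deserves tightening. In part~(a) the identity you need, that the cross terms in $[[\ttau_0 + s, \ttau_0 + s]]$ equal $-\alt\bigl((\partial_{\ttau_0}\otimes \mathbbm{1})(s)\bigr)$, is \emph{not} quite the manipulation in Proposition~\ref{P:TwistLieBialgStructures}; it is closer to the computation in the proof of Theorem~\ref{T:TwistsCYBEGeometry}. But even that argument uses the skew-symmetry $r_\circ^{12}(x,y) = -r_\circ^{21}(y,x)$, whereas your $\ttau_0$ satisfies $\ttau_0^{12} + \ttau_0^{21} = \gamma$ instead. When you cyclically permute $[\ttau_0^{12}, s^{13}+s^{23}]$ and compare with the actual cross terms, a residual term $[\gamma^{13}, s^{21}+s^{23}]$ appears; it vanishes, but only because $\gamma$ is $\ad$-invariant (equivalently $\sum_k g_k \otimes [g_k,b] + [g_k,b]\otimes g_k = 0$). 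You should flag this step explicitly rather than fold it into ``routine bookkeeping''.
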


\smallskip
\noindent
\emph{Comment to the proof}. The correspondence between solutions of cCYBE and Manin triples is as follows. Let $(g_1, \dots, g_q)$ be a basis of $\lieg$. 
\begin{itemize}
\item[(a)] Let $((w_1^+, w_1^-), \dots, (w_q^+, w_q^-)\bigr)$ be the basis of $\liew
\subset \liem = \lieg \times \lieg$, which is dual to the basis $\bigl((g_1, g_1), \dots, (g_q, g_q)\bigr)$ of $\lied$. Then the solution of (\ref{E:trigonomconstant}) corresponding to $\liew$ is given by the formula
\begin{equation}\label{E:trigonomquasiconstantant}
\ttau := \sum\limits_{i = 1}^q  g_i \otimes w_i^+; 
\end{equation}
see \cite[Section 6]{StolinBialg}.
\item[(b)] Similarly, let 
$\bigl( h_1  + \varepsilon g_1^\ast, \dots,  h_q + \varepsilon  g_q^\ast \bigr)$ 
be the 
basis of $\liew
\subset \liem = \lieg[\varepsilon]/(\varepsilon^2)$, which is dual to the basis $\bigl(g_1, \dots, g_q)$ of $\lied$. Then the solution of (\ref{E:rationalconstant}) corresponding to $\liew$ is given by the formula
\begin{equation}
\ttau := \sum\limits_{i = 1}^q g_i \otimes h_i = - \sum\limits_{i = 1}^q h_i \otimes g_i;
\end{equation}
see 
\cite[Theorem 3.12]{StolinBialg}.
\end{itemize}

\begin{remark}\label{R:Wildness}
All solutions of (\ref{E:trigonomconstant}) were classified by Belavin and Drinfeld in \cite[Section 6]{BelavinDrinfeldBook}. On the other hand, let  $\lieg = \mathfrak{sl}_n(\CC)$ and
$a, b \in \lieg$ be such that $[a, b] = 0$. Then $\ttau = a \wedge b$ satisfies
(\ref{E:rationalconstant}). This implies that classification of all solutions of  
(\ref{E:rationalconstant}) is a representation-wild problem; see \cite{GelfandPonomarev}. \hfill $\lozenge$
\end{remark}

\begin{remark}
Any  solution $\ttau \in \lieg \otimes \lieg$ of cCYBE  defines a solution of CYBE. 
\begin{itemize}
\item[(a)] If $\ttau \in \lieg \otimes \lieg$ satisfies (\ref{E:trigonomconstant}) then
$
r(x, y) = \dfrac{y}{x-y} \gamma 
 + \ttau
$
satisfies (\ref{E:CYBE}). 
\item[(b)] If $\ttau \in \lieg \otimes \lieg$ satisfies (\ref{E:rationalconstant}) then
$
r(x, y) = \dfrac{1}{x-y} \gamma 
 + \ttau
$
satisfies (\ref{E:CYBE}). 
\end{itemize}
Such solutions of CYBE  are called \emph{quasi-constant}.  \hfill $\lozenge$
\end{remark}

\begin{theorem}\label{T:QuasiConstantGeometrization} Let $\lieg \times \lieg = \lied \dotplus \liew$ be a Manin triple as in Theorem \ref{T:StolinConstantSolutions} and $\ttau \in \lieg \otimes \lieg$ be the corresponding solution of (\ref{E:trigonomconstant}), given by the formula 
(\ref{E:trigonomquasiconstantant}). 
  Choose homogeneous coordinates on $\PP^1$ and define a nodal Weierstra\ss{} curve 
$E$ via  the pushout diagram (\ref{E:Bicartesian}), where $s_+ = (0: 1)$ and $s_- = (1: 0)$.  Define the sheaf of Lie algebras $\kA$ as the pullback 
\begin{equation}\label{E:PullBackNeu}
\begin{array}{c}
\xymatrix{
\kA \ar[r] \ar[d] & \liew \ar@{^{(}->}[d]\\
\kB  \ar[r]^-{\ev} & \lieg \times \lieg 
}
\end{array}
\end{equation}
in the category $\Coh(E)$, 
where $\kB := \lieg \otimes_{\CC} \bigl(\nu_\ast(\kO_{\PP^1})\bigr)$, whereas $\liew$ and $\lieg \times \lieg$ are considered as skyscraper shaves supported at the singular point $s \in E$  and $\ev$ is induced by the canonical isomorphisms
$\kO_{\PP^1}\Big|_{s_\pm} \cong \CC$. Then $(E, \kA)$ is a geometric CYBE datum and the corresponding geometric $r$-matrix is  the quasi-constant solution $
r(x, y) = \dfrac{y}{x-y} \gamma 
 + \ttau
$
of  (\ref{E:CYBE}).
\end{theorem}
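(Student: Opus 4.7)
The plan is to verify that $(E, \kA)$ is a geometric CYBE datum and then to compute the geometric $r$-matrix by direct application of Theorem \ref{T:main2}. Set $\kB := \lieg \otimes_{\CC} \nu_*(\kO_{\PP^1})$; this is a torsion free coherent sheaf on $E$, locally free on $U$ with $\kB|_U \cong \lieg \otimes \kO_U$, and satisfies $H^0(E, \kB) = \lieg$, $H^1(E, \kB) = 0$. Since the evaluation morphism $\kB \to \lieg \times \lieg$ is surjective on stalks at $s$, the pullback diagram (\ref{E:PullBackNeu}) will produce a short exact sequence
\begin{equation*}
0 \lar \kA \lar \kB \oplus \liew \xrightarrow{(b,\,w) \mapsto \ev(b) - w} \lieg \times \lieg \lar 0.
\end{equation*}
This realises $\kA$ as a subsheaf of $\kB$, so $\kA$ is torsion free, and $\kA|_U = \kB|_U$ is weakly $\lieg$-locally free. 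Passing to cohomology yields $0 \to H^0(E, \kA) \to \lieg \oplus \liew \to \lieg \times \lieg \to H^1(E, \kA) \to 0$, in which the middle arrow $(a, w) \mapsto (a, a) - w$ is a linear isomorphism precisely because $\lieg \times \lieg = \lied \dotplus \liew$ is a direct sum decomposition. Hence $\kA$ is acyclic.

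For isotropy at $s$, the choice $\omega = du/u$ gives $\omega = du_+/u_+$ near $s_+$ and $\omega = -du_-/u_-$ near $s_-$, and a direct calculation on $\widetilde{\lieA}_s = \lieg\llbrace u_+\rrbrace \times \lieg\llbrace u_-\rrbrace$ yields
\begin{equation*}
\widetilde{F}^\omega_s\bigl((f_+, f_-), (g_+, g_-)\bigr) = \kappa\bigl(f_+(s_+), g_+(s_+)\bigr) - \kappa\bigl(f_-(s_-), g_-(s_-)\bigr) = F\bigl(\ev(f), \ev(g)\bigr),
\end{equation*}
identifying $\widetilde{F}^\omega_s$ with the Manin triple pairing $F$ via evaluation at $s$. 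Since $\ev(\widehat{\kA}_s) \subseteq \liew$ by construction of the pullback and $\liew$ is isotropic for $F$, the subalgebra $\widehat{\kA}_s$ is isotropic for $\widetilde{F}^\omega_s$. This completes the verification that $(E, \kA)$ is a geometric CYBE datum.

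To extract the $r$-matrix, take the $R$-basis $(g_1, \dots, g_q)$ of $\lieA \cong \lieg \otimes R$ and its dual basis $(g_1^*, \dots, g_q^*)$ with respect to the Killing form, so that $a_{(k, i)} = g_i u^k$ embeds into $\widetilde{\lieA}_s$ as $(g_i u_+^k,\, g_i u_-^{-k})$. For $k \ne 0$, the element $w_{(k, i)}$ from (\ref{E:dualelements}) already lies in $\widehat{\kA}_s$ because it evaluates to $(0, 0) \in \liew$, so $h_{(k, i)} = 0$. For $k = 0$ one needs $h_{(0, i)} \in \lieg$ with $(-g_i^* + h_{(0, i)}, h_{(0, i)}) \in \liew$. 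Writing $\liew = \bigl\langle (w_j^+, w_j^-)\bigr\rangle$ for the dual basis of $\lied$ used in (\ref{E:trigonomquasiconstantant}), the duality condition $F((g_j, g_j), (w_i^+, w_i^-)) = \delta_{ij}$ unpacks to $w_i^+ - w_i^- = g_i^*$, from which $h_{(0, i)} = -w_i^-$ follows directly. Substituting into the formula (\ref{E:geomRMatrNodal}) gives
\begin{equation*}
\rho(x, y) = \frac{y}{x-y}\, \gamma - \sum_{i = 1}^q w_i^- \otimes g_i.
\end{equation*}
The closing identity $-\sum_i w_i^- \otimes g_i = \sum_i g_i \otimes w_i^+ = \ttau$ is a consequence of the expansion $\gamma = \sum_i g_i^* \otimes g_i = \sum_i(w_i^+ - w_i^-) \otimes g_i$ together with the cCYBE condition $\ttau^{12} + \ttau^{21} = \gamma$. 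The only genuine subtlety in this scheme lies in matching the sign conventions of the Rosenlicht residue with those of the Manin triple form $F$; once that calibration is fixed, the rest of the argument is routine bookkeeping within the framework of Theorem \ref{T:main2}.
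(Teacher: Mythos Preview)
Your approach is essentially the paper's: the same exact sequence for acyclicity, the same identification $\widehat{\lieA}_s = u_+\lieg\llbracket u_+\rrbracket + u_-\lieg\llbracket u_-\rrbracket + \liew$, and the same computation of $h_{(k,i)}$ via Theorem~\ref{T:main2}. Your final identification $-\sum_i w_i^-\otimes g_i=\ttau$ using $\ttau^{12}+\ttau^{21}=\gamma$ is a cleaner finish than the paper's detour through skew-symmetry of $\rho$.

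One genuine slip: the displayed formula
\[
\widetilde{F}^\omega_s\bigl((f_+, f_-), (g_+, g_-)\bigr) = \kappa\bigl(f_+(s_+), g_+(s_+)\bigr) - \kappa\bigl(f_-(s_-), g_-(s_-)\bigr)
\]
is \emph{false} on $\widetilde{\lieA}_s=\lieg\llbrace u_+\rrbrace\times\lieg\llbrace u_-\rrbrace$ --- the residue of $K(f_+,g_+)\,du_+/u_+$ picks up all terms $\kappa(a_k,b_{-k})$, not just $\kappa(a_0,b_0)$, and $f_\pm(s_\pm)$ is not even defined when $f_\pm$ has a pole. The formula is correct only when both arguments lie in $\lieg\llbracket u_+\rrbracket\times\lieg\llbracket u_-\rrbracket$. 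Since $\widehat{\lieA}_s$ is contained in that subalgebra, your isotropy conclusion survives, but the statement as written should be restricted accordingly.
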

\begin{proof}
It follows from the definition of $\kA$ that $\lieA = \Gamma(U, \kA) = \Gamma(U, \kB) \cong \lieL = \lieg\bigl[z, z^{-1}\bigr]$. Next, $\Gamma(E, \kB) \cong \lieg$ and
$H^1(E, \kB) = 0$. From (\ref{E:PullBackNeu}) we obtain an exact sequence
\begin{equation*}
0 \lar H^0(E, \kA) \lar  \lied \dotplus \liew
\stackrel{\cong}\lar (\lieg \times \lieg) \lar 
  H^1(E, \kA) \lar 0, 
\end{equation*}
which implies that $H^0(E, \kA) = 0 = H^1(E, \kA)$. From (\ref{E:MayerVietoris}) we get a direct sum decomposition $\widetilde\lieA = \widehat\lieA \dotplus \lieA$, where $\widehat\lieA$ is the completion of the stalk of $\kA$ at $s$ and $\widetilde\lieA$ is its rational hull. We have: $\widetilde\lieA \cong \lieg\llbrace x_+\rrbrace \times \lieg\llbrace x_-\rrbrace$ and 
$
\lieA \cong \bigl\{\bigl(a x_+^k, a x_-^{-k}\bigr) \, \big| \, a \in \lieg, k \in \NN_0 \bigr\}.
$ Moreover, it follows from (\ref{E:PullBackNeu}) that 
$
\widehat\lieA \cong x_+ \lieg \llbracket x_+\rrbracket + x_- \lieg \llbracket x_-\rrbracket + \liew.
$
In particular,  $\widehat\lieA$ is a Lagrangian Lie subalgebra 
of $\widetilde\lieA$ and $(E, \kA)$ is a geometric CYBE datum, as asserted. 

\smallskip
\noindent
The recipe to compute the geometric $r$-matrix of $(E, \kA)$ is given by (\ref{E:geomRMatrNodal}). Let $(g_1, \dots, g_q)$ be a basis of $\lieg$, $(g^*_1, \dots, g^*_q)$ be the corresponding dual basis with respect to the Killing form and
$\bigl(a_{(k, i)} = g_i z^k \, | \, 1 \le i \le q, k \in \ZZ \bigr)$ be the corresponding basis of $\lieL$. Note  that the elements $w_{(k,\,  i)}$ defined by
(\ref{E:dualelements}) belong to $x_+ \lieg \llbracket x_+\rrbracket + x_- \lieg \llbracket x_-\rrbracket + \liew$ for $k \ne 0$. As a consequence,  the elements
$h_{(k, i)}$ given by (\ref{E:expansiongeomrmatrnodalpoint}) are zero for $k \ne 0$.

\smallskip
\noindent
Let $\bigl((w_1^+, w_1^-), \dots, (w_q^+, w_q^-)\bigr)$ be a basis of $\liew$ dual to the basis $\bigl((g_1, g_1), \dots, (g_q, g_q)\bigr)$ of $\lied$. 
For any $1 \le i \le q$ there exists a uniquely determined element $v_i \in \lieg$ such that $(-g_i^\ast, 0) + (v_i, v_i) = (-w_i^+, -w_i^-)$. It follows from 
(\ref{E:expansiongeomrmatrnodalpoint}) that $h_{(0, i)} = v_i = - w_i^-$ for all $1 \le i \le q$ and $w_i^+ = g_i^\ast + w_i^-$ (here we use that $K\bigl(g_i, g^*_j) = \kappa(g_i, g^\ast_j) = \delta_{ij}$). From (\ref{E:geomRMatrNodal}) we conclude that
$$
r(x, y) = \frac{y}{x-y} \gamma + \sum\limits_{i = 1}^q w_i^- \otimes g_i = 
\frac{y}{x-y} \gamma + \sum\limits_{i = 1}^q (w_i^+ - g_i^\ast) \otimes g_i
= \frac{x}{x-y} \gamma - \sum\limits_{i = 1}^q w_i^+ \otimes g_i.
$$
Since $r(x, y)$ is skew-symmetric, we have:
$$
r(x, y) = - r^{21}(y, x) = \frac{y}{x-y} \gamma + \sum\limits_{i = 1}^q g_i \otimes w_i^+,
$$
as asserted. 
\end{proof}

\begin{remark}\label{R:CuspidalCaseQuasiConst}
An analogous statement is true for the rational quasi-constant solutions. 
Let $\lieg[\varepsilon]/(\varepsilon^2) = \lied \dotplus \liew$ be a Manin triple as in Theorem \ref{T:StolinConstantSolutions} and $\ttau \in \lieg \otimes \lieg$ be the corresponding solution of (\ref{E:rationalconstant}).
Choose homogeneous coordinates on $\PP^1$ and define a cuspidal Weierstra\ss{}  curve 
$E$ via  the pulldown diagram 
\begin{equation}\label{E:BicartesianCusp}
\begin{array}{c}
\xymatrix{
\Spec\bigl(\CC[\varepsilon]/(\varepsilon^2)\bigr) \ar@{^{(}->}[r]^-{\tilde\eta} \ar@{->>}[d]_-{\tilde\nu} & \PP^1 \ar@{->>}[d]^-{\nu} \\
\Spec(\CC) \ar@{^{(}->}[r]^-{\eta} & E
}
\end{array}
\end{equation}
where the image of $\tilde{\eta}$ is the scheme supported at $(1: 0)$. 
Similarly to the nodal case,  we define the sheaf of Lie algebras $\kA$ as the pullback
\begin{equation}\label{E:PullBackNeuCusp}
\begin{array}{c}
\xymatrix{
\kA \ar[r] \ar[d] & \liew \ar@{^{(}->}[d]\\
\kB  \ar[r]^-{\ev} & \lieg[\varepsilon]/(\varepsilon^2)
}
\end{array}
\end{equation}
where $\kB := \lieg \otimes_{\CC} \bigl(\nu_\ast(\kO_{\PP^1})\bigr)$. Let $U$ be the regular part of $E$. Then we have: $\lieA = \Gamma(U, \kA) \cong \lieg[z]$. 
As in the nodal case, it follows that $\widetilde{\lieA}_s = \widehat{\lieA}_s \dotplus \lieA_{(s)}$ is a Manin triple, which can be identified with the Manin triple
$
\lieg\llbrace z^{-1} \rrbrace = \widehat{\lieA} \dotplus \lieg[z],
$
where the symmetric non-degenerate bilinear form $\widetilde{\lieA}_s \times 
\widetilde{\lieA}_s \stackrel{F_s^\omega}\lar \CC$ can be identified with 
\begin{equation}\label{E:FormSeries2}
\lieg\llbrace z^{-1} \rrbrace  \times \lieg\llbrace z^{-1} \rrbrace  \stackrel{F}\lar \CC, (a z^k, b z^l) \mapsto \delta_{k+l+1, 0} \kappa(a, b). 
\end{equation}
In these terms, we have: $\widehat{\lieA} = z^{-2} \lieg\llbracket z^{-1}\rrbracket + \liew$, where we identify $\liew \subseteq \lieg[\varepsilon]/(\varepsilon^2)$ with a subspace of $\lieg + z^{-1} \lieg$. It is precisely the setting of Stolin's theory of rational solutions \cite{Stolin}. As in the nodal case, one can derive from the formula  (\ref{E:geomRMatrCusp}) that the corresponding geometric $r$-matrix is given by the formula
$$
r(x, y) = \dfrac{1}{x-y}\gamma - \ttau = \dfrac{1}{x-y}\gamma + \sum\limits_{i = 1}^q h_i \otimes g_i,
$$
where $\bigl(h_1 + \varepsilon g_1^\ast, \dots, h_q + \varepsilon  g_q^\ast\bigr)$ is 
the basis of $\liew
\subset \liem = \lieg[\varepsilon]/(\varepsilon^2)$ dual to  $\bigl(g_1, \dots, g_q)$.
\end{remark}

\section{Appendices}\label{S:Appendix}

\subsection{Road map to this work}
 Let $\lieK$ be the Kac--Moody Lie algebra over $\mathbb{C}$ associated with an arbitrary  symmetrizable generalized Cartan matrix $A$. It is well-known $\lieK$ admits a natural triangular decomposition
$
\lieK = \lieK_+ \oplus \lieH \oplus \lieK_-.
$
Moreover, $\lieK$ has finite dimensional root spaces as well as an essentially unique non-degenerate symmetric invariant bilinear form $\lieK \times \lieK \stackrel{B}\lar \mathbb{C}$ (which coincides with the Killing form if $\lieK$ is finite dimensional); see \cite{Kac}. As discovered by Drinfeld \cite{Drinfeld},  $\lieK$
has a structure of a Lie bialgebra $\lieK \stackrel{\delta_\circ}\lar \lieK \otimes \lieK$, called in this paper \emph{standard}. Existence of $\delta_\circ$ follows  from the  root space decomposition of $\lieK$ and as well as invariancy and non-degeneracy of the  bilinear form $B$. The action of  $\delta_\circ$  on the Cartan--Weyl  generators of $\lieK$ can be expressed purely in terms of the entries  of the matrix $A$. 

\smallskip
\noindent
The Lie algebra $\lieE = \lieK \times \lieK$ is also equipped with a symmetric non-degenerate invariant  bilinear form 
$$
\lieE \times \lieE \stackrel{F}\lar \CC, \; \bigl((a_1, b_1), (a_2, b_2)\bigr)\mapsto B(a_1, a_2) - B(b_1, b_2).
$$
Identifying 
$\lieK$ with the diagonal in  $\lieE$, we get a direct sum decomposition 
$
\lieE = \lieK \dotplus \lieW_\circ, 
$
where $\lieW_\circ = \bigl\{\bigl((c_+, h), (c_-, -h)\bigr) \in  (\lieK_+ \oplus \lieH)\times (\lieK_- \oplus \lieH) \, \big| \, c_\pm \in \lieK_\pm, h\in \lieH\bigr\}.$  Moreover,  
$\lieK$ and $\lieW_\circ$ are Lagrangian Lie subalgebras of $\lieE$ with respect to the form $F$. 
The   Manin triple $
\lieE = \lieK \dotplus \lieW_\circ
$ ``determines'' the cobracket $\delta_\circ$ in the following sense: 
$$
F\bigl(\delta_\circ(c), w_1 \otimes w_2\bigr) = B\bigl(c, [w_1, w_2]\bigr) \quad \mbox{\rm for all} \quad c \in \lieK \; \mbox{and} \;  w_1, w_2 \in \lieW_\circ.
$$
Following the  work of Karolinsky and Stolin \cite{KarolinskyStolin}, we study   ``twisted'' Lie bialgebra cobrackets of the form
$\delta_{\ttau} = \delta_\circ + \partial_{\ttau}$, where $\ttau \in \wedge^2(\lieK)$ and $\partial_{\ttau}(a) = \bigl[a\otimes 1 + 1 \otimes a, \ttau\bigr]$ for $a \in \lieK$. By  Proposition \ref{P:TwistLieBialgStructures} (see also \cite[Theorem 7]{KarolinskyStolin}), $\delta_\ttau$ is a Lie bialgebra cobracket if and only if the tensor 
$$
\bigl(\alt\bigl((\delta_\circ \otimes \mathbbm{1})(\ttau)\bigr) - [\ttau^{12}, \ttau^{13}] - [\ttau^{12}, \ttau^{23}] -  [\ttau^{13}, \ttau^{23}] \in \lieK^{\otimes 3}
$$
is ad-invariant, where $\alt(a\otimes b \otimes c) := a\otimes b \otimes c + c \otimes a \otimes b + b \otimes c \otimes a$ for $a, b, c \in \lieK$.
In Section \ref{S:LieBilgLagrDec}, we  elaborate a general framework to study twists of a given Lie bialgebra structure  (generalizing and extending results known 
in the finite dimensional case \cite{KarolinskyStolin}) and prove  that such $\ttau$ are parametrized by  Manin triples of the form
$
\lieE = \lieK \dotplus \lieW,
$
where $\lieW$ is a Lie subalgebra of $\lieE$ commensurable with $\lieW_\circ$; see Theorem \ref{T:ManinTriplesTwists}.

From the point of view of applications in the theory of classical integrable systems as well as from the purely algebraic point of view, the most interesting and rich case is when $\lieK =  \doublewidetilde{\lieG}$ is an \emph{affine} Kac--Moody algebra.
 Then the center $\lieZ$ of the Lie algebra $\doublewidetilde{\lieG}$ is one-dimensional.  
 Let $\lieG = \widetilde{\lieG}/\lieZ$  be the ``reduced'' affine Lie algebra, where  $\widetilde{\lieG} = \bigl[\doublewidetilde{\lieG}, \doublewidetilde{\lieG}\bigr]$.
 It follows from the explicit formulae for $\delta_\circ$ that one gets an induced Lie bialgebra cobracket $\lieG \stackrel{\delta_\circ}\lar \lieG \otimes \lieG$. An  \emph{inconspicuous but decisive advantage} to pass from 
$\lieK$ to $\lieG$ is due to the fact that for any $n \in \NN$, the $n$-fold tensor product $\lieG^{\otimes n}$ does not have non-zero ad-invariant elements; see Proposition \ref{P:basicsonloops}. 
As a consequence,  $\ttau \in \wedge^2(\lieG)$ defines  a twisted Lie bialgebra cobracket $\lieG \stackrel{\delta_\ttau}\lar \lieG \otimes \lieG$ if and only if it satisfies  the  \emph{twist equation}
$$
\bigl(\alt\bigl((\delta_\circ \otimes \mathbbm{1})(\ttau)\bigr) - [\ttau^{12}, \ttau^{13}] - [\ttau^{12}, \ttau^{23}] -  [\ttau^{13}, \ttau^{23}]  = 0
$$
 introduced in \cite{KarolinskyStolin},
which is an incarnation  of the classical Yang--Baxter equation
$$
\bigl[{r}^{12}(x_1, x_2), {r}^{23}(x_2, x_3)\bigr] + \bigl[{r}^{12}(x_1, x_2),
{r}^{13}(x_1, x_3)\bigr]
+ \bigl[{r}^{13}(x_1, x_3), {r}^{23}(x_2, x_3)\bigr] = 0. 
$$
To see the latter statement,  recall that the ``reduced'' affine Lie algebra $\lieG$ is isomorphic to  a twisted 
loop algebra $\lieL = \lieL(\lieg, \sigma)$, where $\lieg$ is a finite dimensional simple Lie algebra and $\sigma$ is an automorphism of its Dynkin diagram \cite{Carter, Kac}. 

Let us for simplicity assume that the affine Cartan matrix $A$ corresponds to an extended Dynkin diagram. In this case, the automorphism $\sigma$ is trivial and
$\lieL = \lieg\bigl[z, z^{-1}\bigr]$ is the usual loop algebra. We have  a non-degenerate invariant bilinear form
$$
\lieL \times \lieL \stackrel{B}\lar \CC, \quad B(a z^k, bz^l) = \kappa(a, b) \delta_{k+l, 0},
$$
where $\kappa$ denotes the Killing form of $\lieg$.  A theorem due to Gabber and Kac asserts that there exists an isomorphism of Lie algebras $\lieG \stackrel{\cong}\lar \lieL$ identifying both non-degenerate invariant  bilinear forms on $\lieG$ and $\lieL$ up to a rescaling; see \cite[Theorem 8.5]{Kac}. We show (see Corollary \ref{C:StandradStructure})  that under this identification, the standard Lie bialgebra cobracket $\delta_\circ$  on $\lieL$ is given by the formula
$$\lieL \stackrel{\delta_\circ}\lar \wedge^2(\lieL),  \, f(z) \mapsto 
\bigl[f(x) \otimes 1 + 1 \otimes f(y), r_\circ(x, y)\bigr],$$
where 
$
r_{\circ}(x, y) = \dfrac{1}{2}\Bigl(\dfrac{x+y}{x-y} \gamma + \sum\limits_{\alpha} e_{-\alpha} \wedge e_{\alpha}\Bigr)
$
is the  ``standard'' solution   of CYBE.
As a consequence, 
twists of the standard Lie bialgebra cobracket $\lieL \stackrel{\delta_\circ}\lar \wedge^2(\lieL)$ have the form
\begin{equation*}\label{E:LieBialgebra}
\lieL \stackrel{\delta_\ttau}\lar \wedge^2(\lieL),  \, f(z) \mapsto 
\bigl[f(x) \otimes 1 + 1 \otimes f(y), r_\ttau(x, y)\bigr],
\end{equation*} where  
$\ttau(x, y)  \in (\lieg \otimes \lieg)\bigl[x, x^{-1}, y, y^{-1}\bigr]$ is such that
$
r_\ttau(x, y) = r_\circ(x, y) + \ttau(x, y)
$
is a solution of CYBE; see Theorem \ref{T:TwistsCYBEGeometry}. It turns out that any such solution of CYBE is equivalent (with respect to the  equivalence relation given by (\ref{E:equiv1}) and 
(\ref{E:equiv2})) to a trigonometric solution of CYBE with one spectral parameter (\ref{E:CYBE1}); see Proposition \ref{P:TwistIsTrigonometric}. Trigonometric solutions of CYBE were completely classified by Belavin and Drinfeld \cite{BelavinDrinfeld}.  However, our  work  is completely independent of that classification  and in particular provides an alternative approach to the theory of trigonometric solutions of CYBE.

The latter point is explained by the algebro-geometric perspective on Lie bialgebra structures on twisted loop algebras. To proceed to this, we first show  that twists $\ttau \in \wedge^2(\lieL)$ of the standard Lie bialgebra cobracket $\lieL \stackrel{\delta_\circ}\lar \wedge^2(\lieL)$ are in bijection with Manin triples
\begin{equation*}
\mathfrak{D}  = \lieC \, \dotplus \, \lieW, \; \lieW \asymp 
\lieW^\circ,
\end{equation*}
where $\mathfrak{D} = \lieL_+ \times \lieL_- = \lieL \times \lieL^\ddagger$ and $\lieC = \bigl\{(f, f^\ddagger) \,|\, f \in \lieL\bigr\}$; see Theorem  \ref{T:ManinTriplesLoopTwists}. If $\lieL = \lieL(\lieg, \sigma) \subseteq 
\lieg\bigl[z_+, z_+^{-1}\bigr]$ then 
$\lieL^\ddagger := \lieL\bigl(\lieg, \sigma^{-1}\bigr) \subseteq 
\lieg\bigl[z_-, z_-^{-1}\bigr]$ and  $\bigl(az_+^k)^\ddagger =  
a z_{-}^{-k}$.  
The key statement is that $\lieW$ is stable under multiplications by the elements of   the algebra 
$$\CC[t_+, t_-]/(t_+ t_-) \cong \bigl\{(f_+, f_-) \in \CC\bigl[t_+] \times \CC[t_-] \,|\, f_+(0) = f_-(0) \bigr\},
$$ where $t_\pm = z_\pm^m$ and $m$ is the order of the automorphism $\sigma$; see Lemma \ref{L:OrdersFromMT1}.  Its proof  uses  the fact that any bounded coisotropic Lie subalgebra of $\lieL$ is stable under the multiplication by the elements of $\CC[t]$; see Theorem \ref{P:CoisotropicSubalgebras}. In its turn, the proof of   Theorem \ref{P:CoisotropicSubalgebras} is  based on properties of affine root systems  as well as on the result  of Kac and Wang \cite[Proposition 2.8]{KacWang}. 

\medskip
\noindent
The crux of our work  is that  Manin triples $\mathfrak{D}  = \lieC \, \dotplus \, \lieW, \; \lieW \asymp 
\lieW^\circ$ 
are of algebro-geometric nature. Projecting the Lie algebra $\lieW$ to each factor $\lieL_\pm$ of $\mathfrak{D}$, we get  a pair of Lie algebras $\lieW_\pm \subset \lieL_\pm$, which can be glued  to a Lie algebra bundle $\kB$ on the projective line $\PP^1$, whose generic fibers are isomorphic to the Lie algebra $\lieg$; see Proposition 
\ref{P:SheafLieAlgProjLine}.
Let  $\liew = \lieW/(t_+, t_-)\lieW$,  $\liew_\pm = \lieW_\pm/t_\pm\lieW$ and 
$\liew \stackrel{\theta}\hookrightarrow \liew_+ \times \liew_-$ be the canonical embedding.
Using the  theory of torsion free sheaves on singular projective curves developed in \cite{Survey, Thesis}, we  attach to the datum $(\kB, \liew, \theta)$ a sheaf of Lie algebras $\kA$ on a plane nodal cubic curve $E = \overline{V(u^2 - v^3 - v^2)} \subset \PP^2$; see Proposition \ref{P:GeomDataFromTwists}. This sheaf has the following properties. 
\begin{itemize}
\item $\kA\big|_p \cong \lieg$ for all $p \in \breve{E}$, where $\breve{E}$ is the smooth part of $E$. 
\item $\kA$ has vanishing cohomology: $H^0(E, \kA) = 0 = H^1(E, \kA)$. 
\item $\kA_s$ is a Lagrangian  Lie subalgebra of the rational hull of $\kA$ (which is a simple Lie algebra over the field  of rational functions of $E$), where $s$ is the unique singular point of $E$. 
\end{itemize}
The constructed geometric datum  $(E, \kA)$ fits precisely into the framework of the algebro-geometric theory of solutions of CYBE developed by Burban and Galinat \cite[Theorem 4.3]{BurbanGalinat}. In that work, the authors constructed  a canonical section (called geometric $r$-matrix)
$$
\rho \in \Gamma\bigl(\breve{E} \times \breve{E}\setminus \Sigma, \kA \boxtimes \kA), \;  \mbox{\rm where} \; \Sigma \subset \breve{E} \times \breve{E} \;  \mbox{\rm is the diagonal}, \;
$$
which satisfies a sheaf-theoretic version of the classical Yang--Baxter equation:
$$
\bigl[\rho^{12}, \rho^{13}\bigr] + \bigl[\rho^{13}, \rho^{23}\bigr] + \bigl[\rho^{12}, \rho^{23}\bigr] = 0 \; \mbox{and} \;   \rho(p_1, p_2) = - \rho^{21}(p_2, p_1) \; \mbox{\rm for} \; p_1, p_2 \in \breve{E}.
$$
 In \cite[Proposition 4.12]{BurbanGalinat} it was shown that $\Gamma(\breve{E}, \kA)$ is a Lie bialgebra: the linear map
\begin{equation*}
\Gamma(\breve{E}, \kA) \stackrel{\delta_\rho}\lar \Gamma(\breve{E}, \kA) \otimes \Gamma(\breve{E}, \kA), \; f(t) \mapsto \bigl[f(u) \otimes 1 + 1 \otimes f(v), \rho(u, v)\bigr]
\end{equation*}
is a skew-symmetric one-cocycle satisfying the co-Jacobi identity. It follows from the construction of  $(E, \kA)$ that $\Gamma(\breve{E}, \kO_E) \cong 
\CC\bigl[t, t^{-1}\bigr]$ and $\Gamma(\breve{E}, \kA) \cong \lieL$.  In Theorem \ref{T:TwistsCYBEGeometry} we show
that Lie bialgebras $\bigl(\Gamma(\breve{E}, \kA), \delta_\rho\bigr)$ and $(\lieL, \delta_\ttau)$ are isomorphic. 
This statement also  allows to identify the trivialized geometric $r$-matrix $\rho$ with the solution $r_\ttau(x,y) = r_\circ(x, y) + \ttau(x, y)$ of CYBE. The latter fact in particular means that any trigonometric solution of CYBE arises from an appropriate  geometric datum $(E, \kA)$, concluding the geometrization programme started in \cite{BurbanGalinat}. 

\smallskip
\noindent
In Section \ref{S:ExplicitComputations}, we deal with concrete examples. 
In Theorem \ref{T:QuasiConstantGeometrization}, we describe Manin triples 
\begin{equation*}
\lieg\llbrace z_+ \rrbrace \times \lieg\llbrace z_- \rrbrace = 
\lieg\bigl[z, z^{-1} \bigr] \dotplus \widehat{\lieW},
\end{equation*}
corresponding to quasi-constant trigonometric solutions of CYBE.
In Theorem \ref{T:CremmerGervaisGeometry},  we describe the corresponding Lagrangian  subalgebras $\widehat{\lieW}$
for a special class of (quasi)-trigono\-met\-ric solutions of CYBE for 
 $\lieg = \mathfrak{sl}_n(\CC)$, which were  obtained  in 
 \cite[Theorem A]{BurbanGalinatStolin}.

\subsection{Infinite dimensional Lie bialgebras}
As usual, let $\lieg$ be a finite dimensional simple complex Lie algebra and $r(x, y)$ be a solution of the classical Yang--Baxter equation (\ref{E:CYBE}). There are several essentially different possibilities to attach to $r(x, y)$ a Lie bialgebra. 

\smallskip
\noindent
1. There is a ``universal procedure'',  applicable for all three types of solutions of (\ref{E:CYBE1}):  elliptic, trigonometric and rational. As was explained in Subsection \ref{SS:SurveyCYBE}, any solution of (\ref{E:CYBEformali})  defines a Manin triple of the form $\lieg\llbrace z\rrbrace = \lieg\llbracket z\rrbracket \dotplus \lieW$ and the linear map 
$$
\lieW \stackrel{\delta_r}\lar \lieW \otimes \lieW, \, w(z) \mapsto \bigl[w(x) \otimes 1 + 1 \otimes w(y), r(x, y)\bigr]
$$
is a Lie bialgebra cobracket on $\lieW$. 
For elliptic solutions, such Manin triples appeared for the first time  in \cite{ReymanST}. A description of the corresponding Lie algebras $\lieW$ via generators and relations was given in \cite{Golod, SkrypnykEllAlg}.

\smallskip
\noindent
2. Let  $\varrho(z)$ be a trigonometric solution of CYBE with the lattice of poles 
$2\pi i \mathbb{Z}$. Then there exists $\sigma  \in \Aut_{\CC}(\lieg)$ such that 
$$
\varrho(z + 2\pi i) = \bigl(\sigma \otimes \mathbbm{1}_{\lieg}\bigr) \varrho(z) = \bigl(\mathbbm{1}_{\lieg} \otimes \sigma^{-1}\bigr) \varrho(z).
$$
Moreover, there exists $m \in \NN$ such that $\sigma^m = \mathbbm{1}_{\lieg}$; see \cite[Theorem 6.1]{BelavinDrinfeld}. It turns out that (after an appropriate change of coordinates) $\varrho$ defines a Lie bialgebra cobracket on the twisted loop algebra
$\lieL = \lieL(\lieg, \sigma)$, which is a {twist} of the {standard Lie bialgebra structure} on $\lieL$.
In this paper we prove that  such twists 
 are classified  by Manin triples of the form 
$$
\lieL \times \lieL^\ddagger = \lieD \dotplus \lieW, \quad \lieW \asymp 	 \lieW^\circ,
$$
where $\lieD = \bigl\{(f, f^\ddagger) \, \big|\, f \in \lieL \bigr\} \cong \lieL$ and $\lieW^\circ$ is the Lie algebra corresponding  to the standard Lie bialgebra cobracket on $\lieL$.  From this perspective, the theory of trigonometric solutions of CYBE  appears in a parallel way to the theory of of solutions of cCYBE. 
 Methods developed in this work should be applicable to study analogues of trigonometric solutions of CYBE  for simple Lie algebras defined over arbitrary fields.

\smallskip
\noindent
3. Lie bialgebra structures on the Lie algebra $\lieg\llbracket z\rrbracket$ were studied  in \cite{MSZ}. For any 
\begin{equation}\label{E:MSZList}
r(x, y) \in  \left\{0,\, 
\dfrac{1}{2}\Bigl(\dfrac{x+y}{x-y} \gamma + \sum\limits_{\alpha \in \Phi_+} e_{-\alpha} \wedge e_{\alpha}\Bigr), \, 
\dfrac{1}{x-y} \gamma, \, 
\dfrac{xy}{x-y} \gamma
\right\}
\end{equation}
we have  the corresponding  Lie bialgebra cobrackets 
$\lieg\llbracket z\rrbracket \stackrel{\delta_r}\lar \lieg\llbracket x\rrbracket \otimes \lieg\llbracket y\rrbracket$. It turns out that for any other Lie bialgebra cobracket $\lieg\llbracket z\rrbracket \stackrel{\delta}\lar \lieg\llbracket x\rrbracket \otimes \lieg\llbracket y\rrbracket$, the corresponding Drinfeld double $\mathfrak{D}\bigl(\lieg\llbracket z\rrbracket, \delta\bigr)$ is isomorphic to $\mathfrak{D}\bigl(\lieg\llbracket z\rrbracket, \delta_r\bigr)$
for some $r(x, y)$ from the list (\ref{E:MSZList}); see \cite[Theorem 2.10]{MSZ}.

\smallskip
\noindent
4. Let $r(x, y) = r_{\mathsf{st}}(x, y) + p(x, y)$ be a solution of CYBE, where $p(x, y) \in (\lieg \otimes \lieg)[x, y]$ and 
$$
r_{\mathsf{st}}(x, y) = \left\{
\begin{array}{ll}
\dfrac{\gamma}{x-y} & \mbox{\rm rational case}\\
\dfrac{y}{x-y} \gamma & \mbox{\rm quasi-trigonometric case} \\
\dfrac{xy}{x-y} \gamma & \mbox{\rm quasi-rational case}.
\end{array}
\right.
$$
For any such $r(x, y)$ we have a Lie bialgebra cobracket
$
 \lieg[z] \stackrel{\delta_r}\lar \lieg[x] \otimes \lieg[y].
 $
 Such Lie bialgebra structures of $\lieg[z]$ are controlled  by Manin triples of different shapes (depending on $r_{\mathsf{st}}(x, y)$).
 According to  \cite{Stolin}, 
rational solutions of (\ref{E:CYBE}) are parametrized by Manin triples of the form
$$
\lieg\llbrace z^{-1}\rrbrace = \lieg[z] \dotplus \lieW, \quad \lieW \asymp z^{-1} \lieg\llbracket z^{-1}\rrbracket.
$$
The theory of Manin triples for quasi-trigonometric and quasi-rational solutions od CYBE is given in  \cite{KPSST}  and \cite{StolinMagnusson}, respectively. It turns out that any quasi-trigonometric solution is equivalent (with respect to the transformation rules (\ref{E:equiv1}) and 
 (\ref{E:equiv2})) to a trigonometric solution of (\ref{E:CYBE1}); see \cite{KPSST}. Therefore, quasi-trigonometric solutions of CYBE can be used to define Lie bialgebra 
 cobrackets both on $\lieg[z]$ and 
 $\lieg\bigl[z, z^{-1}\bigr]$.

\smallskip
\noindent
5. A relation between trigonometric and quasi-trigonometric solutions was also explored in \cite[Section 4.2 and Section 4.3]{AbedinMaximov}.  In particular, let  $\lieg = \mathfrak{sl}_n(\CC)$ and $\varrho(z)$ be a trigonometric solution of (\ref{E:CYBE1}) such that the corresponding monodromy automorphism $\sigma  \in \Aut_{\CC}(\lieg)$ induces the trivial automorphism of the Dynkin diagram of $\lieg$. Then 
$\varrho(z)$ is equivalent to a quasi-trigonometric solution; see
\cite[Lemma 4.10 and Remark 4.11]{AbedinMaximov}.

\smallskip
\noindent

\subsection{Twists of the standard Lie bialgebra structure on a twisted loop algebra}
Let $\lieg$ be a finite dimensional simple complex Lie algebra, $\sigma \in \Aut_{\CC}(\lieg)$ be an automorphism of finite order $m$ and $\lieL = \lieL(\lieg, \sigma)$ be the corresponding  twisted loop algebra. In \cite{AbedinMaximov} it is shown that results of this work (in particular,  Proposition \ref{P:TwistIsTrigonometric} and Theorem \ref{T:TwistsCYBEGeometry}) can be used to extend the Belavin--Drinfeld classification of trigonometric solutions of CYBE   to a classification of twists of the standard Lie bialgebra cobracket $\lieL \stackrel{\delta_\circ}\lar \wedge^2(\lieL)$. The key  observation is thereby that for two classical twists $\ttau,\ttau' \in \wedge^2(\lieL)$ of $\delta_\circ$ the Lie bialgebras $(\lieL,\delta_\ttau)$ 
and $(\lieL,\delta_{\ttau'})$ are isomorphic via some $R$--linear automorphism of $\lieL$ if and only if there exists a holomorphic germ $(\CC,0) \stackrel{\phi}\lar \Aut_\CC(\lieg)$ such that 
\begin{equation}
     r_{\ttau'}\left(\exp\left(\dfrac{u}{m}\right),  
    \exp\left(\dfrac{v}{m}\right)\right) = (\phi(u) \otimes \phi(v))r_\ttau\left(\exp\left(\dfrac{u}{m}\right),  
    \exp\left(\dfrac{v}{m}\right)\right);
\end{equation}
see \cite[Theorem 3.7 and Theorem 5.8]{AbedinMaximov}. A proof of this statement uses the algebro-geometric theory of the CYBE developed in Section \ref{S:ReviewGeomCYBE} and Theorem \ref{T:TwistsCYBEGeometry}. In particular, as an intermediate step it is shown that the sheaves of Lie algebras $\kA_\ttau$ and  $\kA_{\ttau'}$ from Theorem \ref{T:TwistsCYBEGeometry} are isomorphic in this case.

\smallskip
\noindent
In the setting of Remark \ref{R:RemarkBDsetting} (i.e.~when  $\sigma$ is a Coxeter automorphism of a diagram automorphism of $\lieg$) this fact already yields the desired classification of classical twists of $\delta_\circ$. Combining Proposition \ref{P:TwistIsTrigonometric} with the classification of trigonometric solutions of (\ref{E:CYBE1}) presented in Section \ref{SS:ReviewBDTheory} it follows that $r_\ttau$ is equivalent to $r_Q$ given by formula (\ref{E:TwistsCoxCase}) for an appropriate Belavin--Drinfeld quadruple $Q$. It follows that $(\lieL,\delta_\ttau)$ is isomorphic to $(\lieL,\delta_Q)$, where 
$\delta_Q= \delta_\circ + \partial_{t_Q}$ and $t_Q$ is given by (\ref{E:twist}).

\smallskip
\noindent
For an arbitrary automorphism $\sigma$ this classification needs a slight adjustment; see  \cite[Lemma 3.2]{AbedinMaximov} as well as \cite[Lemma 6.22]{BelavinDrinfeld}. We keep the notation of Subsection \ref{SS:BasicsTwistedLoops}.
In this setting, a Belavin--Drinfeld quadruple $Q = \bigl((\Gamma_1,\Gamma_2,\tau),\ttaus\bigr)$ consists of (possibly empty) proper subsets $\Gamma_1,\Gamma_2$ of the set 
$
\Pi \subset 
\lieh^\ast \times \NN_0$ of simple roots of $(\lieL, \lieh)$, a bijection $\Gamma_1 \stackrel{\tau}\lar \Gamma_2$ and a tensor $\ttaus \in \wedge^2(\lieh)$ satisfying the following conditions:
\begin{itemize}
\item $\kappa\bigl(\tau(\alpha), \tau(\alpha')\bigr) = \kappa(\alpha, \alpha')$ for all $(\alpha, k), (\alpha', k')\in \Gamma_1$;
\item for any $(\alpha, k) \in \Gamma_1$ there exists $l  \in \NN$ such that  $(\alpha, k), \dots, \tau^{l-1}(\alpha, k) \in \Gamma_1$ but $\tau^l(\alpha, k) \notin \Gamma_1$;
\item $
\bigl(\beta \otimes \mathbbm{1} + \mathbbm{1} \otimes \alpha\bigr)\Bigl(\ttaus+ \dfrac{\gamma_{0}}{2}\Bigr) = 0$ for all $(\alpha, k) \in \Gamma_1$, where $\tau(\alpha, k) = (\beta, t)$.
\end{itemize}
For $i \in \{1,2\}$ consider Lie algebras $\lies_i^\pm   := \llangle x_j^\pm \, \big| \, j \in \Gamma_i \rrangle
\subset
\lieL$ and $\lies_i := \llangle x_j^+, x_j^- \, \big| \, j \in \Gamma_i \rrangle
\subset
\lieL$, where $x_j^\pm \in \lieL_{(\pm \alpha_j, \pm s_j)} = \lieg_{\pm \alpha_j} z^{\pm s_j}$ are Chevalley generators of $\lieL$ corresponding to 
$(\pm \alpha_j, \pm s_j) \in \Pi^\pm$.  Since $\Gamma_i$ is a proper subset of $\Pi$, the Lie algebra  $\lies_i$  is finite dimensional and  semisimple.  It is clear that  $\tau$ induces an isomorphism $\lies_1 \stackrel{\tilde{\tau}}\lar \lies_2$ given by the formula 
$x_j^\pm \longmapsto x_{\tau(j)}^\pm$ for all $j \in \Gamma_1$ (where we identify $\Pi$ with  $\bigl\{0, 1, \dots, r\bigr\}$). We have:  
$\tilde\tau(\lies_1^\pm) = \lies_2^\pm$. 

\smallskip
\noindent
It is clear that there exists a  finite subset $\Phi_i \subset \Phi\setminus\{(0,0)\}$ and a Lie subalgebra $\lieh_i \subset \lieh$ such that $\lies_i = \lieh_i \oplus \oplus_{(\alpha,k) \in \Phi_i}\lieL_{(\alpha,k)}$ and $\lies_i^\pm  =  \oplus_{(\alpha,k) \in \Phi^\pm_i}\lieL_{(\alpha,k)}$, where $\Phi_i^\pm = \Phi_i \cap \Phi_\pm$. 
Let $\vartheta$ be the  nilpotent $\CC$-linear endomorphism of $\lieL$ given as  the  composition
$$
\lieL \stackrel{\pi}\rightarrowdbl \lies_1^+ \stackrel{\tilde\tau}\lar \lies^+_2 \stackrel{\imath}\longhookrightarrow \lieL,
$$
where $\pi$ and $\imath$ are the canonical projection and embedding with respect to the direct sum decomposition
$\lieL = \oplus_{(\alpha,k) \in \Phi}\lieL_{(\alpha,k)}$. We put:
$
\psi  = \dfrac{\vartheta}{\mathbbm{1} - \vartheta} = \sum\limits_{l = 1}^\infty \vartheta^l \in \End_\CC(\lieL)
$. Finally, let us choose  a family $\bigl(b_{(\alpha,k)} \in \lieL_{(\alpha,k)}\bigr)_{(\alpha,k) \in \Phi_1}$ such that 
$B\bigl(b_{(\alpha,k)},b_{(\beta,t)}\bigr) = \delta_{\alpha+\beta,0}\, \delta_{k+t,0}$ for all $(\alpha,k), (\beta, t) \in \Phi_1$.
The following statement is one of main results of  \cite{AbedinMaximov}.

\smallskip
\noindent
\textbf{Theorem}. Let $Q = \bigl((\Gamma_1,\Gamma_2,\tau),\ttaus\bigr)$ be a Belavin--Drinfeld quadruple and 
$$
\ttau_Q = \ttaus + \sum_{(\alpha,k)\in\Phi_1^+} b_{(-\alpha,-k)} \wedge \psi(b_{(\alpha,k)} \in \wedge^2(\lieL).
$$
Then $\delta_Q = \delta_\circ + \partial_{\ttau_Q}$ is a twist of the  standard Lie bialgebra cobracket $\lieL \stackrel{\delta_\circ}\lar \wedge^2(\lieL)$. Conversely, let $\ttau \in \wedge^2(\lieL)$ be such $\lieL \stackrel{\delta_\ttau}\lar 
\wedge^2(\lieL)$ is a Lie bialgebra cobracket. Then there exists  a Belavin--Drinfeld quadruple $Q$ and an $R$-linear automorphism of $\lieL$ giving an isomorphism of Lie bialgebras $(\lieL, \delta_\ttau)$ and $(\lieL, \delta_Q)$. 

\smallskip
\noindent
Note that 
$$
r_Q(x, y) = \left(\frac{\gamma_{0}^{0}}{2} + \gamma_{0}^-\right) + \frac{y^m}{x^m - y^m} \sum\limits_{k = 0}^{m-1} \left(\frac{x}{y}\right)^k \gamma_k + \ttaus + \sum_{(\alpha,k)\in\Phi_1^+} b_{(-\alpha,-k)} \wedge \psi(b_{(\alpha,k)}.
$$
is a solution of CYBE.  In \cite{AbedinMaximov} these  solutions are called \emph{$\sigma$-trigonometric}.

\end{document}